%
%

%
%

%
%

%
%

\documentclass[12pt,reqno]{amsart}


\usepackage[colorlinks=true,urlcolor=blue,
bookmarks=true,bookmarksopen=true,citecolor=blue
]{hyperref}

\usepackage{color,soul}
\setstcolor{red}


\usepackage{pdfsync}
\usepackage[all, cmtip]{xy}
\usepackage{graphicx}

\usepackage{epsfig}
\usepackage{amscd}
\usepackage[mathscr]{eucal}
\usepackage{amssymb}
\usepackage{bm}
\usepackage{amsxtra}
\usepackage{amsmath}
\usepackage{latexsym} 
\usepackage[all]{xy}
\usepackage{enumerate}

\usepackage{enumitem}

\usepackage{color}
\setlength{\textwidth}{100cm}
\setlength{\textheight}{100cm}

\setcounter{tocdepth}{2}
\usepackage{xcolor, soul}
\sethlcolor{green}

\usepackage{bbm}

%

\theoremstyle{plain}

\newtheorem{mainthm}{Theorem}
\setcounter{mainthm}{0}

\newtheorem{thm}{Theorem}[subsection]

\newtheorem{cor}[thm]{Corollary}

\newtheorem{lem}[thm]{Lemma}
\newtheorem{prop}[thm]{Proposition}
\newtheorem{cnj}{Conjecture/Question}

\theoremstyle{definition}
\newtheorem{dfn}[thm]{Definition}

\newtheorem*{claim-nonum}{Claim}

\newtheorem*{assumption}{Assumption}

\theoremstyle{remark}
\newtheorem{rem}[thm]{Remark}

\newtheorem{axio}{Axiom}

\theoremstyle{plain}

%



\newcommand{\cobto}{\leadsto}




%


\oddsidemargin -3pt 
\evensidemargin -3pt 
\marginparwidth 40pt 
\marginparsep 10pt 

\topmargin 0pt 
\headsep 15pt 

\textheight 8.6in 
\textwidth 6.6in 
\topmargin 0pt

\headheight12pt 

%
\newcommand{\R}{\mathbb{R}}
\newcommand{\Z}{\mathbb{Z}}

\newcommand{\N}{\mathbb{N}}
\newcommand{\C}{\mathbb{C}}

\newcommand{\fuk}{\mathcal{F}uk}

\newcommand{\mor}{{\textnormal{Mor\/}}}

\newcommand{\tcn}{{\mathcal{C}one}}









%
%

\newcommand{\pbaddress}{biran@math.ethz.ch}
\newcommand{\ocaddress}{cornea@dms.umontreal.ca}

\begin{document}

\title{A LAGRANGIAN PICTIONARY}

\date{\today }

\thanks{The second author was supported by an individual {\em NSERC
  Discovery }grant a {\em Simons Fellowship} and an {\em Institute for Advanced Study} fellowship grant.}

\author{Paul Biran and Octav Cornea}

\address{Paul Biran, Department of Mathematics, ETH-Z\"{u}rich,
  R\"{a}mistrasse 101, 8092 Z\"{u}rich, Switzerland}
\email{\pbaddress}
 
\address{Octav Cornea, Department of Mathematics and Statistics,
  University of Montreal, C.P. 6128 Succ.  Centre-Ville Montreal, QC
  H3C 3J7, Canada} \email{\ocaddress}

\bibliographystyle{alphanum}

%

%


\maketitle

%
%

\tableofcontents 


\section{Introduction.}

The purpose of this paper is to describe a dictionary $$\mathit{ geometry}\ \longleftrightarrow\  \mathit{algebra}$$ in Lagrangian topology. As a by-product we obtain a tautological (in a sense to be explained) proof of a folklore conjecture (sometimes attributed to Kontsevich) claiming that the objects and structure of the derived Fukaya category can be represented through immersed Lagrangians. We also shed some new light on the dichotomy rigidity/flexibility that is at the heart of symplectic topology.
 
We focus on a symplectic manifold $(M^{2n}, \omega)$ with $M$ compact or convex
at infinity and we consider  classes $\mathcal{L}ag^{\ast}(M)$ of (possibly immersed) Lagrangian submanifolds $L\hookrightarrow M$ carrying also a variety of additional structures. In this paper
the Lagrangians are exact and the choices of primitives are part of the structure.

\subsection{Outline of the dictionary}
\subsubsection{Geometry}
The geometric side of our dictionary, which does not appeal at any point to $J$-holomorphic curves,
 consists of a class  $\mathcal{L}ag^{\ast}(M)$ of Lagrangians, possibly immersed and marked (and endowed with additional structures). 
For an immersed Lagrangian $j_{L}:L\to M$, in generic position,
a marking consists of a choice of a set of self intersection points $(P_{-},P_{+})\in L\times L$ with $j_{L}(P_{-})=j_{L}(P_{+})$.  The Lagrangians in $\mathcal{L}ag^{\ast}(M)$ are organized as a cobordism category $\mathsf{C}ob^{\ast}(M)$, whose morphisms are represented by Lagrangian cobordisms belonging
to a class $\mathcal{L}ag^{\ast}(\C\times M)$.
These are Lagrangians in $\C\times M$ whose projection onto $\C$ is as in Figure \ref{Fig:morph}.
To such a category we associate a number of geometric operations. 
We then list a set of axioms governing them. 
If  the category $\mathsf{C}ob^{\ast}(M)$  obeys these axioms we say that it
{\em has surgery models}.  The reason for this terminology is that the cobordisms associated to 
the classical operation of Lagrangian surgery \cite{La-Si:Lag}, \cite{Po:surgery}, \cite{Bi-Co:cob1}  are in some sense the simplest morphisms in our category, see also Figure \ref{Fig:surg3}.
There is a natural equivalence relation $\sim$, called {\em cabling equivalence}, on the morphisms of $\mathsf{C}ob^{\ast}(M)$.   The fact that the category has surgery models means that each morphism is equivalent to one coming from surgery (together with some other properties that we skip now).  To understand this condition through an analogy, 
consider the path category of a Riemannian  manifold, with objects the points of the manifold and with morphisms the Moore paths. In this case the equivalence relation is homotopy and the place of surgery morphisms is taken by geodesics. If $\mathsf{C}ob^{\ast}(M)$ has surgery models, then the quotient category $$\widehat{\mathsf{C}}ob^{\ast}(M)=\mathsf{C}ob^{\ast}(M)/\sim$$ is triangulated.  
The triangulated structure appears naturally in this context because a surgery cobordism, resulting from the surgery operation, has three ends.

Cobordims are endowed with a natural  measurement provided by their shadow - roughly the area of the connected ``filling'' of their projection onto $\C$ \cite{Co-She:metric}.  If $\mathsf{C}ob^{\ast}(M)$ has surgery models,
then the triangulated structure of $\widehat{\mathsf{C}}ob^{\ast}(M)$ together with the weight provided by
the shadow of cobordisms leads to the definition of a class of pseudo-metrics on 
$\mathcal{L}ag^{\ast}(M)$ called (shadow) {\em fragmentation  pseudo-metrics} and denoted by $d^{\mathcal{F}}$ \cite{Bi-Co-Sh:lshadows-long}. Here $\mathcal{F}$ is a family of objects in $\mathsf{C}ob^{\ast}(M)$ (it can be imagined as a family of generators with respect to the triangulated structure).
Intuitively, the pseudo-distance $d^{\mathcal{F}}(L,L')$ between two objects $L$ and $L'$ infimizes 
the weight of iterated cone-decompositions that express $L$ in terms of $L'$ and of objects 
from the family $\mathcal{F}$.
The category $\mathsf{C}ob^{\ast}(M)$ is called (strongly) {\em rigid} (or non-degenerate) in case, under some natural constraints on $\mathcal{F}$ and a second such family $\mathcal{F}'$ (that can be viewed as 
a generic perturbation of $\mathcal{F}$), the pseudo-metrics 
$d^{\mathcal{F},\mathcal{F}'}=d^{\mathcal{F}}+d^{\mathcal{F}'}$ are non-degenerate.

\subsubsection{Algebra}
The algebraic side of the dictionary is mainly given by the derived Fukaya category $D\fuk^{\ast}(M)$ associated to the {\em embedded} Lagrangians $\mathcal{L}ag^{\ast}_{e}(M)\subset \mathcal{L}ag^{\ast}(M)$.  The definition of this triangulated category \cite{Se:book-fukaya-categ} is an algebraically sophisticated application of $J$-holomorphic curves techniques. The construction is only possible for Lagrangian submanifolds that are {\em unobstructed} \cite{FO3:book-vol1}.  Starting with the definition of Floer theory \cite{Fl:Morse-theory}, a variety of constraints, often of topological type - such 
as exacteness, monotonicity etc -  have been identified that are sufficient to ensure unobstructedness. 
Further unobstructedness conditions are now understood for Lagrangian immersions following the work
of Akaho \cite{Akaho} and Akaho-Joyce \cite{Akaho-Joyce}.

\subsection{Main result} In this framework, the central point of our dictionary is: 
\begin{mainthm}\label{thm:BIG} There are certain classes $\mathcal{L}ag^{\ast}(L)$, $\mathcal{L}ag^{\ast}(\C\times M)$  of {\em exact, marked, unobstructed, immersed} Lagrangian submanifolds and, respectively, cobordisms such that the category $\mathsf{C}ob^{\ast}(M)$ has surgery models 
and is rigid. Moreover, the quotient category $\widehat{\mathsf{C}}ob^{\ast}(M)$ is triangulated and its subcategory
generated by the embedded Lagrangians is triangulated isomorphic to $D\fuk^{\ast}(M)$.  
\end{mainthm}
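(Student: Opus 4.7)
The plan is to build the required classes by combining exactness (with chosen primitives) with an Akaho--Joyce style unobstructedness theory for immersed Lagrangians, and then verify the surgery-model axioms by realizing every cobordism as an iterated cabling of surgery cobordisms. Concretely I would take $\mathcal{L}ag^{\ast}(M)$ to consist of generically immersed exact Lagrangians $j_L:L\to M$ together with a primitive of $j_L^{\ast}\lambda$, a marking $(P_-,P_+)$ selecting which self-intersections are ``active,'' and a bounding cochain supported on the marking which unobstructs the Akaho--Joyce curved $A_\infty$-algebra. For cobordisms $V\subset\C\times M$ I would impose the analogous data with ends matching on the corresponding objects and profile as in Figure \ref{Fig:morph}. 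The critical closure property, which I would establish before anything else, is that this class is preserved under Lagrangian surgery at a marked double point.

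The second step is to verify the surgery-model axioms. The decisive one is that every cobordism $V:L\leadsto(L_1,\dots,L_k)$ is cabling-equivalent to one obtained from iterated surgery. I would prove this inductively: cable $V$ successively against elementary three-ended surgery cobordisms as in Figure \ref{Fig:surg3}, each time inheriting a marking and bounding cochain on the intermediate Lagrangian from the cabling parameter. Combined with naturality of cabling under concatenation and the presence of zero objects, this yields that the quotient $\widehat{\mathsf{C}}ob^{\ast}(M)$ is triangulated, with exact triangles coming from the three ends of surgery cobordisms.

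To identify the embedded subcategory of $\widehat{\mathsf{C}}ob^{\ast}(M)$ with $D\fuk^{\ast}(M)$ I would construct a functor $\Phi$ sending an embedded $L$ to itself and a cobordism class $[V]:L\leadsto L'$ to the Floer morphism obtained by counting $J$-holomorphic sections of $\C\times M\to\C$ with boundary on $V$, extending the embedded cobordism-to-Floer construction of Biran--Cornea. Cabling equivalence descends because cabled cobordisms differ by a Lagrangian isotopy inducing a chain homotopy on Floer complexes. The functor sends the three-ended surgery cobordism to the standard surgery exact triangle in $D\fuk^{\ast}(M)$, so once surgery models are available $\Phi$ is triangulated; essential surjectivity follows from the generation of $D\fuk^{\ast}(M)$ by embedded Lagrangians, and full faithfulness from the iterated-cone computation of the morphism spaces in $\widehat{\mathsf{C}}ob^{\ast}(M)$ matched against $A_\infty$ morphism complexes in $\fuk^{\ast}(M)$. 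Rigidity, i.e. non-degeneracy of $d^{\mathcal{F},\mathcal{F}'}$, then follows from standard Floer-theoretic lower bounds on cobordism shadows applied to families $\mathcal{F},\mathcal{F}'$ generating $D\fuk^{\ast}(M)$.

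The main obstacle will be controlling unobstructedness through the entire marked, immersed surgery/cabling construction. One must show that surgery at a marked self-intersection produces a Lagrangian whose induced marking and bounding cochain again unobstruct its Akaho--Joyce $A_\infty$-algebra, and that the surgery cobordism itself lies in $\mathcal{L}ag^{\ast}(\C\times M)$ so that its Floer invariants are defined and match the Fukaya-theoretic mapping cone. This demands a careful pseudo-holomorphic analysis at the immersed ends of cobordisms and a systematic bookkeeping of how bounding cochains transform under surgery; once this analytic foundation is in place, the triangulated isomorphism with $D\fuk^{\ast}(M)$ and the rigidity statement become essentially formal consequences of the surgery-model axioms.
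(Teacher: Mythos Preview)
Your overall architecture (define unobstructed classes, construct a functor to $D\fuk^{\ast}(M)$, verify the axioms, deduce rigidity) matches the paper. But two of the load-bearing steps in your proposal do not go through as written, and the paper's whole point is to route around the first of them.

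\textbf{The surgery-preserves-unobstructedness step is not what the paper does.} You correctly flag in your last paragraph that the ``main obstacle'' is to show that positive-size surgery at a marked self-intersection yields an unobstructed Lagrangian and an unobstructed surgery cobordism. The paper explicitly does \emph{not} prove this. Instead it introduces $0$-size surgery: $L\#_{c,0}L'$ is simply the immersion $L\cup L'$ together with the marking $c$, and the trace of this ``surgery'' is again a union of trivial pieces equipped with a marking. Unobstructedness of $L\cup L'$ with marking $c\cup c_L\cup c_{L'}$ then reduces to the algebraic condition that $c$ is a cycle in $CF(L,L')$ (Proposition~\ref{prop:surgery-mor}), and no comparison of moduli spaces before and after a handle attachment is ever needed. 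This is exactly the ``tautological'' mechanism advertised in the introduction (\S\ref{subsubsec:bounding}); the positive-$\epsilon$ comparison you propose is discussed in \S\ref{subsubsec:nonmarked-Lag} as an open problem requiring gluing and compactness results that the paper does not supply. So your plan as stated leads into territory the paper deliberately avoids.

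\textbf{Cabling equivalence is not induced by Lagrangian isotopy.} You write that ``cabled cobordisms differ by a Lagrangian isotopy inducing a chain homotopy.'' This is false: the cabling $\mathcal{C}(V,V';c)$ is a \emph{new} immersed Lagrangian, generically with tear-drops, and $V$, $V'$ are not isotopic to each other or to anything canonically extracted from the cabling. The actual mechanism (Proposition~\ref{prop:theta-cabl}) is that $V\sim V'$ iff $\Theta(V)=\Theta(V')$, and this is proved by counting $\mathbf{c}$-marked tear-drops on the cabling through the two planar pivots $U$, $T$ in Figure~\ref{fig:cabling2}: these counts recover $[a^W_V]$ and $[a^W_{V'}]$, and unobstructedness of the cabling is equivalent to their difference being exact. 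This tear-drop/pivot argument is the core analytic content of the paper on the ``cabling'' side and has no isotopy interpretation.

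Once these two points are handled the paper's way, the remaining axioms are \emph{not} verified by the inductive cabling-against-surgeries scheme you sketch, but rather by transport through $\Theta$: since $\widehat\Theta$ is injective and triangulated, Axioms~1, 2, 5 follow from the corresponding facts in $H(\mathrm{mod}\,\fuk^{\ast}(M))$, and Axiom~4 is exactly Proposition~\ref{prop:surgery-mor}.
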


One way to look at this result is that if the class $\mathcal{L}ag^{\ast}(M)$ is small enough, the metric structure is non-degenerate but defining a triangulated structure  on $\widehat{\mathsf{C}}ob^{\ast}(M)$ requires more objects than those in $\mathcal{L}ag^{\ast}(M)$.  On the other hand, if the class $\mathcal{L}ag^{\ast}(M)$ is too big, the triangulated structure is well defined but the expected metric is degenerate. 
However, the unobstructed class in the statement satisfies both properties  - it is flexible enough so that triangulation is well-defined and, at the same time, it exhibits enough rigidity so that the metric is non-degenerate. Additionally, in this case the sub-category generated by embedded objects agrees with the derived Fukaya category and, thus, most algebraic structures typical in Lagrangian Floer theory can be reduced to understading how unobstructedness behaves with respect to the geometric operations appearing in 
our axioms.  

Among immediate consequences
of this result we see that the natural cobordism group generated by the embedded Lagrangians, modulo
the relations given by the (possibly immersed) cobordisms in $\mathcal{L}ag^{\ast}(\C\times M)$, is isomorphic to the
Grothendieck  group $K_{0}(D\fuk^{\ast}(M))$.  In a different direction, because the Lagrangians in $\mathcal{L}ag^{\ast}(M)$ are unobstructed, there is a Donaldson category $\mathcal{D}on(\mathcal{L}ag^{\ast}(M))$ associated to this family and this category is isomorphic to $\widehat{\mathsf{C}}ob ^{\ast}(M)$.

\

The proof of Theorem \ref{thm:BIG} relates geometric constructions in $\widehat{\mathsf{C}}ob^{\ast}(M)$ to corresponding operations in 
$D\fuk^{\ast}(M)$. All objects and morphisms in $D\fuk^{\ast}(M)$ are represented geometrically and all exact triangles in $D\fuk^{\ast}(M)$ are represented by surgery. There are also relations between the pseudo-metrics $d^{\mathcal{F},\mathcal{F}'}$ 
and corresponding pseudo-metrics defined on the algebraic side by making use of action-filterered  $A_{\infty}$
machinery.  

\

This language also allows to formulate what today seems a very strong conjecture (or, maybe, just an intriguing question) that is a formal version in this context of Yasha Eliashberg's claim that ``there is no rigidity beyond $J$-holomorphic curves'':
\begin{cnj}\label{conj:unobstr} If $\mathsf{C}ob^{\ast}(M)$ has surgery models and is rigid, then it consists of unobstructed Lagrangians
and, in particular, the morphisms in $\widehat{\mathsf{C}}ob^{\ast}(M)$ are Floer homology groups.
\end{cnj}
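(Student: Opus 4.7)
My approach would be to attack the contrapositive: assume $\mathsf{C}ob^{\ast}(M)$ has surgery models and contains at least one obstructed Lagrangian $L$, and aim to show that the fragmentation pseudo-metrics $d^{\mathcal{F}, \mathcal{F}'}$ must then be degenerate. The philosophical backbone is Eliashberg's principle invoked just before the conjecture: every manifestation of rigidity must ultimately factor through $J$-holomorphic curve counts, and obstructedness (in the sense of Akaho--Joyce) is precisely the analytic failure of such counts to produce a well-defined Floer theory. So obstructedness, if present, should translate into geometric \emph{flexibility} detectable by shadow.

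The first step is to extract geometric consequences from obstructedness without invoking the Fukaya category directly. The key observation is that the surgery axioms let us resolve each self-intersection point of $L$ in multiple inequivalent ways, producing cobordisms whose shadow can be controlled by the Liouville action near the intersection point. For a fixed generic family $\mathcal{F}$, iterating these resolutions produces many cone decompositions of $L$ over objects of $\mathcal{F}$; the cabling equivalence should then organize these decompositions as the geometric analogue of Maurer--Cartan elements in some categorical $A_{\infty}$-structure naturally attached to $L$. Obstructedness in the analytic sense translates to the \emph{non-existence} of a consistent choice of such cone decompositions, which in turn allows one to rescale the resolutions and obtain iterated cone decompositions of arbitrarily small total shadow that nevertheless represent inequivalent classes in $\widehat{\mathsf{C}}ob^{\ast}(M)$.

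The second step uses this flexibility to contradict rigidity: if $L$ admits two inequivalent cone decompositions over $\mathcal{F}$ with arbitrarily small shadow, then $d^{\mathcal{F}}(L, L') = 0$ for some $L' \not\sim L$ built from those decompositions. Running the same argument for the perturbed family $\mathcal{F}'$ forces $d^{\mathcal{F}, \mathcal{F}'}$ to be degenerate. Having thus ruled out obstructed objects, the second half of the conclusion --- that morphisms in $\widehat{\mathsf{C}}ob^{\ast}(M)$ are Floer homology groups --- would follow by invoking Theorem \ref{thm:BIG}: once every object is unobstructed, the identification of cabling-equivalence classes of cobordisms with Floer-theoretic morphisms established there applies to the entire category.

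The principal obstacle is the first step --- specifically, the geometric realization of the Maurer--Cartan obstruction as a family of small-shadow resolutions. This is in essence asking for a purely geometric avatar of the $A_{\infty}$-algebra of an immersed Lagrangian, built entirely from cobordism data and the surgery axioms, together with a proof that its vanishing obstruction coincides with the Floer-theoretic one. Producing such an avatar would amount to the tautological characterization of rigidity advertised in the introduction, and is essentially equivalent to the conjecture itself. This is why the statement is posed as a question: at present there is no mechanism, independent of $J$-holomorphic curves, to convert analytic obstructions into shadow-flexibility, and any such mechanism would itself be a dramatic confirmation of Eliashberg's principle.
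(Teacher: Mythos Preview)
The statement you are addressing is not a theorem but a \emph{Conjecture/Question} (note the environment \texttt{cnj}, defined in the preamble as ``Conjecture/Question''). The paper offers no proof whatsoever; it presents the statement as an open problem motivated by Eliashberg's principle and immediately moves on. So there is nothing to compare your proposal against.

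You evidently recognize this yourself in your final paragraph, where you correctly diagnose that the first step --- building a geometric avatar of the Maurer--Cartan obstruction purely out of the surgery axioms and shadow estimates --- is circular: it presupposes exactly the bridge between obstructedness and flexibility that the conjecture asserts. Your write-up is therefore not a proof but a plausible heuristic outline together with an honest identification of the missing ingredient, which is appropriate given the status of the statement.

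One additional gap worth flagging: even granting that every object is unobstructed, your appeal to Theorem~\ref{thm:BIG} in the last step is not quite legitimate. That theorem is proved for the \emph{specific} classes $\mathcal{L}ag^{\ast}(M)$ and $\mathcal{L}ag^{\ast}(\mathbb{C}\times M)$ constructed in \S\ref{sec:unobstr-tech}, with all their attendant decorations (primitives, perturbation data, perturbation germs, action-negativity of markings, etc.). An arbitrary cobordism category satisfying the axioms of \S\ref{subsec:surg-models} and consisting of unobstructed Lagrangians need not coincide with that particular construction, so the identification $\mor_{\widehat{\mathsf{C}}ob^{\ast}(M)}(L,L') \cong HF(L,L')$ would require a separate argument. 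The paper's functor $\Theta$ is built by hand from the specific data, not deduced from the axioms alone.
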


\subsection{Some details} We discuss briefly here some of the main points in our approach.

\subsubsection{Triangulation} As mentioned before, there is a simple reason why triangulated categories are the correct framework for organizing Lagrangian submanifolds. The simplest operation with such submanifolds, surgery, produces a Lagrangian cobordism with three ends, two being the Lagrangians that are inserted in the construction and the third being the result of the surgery. By ``shuffling'' appropriatedly the three ends (see Figure \ref{Fig:Extr}) one immediately extracts a triple of maps that can be expected to give rise, in the appropriate setting, to a distinguished triangle. This simple remark implies that, to aim towards building a triangulated category, the class of Lagrangians considered 
has to be closed under the particular type of surgery used. For two Lagrangians $L$ and $L'$ that 
intersect transversely, surgery is possible along any subset $c\subset L\cap L'$ of their intersection 
points.  However, it is clear that without some constraints on these sets $c$ the resulting category will not be 
rigid. So the question becomes how to constrain the choices of these $c$'s allowed in the surgeries 
so that the output is rigid. The notion of unobstructedness comes to the forefront at this point: if $L$ and $L'$ are 
unobstructed the Floer chain complex $CF(L,L')$ is defined and one can chose $c$ to be a cycle in this complex. One expects in this case that the output of the surgery is unobstructed and that the surgery
cobordism itself is unobstructed. This process can then be iterated. 
Further along the line, if all these expectations are met, comparison with the relevant Fukaya category becomes credible as well as the rigidity of the resulting category. 

\subsubsection{Immersed Lagrangians} There are a number of difficulties with this approach. The first is that surgery in only some of the intersection points of $L$ and $L'$ produces an immersed Lagrangian, even if both $L$ and $L'$ are embedded. As a result, one is forced to work with unobstructed immersed Lagrangians. 
Unobstructedness for immersed objects depends on choices of data and, therefore, this leads to the complication that 
the objects in the resulting category can not be simply immersed Lagrangian submanifolds $L$ but rather pairs
$(L,\mathcal{D}_{L})$ consisting of such $L$ together with these data 
choices $\mathcal{D}_{L}$.  This turns out to be just a formal complication but,
more significantly, to prove unobstructedness of the result of the surgery and of the surgery cobordism,  
one needs to compare moduli spaces of $J$-holomorphic curves before and after surgery in a way that is 
technically very delicate. 

\subsubsection{Marked Lagrangians}\label{subsubsec:bounding} In this paper we pursue a different approach that bypasses this comparison. 
We further enrich our objects (thus the immersed Lagrangians $L$, assumed having only transverse double points) with one additional structure, a marking $\mathbf{c}$, which consists of 
a set of double points of $L$. Therefore, now our objects are triples $(L,\mathbf{c}, \mathcal{D}_{L})$.
The advantage of markings is the following: given two Lagrangians $L$ and $L'$ (possibly immersed and endowed with markings) and a set of intersection points $c\subset L\cap L'$, we can define a new immersed, marked Lagrangian by the union $L\cup L'$ endowed with a marking consisting of $c$ together with the markings of $L$ and $L'$. Geometrically, this corresponds to a surgery at the points in $c$ but using $0$-{\em size handles} in the process.
 Once markings are introduced, all the usual definitions of Floer complexes and
the higher $\mu_{k}$ operations have to be adjusted. While usual operations for immersed Lagrangians
count $J$-holomorphic curves that are not allowed to jump branches at self-intersection points, in the marked
version such jumps are allowed but only it they occur at points that are part of the marking. Experts will probably recognize that these markings are, in fact, just particular examples of bounding chains as in \cite{Akaho-Joyce} - see \S\ref{subsubec:bound-inv} for more details.  Keeping track of the markings and the data $\mathcal{D}_{L}$ through all the relevant operations requires roughly the same type of choices and the same effort as needed  to build coherent, regular perturbations in the construction 
of the Fukaya category.  Indeed, the process produces geometric representatives of the modules in $\mathcal{D}\fuk^{\ast}(M)$ that
are unions of embedded Lagrangian  submanifolds  $L_{1}\cup\ldots \cup L_{k}$ (together with some marking given by relevant intersection points) and the data associated to such a union is similar to the choice of data associated to the family  $L_{1},\ldots, L_{k}$ in the construction of the Fukaya category $\fuk^{\ast}(M)$.
This is why we call {\em tautological} our solution to the conjecture mentioned at the beginning of the introduction. At the same time, the use of markings has also some conceptual, intrinsic
advantages (see also \S\ref{subsubsec:nonmarked-Lag}).

\subsubsection{Immersed cobordisms.} There are two other additional difficulties that need to be addressed. 
The first, is that cobordisms with immersed ends do not have isolated double points, at best they have clean self-intersections that contain some half-lines of double points. As a result, we need to consider some appropriate deformations that put them in generic position.  This leads to additional complications with respect to composing the morphisms in our category. In particular,
because the morphisms are represented by cobordisms together with the relevant deformations, the objects themselves have to be equipped with one additional piece of data that comes down to a germ of this deformation along the ends.

\subsubsection{Cabling.} The second difficulty has to do with the equivalence relation $\sim$
(cabling equivalence) that is defined on the morphisms of
$\mathsf{C}ob^{\ast}(M)$. This relation is defined through an operation
called cabling that is applied to two cobordisms $V:L\cobto (L_{1},\ldots, L_{m},L')$ and 
$V':L\cobto (L'_{1},\ldots, L'_{s},L')$  (see Figure \ref{Fig:morph}) and produces a new Lagrangian $\mathcal{C}$ in $\C\times M$ pictured in Figure \ref{Fig:cabling}. Both $V, V'\in \mathcal{L}ag^{\ast}(\C\times M)$ are viewed in our category as morphisms $\in\mor (L, L')$  and they are cabling equivalent if their cabling also belongs to $\mathcal{L}ag^{\ast}(\C\times M)$. It is easy to see that $\mathcal{C}$ is generally immersed (even if $V$ and $V'$ are both emebdded) and that there are, in general,  holomorphic curves  $u: D^{2}\backslash \{1\} \to \C\times M$ with boundary on $\mathcal{C}$ and with
the boundary puncture that is sent to one of the self-intersection points of $\mathcal{C}$. 
Such curves are called tear-drops.
 In the unobstructed setting, the counts of these tear-drops, at each self-intersection point, need to vanish.
 The difficulty is to ensure that the relevant moduli spaces are regular so that these counts are possible. We deal with this issue by using appropriate interior marked points (having to do with the planar
 configurations associated to cabling) and use standard arguments for regularity.

\subsection{Structure of the paper.}
In the second section we review the geometric cobordism category
and list the axioms mentioned above. We also prove that, when the axioms are satisfied, the resulting quotient category is triangulated. We also indtroduce the shadow fragmentation pseudo-metrics. All the constructions in this part are ``soft''. In the third section we discuss unobstructedness for both Lagrangians and cobordisms and we also define the classes $\mathcal{L}ag^{\ast}(M)$ and $\mathcal{L}ag^{\ast}(\C\times M)$ that 
appear in the statement of Theorem \ref{thm:BIG}, the constructions here are ``hard''. 
In the fourth section we pursue the  correspondence {\em geometry  $\leftrightarrow$ algebra} and  prove Theorem \ref{thm:BIG}. In the last section we discuss some further technical points and open questions. 

\

\noindent {\bf Acknowledgements.} This work started during the academic year 2015-2016 that the second author has spent at the IAS where the first author was also hosted during  two weeks in the Fall of 2015.  Both authors thank the {\em Institute for Advanced Study} and, in particular, Helmut Hofer for their hospitality. The second author also thanks the {\em FIM} in Z\"urich for its hospitality during frequent research visits.


\section{Cobordism categories with surgery models.}\label{sec:cob}
\subsection{Objects, morphisms and basic operations.}\label{subsubsec:basic-def}
The definition of Lagrangian cobordism that we use, as in \cite{Bi-Co:cob1}, is a variant of a notion first 
introduced by Arnold \cite{Ar:cob-1,Ar:cob-2}. 

\

We fix on $\mathbb{R}^2$  the symplectic
structure $\omega_{0} = dx \wedge dy$, $(x,y) \in
\mathbb{R}^2$ and on $\mathbb{R}^2 \times M$  
the symplectic form $\omega_{0} \oplus \omega$. 
We denote by $\pi: \mathbb{R}^2 \times M \to \mathbb{R}^2$ be the projection. 
For a subset $V \subset \mathbb{R}^2 \times M$ and $S \subset \mathbb{R}^2$
we let $V|_{S} = V \cap \pi^{-1}(S)$.

\begin{dfn}\label{def:Lcobordism}
   Let $(L_{i})_{1\leq i\leq k_{-}}$ and $(L'_{j})_{1\leq j\leq
     k_{+}}$ be two tuples of closed Lagrangian submanifolds of
   $M$. We say that that these two tuples are Lagrangian
   cobordant, $(L_{i}) \simeq (L'_{j})$, if there exists a smooth
   compact cobordism $(V;\coprod_{i} L_{i}, \coprod_{j}L'_{j})$ and a
   Lagrangian embedding $V \subset ([0,1] \times \mathbb{R}) \times M$
   so that for some $\epsilon >0$ we have:
   \begin{equation} \label{eq:cob_ends}
      \begin{aligned}
         V|_{[0,\epsilon)\times \mathbb{R}} = & \coprod_{i} 
         ([0, \epsilon) \times \{i\})  \times L_i \\
         V|_{(1-\epsilon, 1] \times \mathbb{R}} = 
         & \coprod_{j} ( (1-\epsilon,1]\times \{j\}) \times L'_j~.~
      \end{aligned}
   \end{equation}
   The manifold $V$ is called a Lagrangian cobordism from the
   Lagrangian tuple $(L'_{j})$ to the tuple $(L_{i})$. We 
   denote such a cobordism by $V:(L'_{j}) \cobto (L_{i})$.
   \end{dfn}
   
We will allow below for both the cobordism $V$ and its ends $L_{i}, L'_{j}$ to be immersed manifolds.
The connected components of the ends $L_{i}$, $L'_{j}$ will be assumed to have only isolated, transverse double points. The connected components of the cobordism $V$ 
will be assumed to have double points that are of two types, either isolated transverse double points
or $1$-dimensional clean intersections (possibly diffeomorphic to semi-closed intervals). Such intervals
of double points are associated to the double points of the ends.

\

It is convenient to extend the ends of such a cobordism hotizontally in both directions and
thus to view $V$ as a (non-compact) sub-manifold of $\C\times M$.
A cobordism $V$ is called {\em simple} if it has a single positive end and a single negative end. Such a cobordism
$V:L\cobto L'$ can be rotated by using a $180^{o}$ - rotation in $\C$ and this provides a new simple cobordism
denoted $\bar{V}:L'\cobto L$.
The two most common examples of cobordisms are Lagrangian surgery - that will be discussed in more 
detail later in the paper - and Lagrangian suspension. This is a simple cobordism associated to a Hamiltonian isotopy $\phi$, $V^{\phi}:L\cobto \phi(L)$.  

\begin{figure}[htbp]
   \begin{center}
     \includegraphics[width=0.6\linewidth]{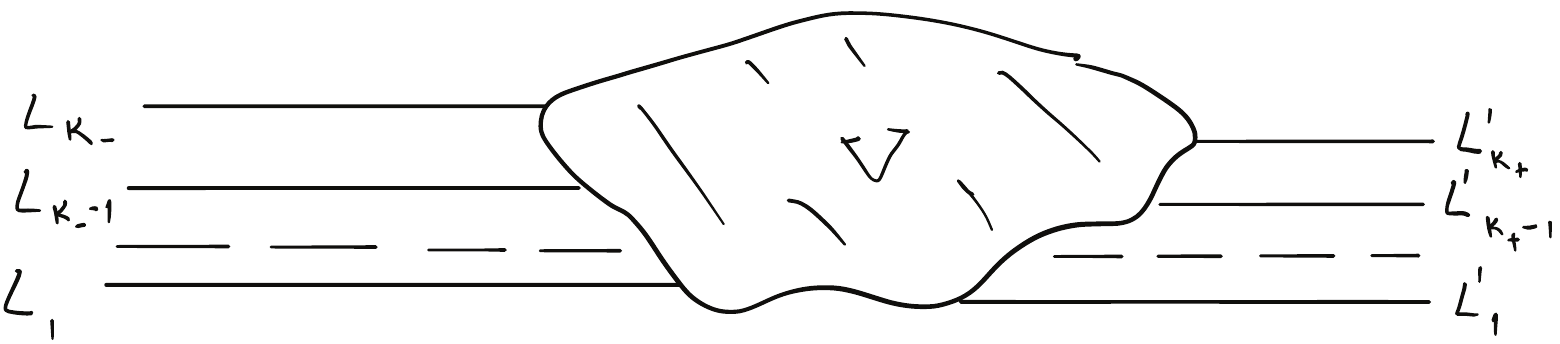}
   \end{center}
   \caption{A cobordism $V:(L'_{j})\cobto (L_{i})$ projected
     on $\mathbb{R}^2$.}
\end{figure}

The category $\mathsf{C}ob^{\ast}(M)$ has appeared before in \cite{Bi-Co:lcob-fuk}
with the notation $S\mathcal{C}ob$ that we shorten here for ease of notation (in that paper 
also appeared a slightly different category denoted $\mathcal{C}ob^{\ast}(M)$ that will not play a role here). 
There are a number of variants of this definition so we give a few details here.

\

The objects of this category are closed Lagrangian submanifolds in $M$, possibly immersed,
in a fixed class $\mathcal{L}ag^{\ast}(M)$, we allow $\emptyset \in \mathcal{L}ag^{\ast}(M)$.  
We also fix a class of cobordisms  $\mathcal{L}ag^{\ast}(\C\times M)$ with the property
that the ends of the cobordisms in this class belong to $\mathcal{L}ag^{\ast}(M)$. 
To fix ideas, the classes in $\mathcal{L}ag^{\ast}(M)$, $\mathcal{L}ag^{\ast}(\C\times M)$ consist of immersed Lagrangian submanifolds endowed with certain decorations (such as, possibly, an orientation, a spin structure or possibly other such choices) and subject to certain
contraints (such as exactness, monotonicity etc). An important type of less well-known decoration (for non-embedded Lagrangians) is a marking, as given in Definition \ref{def:marked}.

\

The category
$\mathcal{L}ag^{\ast}(\C\times M)$ is closed with respect to Hamiltonian isotopy horizontal at infinity
and it contains all cobordisms of the form $\gamma\times L$ where $L\in \mathcal{L}ag^{\ast}(M)$
and $\gamma$ is  a curve in $\C$ horizontal (and of non-negative integer heights) at $\pm \infty$.
This also implies that Lagrangian suspension cobordisms for Lagrangians in 
$\mathcal{L}ag^{\ast}(M)$ are in $\mathcal{L}ag^{\ast}(\C\times M)$ (the reason is that they
can be obtained by Hamiltonian isotopies of the trivial cobordism).
We will also assume that both classes $\mathcal{L}ag^{\ast}(M)$ and $\mathcal{L}ag^{\ast}(\C\times M)$
are closed under (disjoint) finite unions. 

\begin{rem} \label{rem:direct-sum} It is useful to be more explicit at this point concerning
the properties of the class of Lagrangians immersions $\mathcal{L}ag^{\ast}(M)$. 
We  assume that all the immersions $j_{L}: L\to M$ in $\mathcal{L}ag^{\ast}(M)$ are such that
the restriction of $j_{L}$ to any connected component of $L$ only has isolated, transverse double-points and that $L$ has a finite number of connected components.
Morevoer, as mentioned above, the class $\mathcal{L}ag^{\ast}(M)$
(as well as $\mathcal{L}ag^{\ast}(\C\times M)$) is required to be closed under disjoint union.
In other words, given two immersions $j_{L_{1}}:L_{1}\hookrightarrow M$, $j_{L_{2}}:L_{2}\hookrightarrow M$ in the class
$\mathcal{L}ag^{\ast}(M)$, the  obvious immersion defined on the disjoint union $j_{L_{1}}\sqcup j_{L_{2}}: L_{1}\sqcup L_{2}\hookrightarrow M$ is also in the class
$\mathcal{L}ag^{\ast}(M)$. This operation gives rise to Lagrangians with non-isolated double points
(for instance, by ``repetition'' of the same immersion $j_{L}:L\to M$) but, clearly, the restriction of the resulting immersion to the connected components of the domain only have isolated, transverse double-points if $j_{L_{i}}$, $i=1,2$ have this property. 

A similar, definition applies to the class $\mathcal{L}ag^{\ast}(\C\times M)$ that consists of 
immersed cobordisms such that each connected component of such an immersion has only isolated, transversal self-intersection points or, possibly, clean intersections of dimension $1$.
\end{rem}

\

The morphisms in $\mathsf{C}ob^{\ast}(M)$ between two objects $L$ and $L'$ are the cobordisms $V:L\cobto (L_{1},\ldots, L_{m}, L')$ (with $m$ arbitrary), $V\in \mathcal{L}ag^{\ast}(\C\times M)$ modulo Hamiltonian isotopy horizontal at infinity (see again \cite{Bi-Co:lcob-fuk}), as in Figure \ref{Fig:morph}
(and possibly modulo some additional identifications, having to do with the decorations included in $\ast$). When viewing a cobordism $V$ as before as a morphism $V:L\to L'$ we will refer to the
ends $L_{1},\ldots, L_{m}$ as the {\em secondary ends} of the morphism $V$.

\begin{figure}[htbp]
   \begin{center}
     \includegraphics[width=0.6\linewidth]{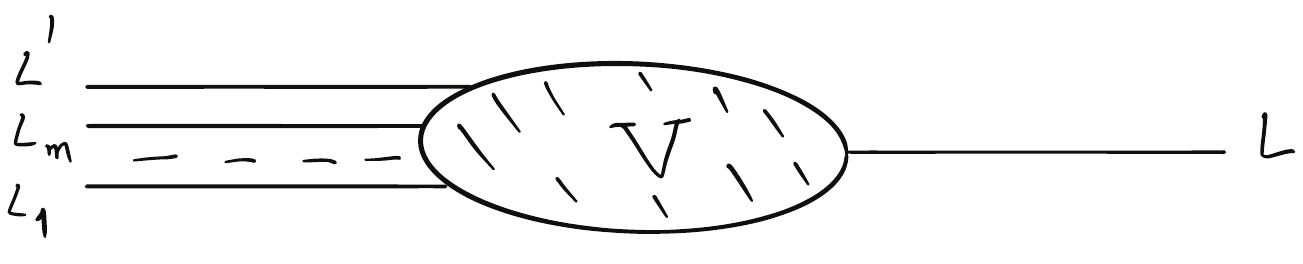}
   \end{center}
   \caption{A a morphism $V:L\to L'$. \label{Fig:morph}}
\end{figure}

\

The composition of two morphisms $V:L\to L'$, $V':L'\to L''$ is given by the composition of the cobordism
$V$ with $V'$ by gluing the upper ``leg'' of $V$ to the input of $V'$, as in Figure \ref{Fig:Comp}.
\begin{figure}[htbp]
   \begin{center}
   \includegraphics[width=0.6\linewidth]{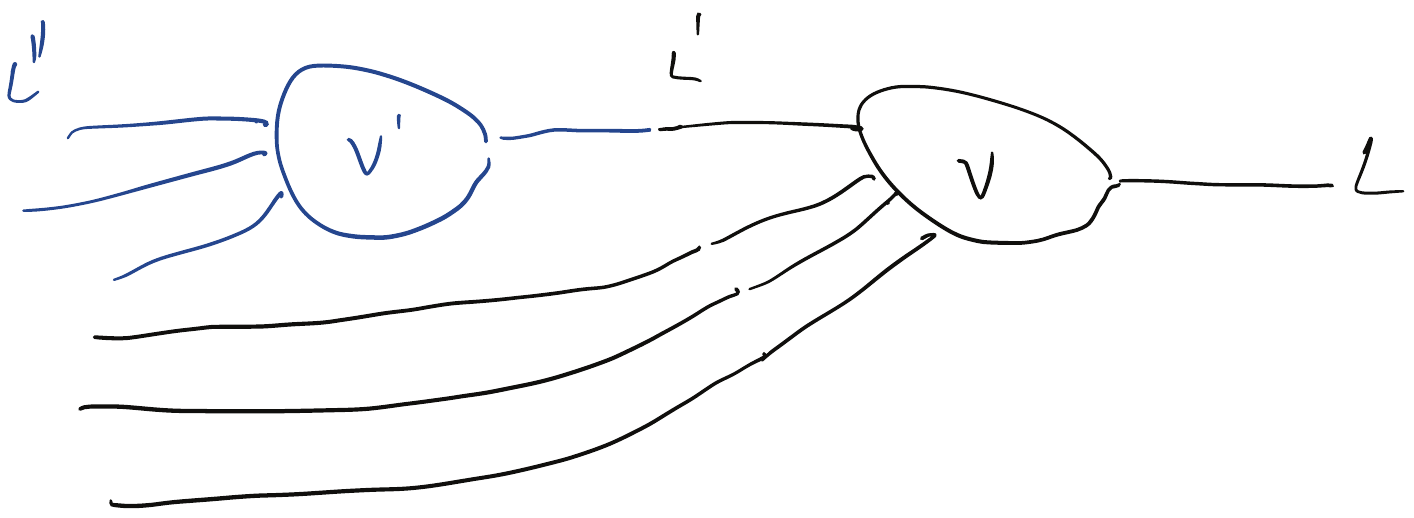}
   \end{center}
   \caption{\label{Fig:Comp} The composition $V'\circ V$ of $V:L\to L'$ and $V':L'\to L''$.}
\end{figure}
We assume that composition of the cobordisms in the class $\mathcal{L}ag^{\ast}(\C\times M)$ along {\em any} possible ``leg''  - not only on the top leg of $V$ - preserves the class $\mathcal{L}ag^{\ast}(\C\times M)$.

\

As mentioned before, the connected immersed objects in $\mathcal{L}ag^{\ast}(M)$ are supposed to only have (isolated) self-transverse 
double points and the connected immersed objects in $\mathcal{L}ag^{\ast}(\C\times M)$ have clean intersections with 
the double points along manifolds (possibly with boundary) of dimension at most $1$.

\

We next discuss a few basic notions related to this framework.

\subsubsection{Distinguished triangles.} \label{subsubsec:tri}
Cobordisms $V\in \mathcal{L}ag^{\ast}(\C\times M)$ with three ends $V:L\to (L'',L')$  play an 
important role here. Let $u:L\to  L'$ be the morphism represented by $V$. There are  two other
morphisms $Ru: L'\to L''$ and $R^{-1}u:L''\to L$ that are obtained by ``shuffling'' the ends of $V$ 
as in Figure \ref{Fig:Extr}. 

\begin{figure}[htbp]
   \begin{center}
   \includegraphics[width=0.6\linewidth]{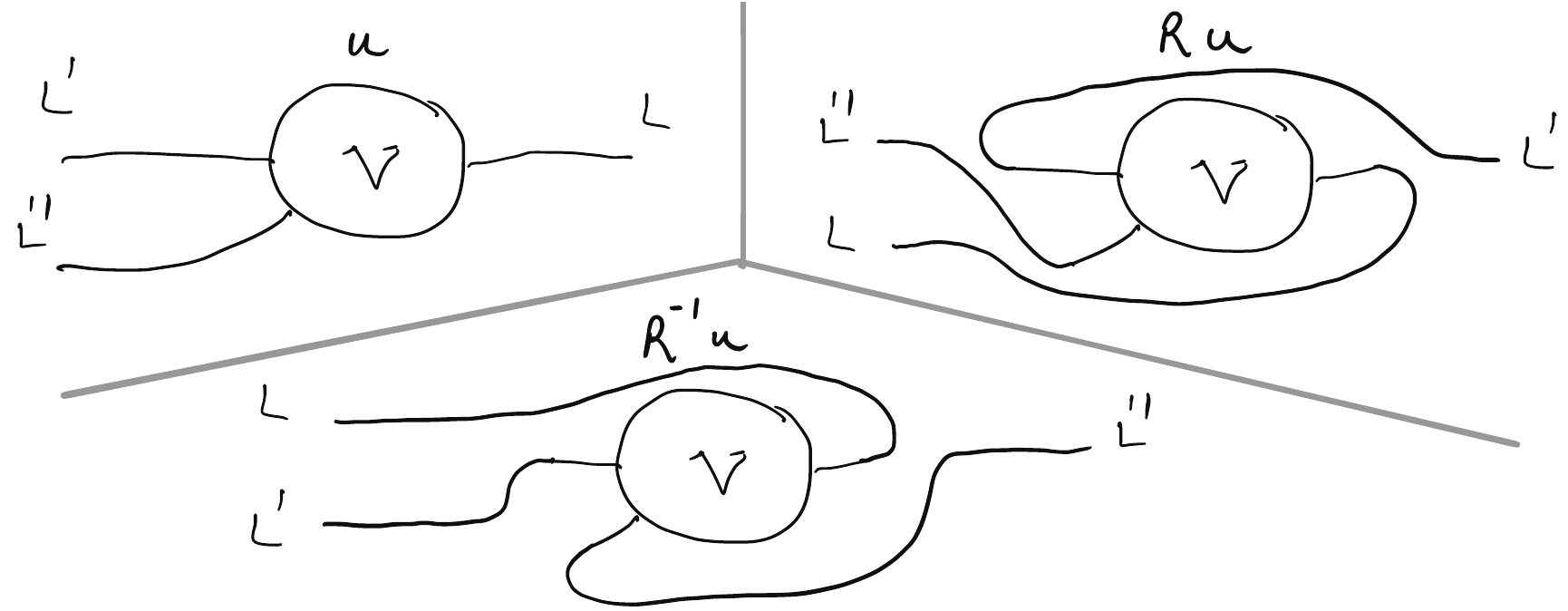}
   \end{center}
   \caption{\label{Fig:Extr} The three morphisms $(R^{-1}u,u, Ru)$ in a distinguished triangle.}
\end{figure}
The triple $(u,Ru,R^{-1}u)$ will be called a {\em distinguished} triangle in $\mathsf{C}ob^{\ast}(M)$
and we refer to the shuffling $R$ as a rotation (operator).
\begin{rem}\label{rem:comp}
For a cobordism $V:L\to (L'',L')$ that represents a morphism $u:L\to L'$ as above we have
$RR^{-1}u= u, R^{3}u=u$ (recall that two cobordisms induce the same morphism if they are 
horizontally Hamiltonian isotopic). In particular, an alternative writing of an exact 
triangle $(u,Ru, R^{-1}u)$ as above is $(u,Ru, R^{2}u)$
\end{rem}
For a general cobordism $V: L\to (L_{1},\ldots, L_{m})$ inducing a morphism $v:L\to L_{m}$
we let $Rv:L_{m}\to L_{m-1}$ be the morphism induced by the cobordism obtained from $V$ by shuffling the
$L$, $L_{m}$ and $L_{m-1}$ ends as in Figure \ref{Fig:GenRot} (after shuffling $L$ becoms
the bottom negative end; $L_{m}$ is the unique positive end; $L_{m-1}$ is the top negative end). 
\begin{figure}[htbp]
   \begin{center}
   \includegraphics[width=0.4\linewidth]{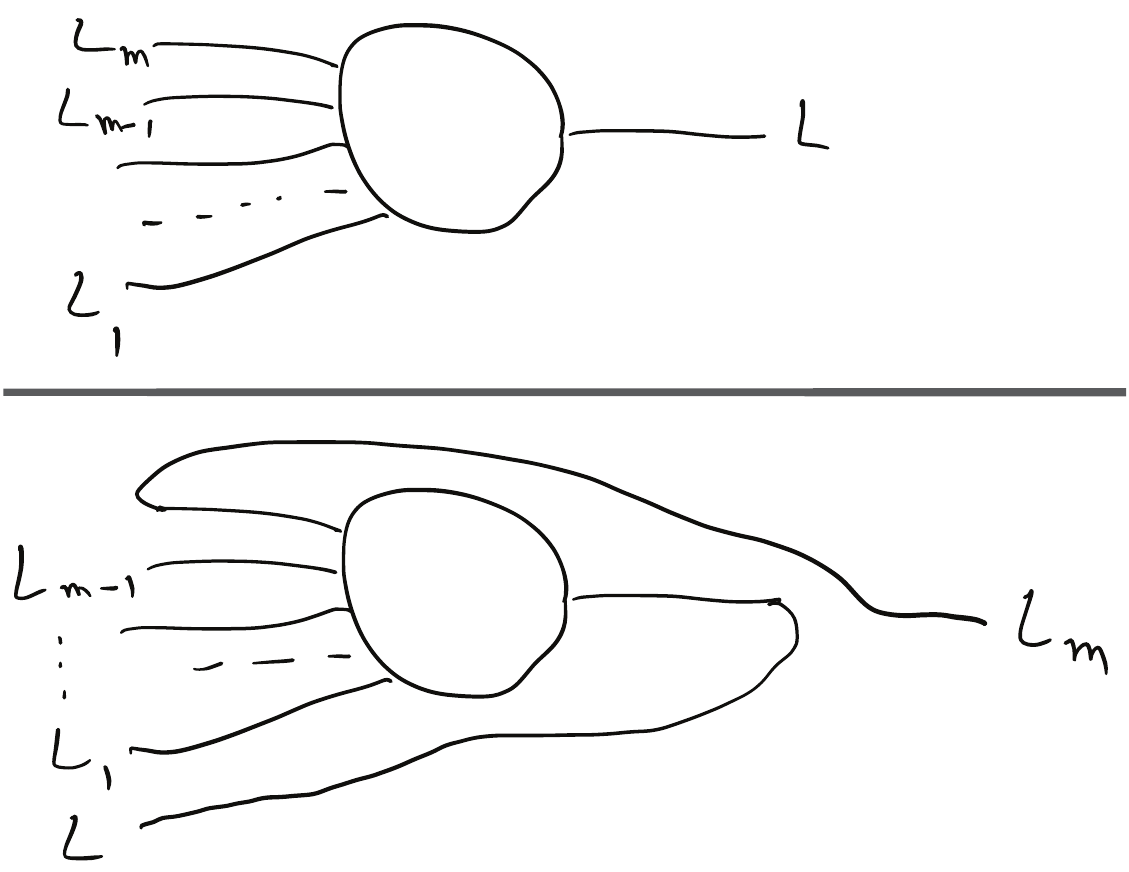}
   \end{center}
   \caption{\label{Fig:GenRot} The morphism $v$ on top and $Rv$ at the bottom.}
\end{figure}
We have obvious
relations similar to those in Remark \ref{rem:comp}. In this case too, for instance $R^{m+1}v=v$.
More generally, given a cobordism $V$ as above  there are unique morphisms (defined up to horizontal Hamiltonian isotopy) $L_{j+1}\to L_{j}$ and $L_{1}\to L$
that are associated to $V$ and defined as an appropriate iterated rotation of $v$. To simplify 
terminology, we will refer in the following to such a morphism $L_{j+1}\to L_{j}$ or $L_{1}\to L$
 associated to $V$ as the  {\em rotation of} $v$ {\em with domain} $L_{j+1}$ (respectively $L_{1}$). For instance, in Figure \ref{Fig:Extr}
 at the right we have the rotation of $v$ with domain $L'$ and, at the bottom, the rotation with domain $L''$. 

We asssume that the class $\mathcal{L}ag^{\ast}(\C\times M)$ is closed with respect to these rotations.

\subsubsection{Surgery and marked Lagrangians.}\label{subsec:surg}
Lagrangian surgery, as introduced by Lalonde-Sikorav \cite{La-Si:Lag} for surfaces and Polterovich \cite{Po:surgery} in full generality,
is one of the basic operations with Lagrangian submanifolds. It was remarked in \cite{Bi-Co:cob1} that the 
trace of a Lagrangian surgery is itself a Lagrangian cobordism. We will use the conventions in \cite{Bi-Co:cob1}
and will apply the construction in a situation a bit more general than \cite{Bi-Co:cob1}  to two - possibly immersed -
Lagrangians $L_{1}$ and $L_{2}$. We assume that $L_{i}$, $i=1,2$ has only self-transverse double points
and that $L_{1}$ and $L_{2}$ intersect transversely. The surgery will be performed on a subset $c$ of the 
intersection points of $L_{1}$ and $L_{2}$ and the outcome will be denoted by $L_{1}\#_{c,\epsilon}L_{2}$
and is again a (generally) immersed Lagrangian with self-transverse double points. 
The $\epsilon$ that appears in the formula is a parameter that is used to keep track of the size of the handle used in the surgery (it equals the area of the region encompassed by the handle
in $\C$). We consider here only surgeries using the same model handle for all double points 
in $c$.

The projection on $\C$  of the trace of a surgery is as in Figure \ref{Fig:surg}.
\begin{figure}[htbp]
   \begin{center}
    \includegraphics[width=0.5\linewidth]{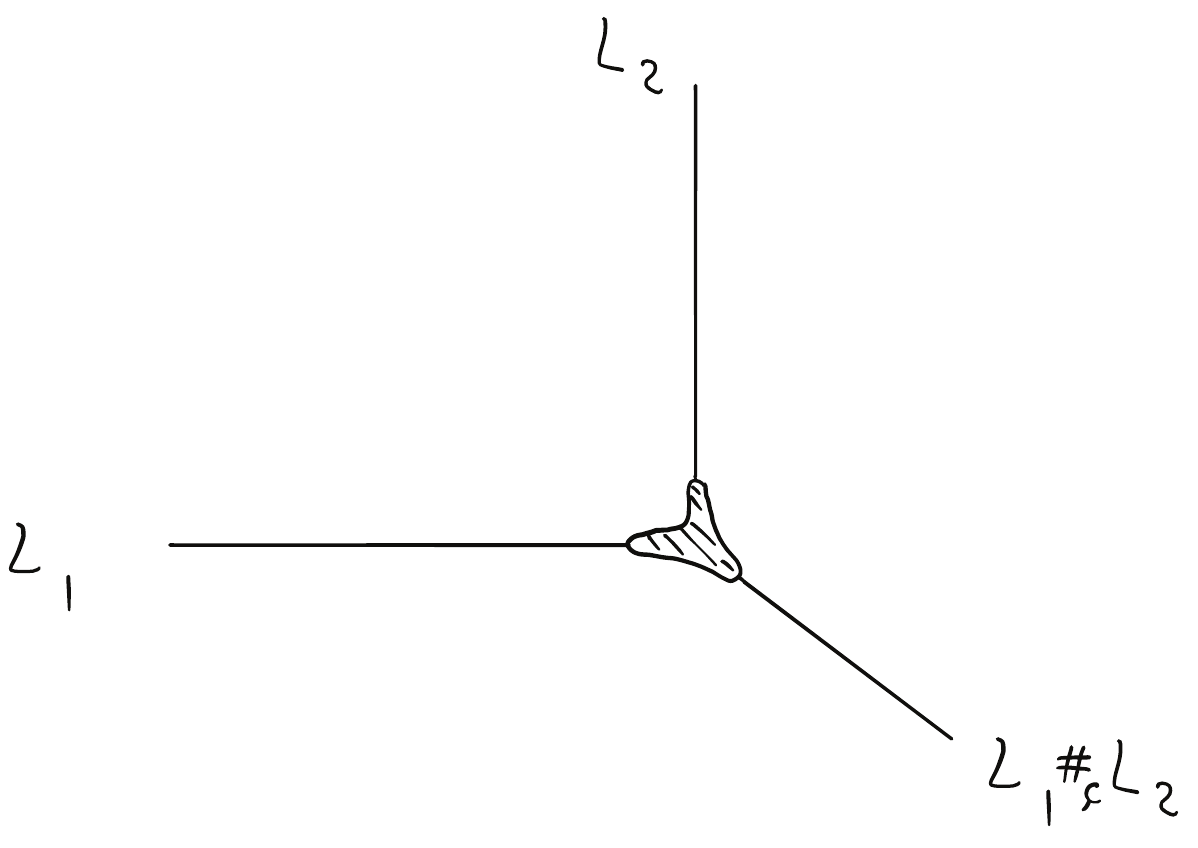}
   \end{center}
   \caption{\label{Fig:surg} The projection of the trace of a surgery onto $\C$.}
\end{figure}
The model for the surgery in the neighbourhoods of the points where the surgery handles are attached is as described
in \cite{Bi-Co:cob1}. The projection of this handle attachement is (up to a small smoothing) as the left drawing in  Figure \ref{Fig:surg2} below.  On regions away from the surgered intersection points the local model 
is as in the drawing at the right in Figure \ref{Fig:surg2}. To glue the two local models a further perturbation is  required in the direction of the horizontal and vertical axes which leads to a projection 
as in Figure \ref{Fig:surg}.

\begin{figure}[htbp]
   \begin{center}
   \includegraphics[width=0.7\linewidth]{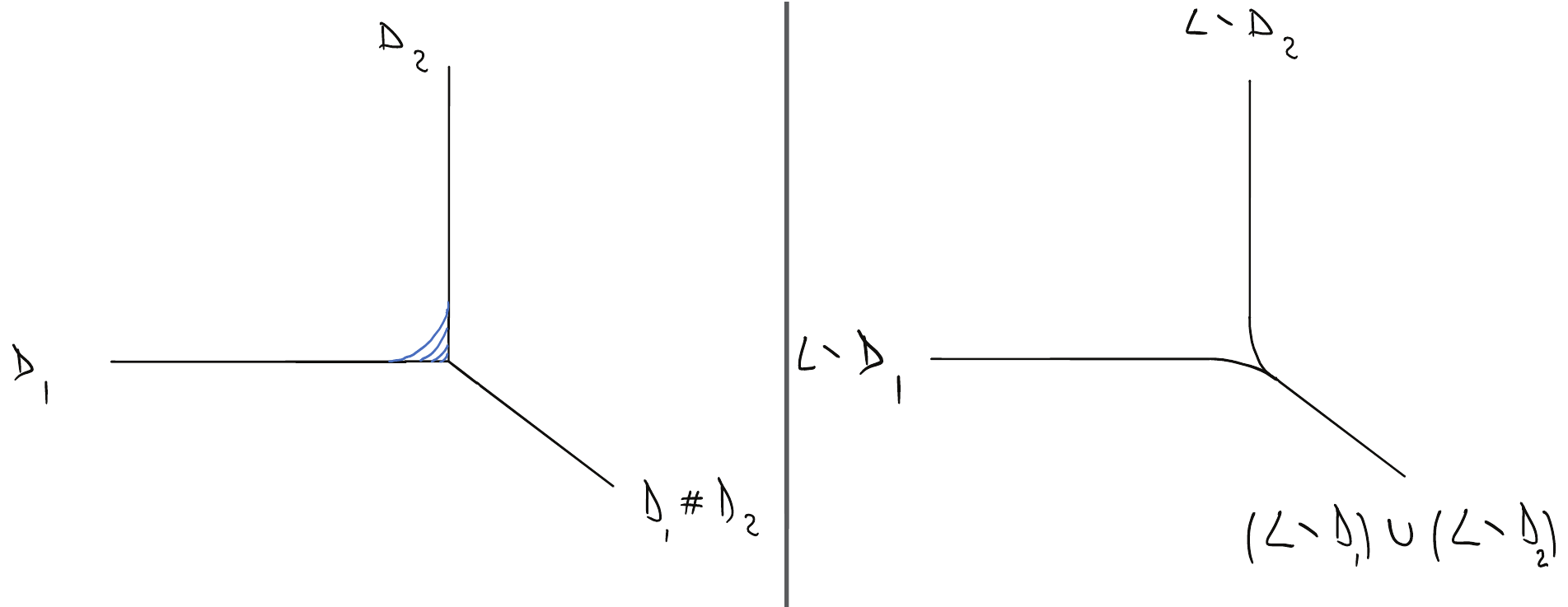}
   \end{center}
   \caption{\label{Fig:surg2} The local models for the trace of the surgery around points where the surgery 
   takes place, at the left, and
   where nothing happens, at the right.}
\end{figure}
In particular, it is clear that even if $L_{1}, L_{2}$ are embedded but $c$ does not consist of all intersection 
points $L_{1}\cap L_{2}$, then the trace of the surgery has double points along an interval of type $[0,\infty)$.

\

Given that we are working with immersed Lagrangians, it is natural to consider 
also ``formal'' $0$-size surgeries $L_{1}\#_{c} L_{2}:= L_{1}\#_{c,0} L_{2}$ that correspond to  $\epsilon=0$.
In this case $L_{1}\#_{c}L_{2}$ is, by definition, the immersed Lagrangian $L_{1}\cup L_{2}$ together with the set $c$ viewed as an additional decoration.   This 
seemingly trivial operation will play an important role later in the paper and we formalize the structure of $L_{1}\#_{c}L_{2}$ in the next definition.

For an immersed Lagrangian $j_{L}:L\to M$ with only transverse, isolated double points we denote by $I_{L}\subset L\times L$ the set of double points of $L$:
$$I_{L}= \{(x,y)\in L\times L : j_{L}(x)=j_{L}(y)\}~.~$$ 

\begin{dfn} \label{def:marked}
A marked Lagrangian immersion $(L,c)$ in $M$ consists of a  Lagrangian immersion $j_{L}:L\to M$ 
with isolated, transverse double points and a choice of a subset $c\subset I_{L}$ called a marking of $L$.
\end{dfn}

The $0$-size surgery $L_{1}\#_{c}L_{2}$ is a marked Lagrangian in this sense, the relevant immersion is $L_{1}\sqcup L_{2}\hookrightarrow M$ and the marking $c$ consists of the family of intersection points   $c\subset L_{1}\cap L_{2}$ with each point $P \in c$ lifted back to $L_{1}$ and $L_{2}$ as a couple $(P_{-},P_{+}) \in L_{1}\times L_{2}\subset (L_{1}\sqcup L_{2})\times (L_{1}\sqcup L_{2})$.  
Once the class of Lagrangians is extended as before to marked ones, we may also consider surgery
as an operation with marked Lagrangians.  The construction discussed above 
extends trivially to his case. For instance, given two marked Lagrangians $(L_{1},c_{1}), (L_{2},c_{2})$, and $c\subset (L_{1},L_{2})$
a subset of interesection points of $L_{1}$ and $L_{2}$,
the $0$-size surgery $(L_{1},c_{1})\#_{c} (L_{2},c_{2})$ is the union $L_{1}\cup L_{2}$
marked by $c_{1}\cup c\cup c_{2}$. We also allow for $\epsilon$-surgeries
$(L_{1},c_{1})\#_{c,\epsilon} (L_{2},c_{2})$ with $\epsilon >0$ in which case the new marking is 
simply $c_{1}\cup c_{2}$. A non-marked Lagrangian is a special case of a marked Lagrangian whose marking is the void set.

\

Recall from Remark \ref{rem:direct-sum} that the elements
of $\mathcal{L}ag^{\ast}(M)$ are unions of immersions with double isolated, transverse double points.
A marking of such an element $L\in \mathcal{L}ag^{\ast}(M)$ consists of a set $c=\{(P_{-},P_{+})\}\subset L\times L$ such that if $L_{\pm}\subset L$ is respectively the component of $L$ containing
respectively $P_{\pm}$, then $L_{-}$ intersects $L_{+}$ transversely (or self-transversely in case $L_{-}=L_{+}$).  
In other words, the class $\mathcal{L}ag^{\ast}(M)$ consists of (possibly) marked Lagrangians in the sense of Definition \ref{def:marked} and their disjoint unions.

\

To define the surgery operation for two elements $(L_{1}, c_{1}), (L_{2},c_{2})\in \mathcal{L}ag^{\ast}(M)$, we impose the condition that the set  $c\not=\emptyset$ (that is used to define $\epsilon$-size surgery, $\epsilon\geq 0$) consists of a finite set of double points  $(P_{-},P_{+}) \in (L_{1},L_{2})$ such that the connected component of $L_{1}$ containing $P_{-}$ intersects transversely (and away from the double-points of $L_{i}$, $i=1,2$) the connected component of $L_{2}$ containing $P_{+}$. Notice that with this convention the disjoint union $j_{L_{1}}\sqcup j_{L_{2}}$ can be
viewed as marked $0$-size surgery with $c=\emptyset$ and it does not require any transversality among 
the components of $L_{1}$ and $L_{2}$.

\

The trace of a $0$-size surgery projects globally as the drawing on the right in Figure \ref{Fig:surg2}. This trace  is obviously an immersed Lagrangian such that each connected component has clean intersections 
having $1$-dimensional double-points manifolds corresponding to the intersection points $L_{1}\cap L_{2}$. In the following, we will need to use 
also marked cobordisms. Due to the presence of these $1$-dimensional double point sets, the definition 
of this notion is a bit more complicated, 
and we postpone it to \S\ref{subsubec:marked-cob}.  The trace of the $0$-size surgery
$L_{1}\#_{c}L_{2}$,  is marked in this sense in a particularly simple way, see 
 \S\ref{subsubsec:surg-mor}.  Both $L_{1}\#_{c} L_{2}$ as well as the trace of this surgery can obviously be viewed as limits of $L_{1}\#_{c,\epsilon} L_{2}$ and, respectively, of the corresponding surgery trace, when $\epsilon \to 0$. 
 
 \begin{rem}\label{rem:formal-surg} a. To include the $0$-size surgery and its trace inside a cobordism 
 category $\mathsf{C}ag^{\ast}(M)$ a bit more ellaboration is needed.
  In essence,  in this case, the class $\mathcal{L}ag^{\ast}(M)$ needs to contain
  marked Lagrangians (as  always, subject to further constraints included in the condition $\ast$),
 similarly the elements of $\mathcal{L}ag^{\ast}(\C\times M)$ are marked cobordisms.
A marked cobordism $(V,v)$ between two tuples of marked 
 Lagrangians $\{(L_{i},c_{i})\}_{i}$ and $\{(L'_{j},c'_{j})\}_{j}$ is a cobordism 
 as in Definition \ref{def:Lcobordism} with the additional property that the restriction of the
 marking $v$ to the ends $L_{i}$, $L'_{j}$ coincides, respectively, with the markings
 $c_{i}$, $c'_{j}$. Composition of marked cobordisms $V$, $V'$ is again a marked cobordism in the obvious way, by concatenating the markings to obtain a marking on  the gluing of $V$ and $V'$. Full details appear in \S\ref{subsubec:marked-cob}. Obviously, the class of usual ``non-marked'' Lagrangians and cobordisms is included in this set-up by simply associating to them the empty marking.
 Whenever discussing/using $0$-size surgeries below we will always assume that the classes of
 Lagrangians involved are as described above.
 
 b. While markings behave in a way similar to other structures  that are suitable to 
incorporation in cobordism categories such as choices of orientation or spin structure, 
there is a significant distinction: immersed marked Lagrangians are 
{\em more general} objects compared to  non marked ones. It is useful to imagine 
them as the result of a two stage extension process {\em  embedded} $\to $ {\em immersed} $\to$ {\em marked}.
 \end{rem}

 The trace of a surgery as in Figure \ref{Fig:surg} above will be viewed as a morphism in a cobordism  category by twisting the ends as in Figure \ref{Fig:surg3} below. This cobordism
(and the associated morphism) will be denoted by $S_{L_{2},L_{1};c,\epsilon}$. We will refer to $S_{L_{2},L_{1};c,\epsilon}$
as the {\em surgery morphism} from $L_{2}$ to $L_{1}$ relative to $c$ and to a handle of area $\epsilon$. The same conventions and notation apply to $0$-size surgeries, under the assumptions in Remark \ref{rem:formal-surg}. The morphism associated to a $0$-size surgery associated to $c$ is denoted by $S_{L_{2},L_{1}; c}$. In the figures  below we will not distinguish the case $\epsilon=0$ and $\epsilon > 0$.
\begin{figure}[htbp]
   \begin{center}
   \includegraphics[width=0.6\linewidth]{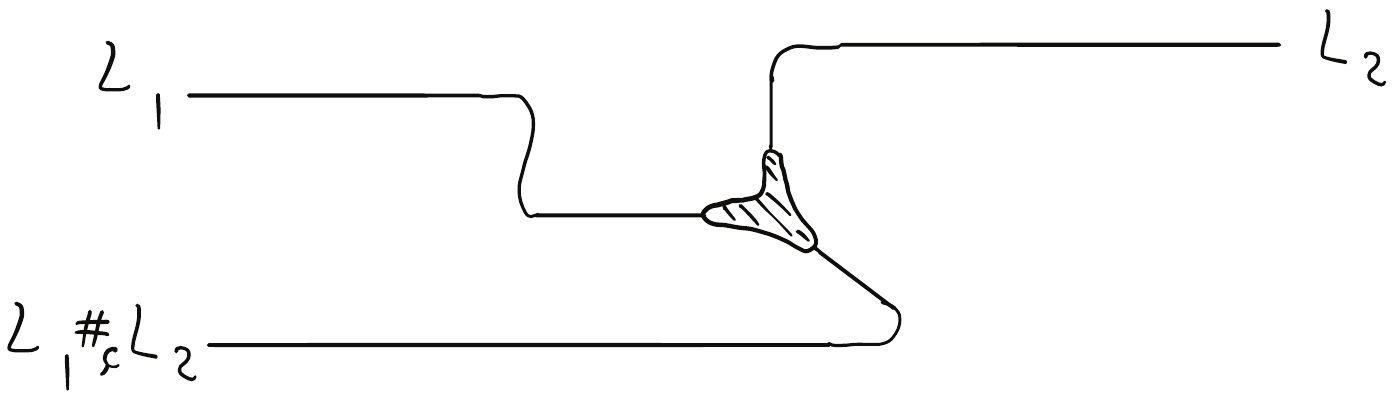}
   \end{center}
   \caption{\label{Fig:surg3} The morphism $S_{L_{2},L_{1};c,\epsilon}$ associated to the trace of the
  surgery of $L_{1}$ and $L_{2}$ along $c\subset L_{1}\cap L_{2}$.}
\end{figure}
   
Notice that the set of all $0$-size surgery morphisms from $L_{2}$ to $L_{1}$ as before are in bijection with the choices of the subset $c$ of intersection points of the two Lagrangians. In turn, we can view $c$ as an element in the $\Z/2$-vector space
 $\Z/2 <L_{1}\cap L_{2}>$. 
In short, we will identify the set of all $0$-size surgery morphisms from $L_{2}$ to $L_{1}$, $\bar{S}_{L_{2},L_{1}}$,  
with the vector space:

\begin{equation}\label{eq:surg-lin}
\bar{S}_{L_{2},L_{1}}\equiv \Z/2<L_{1}\cap L_{2}>~.~
\end{equation}
This identification endows $\bar{S}_{L_{2},L_{1}}$ with a linear structure. Assuming
fixed choices of handle models, small $\epsilon$, and fixed choices of Darboux charts around the intersection points of $L_{1}$ and $L_{2}$, the same applies to the set of $\epsilon$-size
surgery morphisms $S_{L_{2},L_{1}; c,\epsilon}$.  

\

In some of the constructions below we will consider Lagrangians in $\C\times M$ whose 
projection onto $\C$ will contain a picture as in Figure \ref{Fig:crossing} below. In other words,
the projection will contain two segments $\gamma_{1}$ and $\gamma_{2}$ that intersect transversely at one point $P\in \C$.
Their preimage in $\C\times M$ coincides with $\gamma_{1}\times L_{1}$, respectively 
$\gamma_{2}\times L_{2}$, with $L_{1},L_{2}$ two Lagrangian submanifolds in $M$ that intersect transversely.
For any point $x\in L_{1}\cap L_{2}$ we can operate two types of surgeries in $\C\times M$
 between $\gamma_{1}\times L_{1}$ and $\gamma_{2}\times L_{2}$ at the point $P\times x$. A {\em positive} surgery whose handle has a projection like the region in blue in the figure or a {\em negative} surgery 
 whose projection is as the region in red. More generally, we will talk about a $+$ surgery at $P$ (respectively,
 a $-$ surgery at $P$) to refer to simultaneous positive (respectively negative) surgeries along a subset $c\subset L_{1}\cap L_{2}$. Such a surgery will be denoted by $\#^{+}_{c,\epsilon}$ (respectively, $\#^{-}_{c,\epsilon}$) when the size of the handles is $\epsilon$. Again, we allow $\epsilon=0$ and the notation in that case is $\#^{\pm}_{c}$. In terms of the relevant marking the difference between a positive $0$-size surgery and a negative one at some point
 $P\in (\gamma_{2}\times L_{2})\cap (\gamma_{1}\times L_{1})$ is simply the 
 order of the two points $P_{-}\in \gamma_{1}\times L_{1}$, $P_{+}\in \gamma_{2}\times L_{2}$: the marking is $(P_{-},P_{+})$ for a positive surgery and
 $(P_{+},P_{-})$ for a negative one.
For positive surgeries we will generally ommit the superscript $^{+}$.

\begin{figure}[htbp]
   \begin{center}
      \includegraphics[width=0.3\linewidth]{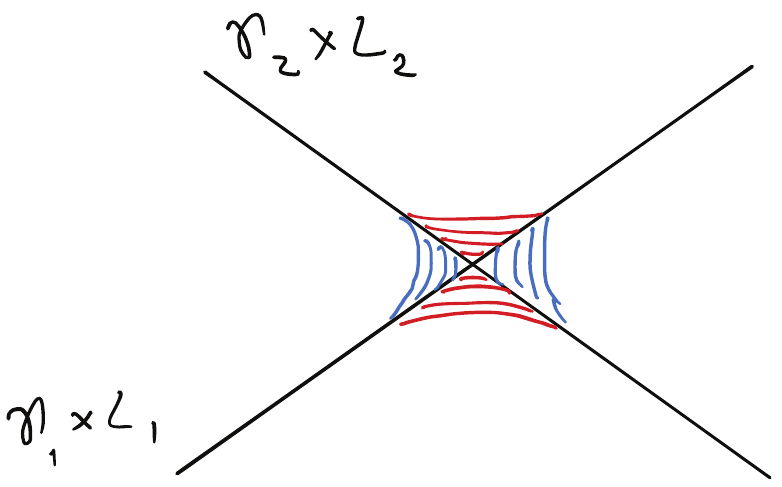}
   \end{center}
   \caption{\label{Fig:crossing} The crossing of $\gamma_{1}\times L_{1}$ and $\gamma_{2}\times L_{2}$
   and a positive surgery - in blue - and a negative surgery - in red - at the crossing point.}
\end{figure}

\subsubsection{Cabling.} \label{subsubsec:cables}
We now consider a simple construction, based on Lagrangian surgery,
that will play an important role in our framework.

Consider two cobordisms $V:L\cobto (L_{1},\ldots, L_{m}, L')$ and $V': L\cobto (L'_{1},\ldots, L'_{s},L')$ 
as in Figure \ref{Fig:twocob}  and 
assume that $L$ and $L'$ intersect transversely. 
\begin{figure}[htbp]
   \begin{center}
  \includegraphics[width=0.7\linewidth]{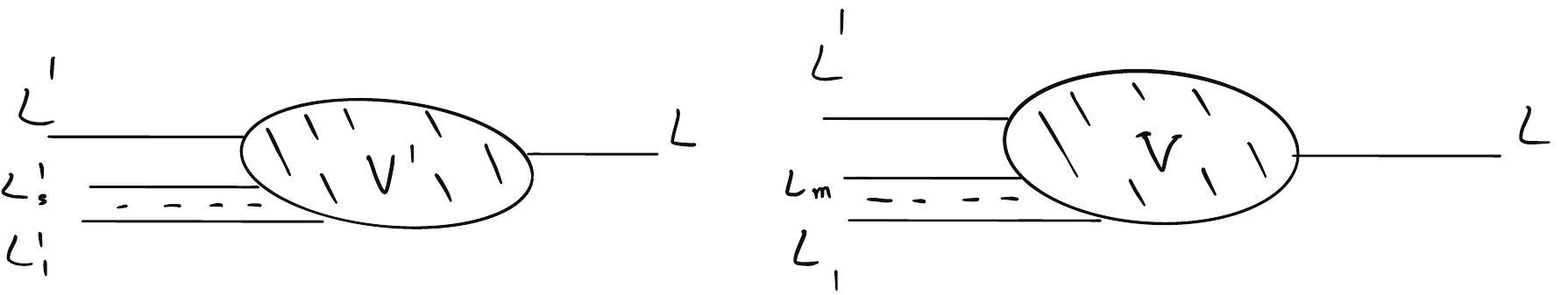}
   \end{center}
   \caption{\label{Fig:twocob} Two cobordisms $V:L\cobto (L_{1},\ldots, L_{m}, L')$, $V': L\cobto (L'_{1},\ldots, L'_{s},L')$.}
\end{figure}

A {\em cabling} $\mathcal{C}(V,V';c,\epsilon)$ of $V$ and $V'$ relative to $c\in L\cap L'$
(and relative to a handle of size $\epsilon$) is a cobordism
$\mathcal{C}(V,V';c,\epsilon) : L_{1}\to (L_{2},\ldots, L_{m},L'_{1},\ldots, L'_{s})$ whose projection is as in Figure
\ref{Fig:cabling}.
\begin{figure}[htbp]
   \begin{center}
   \includegraphics[width=0.7\linewidth]{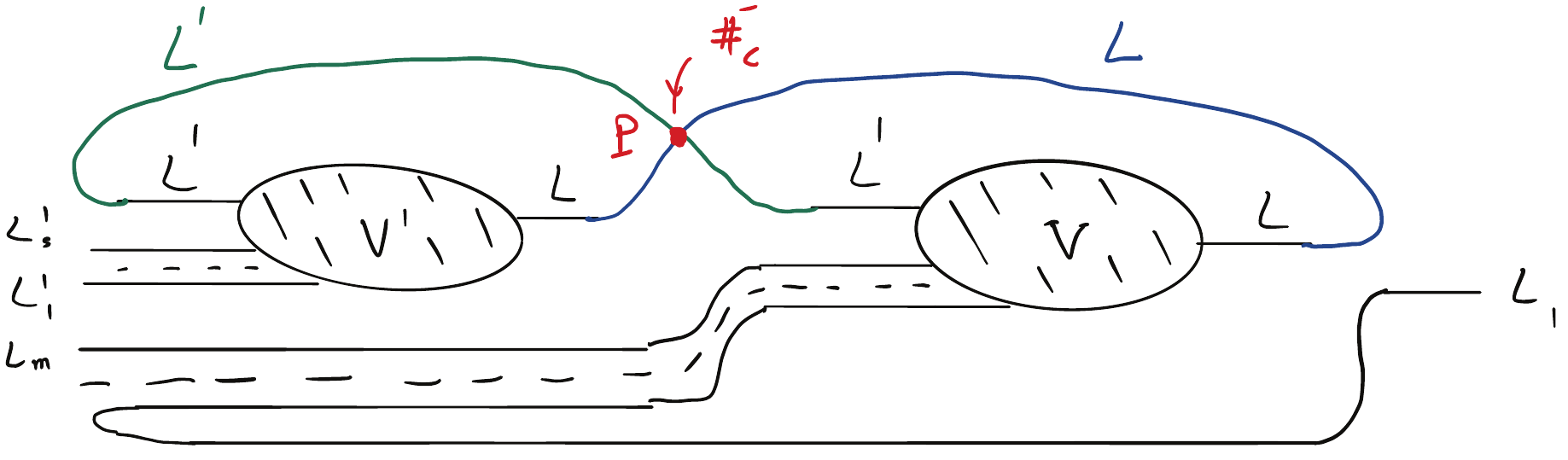}
   \end{center}
   \caption{\label{Fig:cabling} The cabling $\mathcal{C}(V,V';c)$ of $V$ and $V'$ relative to $c$.}
\end{figure}
More explicitely, the positive end of $V$ is bent, above $V$, towards the negative direction and is glued to the
positive end of $V'$. Similarly, the top negative end of $V'$ is bent above $V'$ towards the positive direction
and is glued to the top negative end of $V$. The extensions of these two ends intersect above a point $P$
in the plane in a configuration similar to that in Figure \ref{Fig:crossing}. At this point $P$ are performed
negative surgeries along the subset $c\subset L\cap L'$ (more precisely, $c\subset L\times L'\subset (V\sqcup V')\times (V\sqcup V')$).  Finally, the $L_{1}$ end is bent below $V$ 
in the positive direction. Again we allow for $\epsilon=0$ which leads to a marked cobordism
with the marking associated to negative surgeries of size $0$ along $c$. The notation in this case is $\mathcal{C}(V,V';c)$

\begin{rem}\label{rem:transversality}
a. As mentioned before, the surgery operation (and thus also cabling) depends not only on the choice of the subset $c$ of intersection points but also on the choice and size of surgery handles, on the choice of Darboux charts around the point of surgery and, in the case of cabling, on the choice of the bounded  regions inside the bent curves, on one side and the other of $P$, in Figure \ref{Fig:cabling}.

b. Surgery as well as cabling have been defined before only if the relevant Lagrangians are intersecting transversely.
However, if this is not the case one can use a small Hamiltonian isotopy to achieve transversality.  For instance, in the   cabling case, assume that $\phi$ is a small Hamiltonian isotopy such  that $\phi(L)$ intersects
transversely $L'$. In this case the region around $P$ in Figure \ref{Fig:cabling} is replaced with the configuration
on the left in Figure \ref{Fig:perturbed-cable}. The regions in grey are projections of small Lagrangian suspension cobordisms,
the one to the left of $P$ associated to $\phi$ and the one to right of $P$ associated to $\phi^{-1}$. In this case
$c\subset \phi(L)\cap L'$. We still use the notation $\mathcal{C}(V,V';c)$ to denote a cabling in this 
case even if it obviously also depends on the choice of $\phi$. A similar perturbation can be used for the surgery morphisms, as represented in the drawing on the right in Figure \ref{Fig:perturbed-cable}. Obviously, in both these perturbed cases $c\subset \phi(L)\cap L'$.
\end{rem}

\begin{figure}[htbp]
   \begin{center}
   \includegraphics[width=0.75\linewidth]{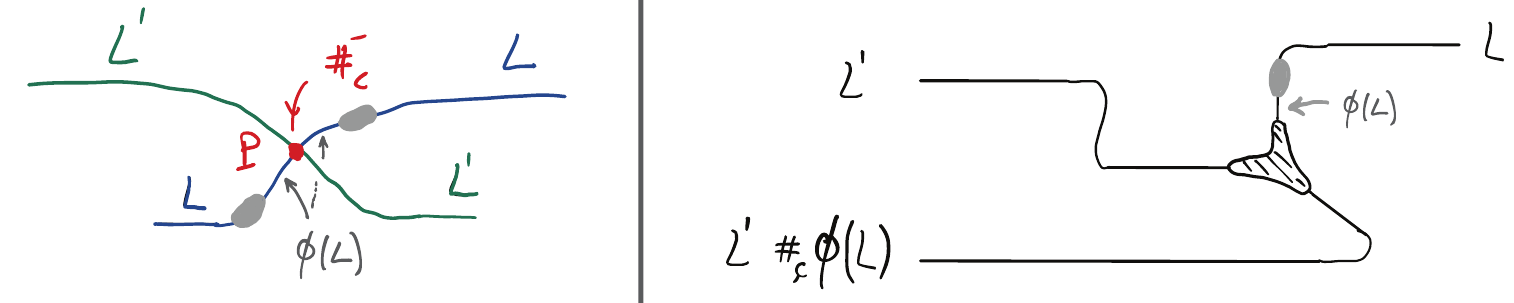}
   \end{center}
   \caption{\label{Fig:perturbed-cable} The perturbed cabling around $P$ using an isotopy $\phi$ such that 
   $\phi(L)$ intersects $L'$ transversely. Similarly, using a perturbation to define the surgery morphism $S_{L,L';c}$.}
\end{figure}

The cabling construction leads to the following definition that is central for us.

\begin{dfn}
\label{dfn:cabling-rel}
Let $V,V'$ be two cobordisms in $\mathcal{L}ag^{\ast}(\C\times M)$. We say that $V$ and $V'$ are related by cabling and write $V\sim V'$, if there exists a cabling $\mathcal{C}(V,V';c,\epsilon)\in \mathcal{L}ag^{\ast}(\C\times M)$. 
\end{dfn}
In case we need to indicate the precise cabling in question we will write
$$\mathcal{C}(V,V';c,\epsilon): V\sim V'~.~$$
We allow for $\epsilon$ to be $0$, and adopt the notation $$\mathcal{C}(V,V';c): V\sim V'$$ 
in this case. As discussed in Remark \ref{rem:formal-surg} - this requires that all the classes of Lagrangians and cobordisms under consideration be marked.  In case the context allows for it, we sometimes refer to a cabling simply by $\mathcal{C}(V,V'): V\sim V'$ with the understanding that this represents a cabling of $V$ and $V'$  associated to
an $\epsilon\geq 0$ and a $c$ as above.

\subsection{Axioms and surgery models for $\mathsf{C}ob^{\ast}(M)$ .} \label{subsec:surg-models}
We will say that the cobordism category $\mathsf{C}ob^{\ast}(M)$
{\em has surgery models} if it satisfies the following five axioms:

\begin{axio} (\underline{homotopy}) The cabling relation descends to $\mor_{\mathsf{C}ob^{\ast}(M)}$ and
it gives an equivalence relation $\sim$ that is preserved by the composition of cobordisms along any end, by the union of cobordisms as  well as by the insertion of the void end among the negative ends of a cobordism.\end{axio}

The first part of this axiom means that two cobordisms that are Hamiltonian isotopic by an isotopy 
horizontal at infinity are cabling equivalent and that the resulting relation on 
$\mor_{\mathsf{C}ob^{\ast}(M)}$ is an equivalence. 
We indend to make more explicit the further properties of this relation as implied by this axiom.
To simplify notation, from now on we will no longer distinguish between a morphism $u\in \mor_{\mathsf{C}ob^{\ast}(M)}(L,L')$ and the underlying cobordism $V: L\cobto (L_{1},\ldots, L_{m},L')$.

The relation $\sim$ is preserved by the composition of cobordisms in the following sense.
First, if $V\sim V'$ and $U$, $W$ are morphisms in $\mor_{\mathsf{C}ob^{\ast}(M)}$ such that the relevant compositions are defined, then $U\circ V \sim U\circ V'$ and $V\circ W\sim V'\circ W$.
Moreover, let $V:L\cobto (L_{1},\ldots, L_{m}, L')$
and consider another cobordism $K:L_{i}\cobto (L'_{1},\ldots, L'_{s})$ so that the composition
$V\circ_{i}K:L\to (L_{1},\ldots, L_{i-1},L'_{1},\ldots, L'_{s}, L_{i+1},\ldots, L_{m},L')$ is defined by
gluing $V$ to $K$ along the (secondary) $L_{i}$ end of $V$, then  $V\circ_{i}K$ viewed as morphism $:L\to L'$
has the property that $V\sim V\circ_{i}K$.  The union statement means that if
$V:L\cobto (L_{1},\ldots, L_{m}, L')$ can be written as a union of cobordisms
$V':L\cobto (L_{j_{1}},\ldots, L')$ and a second  cobordism $V'':\emptyset \cobto (L_{s_{1}},\ldots, L_{s_{r}})$ where the ends $L_{j_{i}}$, $L_{s_{k}}$ put together recover all the ends $L_{1},\ldots, L_{m}$, 
then $V\sim V'$. Finally, the last property claims that two cobordisms $V:L\cobto (L_{1},\ldots, L_{k}, L')$ and $V':L\cobto (L_{1},\ldots, L_{i},\emptyset, L_{i+1},\ldots, L_{k},L)$ that differ only by the insertion of a void end (and the corresponding rearrangement of the other ends), are equivalent.

\begin{axio}(\underline{cabling reduction}) Suppose that $V$ and $V'$ are two cobordisms
such that the cabling $\mathcal{C}(V,V';c,\epsilon)$ is defined and belongs to $\mathcal{L}ag^{\ast}(\C \times M)$.
Then $V,V'\in \mathcal{L}ag^{\ast}(\C \times M)$.
\end{axio}
By inspecting Figure \ref{Fig:cabling}, this means that, assuming $\mathcal{C}(V,V';c,\epsilon)$ is in 
$\mathcal{L}ag^{\ast}(\C \times M)$,  by erasing in that picture the point $P$ as well as the blue and
green arcs, we are left with a configuration that remains in $\mathcal{L}ag^{\ast}(\C \times M)$. 

\

For any $L\in \mathcal{L}ag^{\ast}(M)$ we denote by $id_{L}:L\to L$ the cobordism given 
by $\R\times L\subset \C\times M$.

\begin{axio}(\underline{existence of inverse}) If $V\in\mor_{\mathsf{C}ob^{\ast}(M)}(L,L')$ is 
a simple cobordism, then $$V\circ \bar{V}\sim id_{L}~.~$$
\end{axio}

Recall that $\bar{V}$ is  the cobordism obtained by rotating $V$ using a $180^{o}$ rotation 
in the plane.

\begin{axio}(\underline{surgeries}) 
For any $V\in \mor_{\mathsf{C}ob^{\ast}(M)}(L,L')$ there exists a surgery morphism $S_{L,L';c,\epsilon}\in\mathcal{L}ag^{\ast}(\C\times M)$ such that
$V\sim S_{L,L';c,\epsilon}$. Moreover, the sum of surgeries preserves $\mathcal{L}ag^{\ast}(\C\times M)$ and is compatible with $\sim$.
\end{axio}

As in Remark \ref{rem:transversality} b small Hamiltonian perturbations may be required for the surgery morphisms used here. This is particularly the case to associate to $id_{L}$ an equivalent surgery morphism.       
The second part of this axiom claims that the set $\bar{S}^{\ast}_{L,L'}= \bar{S}_{L,L'}\cap \mathcal{L}ag^{\ast}(\C\times M)$ of those surgeries that are in the class $\mathcal{L}ag^{\ast}(\C\times M)$ is a vector sub-space of $\bar{S}_{L,L'}$.  In particular, as $0=2b$ for any $b\in \bar{S}_{L,L'}$ this means that the $0$ surgery (that is the ``absence'' of surgery) is in $\bar{S}^{\ast}_{L,L'}$. The compatibility with $\sim$ means that the set of equivalence classes $\bar{S}^{\ast}_{L,L'}/\sim$ is a $\Z/2$-vector space.

\begin{rem}
In the main example discussed later in the paper $\epsilon$ in Axiom 4 can be taken to be $0$. However, we 
do not impose this condition in the axiom because this requirement would automatically imply that any 
category with surgery models necessarily consists of marked Lagrangians. 
\end{rem}
\

The next axiom is considerably more difficult to unwrap.

\begin{axio} (\underline{naturality})
Fix $V:L\to L'$, $V':K\to L'$, $V'': L\to K'$, $S:K\to L$, $S':L'\to K'$ in $\mor_{\mathsf{C}ob^{\ast}(M)}$ such that $S$ and $S'$ have exactly three ends.
If $\mathcal{C}(V\circ S,V'): V\circ S\sim V'$ belongs to $\mathcal{L}ag^{\ast}(\C\times M)$, then
 \begin{equation}\label{eq:cancellation1}
 \mathcal{C}(V\circ S,V')\circ RS\sim RV'\circ V~.~
 \end{equation}
 Similarly, if $\mathcal{C}(V'', S'\circ V): V''\sim S'\circ V$ belongs to $\mathcal{L}ag^{\ast}(\C\times M)$, then 
 \begin{equation}\label{eq:cancellation2}
 R^{-1}S'\circ \mathcal{C}(V'',S'\circ V)\sim V\circ R^{-1}V'' ~.~
 \end{equation}
\end{axio}

We explain in a geometric way this Axiom in  Figures \ref{Fig:ax5fig1}, \ref{Fig:ax5fig2}, \ref{Fig:ax5fig3} and \ref{Fig:ax5fig4}.

\begin{figure}[htbp]
   \begin{center}
    \includegraphics[width=0.65\linewidth]{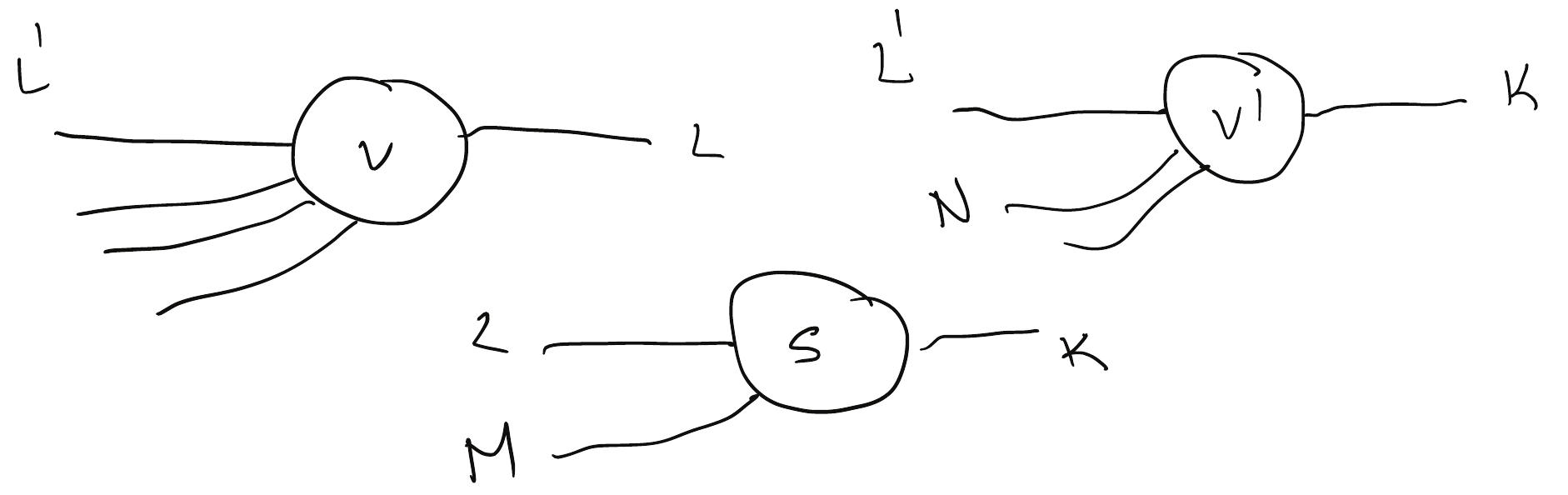}
   \end{center}
   \caption{\label{Fig:ax5fig1} The three cobordisms $V:L\to L'$, $V':K\to L'$ and $S:K\to L$.}
\end{figure}

\begin{figure}[htbp]
   \begin{center}
    \includegraphics[width=0.6\linewidth]{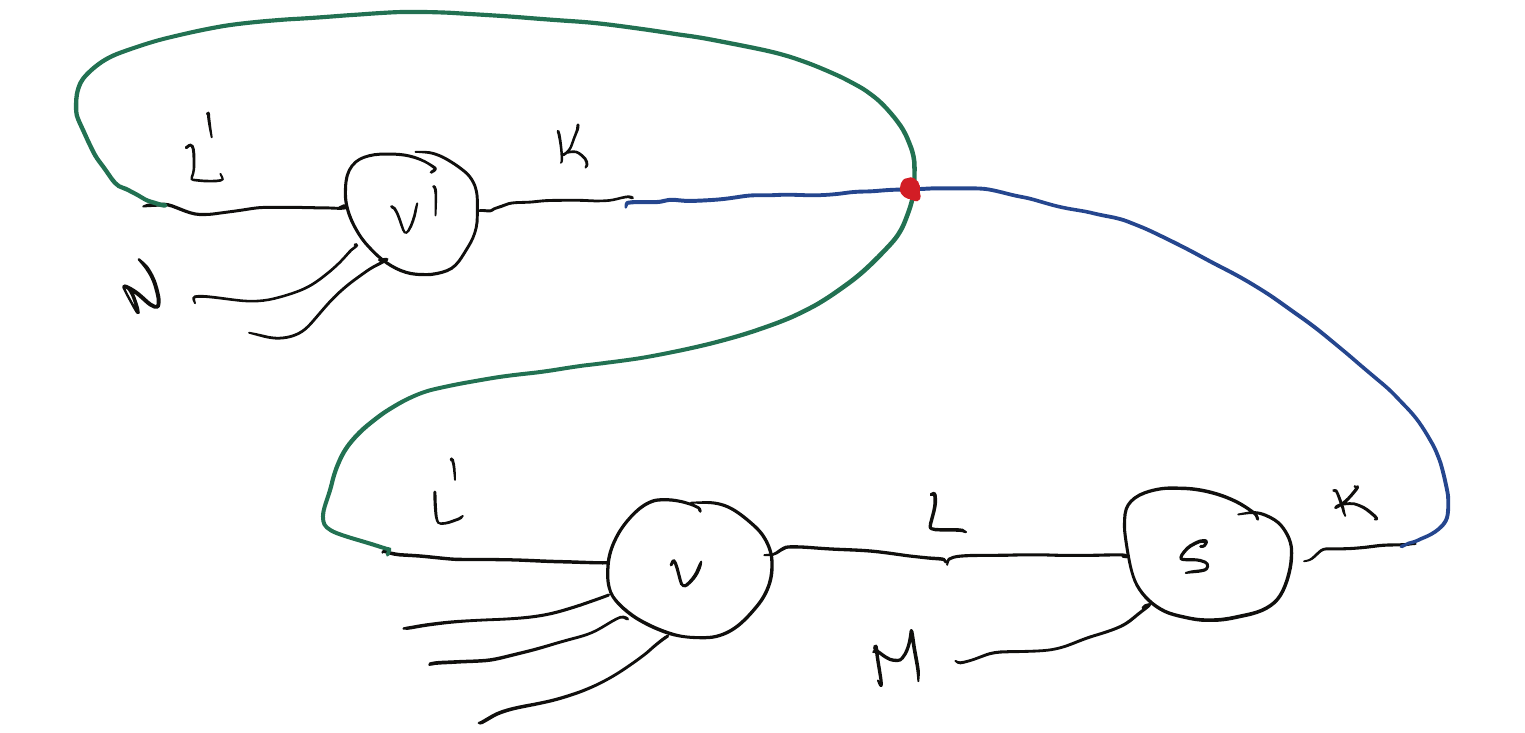}
   \end{center}
   \caption{\label{Fig:ax5fig2} The cabling $\mathcal{C}(V\circ S,V')$.}
\end{figure}

\begin{figure}[htbp]
   \begin{center}
    \includegraphics[width=0.65\linewidth]{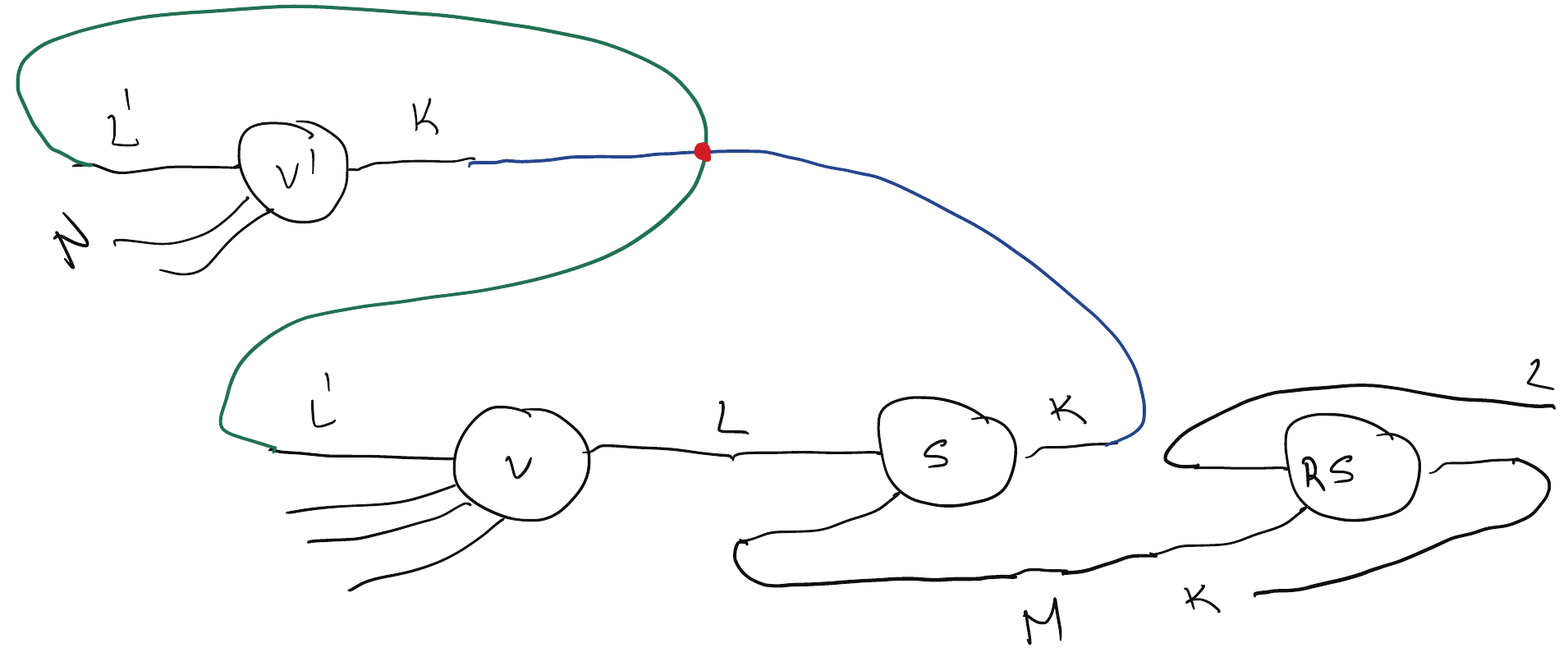}
   \end{center}
   \caption{\label{Fig:ax5fig3} The composition $C=\mathcal{C}(V\circ S,V')\circ RS$ viewed
   as morphism $C:L\to N$.}
\end{figure}

\begin{figure}[htbp]
   \begin{center}
    \includegraphics[width=0.5\linewidth]{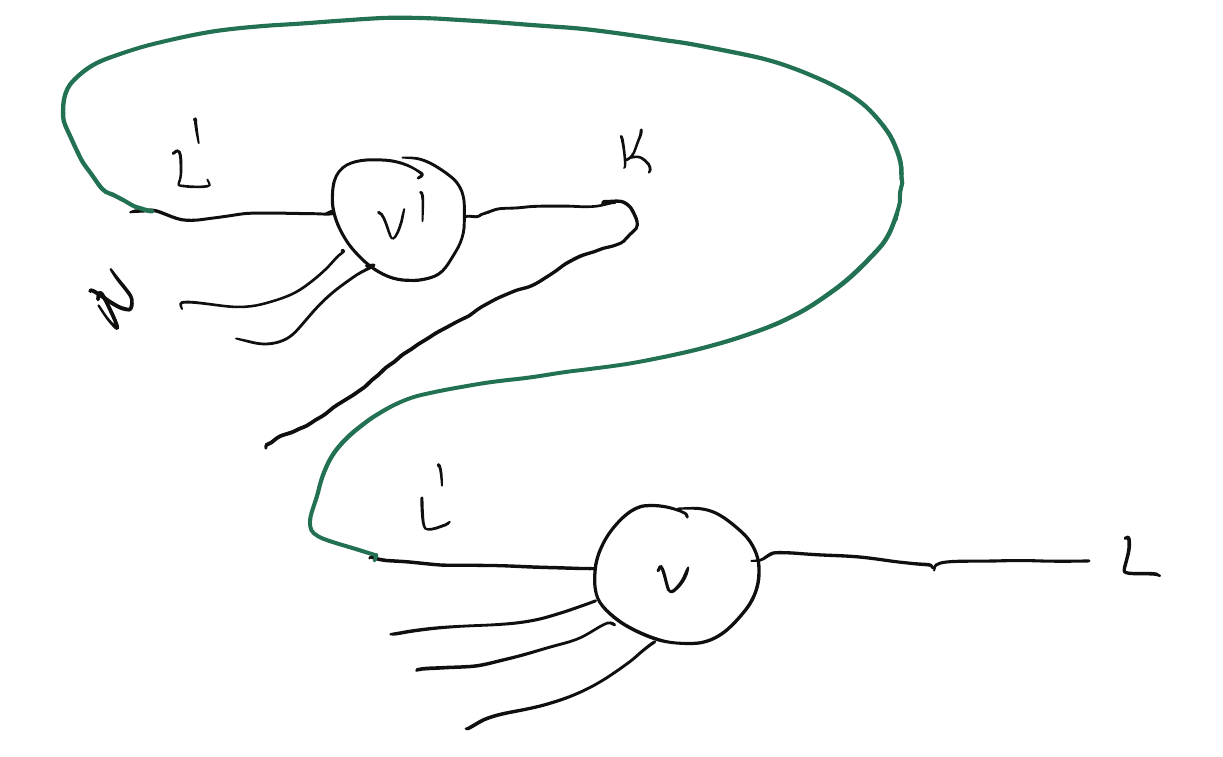}
   \end{center}
   \caption{\label{Fig:ax5fig4} The cobordism $C'=RV'\circ V$ can be seen as obtained from $C$ 
   by erasing from $C$ the blue arc as well as $S$ and $RS$. Axiom 5 claims that $C$ is 
   equivalent to $C'$.}
\end{figure}

To better understand the meaning of this axiom see Remark \ref{rem:axiom5} b.

In the axioms above we allow $0$-size surgeries and cablings as long as the respective 
classes $\mathcal{L}ag^{\ast}(M)$ and $\mathcal{L}ag^{\ast}(\C\times M)$
are enlarged to include marked Lagrangians and cobordisms (see also Remark \ref{rem:formal-surg}).

\subsection{The quotient category $\widehat{\mathsf{C}}ob^{\ast}(M)$ is triangulated.}
We assume that $\mathsf{C}ob^{\ast}(M)$ has surgery models so that, in particular, by Axiom 1, the
cabling relation $\sim$ is an equivalence.
We define $\widehat{\mathsf{C}}ob^{\ast}(M)$ to be the quotient category of $\mathsf{C}ob^{\ast}(M)$
by this equivlence relation. As usual, the objects of $\widehat{\mathsf{C}}ob^{\ast}(M)$ are the same as the objects
of $\mathsf{C}ob^{\ast}(M)$ and the morphisms are the classes of equivalence of the morphisms in
$\mathsf{C}ob^{\ast}(M)$.

Here is the main property of the structures described before.
\begin{thm}\label{thm:tri}
If the cobordism category $\mathsf{C}ob^{\ast}(M)$ has surgery models, then the quotient category
$\widehat{\mathsf{C}}ob^{\ast}(M)$ is triangulated with the exact triangles represented by surgery distinguished
triangles as in \S\ref{subsubsec:tri}.
\end{thm}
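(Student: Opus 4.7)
The plan is to verify Verdier's axioms TR1--TR4 for $\widehat{\mathsf{C}}ob^{\ast}(M)$, with shift functor equal to the identity (there is no grading in our setting, and the relation $R^{3}u = u$ from Remark \ref{rem:comp} naturally produces cyclic triples), and with distinguished triangles declared to be those triples $(u, Ru, R^{2}u)$ extracted as in Figure \ref{Fig:Extr} from a three-ended cobordism in $\mathcal{L}ag^{\ast}(\C\times M)$, together with all triples isomorphic to such in the quotient.

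Before that, one must check that $\widehat{\mathsf{C}}ob^{\ast}(M)$ is additive. Axiom 4 shows that every morphism class in the quotient has a surgery morphism representative, identifying $\mor_{\widehat{\mathsf{C}}ob^{\ast}(M)}(L,L')$ with $\bar{S}^{\ast}_{L,L'}/\sim$, which is a $\Z/2$-vector space; bilinearity of composition and the fact that $\sim$ is a two-sided congruence is the content of Axiom 1. The empty Lagrangian $\emptyset \in \mathcal{L}ag^{\ast}(M)$ is a zero object since, by the void-end-insertion part of Axiom 1, any morphism factoring through $\emptyset$ is $\sim$-equivalent to the zero surgery morphism; disjoint unions of Lagrangians then give biproducts, with structural maps the obvious inclusion cobordisms and surgery projections.

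For TR1, the identity triangle $(\id_L, 0, 0)$ is realized by the three-ended cobordism obtained from $\R \times L$ by inserting a void secondary end; the completion of any morphism to a triangle follows from Axiom 4, since every morphism is $\sim$-equivalent to a surgery morphism $S_{L,L';c,\epsilon}$, whose underlying surgery trace, reread via \S\ref{subsubsec:tri}, has three ends; stability under isomorphism is formal. TR2 (rotation) is built into the construction: the three triples associated to a three-ended cobordism $V: L \cobto (L'',L')$ are cyclic shifts of one another, and Axiom 3 together with Remark \ref{rem:comp} identifies $R^{3}u$ with $u$ in the quotient.

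The substantive content lies in TR3 and TR4, and Axiom 5 is designed precisely for these. For TR3, given a morphism of triangles defined on the first two objects, I would replace each edge by a surgery morphism via Axiom 4 and observe that commutativity of the given square, combined with Axiom 2 (cabling reduction), produces a cabling $\mathcal{C}(V\circ S, V')$ in $\mathcal{L}ag^{\ast}(\C \times M)$ to which \eqref{eq:cancellation1} applies; this delivers the third vertical morphism, while commutativity of the second square is exactly \eqref{eq:cancellation1} and the third uses \eqref{eq:cancellation2}. For the octahedral axiom TR4, given composable $u: L \to L'$, $w: L' \to L''$, one assembles the surgery traces of $u$, $w$, and $w\circ u$ into an iterated cabling configuration, from which two applications of Axiom 5 produce the morphisms on the ``top'' and ``bottom'' of the octahedron; commutativity of its four triangular faces follows by combining Axiom 5 with Axiom 1. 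I expect TR4 to be the main obstacle: the bookkeeping needed to exhibit the octahedron as a coherent iterated cabling in $\mathcal{L}ag^{\ast}(\C\times M)$, and then to match its four faces with the two variants \eqref{eq:cancellation1}--\eqref{eq:cancellation2} of Axiom 5, is the most delicate part of the argument.
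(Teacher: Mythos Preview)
Your proposal is essentially correct and follows the same route as the paper: additivity from Axiom 4 and disjoint union, completion to triangles from surgery, and TR3/TR4 from Axiom 5 applied to cablings. Two small corrections are worth making. First, Axiom 2 plays no role in producing the cabling in TR3: the commutativity hypothesis $V'\circ S \sim S'\circ V$ \emph{is} the existence of a cabling $\mathcal{C}(S'\circ V, V'\circ S) \in \mathcal{L}ag^{\ast}(\C\times M)$, and the crucial observation (which you leave implicit) is that this very cabling, reread as a morphism between the third ends $K$ and $K'$, \emph{is} the fill-in $V''$; Axiom 5 is then invoked only to verify the two remaining commutativities. Second, $R^{3}u = u$ already holds up to horizontal Hamiltonian isotopy (Remark \ref{rem:comp}), so Axiom 3 is not needed for TR2.
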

\begin{proof}
The proof is elementary. To start we remark that in the somewhat naive setting here the 
suspension functor $T$ is taken to be the identity. 

\

\noindent \underline{a. $\widehat{\mathsf{C}}ob^{\ast}(M)$ is additive.} We start by noting that in view of Axioms  4, the set of morphism between $L$ and $L'$ in $\widehat{\mathsf{C}}ob^{\ast}(M)$ has the structure of a $\Z/2$-vector space. The sum of objects is given by the union $L\oplus L'=L\sqcup L'$,
endowed with the obvious immersion $L\sqcup L'\hookrightarrow M$ that restricts to the immersions of $L$ and $L'$ (see also Remark \ref{rem:direct-sum}).

\

\noindent \underline{b. Exact triangles.} We define an exact triangle in $\widehat{\mathsf{C}}ob^{\ast}(M)$
as a triangle isomorphic to a triangle represented by a distinguished triangle
 - as defined in \S\ref{subsubsec:tri} -  in  $\mathsf{C}ob^{\ast}(M)$.
Clearly, as any morphism in  $\mathsf{C}ob^{\ast}(M)$ is equivalent to a surgery morphism which is itself part of an obvious distinguished triangle, we deduce that any morphism in $\widehat{\mathsf{C}}ob^{\ast}(M)$ can be completed to an exact triangle.  

\begin{rem} \label{rem:var-braiding}  To understand a bit the interplay of the different axioms 
we shall describe next a different way to complete a morphism $V:L\to L'$ associated to a cobordism 
$V:L\cobto (L_{1},\ldots, L_{m}, L')$  to an exact triangle. We will assume that $m=3$ and discuss the construction
by following Figures \ref{Fig:tri-completion1}, \ref{Fig:tri-completion2} and \ref{Fig:tri-completion3}.
\begin{figure}[htbp]
   \begin{center}
    \includegraphics[width=0.4\linewidth]{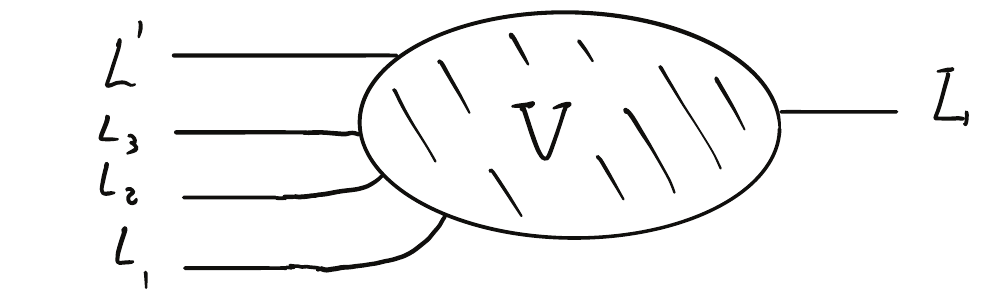}
   \end{center}
   \caption{\label{Fig:tri-completion1} The initial cobordism $V$.}
\end{figure}
We start with the cobordism $V$ in Figure \ref{Fig:tri-completion1}.
By Axiom 4 the rotation $R^{3}V$ with domain $L_{2}$ is equivalent to a surgery morphism $S_{L_{2},L_{1};c}$. 
We use this surgery morphism to construct the cabling of $R^{3}V$ and  $S_{L_{2}, L_{1};c}$ and this
produces (after another rotation) the cobordism $V'$ in Figure \ref{Fig:tri-completion2}.
Here and below, we neglect $\epsilon\geq 0$ from the notation.

\begin{figure}[htbp]
   \begin{center}
    \includegraphics[width=0.45\linewidth]{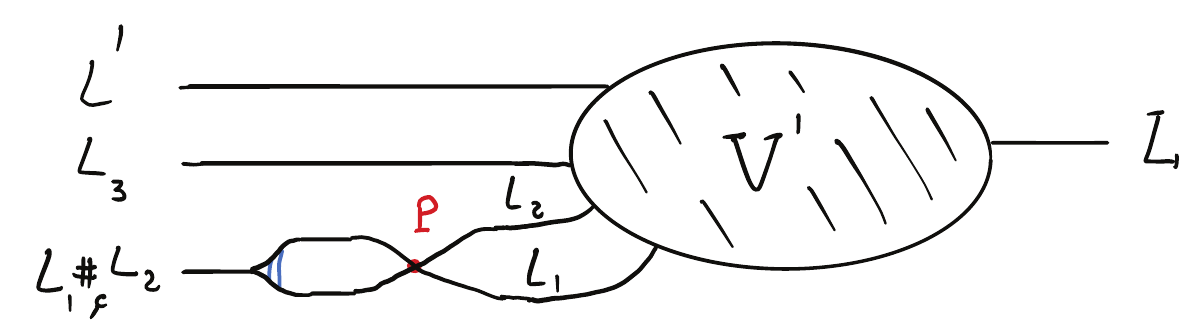}
   \end{center}
   \caption{\label{Fig:tri-completion2} The cobordism $V'$ resulting from a cabling of a rotation of $V$ with a surgery of the ends $L_{2}$ and $L_{1}$.}
\end{figure}
We now reapply the same argument but this time for the bottom two negative ends of $V'$ and we obtain this 
time the cobordism $V''$ in Figure \ref{Fig:tri-completion3} that represents a morphism $V'':L\to L'$
and has three ends and thus fits directly into an exact triangle. 
\begin{figure}[htbp]
   \begin{center}
    \includegraphics[width=0.65\linewidth]{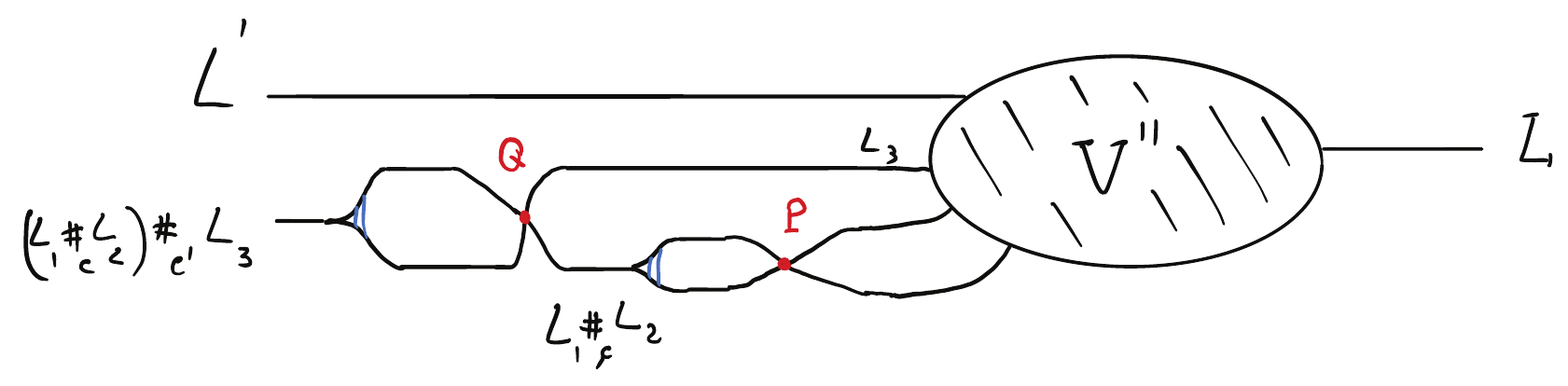}
   \end{center}
   \caption{\label{Fig:tri-completion3} The final cobordism  $V''$ obtained by iterating the operation before one
   more step.}
\end{figure}

To end this discussion we want to remark that the morphism $V'':L\to L'$ is equivalent
to $V$.  Given that the relation $\sim$ is an equivalence this follows if we can show that $V\sim V'$
where $V'$ is viewed also as a morphism $V': L\to L'$. To show this we proceed as follows. By symmetry,
$V'\sim V'$ so that a cabling as in Figure \ref{Fig:tri-completion4} belongs 
to $\mathcal{L}ag^{\ast}(\C\times M)$. In this cabling we denote by $P'$ the crossing point corresponding
to one of the copies of $V'$ and by $P$ the other such point.

\begin{figure}[htbp]
   \begin{center}
    \includegraphics[width=0.65\linewidth]{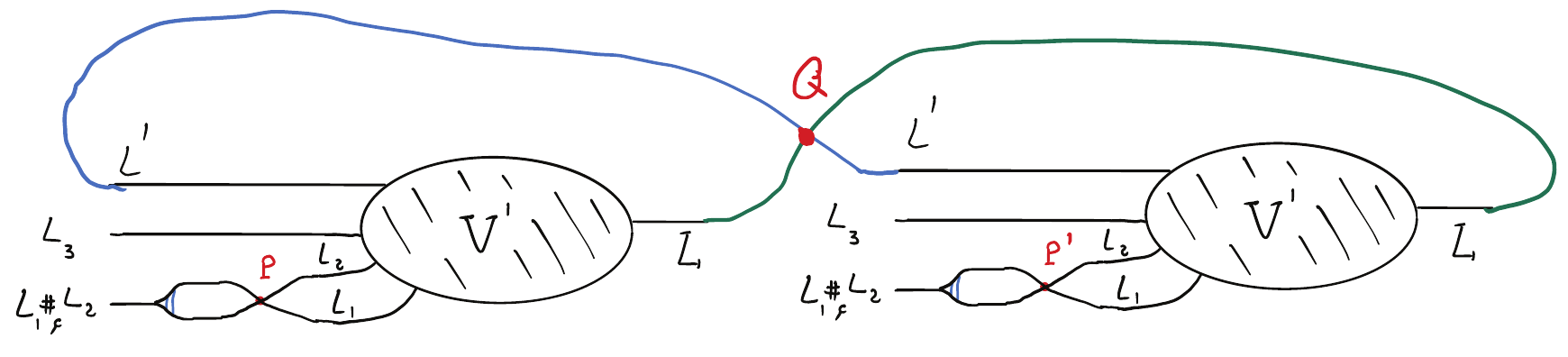}
   \end{center}
   \caption{\label{Fig:tri-completion4} The cabling of $V'$ and $V'$.}
\end{figure}

We now use Axiom 2 to ``erase'' the point $P'$ and remove  the part of the picture at the left of $P'$ -
as in Figure \ref{Fig:tri-completion5} - and to deduce that the remaining portion, which is precisely the
 cabling of $V'$ and $V$, belongs
to $\mathcal{L}ag^{\ast}(\C\times M)$ and thus $V\sim V'$.

\begin{figure}[htbp]
   \begin{center}
   \includegraphics[width=0.65\linewidth]{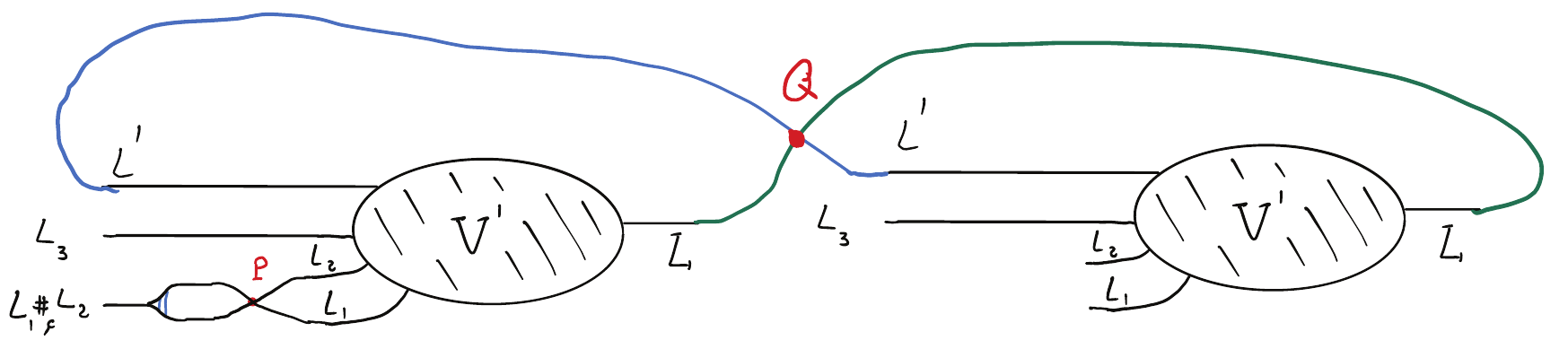}
   \end{center}
   \caption{\label{Fig:tri-completion5} The cabling of $V'$ and $V$.}
\end{figure}
For further reference, notice that the type of braiding of two consecutive ends as in Figures \ref{Fig:tri-completion2}
and \ref{Fig:tri-completion3} produces a Lagrangian representing the cone of the morphism
relating the two ends. For instance, the cone of the morphism $R^{3}V$ is (isomorphic) to the Lagrangian $L_{1}\#_{c} L_{2}$ in Figure \ref{Fig:tri-completion2}. Using also Axiom 3, it is easy to see that, up to isomorphism, the cone 
only depends of the equivalence class of a morphism. 
\end{rem}

\underline{c. Naturality of exact triangles} 
We now consider the following diagram in $\mathsf{C}ob^{\ast}(M)$.

\begin{eqnarray}\label{eq:comm-sq}
   \begin{aligned}
\xymatrix@=0.5in{
  L \ar[r]^{S} \ar[d]_{V} & M \ar[d]_{V'}\ar[r]^{T} & K\ar@{.>}[d]_{V''}\ar[r]^{U}& L \ar[d]_{V}\\
  L' \ar[r]^{S'} 
  & M'\ar[r]^{T'}
  & K'\ar[r]^{U'}& L'
   }  
\end{aligned}
\end{eqnarray}
The morphisms $S$, $V$, $V'$ and $S'$ are given and $V'\circ S\sim S'\circ V$. These morphisms can be
each completed to exact triangles corresponding to the horizontal solid lines. Explicitely, this means 
that the cobordisms $S$, $V$, $V'$ and $S'$ can each be replaced with cobordisms with exactly three ends
that represent the same respective morphisms in $\widehat{\mathsf{C}}ob^{\ast}(M)$. Notice that there 
are many ways to do this replacement (we indicated two such ways, equivalence with a surgery morphism and
braiding, above).  Assuming from now on that $S$, $V$, $V'$ and $S'$ are each given by cobordisms 
with three ends we have $T=RS$, $T'=RS'$ and $U=R^{2}S$, $U'=R^{2}S'$.

We want to show that there exists  a morphism $V'':K\to K'$ such that $V''\circ T\sim T'\circ V'$ and 
$V\circ U\sim U'\circ V''$. We will again use a series of figures to pursue this construction.
We start with Figure \ref{Fig:fix-morph} where we fix the four morphisms $V,S,V',S'$ as above.
\begin{figure}[htbp]
   \begin{center}
       \includegraphics[width=0.65\linewidth]{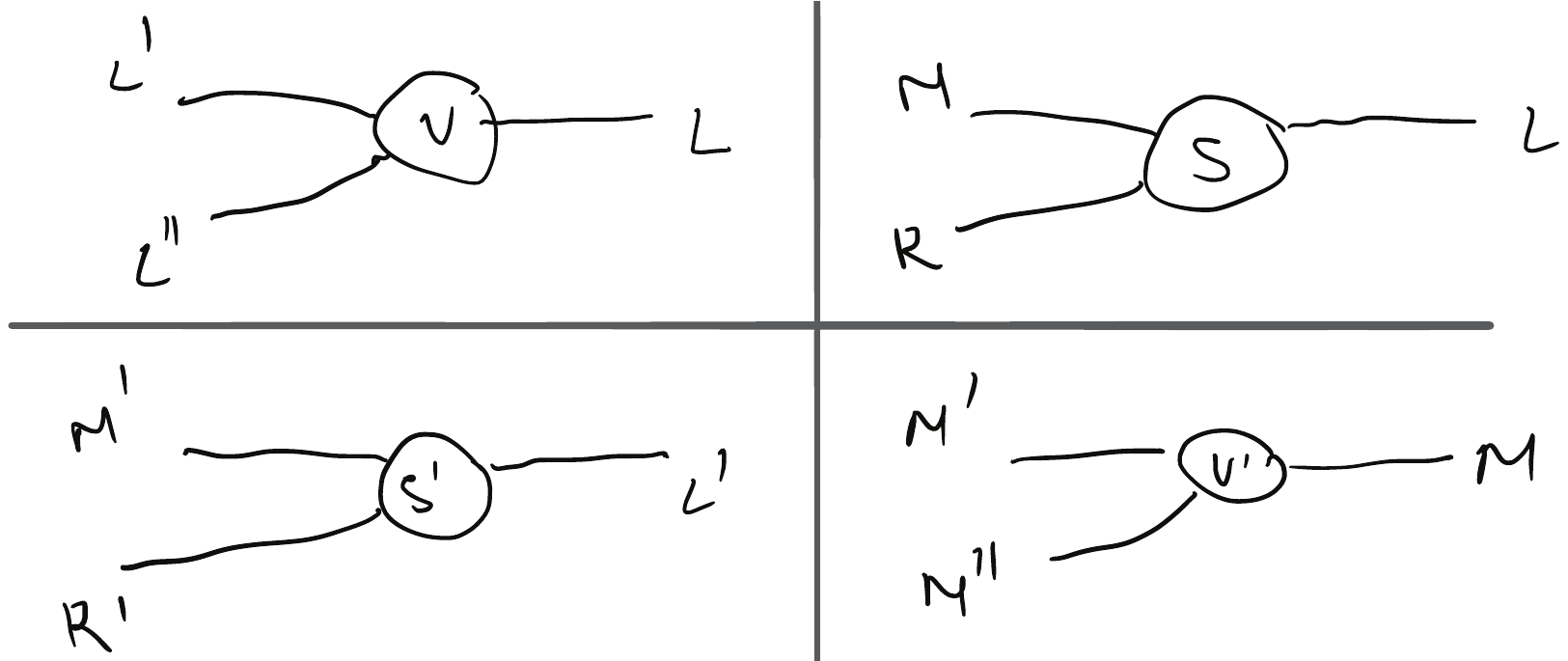}
   \end{center}
   \caption{\label{Fig:fix-morph} The morphisms in the left square of (\ref{eq:comm-sq}).}
\end{figure}

The next step is in Figure \ref{Fig:fix-morph2}. This describes the cabling of $S'\circ V$ and $V'\circ S$ - this exists
in $\mathcal{L}ag^{\ast}(\C\times M)$ because $V'\circ S\sim S'\circ V$ - and views it as a morphism $V'':K\to K'$.

\begin{figure}[htbp]
   \begin{center}
    \includegraphics[width=0.65\linewidth]{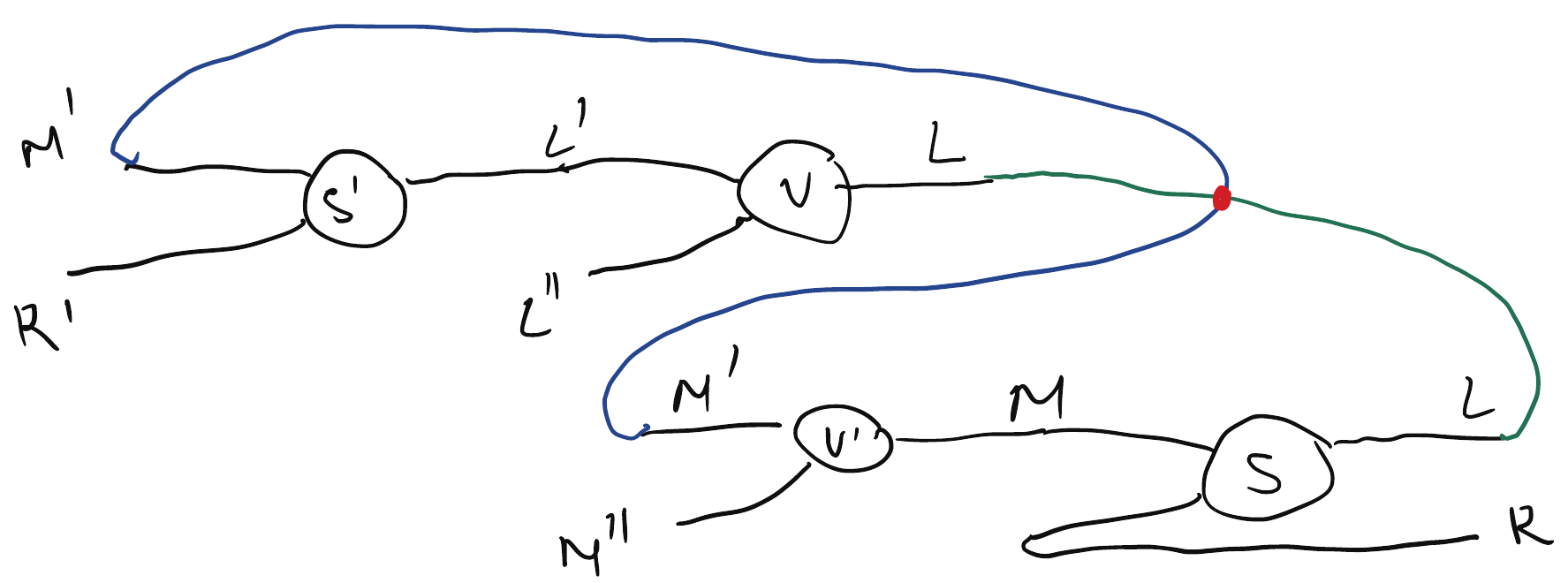}
   \end{center}
   \caption{\label{Fig:fix-morph2} The cabling of $S'\circ V$ and $V'\circ S$ viewed as a morphism $V'':K\to K'$.}
\end{figure}

\

For the next step we will make use of Axiom 5 to show the commutativity $V''\circ T\sim T'\circ V'$.
We start in Figure \ref{Fig:fix-morph3} by representing the composition $V''\circ T$.

\begin{figure}[htbp]
   \begin{center}
    \includegraphics[width=0.65\linewidth]{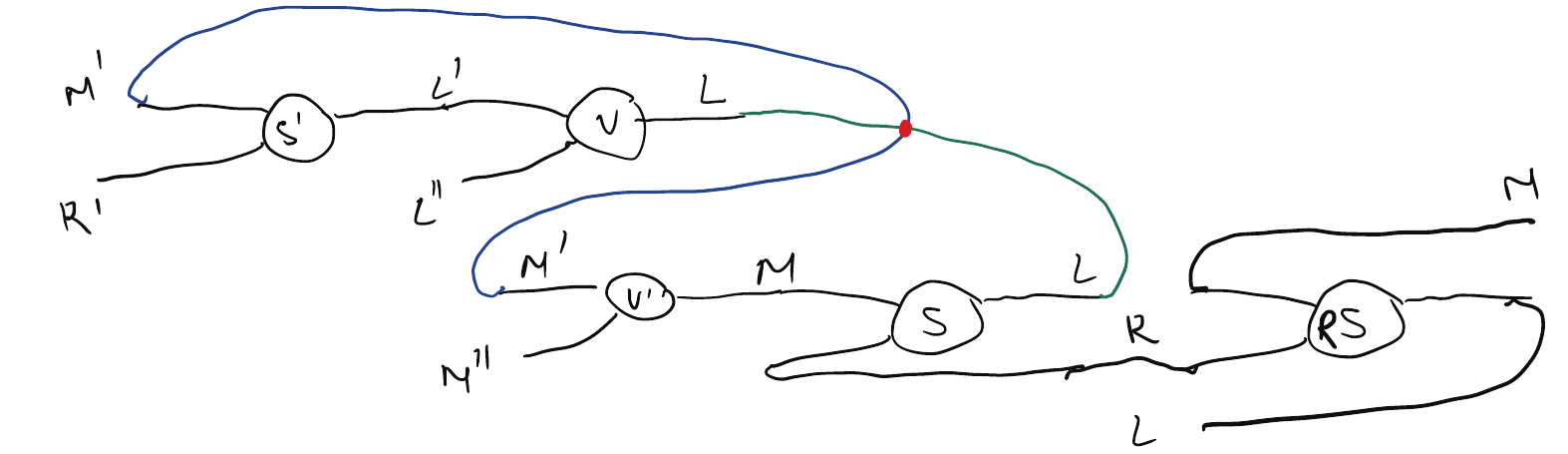}
   \end{center}
   \caption{\label{Fig:fix-morph3} The composition of $V'':K\to K'$ and $T=RS:M\to K$.}
\end{figure}

We now apply Axiom 5 to ``erase'' the green arc and the cobordisms $S$ and $RS$ in the picture thus
getting a new morphism $V'''\sim V''\circ T$ as in Figure \ref{Fig:fix-morph4}.

\begin{figure}[htbp]
   \begin{center}
    \includegraphics[width=0.65\linewidth]{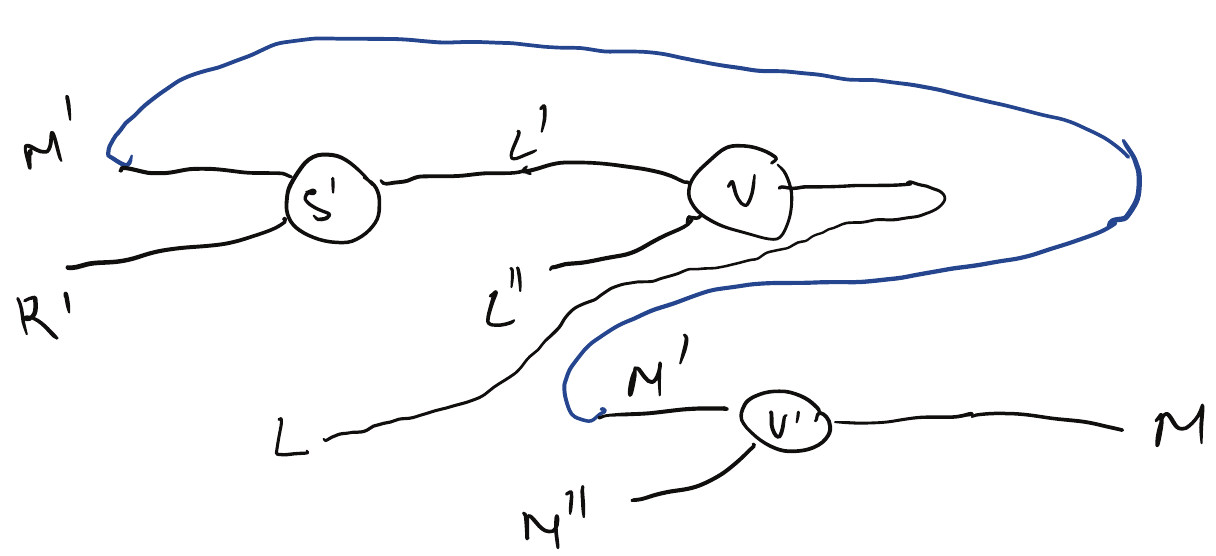}
   \end{center}
   \caption{\label{Fig:fix-morph4} Here is the cobordism $V'''$ obtained from $V''\circ T$ after erasing the green arc as well as $S$ and $RS$ (and bending the $L$ end in the positive direction).}
\end{figure}

The composition of $T'\circ V'$ is represented in Figure \ref{Fig:fix-morph5}. We 
notice that $V'''$ is equal to the composition of $T'\circ V'$ with a rotation of $V$ along a secondary leg (with end
$L'$). In view of Axiom 1, this means that $V'''$ is equivalent to the cobordism $T'\circ V'$ and concludes the argument.
\begin{figure}[htbp]
   \begin{center}
    \includegraphics[width=0.65\linewidth]{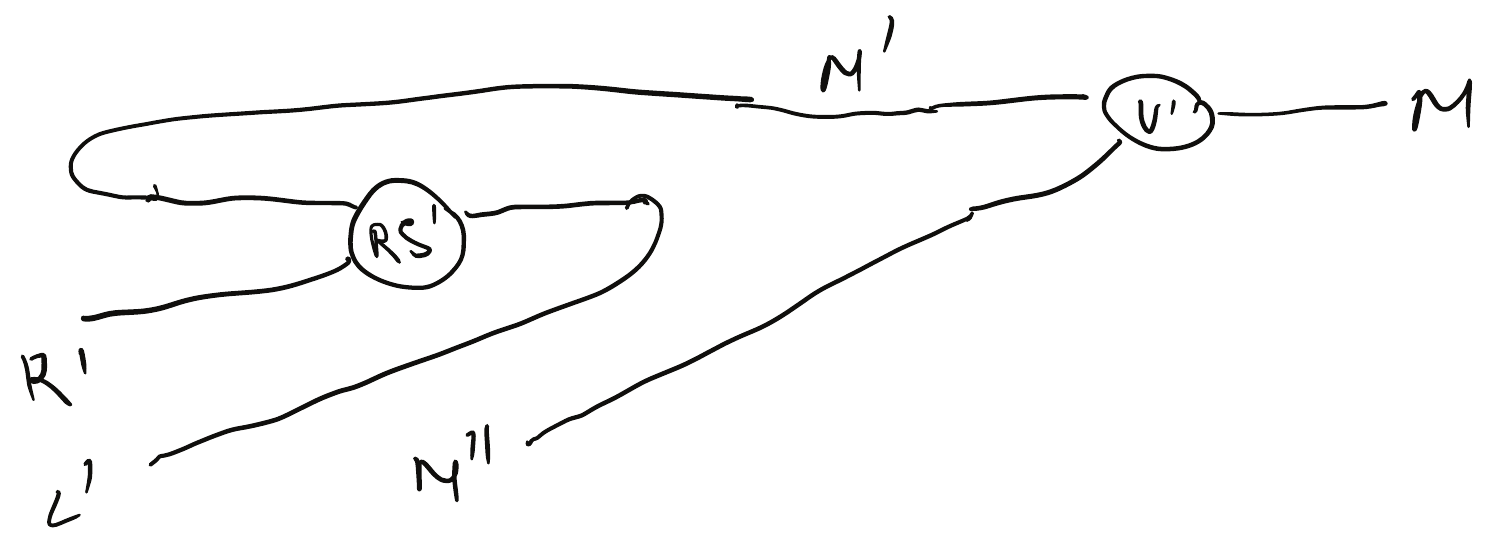}
   \end{center}
   \caption{\label{Fig:fix-morph5} The morphism $T'\circ V'$.}
\end{figure}
The argument required to show $V\circ U\sim U'\circ V''$ is perfectly similar, it appeals to the second identity
in Axiom 5.

\begin{rem}\label{rem:axiom5} a. It follows from the naturality of exact triangles that if $V:L\cobto (K,L')$, then $RV\circ V\sim 0$.

b.   Some special choices
in  diagram (\ref{eq:comm-sq}), as in diagram (\ref{eq:comm-sq3}) below,
indicate that {\em naturality} is an adequate name for Axiom 5.

\begin{eqnarray}\label{eq:comm-sq3}
   \begin{aligned}
\xymatrix@=0.5in{
  K \ar[r]^{S} \ar[d]_{id} & L \ar[d]_{V}\ar[r]^{RS} & M\ar@{.>}[d]_{V''}\\
  K \ar[r]^{V'} 
  & L'\ar[r]^{RV'}
  & N
   }  
\end{aligned}
\end{eqnarray}
In this setting, notice that the definition of $V''$ given above is precisely the cabling $\mathcal{C}(V\circ S, V')$
and thus the first identity in Axiom 5 simply claims that the square on the right in diagram (\ref{eq:comm-sq3}) commutes in $\widehat{\mathsf{C}}ob^{\ast}(M)$. The second identity has a similar interpretation.
We consider the  next diagram (we assume that all cobordisms here have only three ends) which is again obtained 
from (\ref{eq:comm-sq}) by adjusting the notation so that it fits with the statement of Axiom 5. \begin{eqnarray}\label{eq:comm-sq4}
   \begin{aligned}
\xymatrix@=0.5in{
  L \ar[r]^{V''} \ar[d]_{V} & K' \ar[d]_{id}\ar[r]^{RV''} & M\ar@{.>}[d]_{\mathcal{C}}\ar[r]^{R^{-1}V''}& L \ar[d]_{V}\\
  L' \ar[r]^{S'} 
  & K'\ar[r]^{RS'}
  & N\ar[r]^{R^{-1}S'}& L'
   }  
\end{aligned}
\end{eqnarray}
The map $\mathcal{C}$ is the cabling coming from $V''\sim S'\circ V$. With this notation, the second statement from 
axiom 5 claims the commutativity of the square on the right in (\ref{eq:comm-sq4}).

\end{rem}

\underline{d. The octahedral axiom.}
We consider the following diagram:

\

\begin{eqnarray}\label{eq:comm-sq2}
   \begin{aligned}
\xymatrix@=0.5in{
  L \ar[r]^{S} \ar[d]_{id} & M \ar[d]_{V'}\ar[r]^{T} & K\ar[d]_{V''}\ar[r]^{U}& L\ar[d]^{id}\ar[r]^{S}& M\\
  L \ar[r]^{S'}   & M'\ar[r]^{T'}\ar[d]_{W'}
  & K'\ar@{.>}[d]_{W''}\ar[r]^{U'}& L&\\
     & M''\ar[r]^{id}\ar[d]_{R^{2}V'} & M''\ar@{.>}[d]_{R^{2}V''} & &\\
  &  M\ar[r]^{T} & K &   &
   }  
\end{aligned}
\end{eqnarray}
Notice that compared to (\ref{eq:comm-sq}) the place of the morphism $V$ is taken by $id:L\to L$.
The assumption is that the two horizontal rows are exact triangles and that the left vertical column
is also an exact triangle. Moreover, as before, $W'=RV'$, $T=RS$, $T'=RS'$ (and we assume that $S,S',V'$ have only three ends). We first need to show the existence of an exact triangle like the column on the right
and that it makes the bottom square on the right commutative. We will take $V''$ to be defined 
as at the point c. before. This implies the commutativity of $V''$ with the connectants $U=R^{2}S$ and $U'=R^{2}S'$. Compared to Figure \ref{Fig:fix-morph2} the morphism $V''$ is simpler in our case because
instead of $V$ we can insert the identity cobordism of $L$. We thus get the morphism in the Figure \ref{Fig:fix-morph6}  below:

\begin{figure}[htbp]
   \begin{center}
    \includegraphics[width=0.65\linewidth]{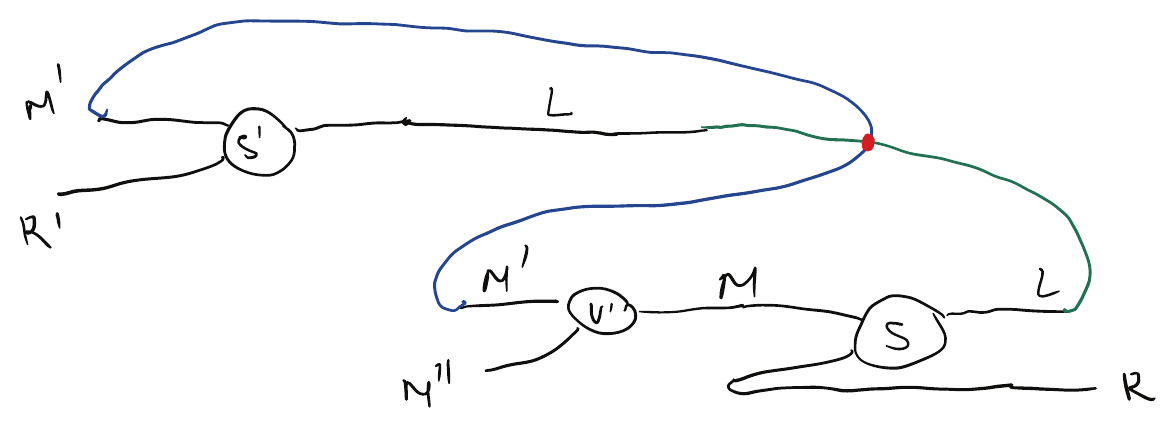}
   \end{center}
   \caption{\label{Fig:fix-morph6} The morphism $V'':K\to K'$ when $V=id|_{L}$.}
\end{figure}
This morphism has three ends and thus gives directly rise to an exact triangle. We put $W''=RV'':K'\to M''$.
We now check that $W''\circ T'\sim W'$. 
\begin{figure}[htbp]
   \begin{center}
    \includegraphics[width=0.8\linewidth]{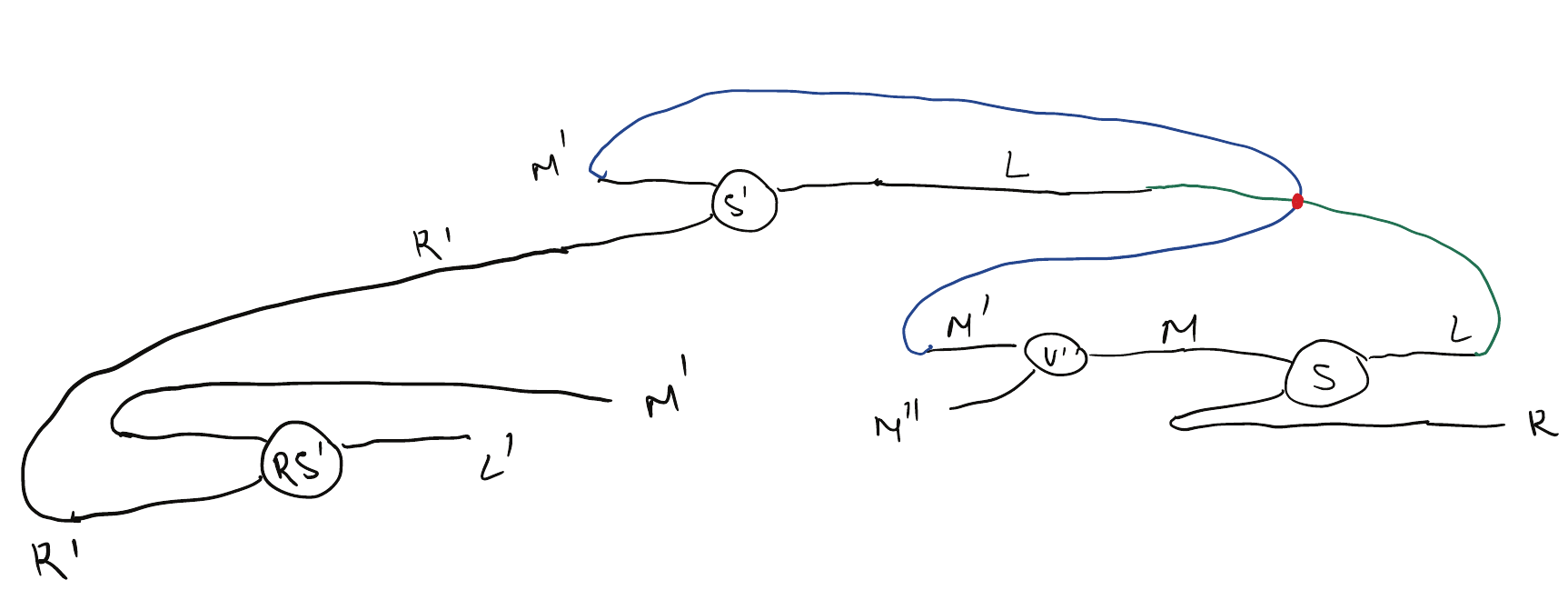}
   \end{center}
   \caption{\label{Fig:fix-morph7} The composition $W''\circ RS'$ with $W''=RV'':K'\to M''$ and $T'=RS'$.}
\end{figure}

This is done again in two Figures \ref{Fig:fix-morph7} and \ref{Fig:fix-morph8}: the first represents the composition $RV''\circ RS'$ (recall $T'=RS'$) and the second reflects again an application of Axiom 5
to conclude that $W'\sim V''\circ RS'$.

\begin{figure}[htbp]
   \begin{center}
    \includegraphics[width=0.65\linewidth]{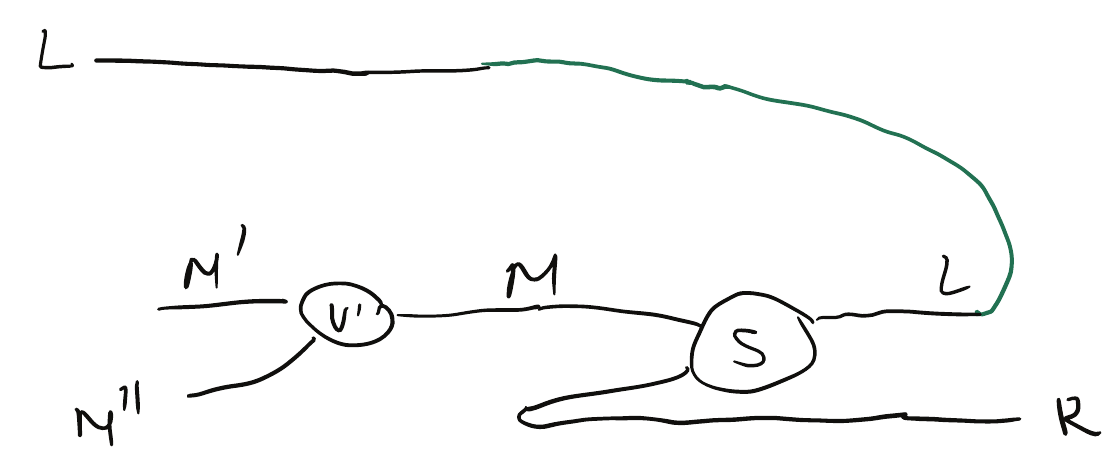}
   \end{center}
   \caption{\label{Fig:fix-morph8} Erasing the blue arc as well as $RS'$ and  $S'$ from Figure \ref{Fig:fix-morph7} leads, by Axiom 5, to the equivalent morphism above which, in turn, is
   equivalent to $RV'=W'$.}
\end{figure}

To conclude establishing the octahedral axiom we need to also remark that 
$R^{2}V'\circ  W''\sim S\circ U'$ or, in other words, $R^{2}V'\circ RV''\sim S\circ R^{2}S'$. 
This follows from a similar argument, by making use of the 
fact that the cobordism $V'$ has three ends and applying the second identity in Axiom 5. 
\end{proof}

\begin{rem}\label{rem:dec} Fix a cobordism $V:L\cobto (L_{1},\ldots, L_{m})$ 
in $\mathcal{L}ag^{\ast}(\C\times M)$. By succesively braiding the ends $L_{1},\ldots, L_{m}$,
as in Remark \ref{rem:var-braiding} (but including in the braiding all the negative ends), 
we replace $V$ by a new cobordism $V': L\cobto L'$ where $L'=L_{m}\#_{c_{m}}(L_{m-1}\#\ldots (L_{2}\#_{c_{1}}L_{1})\ldots) )$. In view of Axiom 3,
$V'$ is an isomorphism and $L'$ is given in $\widehat{\mathsf{C}}ob^{\ast}(M)$ as an iterated cone
$L'=\tcn(L_{m}\to \tcn(L_{m-1}\to\ldots \tcn(L_{2}\to L_{1})\ldots))$.
\end{rem}

\subsection{Shadows and fragmentation metrics}\label{subsec:shadow}
Given a Lagrangian cobordism $V : L\cobto (L_{1},\ldots, L_{m})$
we define its shadow  \cite{Co-She:metric},
$\mathcal{S}(V)$, by:
\begin{equation} \label{eq:shadow-intro} \mathcal{S}(V) = Area \bigl(
  \mathbb{R}^2 \setminus \mathcal{U} \bigr),
\end{equation}
where $\mathcal{U} \subset \mathbb{R}^2 \setminus \pi(V)$ is the union
of all the {\em unbounded} connected components of
$\mathbb{R}^2 \setminus \pi(V)$. Here
$\pi: \mathbb{R}^2 \times M \longrightarrow \mathbb{R}^2$ is the
projection.

 The shadow of a surgery morphism is bounded by the ``size'' of the handle used in the surgery. Therefore,
 assuming  that  $\mathsf{C}ob^{\ast}(M)$ has surgery models, a natural additional requirement to 
 impose is to ask that $\mathsf{C}ob^{\ast}(M)$ has  {\em small} surgery models - in the sense that each
morphism is equivalent to a surgery cobordism of arbitrarily small shadow.  Similarly, in the definition of cabling equivalence, Definition \ref{dfn:cabling-rel}, the requirement for $V\sim V'$ becomes that for each $\epsilon$ sufficiently small, there exists a cabling $\mathcal{C}(V,V';c,\epsilon)\in\mathcal{L}ag^{\ast}(M)$.  In the marked context, this includes the case 
$\epsilon=0$ and, in fact, $V\sim V'$ in this setting if and only if there exists exists a  cabling 
$\mathcal{C}(V,V';c)\in \mathcal{L}ag^{\ast}(M)$.

We will say that the category $\mathsf{C}ob^{\ast}(M)$ is {\em rigid with surgery models}
if it has small surgery models in the sense above and if additionally it satisfies the next axiom.

\begin{axio}(\underline{rigidity}) For any two Lagrangians $L$ and $L'$ there exists a constant 
$\delta(L,L')\geq 0$ that  vanishes only if $L=L'$, such that for any simple cobordism $V:L\cobto L'$, 
$V\in\mor_{\mathsf{C}ob^{\ast}(M)}$ we have \begin{equation}\label{eq:ineq}
\mathcal{S}(V)\geq \delta(L,L')
~.~\end{equation} 
\end{axio}

We now discuss the meaning of this axiom. We start by noting that the notion of shadow
 easily leads to a family of pseudo-metrics, called {\em fragmentation
shadow pseudo metrics}, as introduced in \cite{Bi-Co-Sh:lshadows-long}. 
They are defined as follows. 
Fix  a family of objects  $\mathcal{F}$ in $\mathcal{L}ag^{\ast}(M)$
and, for two objects $L,L'\in \mathcal{L}ag^{\ast}(M)$,
put:
\begin{align}\label{eq:pesudo-m}
d^{\mathcal{F}}(L,L') = &\inf\{\mathcal{S}(V) \ :  \ \ V:L\cobto (F_{1},\ldots,F_{i}, L', F_{i+1},\ldots , F_{m}),\\   & \ \ \ \ F_{i}\in \mathcal{F}, \ V \in\mathcal{L}ag^{\ast}(\C\times M) \  \} \notag 
\end{align}

This is clearly symmetric and satisfies the triangle inequality and thus defines a pseudo-metric, possibly infinite.
If $\mathcal{F}'$ is another family of objects in $\mathcal{L}ag^{\ast}(M)$ we can consider
the average of the two pseudo-metrics which is again a pseudo-metric:
$$d^{\mathcal{F},\mathcal{F}'}(L,L')=\frac{d^{\mathcal{F}}(L,L')+d^{\mathcal{F}'}(L,L')}{2}~.~$$
There is an obvious order among fragmentation  pseudo-metrics induced by the order on pairs  of  families $\mathcal{F},\mathcal{F}'$ given by inclusion. In particular, $d^{\emptyset,\emptyset}\geq d^{\mathcal{F},\mathcal{F}'}$. 

Apriori, none of the pseudo-metrics above is non-degenerate so that it is natural to define the rigidity
of a cobordism category $\mathsf{C}ob^{\ast}(M)$ by imposing this requirement, as below.
We will only be concerned with some special pairs $\mathcal{F},\mathcal{F}'$ - see Corollary \ref{cor:rig}.
\begin{dfn}\label{def:rig}
A cobordism category $\mathsf{C}ob^{\ast}(M)$ is called {\em strongly rigid} if, for any two 
families $\mathcal{F}$ and $\mathcal{F}'$ such that the intersection
$$\overline{\bigcup_{F\in\mathcal{F}}  F}\cap \overline{\bigcup_{F'\in\mathcal{F}'}  F'}$$ is totally discrete,  
the pseudo-metric $d^{\mathcal{F},\mathcal{F}'}$ is non-degenerate.
\end{dfn}

This definition of strong rigidity does not require $\mathsf{C}ob^{\ast}(M)$ to have surgery models, it is a purely geometric
constraint. Similarly, the condition in Axiom 6 is also purely geometric and can be formulated independently of the existence of surgery models. With our definitions, the inequality (\ref{eq:ineq}) is equivalent to 
\begin{equation}\label{eq:ineq2}
d^{\emptyset, \emptyset}(L,L')\geq \delta(L;L')~,~
\end{equation} and thus (\ref{eq:ineq}) is equivalent to the fact that $d^{\emptyset, \emptyset}$
is non-degenerate. In view of displacement energy - width inequalities,  relations such as (\ref{eq:ineq}), are natural to expect in non flexible settings. 

We will see that  unobstructed classes of Lagrangians, in the sense to be made precise later in the paper,  have small surgery models and are strongly rigid. In fact, as will be noticed in Corollary \ref{cor:rig}, 
in this case the non-degeneracy of $d^{\emptyset, \emptyset}$ implies the non-degeneracy  of all the other pseudo-metrics from Definition \ref{def:rig} (for this result, our objects need to be immersed Lagrangians).

\begin{rem}\label{rem:weights1}a. To further unwrap the meaning  of Axiom 6, notice that,
because Lagrangian suspensions of objects in $\mathcal{L}ag^{\ast}(M)$ are in 
$\mathcal{L}ag^{\ast}(\C\times M)$, the pseudo-metric $d^{\emptyset,\emptyset}$ is bounded from above by the 
Hofer distance on Lagrangian submanifolds. Thus, Axiom 6 implies the non-degeneracy of this distance too.
In a different direction, the (pseudo)-metrics $d^{\mathcal{F},\mathcal{F}'}$ are finite
much more often compared to $d^{\emptyset,\emptyset}$ and thus much larger classes of Lagrangians are
endowed with a meaningful geometric structure.

b. In case the category $\mathsf{C}ob^{\ast}(M)$
has surgery models (and thus  $\widehat{\mathsf{C}}ob^{\ast}(M)$ is triangulated) 
there is a more general context that fits the construction of this shadow pseudo-metrics that we
briefly recall from  \cite{Bi-Co-Sh:lshadows-long}.
Let $\mathcal{X}$ be a
triangulated category and recall that 
there is a category denoted by $T^{S}\mathcal{X}$ that was introduced
in \cite{Bi-Co:cob1,Bi-Co:lcob-fuk}. This category is monoidal and its
objects are finite ordered famillies $(K_{1},\ldots, K_{r})$ with
$K_{i}\in \mathcal{O}b(\mathcal{X})$ with the operation given by
concatenation. In essence, the morphisms in
$T^{S}\mathcal{X}$ parametrize all the cone-decompositions of the
objects in $\mathcal{X}$. Composition in $T^{S}\mathcal{X}$ comes down
to refinement of cone-decompositions.  Assume given a weight
$w: \mor_{T^{S}\mathcal{X}}\to [0,\infty]$ such that
\begin{equation}\label{eq:weight-tr}
  w(\bar{\phi}\circ \bar{\psi})\leq 
  w(\bar{\phi})+w(\bar{\psi}) \ , \ w(id_{X})=0 \ , \forall \ X,
\end{equation}
Fix also a
family $\mathcal{F}\subset \mathcal{X}$.  We then can define a measurement on the objects of $\mathcal{X}$:
\begin{equation}\label{eq:pseudo-metric2}
  s^{\mathcal{F}}(K',K)=
  \inf \{w(\bar{\phi}) 
  \mid \ \ \bar{\phi} : K'\to 
  (F_{1},\ldots, K,\ldots, F_{r}),\ \ F_{i}\in \mathcal{F}, \forall i \}~.~
\end{equation}
This satisfies the triangle inequality but is generally non-symmetric. However,
as noted in \cite{Bi-Co-Sh:lshadows-long}, $s^{\mathcal{F}}$ can be symmetrized and it leads to what is called in 
 \cite{Bi-Co-Sh:lshadows-long} a fragmentation pseudo-metric. Our  remark here is that
 if  $\mathcal{X}=\widehat{\mathsf{C}}ob^{\ast}(M)$,
 then the category $T^{S}\mathcal{X}$ can be identified with (a quotient of) the general cobordism
 category $\mathcal{C}ob^{\ast}(M)$ as introduced in \cite{Bi-Co:cob1}. This category has as objects
 families of Lagangians $(L_{1},\ldots, L_{k})$ and as morphisms families of cobordisms $V:L\cobto (L_{1},\ldots, L_{k})$.  The difference compared to the category $\mathsf{C}ob^{\ast}(M)$ is that such a $V$ is viewed in $\mathcal{C}ob^{\ast}(M)$ as a morphism from the family formed by a single element $L$ to the family $(L_{1},\ldots, L_{k})$ and in $\mathsf{C}ob^{\ast}(M)$ the same $V$ is viewed as a morphism from $L$ to $L_{k}$. 
 The relation between $T^{S}\widehat{\mathsf{C}}ob^{\ast}(M)$ and $\mathcal{C}ob^{\ast}(M)$ is not at all surprising because, 
 as noted in Remark \ref{rem:dec}, each cobordism $V:L\cobto (L_{1},\ldots, L_{k})$  induces an iterated cone-decomposition  in $\widehat{\mathsf{C}}ob^{\ast}(M)$.  Finally, the fragmentation pseudo-metric $s^{\mathcal{F}}$ defined on the objects of $\widehat{\mathsf{C}}ob^{\ast}(M)$ by using as weight the shadow of cobordisms coincides with $d^{\mathcal{F}}$.
 \end{rem}
 
\

\subsection{Cobordism groups.}
Given a cobordism category $\mathsf{C}ob^{\ast}(M)$ there is a natural associated cobordism group,
$\Omega^{\ast}(M)$, given as the free abelian group generated by the Lagrangians in $\mathcal{L}ag^{\ast}(M)$
modulo the subgroup of relations generated by the expressions $L_{1}+\ldots  + L_{m}=0$ whenever 
a cobordism $V:\emptyset \cobto (L_{1},\ldots, L_{m})$ exists and $V\in \mathcal{L}ag^{\ast}(\C\times M)$.

In our setting, where orientations are neglected, this group is a $\Z_{2}$-vector space.

\begin{cor}\label{cor:K}
If the category $\mathsf{C}ob^{\ast}(M)$ has surgery models, then there is an isomorphism
$$\Omega^{\ast}(M)\cong K_{0}(\widehat{\mathsf{C}}ob^{\ast}(M))~.~$$
\end{cor}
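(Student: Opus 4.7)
The plan is to construct mutually inverse homomorphisms $\Phi\colon\Omega^{\ast}(M)\to K_{0}(\widehat{\mathsf{C}}ob^{\ast}(M))$ and $\Psi\colon K_{0}(\widehat{\mathsf{C}}ob^{\ast}(M))\to\Omega^{\ast}(M)$, both given on generators by $[L]\mapsto[L]$. All verifications are bookkeeping once the two main geometric inputs are in place.

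First I would verify that $\Phi$ descends from the free abelian group on $\mathcal{L}ag^{\ast}(M)$ to the quotient $\Omega^{\ast}(M)$. A defining relation of $\Omega^{\ast}(M)$ is produced by a null cobordism $V\colon\emptyset\cobto(L_{1},\ldots,L_{m})$ in $\mathcal{L}ag^{\ast}(\C\times M)$. Applying Remark~\ref{rem:dec} with $L=\emptyset$, this $V$ witnesses an isomorphism in $\widehat{\mathsf{C}}ob^{\ast}(M)$ between $\emptyset$ and the iterated surgery $L_{m}\#_{c_{m}}(L_{m-1}\#\cdots\#_{c_{1}}L_{1})$, which by Axiom~3 and Remark~\ref{rem:var-braiding} is an iterated cone in the triangulated quotient. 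Iteratively applying the $K_{0}$-identity $[\tcn(f\colon A\to B)]=[A]+[B]$ (we work mod $2$, as orientations are absent) collapses this to $\sum_{i}[L_{i}]$ in $K_{0}$. Since $\emptyset$ is a zero object of the additive category (the unit of $\sqcup=\oplus$), $[\emptyset]=0$ in $K_{0}$, and therefore $\sum[L_{i}]=0$, so $\Phi$ is well defined.

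Next I would construct $\Psi$ and verify well-definedness on the distinguished-triangle and isomorphism relations defining $K_{0}$. The key geometric input is a \emph{closing trick}: for each $L\in\mathcal{L}ag^{\ast}(M)$, pick a U-shaped curve $\gamma\subset\C$ with both ends horizontal at $x=-\infty$ at distinct integer heights; then $W_{L}:=\gamma\times L$ is a null cobordism $\emptyset\cobto(L,L)$ lying in $\mathcal{L}ag^{\ast}(\C\times M)$ by the closure assumptions on the class. Given a distinguished triangle represented by a three-ended cobordism $V\colon L\cobto(L'',L')$, composing $V$ with $W_{L}$ along one copy of $L$ (using closure of $\mathcal{L}ag^{\ast}(\C\times M)$ under composition along any leg) yields a null cobordism $\emptyset\cobto(L,L'',L')$, which delivers the relation $[L]+[L']+[L'']=0$ in $\Omega^{\ast}(M)$. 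This is precisely the mod-$2$ triangle relation $[B]=[A]+[C]$. Moreover, the trivial cobordism $\emptyset\subset\C\times M$ shows $[\emptyset]=0$ in $\Omega^{\ast}(M)$, so applying the triangle relation to the exact triangle $L\to L'\to\emptyset\to L[1]$ associated to any isomorphism $L\cong L'$ yields $[L]=[L']$ in $\Omega^{\ast}(M)$, handling the isomorphism identifications.

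The maps $\Phi$ and $\Psi$ are inverse to each other on generators by construction, so the two well-definedness statements complete the proof. The main obstacle I expect is careful bookkeeping around the closing trick: one must check that the concatenation of $V$ with $W_{L}$ remains in $\mathcal{L}ag^{\ast}(\C\times M)$, and, to apply the triangle argument for the iso-associated triangle $L\to L'\to\emptyset$, one should realize the isomorphism by a cobordism with an effectively two-ended shape (obtained by combining Axiom~4 with the braiding procedure of Remark~\ref{rem:dec}) so that the closing step can be carried out without circularity.
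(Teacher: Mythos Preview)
Your approach is essentially the paper's, expanded. The paper's proof is two sentences: Remark~\ref{rem:dec} shows that null-cobordism relations lie among the $K_0$-relations, and ``since all exact triangles are represented by surgeries'' the $K_0$-relations are cobordism relations. Your closing trick with the U-shaped curve $\gamma$ is precisely what unpacks the second sentence: it is the (standard but unstated) move that turns a three-ended $V:L\cobto(L'',L')$ into a null cobordism $\emptyset\cobto(L,L'',L')$, and it is legitimate because $\gamma\times L\in\mathcal{L}ag^{\ast}(\C\times M)$ and the class is closed under gluing along any leg. So on this point you are simply more explicit than the paper.

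Your caution about isomorphisms is warranted and is a point the paper also elides. The closing trick handles \emph{distinguished} triangles directly, but exact triangles are only \emph{isomorphic} to distinguished ones, so one still needs $[A]=[A']$ in $\Omega^{\ast}(M)$ whenever $A\cong A'$ in $\widehat{\mathsf{C}}ob^{\ast}(M)$. Your proposed route (replace the isomorphism by a surgery morphism via Axiom~4, then braid to a three-ended cobordism as in Remark~\ref{rem:var-braiding}) reduces this to showing $[C]=0$ in $\Omega^{\ast}(M)$ whenever $C\cong\emptyset$; that last step is not entirely tautological and deserves a line of argument (for instance: $C\cong\emptyset$ forces $\id_C=0$ in $\mor(C,C)$, and cabling $\id_C$ with the zero surgery $S_{C,C;\emptyset}$ produces a cobordism in $\mathcal{L}ag^{\ast}(\C\times M)$ from which the relation can be read off). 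You correctly flag this as the main place where care is required.
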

\begin{proof} Recall that $K_{0}\mathcal{C}$ is the Grothendieck group of the triangulated category
$\mathcal{C}$. It is generated by the objects of $\mathcal{C}$ subject to the relations
$B=A+C$ for each exact triangle $A\to B\to C$. It immediately follows from Remark \ref{rem:dec}
that the relations defining the cobordism group $\Omega^{\ast}(M)$ belong to the relations subgroup
giving $K_{0}$. Conversely, as all exact triangles in  $\widehat{\mathsf{C}}ob^{\ast}(M)$ are represented by 
surgeries it follows that all relations defining $K_{0}$ are also cobordism relations. Thus, we have the isomorphism
claimed.
\end{proof}

\


\section{Unobstructed classes of Lagrangians and the category $\mathsf{C}ob^{\ast}(M)$}
\label{sec:unobstr-tech}

The purpose of this section is to discuss in detail the classes of Lagrangians $\mathcal{L}ag^{\ast}(M)$ and cobordisms  $\mathcal{L}ag^{\ast}(\C\times M)$  that 
are used to define the category $\mathsf{C}ob^{\ast}(M)$. This category will be shown later in the 
paper to have rigid surgery models, leading to a proof of Theorem \ref{thm:BIG}.

To fix ideas, the Lagrangians $L$
belonging to the class $\mathcal{L}ag^{\ast}(M)$ are exact, generic immersions $i_{L}:L\to M$, endowed with some additional structures. In short, these additional decorations consist of a primitive, a marking $c$ (in the sense of Definition \ref{def:marked} - thus, a choice of double points of $i_{L}$) 
and some perturbation data associated to $L$, $\mathcal{D}_{L}$, providing good control on the pseudo-holomorpic (marked) curves with boundary on $L$ and such that, with respect to these data,
 $L$ is unobstructed.  The cobordisms $V\in \mathcal{L}ag^{\ast}(\C\times M)$ satisfy similar properties: they 
are exact, marked, immersions $i_{V}:V\to \C\times M$, they also carry perturbation data of a similar nature
as $\mathcal{D}_{L}$ in a way that extends the data of the ends. An additional complication is that 
cobordisms come with an additional decoration consisting of a choice of perturbation required to 
transform such a cobordism with immersed ends, therefore having non-isolated double points along the ends, 
into generic immersions.  

The section is structured as follows: in \S \ref{subsec:unobstr} we first discuss the types of curves with 
boundary that appear in our constructions as well as the notion of unobstructed, marked Lagrangian in $M$;  
in \S \ref{subsubec:marked-cob} we discuss the similar notions for cobordisms; we then 
specialize in \S\ref{subsec:exact} the discussion to the exact case; finally, in \S\ref{subsubsec:cob-cat-def},
after some other adjustments, we define the category $\mathsf{C}ob^{\ast}(M)$.

\

\subsection{$J$-holomorphic curves and unobstructed Lagrangians.}\label{subsec:unobstr}

Immersed Lagrangians have been considered from the point of view of Floer
theory starting with the work of  Akaho \cite{Akaho} by Akaho-Joyce \cite{Akaho-Joyce} as well as other authors such as Alston-Bao \cite{Alston-Bao}, Fukaya \cite{Fukaya-immersed}, Palmer-Woodward \cite{Pal-Wood-imm} and others. A variety of unobstructedness type conditions appear in all these works.
We discuss here only the aspects that are relevant for our approach.

\

While in the embedded case unobstructedness can be deduced from certain topological constraints (such as
exactness or monotonicity) and thus it is independent of choices of almost complex structures and other such data, for immersed marked  Lagrangians this condition is much more delicate as it requires certain counts of $J$-holomorphic curves to vanish ({\em mod $2$}). For these counts to be well defined, the relevant moduli spaces need to be regular and, further, the counts themselves depend on the choices of data.  This makes the definition of an unobstructed immersed, marked, Lagrangian considerably more complicated as one can see in Definition \ref{def:unobstr} below. For immersed marked cobordisms the relevant definitions are even more complex (as seen in Definition \ref{def:unobstrcob}) because one needs to deal with the additional problem that double points of  cobordisms with immersed ends are not isolated. 
\


\

\subsubsection{$J$-holomorphic curves with boundary along marked Lagrangians}\label{subsubsec:curves-marked}
We consider here marked, immersed  Lagrangians $(L, c)$ with $j_{L}:L\to M$ an immersion with only transverse double points, as in Definition \ref{def:marked}. Recall that the set $c\subset L\times L$ is a collection of (ordered) double points of $j_{L}$,  in particular, for each $(P_{-},P_{+})\in c$ we have $j_{L}(P_{-})=j_{L}(P_{+})$.  

Consider a finite $n$-tuple of such Lagrangians $(L_{i}, c_{i})$, $1\leq i\leq n$ 
and assume for the moment that for all $i\not=j$ the intersections of $L_{i}$ and $L_{j}$ 
are transverse and distinct from all self intersection points of the $L_{i}$'s.  We denote by $\mathbf{c}$
the family $\mathbf{c}=\{c_{i}\}_{i}$.

A $J$-holomorphic polygon with boundary on the $(L_{i},c_{i})$'s is a map $u:D^{2}\to M$ with
$u(\partial D^{2})\subset \bigcup_{i} j_{L_{i}}(L_{i})$ such that (see Figure \ref{fig:polygon})
:
\begin{itemize}
\item[i.] $u$ is continuous and smooth on 
$D^{2}\backslash\ \bigcup_{i}\{a^{i}_{1},\ldots a^{i}_{s_{i}}\}$ 
and satisfies $\bar{\partial}_{J}u=0$ inside $D^{2}$. Here
$a^{i}_{k}\in S^{1}$, $1\leq k \leq s_{i}$, and this family $\mathbf{a}=\{a^{i}_{k}\}$ is 
ordered $$(a^{1}_{1}, a^{1}_{2},\ldots, a^{1}_{s_{1}}, a^{2}_{1},a^{2}_{2},\ldots, a^{2}_{s_{2}},
\ldots , a^{n}_{s_{n}})~.~$$ With this order, the points in $\mathbf{a}$ are placed around the circle 
$S^{1}$ in clockwise order. We denote  by $C^{i}_{k}$ the (closed) arc of $S^{1}$ that starts at $a^{i}_{k}$ and stops at the next point in $\mathbf{a}$ (in cyclic order). 
\item[ii.] For each $i,k$, the restriction  $u|_{C^{i}_{k}}$ has a continuous 
lift $\hat{u}^{i}_{k}: C^{i}_{k}\to L_{i}$.
\item[iii.] $u$ has asymptotic corners  (in the usual sense \cite{Se:book-fukaya-categ}) at each of the 
$a^{i}_{k}$'s such that each of the $a^{i}_{k}$ is mapped to a self intersection point of $L_{i}$ 
for $1<k\leq s_{i}$, and to an intersection point of $L_{i}\cap L_{i+1}$ for $a^{i+1}_{1}$, $i\geq 1$
and, for $a^{1}_{1}$, to an intersection point in $L_{n}\cap L_{1}$. If the boundary conditions consist of a single Lagrangian $L_{1}$, we assume that $a_{1}^{1}$ is also mapped to a self intersection point of $L_{1}$. 
\end{itemize}
With the usual conventions for the orientation of $D^{2}\subset \C$, the punctures $a^{i}_{k}$ correspond
to ``entry points'' in the disk. To be more precise, viewing a puncture $a^{i}_{k}$ as an entry or an exit point is equivalent to making a choice of a class of strip-like end coordinates around the puncture: $0$-side of the strip before the $1$-side in clock-wise order corresponds to an entry and the other way around for an exit. 
The convention is that each time we use strip-like coordinates around a puncture, they are assumed to be of the corresponding type as soon as we fix the type of the puncture to be either an entrance or 
an exit.  To relate these configurations to the operations typical in Floer theory, it is often useful to consider all 
punctures as entries except possibly for one for which the strip-like coordinates are 
reversed so that it becomes an exit point.  We will fix the convention that the exit point, if 
it exists, is associated to  $a^{1}_{1}$.

\

We denote the moduli spaces of curves as above by $\mathcal{M}_{J;L_{1},\ldots, L_{n}}(x_{1},\ldots , x_{m}; y)$
where each $x_{i}$ is either one of the self intersection points of an $L_{j}$ viewed as a couple in $((x_{i})_{-}, (x_{i})_{+})\in  L_{j}\times L_{j}$, or an intersection point of 
$L_{j}\cap L_{j+1}$  that can also be viewed as a pair of points   
$((x_{i})_{-}, (x_{i})_{+})\in (L_{j}, L_{j+1})$,  in such a way that the punctures
$a^{i}_{k}$ are sent in order to the $x_{j}$'s, starting with the exit  $a^{1}_{1}$ that is sent to $y$.  Moreover, we fix conventions such that the path $\hat{u}^{i}_{k}(C^{i}_{k})$,  followed  clockwise  around the circle,  starts at a point $(x_{j})_{+}$ (or at $y_{-}$ in the case of $C^{1}_{1}$) and ends at 
$(x_{j+1})_{-}$ (or at $y_{+}$ for $C^{n}_{s_{n}}$).

It is also useful to consider the case when there is no exit point, thus even $a^{1}_{1}$ is an entry. 
The corresponding moduli space is denoted by $\mathcal{M}_{J;L_{1},\ldots, L_{n}}(x_{1},\ldots , x_{m}; \emptyset)$.
In the case of a single boundary condition $L_{1}$, we also allow for the case of no punctures ($\mathbf{a}=\emptyset$) and notice that in this case $u$ is a $J$-holomorphic disk with boundary on $L_{1}$ with the 
moduli space denoted $\mathcal{M}_{J,L_{1}}$. We omit some of the subscripts, such as the boundary conditions $L_{1},\ldots, L_{n}$, if they are clear from the context. 

\

Moduli spaces of this type appear often in Floer type machinery for instance
\cite{FO3:book-vol1}, \cite{Se:book-fukaya-categ} and, in the case of immersed Lagrangians, in
 \cite{Alston-Bao},\cite{Alston-Bao:imm2} as well as \cite{Akaho-Joyce}. 

\

A special class of moduli spaces as above plays a particular role for us. 
We will say that $u$ is a $\mathbf{c}$-marked $J$-holomorphic curve if it is a curve as above
with the additional constraint that:
\begin{itemize}
\item[iv.] If $n\geq 2$, for each $i\in \{1,\ldots, n\}$ and $1<k < s_{i}$  we have $(\hat{u}^{i}_{k}(a^{i}_{k+1}), \hat{u}^{i}_{k+1}(a^{i}_{k+1}))\in c_{i}$. In case $n=1$ and if $u$ has no exit, then we 
also assume   $(\hat{u}^{1}_{s_{1}}(a^{1}_{1}), \hat{u}^{1}_{1}(a^{1}_{1}))\in c_{1}$.
\end{itemize}
In other words, a $\mathbf{c}$-marked curve $u$ is a $J$-holomorphic polygon with asymptotic corners at the punctures 
given by the points $a^{i}_{k}$, such that it switches branches along the Lagrangian $L_{i}$ {\em only} at points belonging to $c_{i}$ and it switches from  $L_{i}$ to $L_{i+1}$ (or, respectively, from 
$L_{n}$ to $L_{1}$) at the puncture point $a^{i+1}_{1}$ (respectively, $a^{1}_{1}$). Notice that, 
if there is a single boundary condition $L_{1}$, and $a^{1}_{1}$ is an exit, then
the switch at the point $a^{1}_{1}$ is not required to belong to $c_{1}$. On the other hand, if there is a single boundary condition $L_{1}$, and all punctures 
are entries, then  we require for $a^{1}_{1}$ to also correspond to a point in $c_{1}$.
We denote the moduli spaces of $\mathbf{c}$-marked polygons by $\mathcal{M}_{J,\mathbf{c}; L_{1},\ldots, L_{n}}(x_{1},\ldots, x_{m};y)$. With respect to this notation, the condition relevant to $\mathbf{c}$ is, in summary, that all the switching of branches at entry points corresponds to $x_{k}$'s that belong to some $c_{i}$.

\begin{figure}[htbp]
   \begin{center}
    \includegraphics[width=0.5\linewidth]{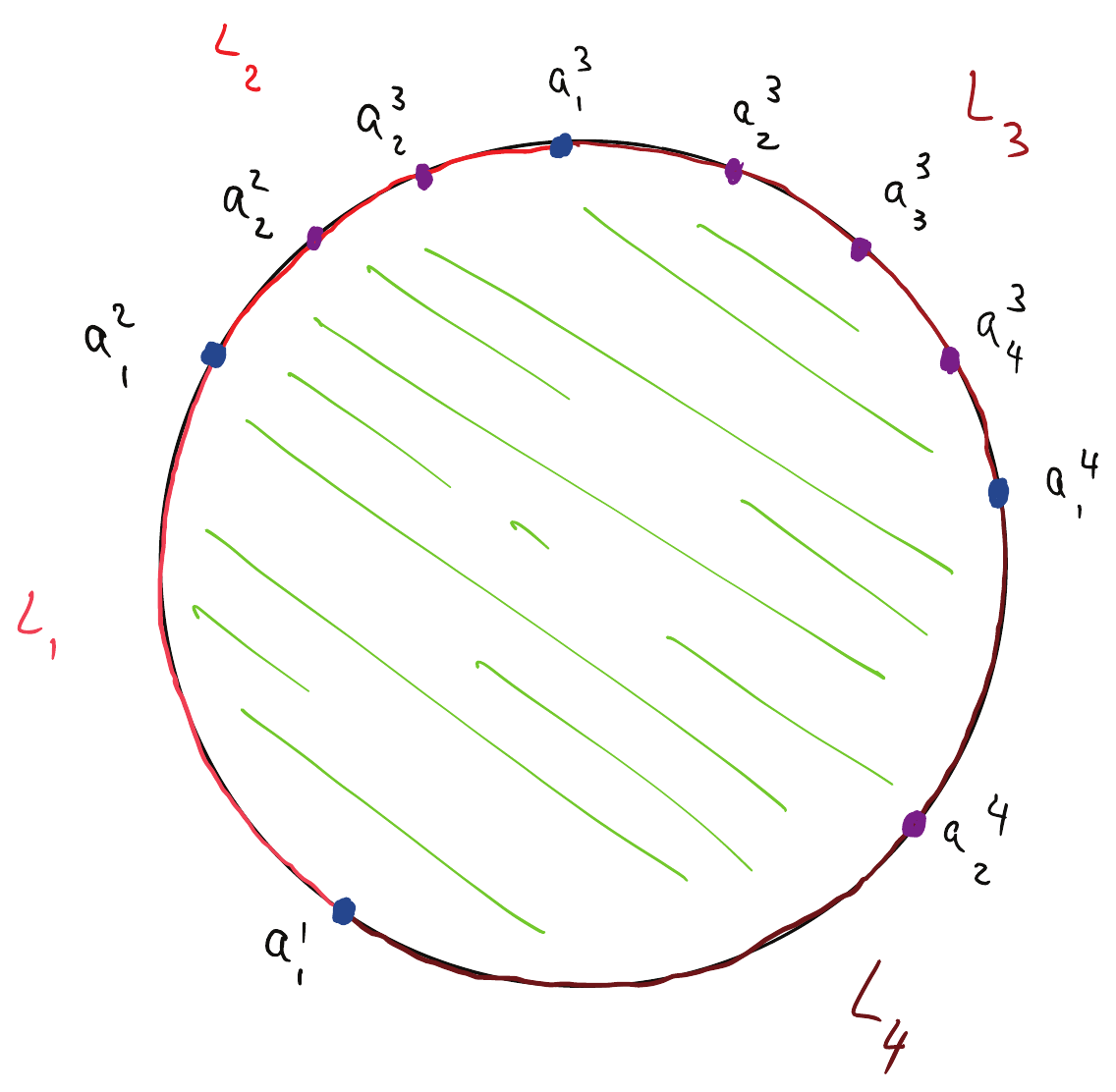}
   \end{center}
   \caption{\label{fig:polygon} An example of a curve $u$  with boundary conditions along $L_{1}$, $L_{2}$, $L_{3}$, $L_{4}$.}
\end{figure}

\

In the literature, most Floer type algebraic constructions associated to immersed Lagrangians  are defined through counts of curves that correspond, in our language, to the case $\mathbf{c}=\emptyset$. In other words, these are curves that do no switch branches along the immersed Lagrangians. In our case, the invariants discussed later in the paper are associated to counts of $\mathbf{c}$-marked curves, thus {\em switching of branches along $L_{i}$  is allowed as long as it  takes place at self intersection points
belonging  to $c_{i}$}.  
 
 \
 
A few special cases are worth mentioning: Floer strips, in the usual sense, correspond to the 
case when there are just two boundary conditions $L_{1}, L_{2}$ and only two punctures
$a^{1}_{1}$ - the exit -  and $a^{2}_{1}$ - the entry (in other words, $s_{1}=1$, $s_{2}=1$); curves
with boundary conditions again along $L_{1}$ and $L_{2}$ but with $s_{1}, s_{2}$ not necessarily
equal to $1$ but still with an exit at $a^{1}_{1}$ will be referred to as $\mathbf{c}$-marked Floer strips; another important special case appears when 
there is a single boundary condition and one puncture $a^{1}_{1}$ which is viewed as an exit, 
these curves are called tear-drops. Such a
curve with boundary condition along $L_{2}$ appears in Figure \ref{fig:tear-drop}. A more general case
of a $\mathbf{c}$-marked curve is also of interest to us: in case there is a single boundary condition $L_{1}$ and $s_{1}>1$, with $a_{1}^{1}$ an exit
and $a^{i}_{k}$ entries for all $k>1$ and, as discussed above, all entries are associated to elements in $c_{1}$.  
This type of curves will be referred to as marked $\mathbf{c}$-marked tear-drops (of course, we include
the usual, non-marked, teardrops among the marked ones). Finally, one last case: again there is just one boundary condition $L_{1}$ but in this case all $a^{1}_{k}$'s are viewed as entries. In case
all these entries (including $a^{1}_{1}$) are included in $c_{1}$ we call the curve a $\mathbf{c}$-marked $J$-disk with boundary on $L_{1}$.

\begin{rem}
a. With our conventions some geometric curves can appear in more than a single category. 
For instance, a ear-drop with an exit $(P_{-},P_{+})$ such that  $(P_{+},P_{-})$ belongs to the marking
$c_{1}$ can also be viwed  as a $\mathbf{c}$-marked disk. However, once the nature of the punctures is fixed there is no such ambiguity.

b. Recall from Remark \ref{rem:direct-sum} that the objects in $\mathcal{L}ag^{\ast}(M)$ are 
unions of immersions with isolated, transverse double points. The definition of the moduli spaces before extends 
naturally to this situation because of condition ii at the beginning of this section, and because the markings are self-intersection points of immersions with isolated, transverse double points.
\end{rem}

\

For a curve $u$ as above we denote by $|u|$ the number of punctures of $u$, $|u|=s_{1}+s_{2}+\ldots + s_{n}$. 
 
\begin{figure}[htbp]
   \begin{center}
    \includegraphics[width=0.4\linewidth]{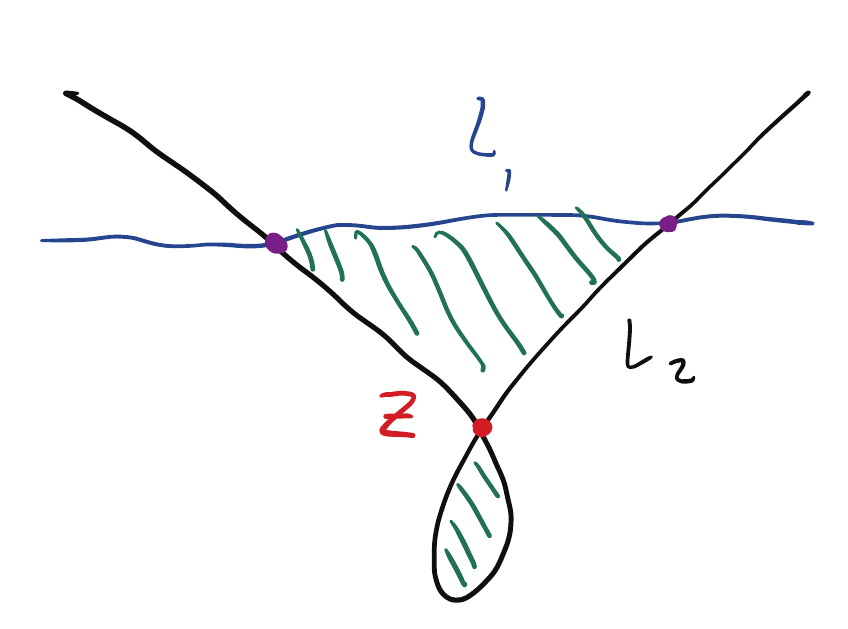}
   \end{center}
   \caption{\label{fig:tear-drop} A tear-drop at $Z$ (along $L_{2}$) and a triangle - a curve with two boundary conditions $L_{1}$, $L_{2}$ and $s_{1}=1$, $s_{2}=2$.}
\end{figure}

\

While in the discussion above we have only considered $J$-holomorphic curves, in practice we need
to also include in our consideration perturbed such curves. We will discuss the system of perturbations
in some more detail below but, to fix ideas, we mention that for all moduli spaces of curves $u$ with $|u|\geq 2$, the choice of these perturbations follows Seidel's scheme in his construction of the Fukaya category \cite{Se:book-fukaya-categ}  (we assume here familiarity with his approach). 

\

Fix one marked, immersed Lagrangian $(L, c)$. 

\begin{dfn}\label{defi:perturbD}A regular, coherent system of perturbation data $\mathcal{D}_{L}$ for $(L,c)$ consists of the following:
\begin{itemize}
\item[i.]A time dependent almost complex structure $J=J_{t}$, $t\in [0,1]$, with $J_{0}=J_{1}$,
 such that:
\begin{itemize}
\item[a.] all moduli spaces of 
$J_{0}$-holomorphic {\em $\mathbf{c}$- marked} disks with boundary on  $L$ are void.
\item[b.]  the moduli spaces of 
non-marked $J_{0}$-holomorphic tear-drops are all void.
\end{itemize}
\item[ii.] A system of perturbations parametrized by the universal-associahedron, as described in \cite{Se:book-fukaya-categ}, such that all the resulting moduli spaces of polygons $u$ with boundary on $L$, with $|u|\geq 2$ and with an exit 
are regular
and satisfy the following additional properties:
\begin{itemize}
\item[a.] On the boundary of each polygon $u$ (with $|u|\geq 1$) the almost complex structure coincides 
with $J_{0}$ and the Hamiltonian perturbation is trivial. 
\item[b.] For each polygon $u$ with $|u|=2$, with one entrance and one exit, 
the respective perturbation has a trivial Hamiltonian term and its almost complex structure part coincides with $J_{t}$ in strip-like coordinates (where $(s,t)\in \R\times [0,1]$).
\item[c.] For each polygon $u$ with $|u|\geq 3$, there are strip-like end coordinates around the punctures
 and in these coordinates the perturbation has vanishing Hamiltonian term and its almost complex part coincides
 with $J_{t}$.
 \item[d.] The choices of perturbations are coherent with respect to gluing and splitting for all curves $u$ with $|u|\geq 2$.
\end{itemize}
\end{itemize}
\end{dfn}

We refer to the almost complex structure $J_{0}$ as the {\em base} almost complex
structure of the coherent system of perturbations $\mathcal{D}_{L}$. In some of the considerations
needed later in the paper we will also need to use curves with some interior marked points (see also point b
in Remark \ref{rem:coherent-perturb} below) but for the moment we will limit our discussion to the 
types of curves introduced above.

\begin{rem} \label{rem:coherent-perturb}
a. Condition i.a in the definition above requires not only that there are no $J_{0}$-holomorphic disks with boundary on $L$ but also that there are no $\mathbf{c}$-marked polygons with boundary on 
$L$ and without an exit.  

b. It is easy to formally relax condition i.b to only require that moduli spaces of (non-marked), possibly perturbed, tear-drops are regular and most further definitions and constructions work under this weaker  
assumption. However, this regularity is hard to achieve in practice in full generality. One of the reasons is that tear-drops carry a single puncture and thus their domains are unstable. As a result, Seidel's recursive scheme in choosing perturbations for polygons with more and more corners does not automatically work. 
 Alternative ways to address this in some settings appear in \cite{Alston-Bao}, another possibility is to implement the Kuranishi method as in \cite{Akaho-Joyce}. Yet another different approach, initiated by Lazarinni \cite{Laz:decomp} for disks and pursued for polygons by Perrier \cite{Per1}, is to attempt to use more special but still generic classes of autonomous almost complex structures in the whole construction. 

In this paper we will mostly 
assume as at point i.b in Definition \ref{defi:perturbD} that {\em all moduli spaces of (non-marked) tear-drops are void}.  Later in the paper, when dealing with the moduli spaces of curves
with boundaries along cobordisms, there are special situations when tear-drops can be stabilized by using an interior marked point and in that case Seidel's method to pick coherent perturbations continues to apply.  

c. Notice that we require regularity of moduli spaces of all polygons with boundary on $L$ and not only
of the $\mathbf{c}$-marked polygons. The reason is that, even if the various $\mu_{k}$ operations  defined later in the paper only use $\mathbf{c}$-marked polygons, the proofs of relations of type $\mu\circ \mu=0$ make also use of non $\mathbf{c}$-marked polygons as these  appear through an application of Gromov compactness. However, the regularity of all polygons can be relaxed at point {\em ii} of Definition
\ref{defi:perturbD} to the regularity of all the moduli spaces of $\mathbf{c}$-marked polygons as well as that of the moduli spaces appearing in compactifications of these.
\end{rem}

\subsubsection{Moduli spaces of marked polygons.} \label{subsubsec:marked-poly}
Given systems of coherent perturbations $\mathcal{D}_{L}$ for each $(L,c)$ in some
 class $Lag^{\ast}(M)$, these can be extended in 
the sense in \cite{Se:book-fukaya-categ} to a coherent system of perturbations for curves with boundary conditions along all the $(L,c,\mathcal{D}_{L})$'s.  In full generality, this requires picking for each pair $(L,c), (L',c')$ Floer data $H_{L,L'}, J_{L,L'}$ where, in general, $H_{L,L'}$ is not vanishing so that the construction can handle Lagrangians with non-transverse intersections. Moreover, $J_{L,L'}$ is non-autonomous and picked in such a way as to agree with $(J_{0})_{L}$ and, respectively, with $(J_{0})_{L'}$ for $t$ close to $0$ (respectively,  close to $1$).

The resulting moduli spaces involving these systems of coherent perturbations will be denoted
by $\mathcal{M}_{\mathcal{D};L_{1},\ldots, L_{n}}(x_{1},\ldots , x_{m}; y)$ for boundary
conditions along $L_{1},\ldots, L_{m}$.
Notice that, in the non-transverse intersection case the punctures of a curve $u$ in these moduli space
correspond to two types of ends: self intersection points $x_{i}\in I_{L_{i}}$ and time-$1$  Hamiltonian chords associated to the Hamiltonians $H_{L_{i},L_{i+1}}$. We denote these chords by $\mathcal{P}(L_{i},L_{i+1})$.  To simplify the discussion we focus below on the transverse case with the understading that the adjustements of the relevant notions to the non-transverse situation are immediate.
Similarly, in the marked case, we have the  moduli spaces  $\mathcal{M}_{\mathcal{D},\mathbf{c};L_{1},\ldots, L_{n}}(x_{1},\ldots , x_{m}; y)$
\

It is useful to regroup various components of these moduli spaces in the following way. 
We fix intersection points (or, more generally, Hamiltonian chords) $h_{i}\in\mathcal{P}( L_{i}, L_{i+1})$ and $y\in \mathcal{P}(L_{1}, L_{n})$. We denote by $\overline{\mathcal{M}}_{\mathcal{D};L_{1},\ldots, L_{n}}(h_{1},\ldots , h_{n-1}; y)$ be the union
of the curves $u\in\mathcal{M}_{\mathcal{D};L_{1},\ldots, L_{n}}(x_{1},\ldots , x_{m}; y)$
where, in order, $x_{1},\ldots, x_{s_{1}}$ are self intersection points of $L_{1}$, 
$x_{s_{1}+1}=h_{1}$, the next $s_{2}$ - points $x_{i}$ are self intersections points of $L_{2}$,
the next point equals $h_{2}$ and so forth around the circle (see again Figure \ref{fig:polygon}). 
In essence, the spaces $\overline{\mathcal{M}}$ group together curves with fixed ``corners''
at the chords $h_{i}\in \mathcal{P}(L_{i}, L_{i+1})$, but with variable numbers of additional corners 
at self intersection points along each one of the $L_{i}$'s. We will also use the notation 
$\overline{\mathcal{M}}_{\mathcal{D},\mathbf{c};L_{1},\ldots, L_{n}}(h_{1},\ldots , h_{n-1}; y)$
for the corresponding $\mathbf{c}$-marked moduli spaces which are defined, as usual, by requiring
that all self intersection corners belong to $c_{i}\subset I_{L_{i}}$. For instance, with this notation, the moduli
space of all $\mathbf{c}$-marked tear-drops at a self intersection point $y\in I_{L_{1}}$
is written as $\overline{\mathcal{M}}_{\mathcal{D}_{L_{1}},c_{1};L_{1}}(\emptyset;y)$

\subsubsection{Unobstructed marked Lagrangians in $M$} 
The purpose of this subsection is to define the notion of unobstructed marked Lagrangians that appears
in the proof of Theorem \ref{thm:surg-models}. 

\begin{dfn}\label{def:unobstr}
A triple $(L,c, \mathcal{D}_{L})$ with $(L,c)$ a marked Lagrangian and $\mathcal{D}_{L}$ a coherent
system of regular perturbations as in Definition \ref{defi:perturbD} is called {\em unobstructed} if, 
for each self intersection point $y\in I_{L}$, the $0$-dimensional part of the moduli space of $\mathbf{c}$-marked tear-drops $\overline{\mathcal{M}}_{\mathcal{D}_{L},c_{1};L}(\emptyset;y)$ is compact and the $mod\ 2$ number of its elements vanishes.
\end{dfn}
\begin{rem}\label{rem:bding-chain}
In the terminology of \cite{FO3:book-vol1} and \cite{Akaho-Joyce} the choice of a marking $c$ as in the
definition above is a particular case of a {\em bounding chain}. 
\end{rem}

Over $\Z/2$ the definitions of Floer and Fukaya category type algebraic structures 
generally require that the relevant moduli spaces satisfy certain compactness conditions ensuring that the 
relevant $0$-dimensional spaces are compact and that the $1$-dimensional moduli spaces have appropriate compactifications. This follows from Gromov compactness as long as apriori energy bounds are available.
We integrate this type of condition in our definition of an unobstructed class of Lagrangians.

\begin{dfn}\label{def:finite-energy}
A class $Lag^{\ast}(M)$ of marked, immersed
Lagrangians $(L,c, \mathcal{D}_{L})$, each of them unobstructed in the sense of Definition \ref{def:unobstr}, is {\em unobstructed}
 with respect to a system 
of coherent perturbations $\mathcal{D}$ that extends the $\mathcal{D}_{L}$'s if:
\begin{itemize}
\item[i.] $\mathcal{D}$ is regular.
\item[ii.] for any finite family $(L_{i},c_{i})$, $1\leq i\leq m$, there exists a constant $E_{L_{1},\ldots, L_{n};\mathcal{D}}$
such that we have  $E(u)\leq E_{L_{1},\ldots, L_{n};\mathcal{D}}$ for any $h_{i}\in \mathcal{P}(L_{i}, L_{i+1})$, $y\in \mathcal{P}(L_{1},L_{n})$ and $u\in\overline{\mathcal{M}}_{\mathcal{D},\mathbf{c};L_{1},\ldots, L_{n}}(h_{1},\ldots , h_{n-1}; y)$.
\end{itemize}
\end{dfn}

Here $E(u)$ is the energy of the curve $u$, $E(u)=\frac{1}{2}\int_{S}||du-Y||^{2}$ (where $S$ is the 
punctured disk, domain of $u$, and $Y$ is the $1$-form with values in Hamiltonian vector fields used in the 
perturbation term). Regularity of $\mathcal{D}$
means that all the moduli spaces of $\mathbf{c}$-marked (and non-marked) polygons with boundaries along families picked from $Lag^{\ast}(M)$ are regular.
We refer to condition ii. as the {\em energy bounds} condition relative to $\mathcal{D}$.

\begin{rem}
a. Through an application of Gromov compactness,  the energy bounds condition implies that the number of corners at self intersection points of all  $\mathbf{c}$-marked polygons with boundaries along the family $(L_{1},\ldots, L_{n})$ is uniformly bounded.

b. Stardard methods show that, starting with a family $\{(L,c,\mathcal{D}_{L})\}$  of 
unobstructed marked Lagrangians, it is possible to extend the perturbations $\mathcal{D}_{L}$ to a coherent, regular, system of perturbations $\mathcal{D}$, as required at point i. in the definition. 
However, in the absence of some {\em apriori} method to bound
the energy of the curves in the resulting moduli spaces - such as estimates involving primitives of the Lagrangians involved or monotonicity arguments etc - defining invariants over $\Z/2$ is not possible even for this regular $\mathcal{D}$. Moreover, the energy bounds condition itself is not stable with respect to 
small perturbations and constraints such as exacteness or monotonicity are lost through surgery. As a consequence,  we made here the unusual choice to integrate the
energy bounds condition in the definition.

c. When working over the universal Novikov field the energy bounds condition is no longer required.
\end{rem}

\subsection{Unobstructed marked cobordisms.}\label{subsubec:marked-cob}

The next step is to explain the sense in which a class of marked, immersed, cobordisms 
$Lag^{\ast}(\C\times M)$ is unobstructed. In essence, the condition is 
the same as in Definition \ref{def:unobstr} but there are additional subtelties in this case that have to do with the fact that a cobordism with immersed ends has double points that are not 
isolated. 

\

We start by making more precise  the type of immersions included in the class 
$Lag^{\ast}(\C\times M)$. These are unions of 
Lagrangian cobordism immersions $j_{V}:V \to  \C\times M$
such that the immersion 
$j_{V}$ has singular points of (potentially) two 
types: the first type consists of isolated, transverse double points; the second are clean intersections along  a sub-manifold $\Sigma_{V}$ of dimension $1$ in $\C\times M$ (possibly with boundary, and not necessarily compact or connected) such that the projection $$\pi:\C\times M\to \C$$ restricts to an embedding on each connected component of $\Sigma_{V}$. Moreover, each non-compact component of $\Sigma_{V}$ corresponds
to an end of the cobordism.  More precisely, if a cobordism $V$ has an end $L$ (say positive) that is immersed with a double point $(P_{-},P_{+})$, then, by definition,  a component of $\Sigma_{V}$ 
includes an infinite semi-axis of the form $[a,+\infty)\times \{k\}\times \{P\}$ where
$P=j_{L}(P_{-})=j_{L}(P_{+})$. We assume that all non-compact components of $\Sigma_{V}$ correspond to a double point from the fiber in this sense.  We continue to denote by $I_{V}$ the 
double point set of such an immersion: $$I_{V}=\{(P_{-},P_{+})\subset V\times V \ : \ j_{V}(P_{-})=j_{V}(P_{+})\}~.~$$ 
We denote the $i$-dimensional subspace of $I_{V}$ by $I^{i}_{V}$, $i\in\{0,1\}$. In particular,
$\Sigma_{V}$ is the image of $I^{1}_{V}$ through $j_{V}$.

As in the case of the class $Lag^{\ast}(M)$, is is sufficient to focus here on the cobordisms $V$ as above
as the various arguments extend trivially to finite disjoint unions of such objects.


\subsubsection{A class of perturbations for cobordisms with $1$-dimensional clean intersections.}\label{subsubsec:preturb}

Dealing analytically with clean self intersections along intervals with boundary is probably possible but it requires some new ingredients (due to the boundary points) that we prefer to avoid (for Floer theory
for Lagrangians with clean intersections see \cite{Schmaschke-imm}). 
For this purpose we will  describe below a class of perturbations that transforms such a cobordism $V$ into
a Lagrangian $V_{h}$ immersed in $\C\times M$ which is no longer a cobordism but has isolated,
generic double points and whose behaviour at $\infty$ presents ``bottlenecks'' in the sense 
of \cite{Bi-Co:lcob-fuk}. These perturbations will also be needed to even define the notion of marked cobordism. 

\

Fix a cobordism $V$ immersed in $\C\times M$, as discussed before, with clean intersections
along a one-dimensional manifold $\Sigma_{V}$. All perturbations to be considered here 
can be described as follows. Consider $U\subset T^{\ast}V$ a small neighbourhood of the $0$-section.
We can take $U$ small enough so that the immersion $i_{V}:V\to \C\times M$ extends to an 
immersion (still denoted by $i_{V}$) of $U$ which is symplectic and such that $i_{V}$ is an embedding 
on each $U_{x}=U\cap T^{\ast}_{x}V$. We then consider a small Hamiltonian perturbation (that will be made more precise below) of the $0$-section, $V_{h}\subset U$. The perturbed immersion that we are looking for is given by the restriction of $i_{V}$ to $V_{h}$. 

We now describe more precisely the perturbation $V_{h}$ of the $0$-section. It will be written as the 
time one image of $V$ through a Hamiltonian isotopy induced by a Hamiltonian $h$. To make this $h$
explicit, notice that the 
$1$-dimensional manifold $\Sigma_{V}$ has some compact components as well as some non-compact ones. We focus now on one of the non-compact components, $\Sigma'$. This corresponds to one 
of the ends $L$ of the cobordism and, more precisely, to a double point $(P_{-},P_{+})\in I_{L}$ of the 
immersion $j_{L}$. We consider two small disks $D_{-}$ and $D_{+}$ in $L$ around respectively
the points $P_{-}$ and $P_{+}$. To fix ideas, we imagine these two disks to be of unitary radius.
Inside these disks we consider smaller disks $D'_{-}$ and $D'_{+}$ of half radius as well as closed 
rings $C_{-}$, $C_{+}$ that are, respectively, the closure of the complements of the $D'_{\pm}$'s inside the
disks $D_{\pm}$.

By assumption, outside of a compact set, $\Sigma'$ is of the form $[a,+\infty) \times \{k\}\times \{P\}$
and we have the inclusions $[a,+\infty) \times \{k\}\times (D_{-}\sqcup D_{+})\subset [a,+\infty) \times \{k\}\times L\subset V\subset U$. 
Outside of $[a, +\infty)\times\{k\}\times (D_{-}\cup D_{+})$ we take the pertubation $h$ to be the identity and thus $V_{h}$ is equal to $V$ there.  We now describe the construction on the region $[a,+\infty) \times \{k\}\times (D'_{-}\cup D'_{+})$.  
We consider a Hamiltonians $h_{\pm}$ defined in a small neighbourhood of the zero section of $T^{\ast}V$ given as a composition $h_{\pm}=\hat{h}_{\pm}\circ Re( \pi\circ j_{V})$
where $\hat{h}_{\pm}: [a,\infty)\times \to \R$ has a nondegenerate critical point at $a+2$, is linear 
for $t\geq a+4$ (increasing for $h_{+}$ and decreasing for $h_{-}$)  and vanishes for $t\in [a, a+1]$,
see Figure \ref{fig:bottle}. 

The perturbation $h$ is required to agree with $h_{\pm}$ on $[a, +\infty)\times\{k\}\times (D'_{-}\cup D'_{+})$. We now describe $h$ on $[a, +\infty)\times\{k\}\times (C_{-}\cup C_{+})$. We write
each $C_{\pm}$ as a cylinder $S_{\pm}\times [0,1]$ with the $0$ end corresponding to the inner
boundary of $C_{\pm}$ and the $1$ end corresponding to the outer boundary. Denote by $s$ the
variable associated to the height in these cylinders.
We define $h:[a, +\infty)\times\{k\}\times C_{\pm}\to \R$, to be an interpolation $h^{s}_{\pm}$, $s\in [0,1]$
between $h_{\pm}$  and $0$, equal to $h_{\pm}$ for $s$ close to $0$ and equal to $0$ for $s$
close to $1$. More explicitely, $h^{s}_{\pm}=\hat{h}^{s}_{\pm}\circ Re( \pi\circ j_{V})$ 
with $\hat{h}^{s}_{\pm}=(1-s)\hat{h}_{\pm}$. The image of $V$ through the time one
hamiltonian diffeomorphism associated to $h$, $V_{h}$, has a projection onto $\C$ (along the end discussed here) as in Figure \ref{fig:bottle}. Assuming $h_{\pm}$ sufficiently small we see that,
inside the set of double points of $j_{V}|_{V_{h}}$, the set $\Sigma'$ has been replaced by a union
of a single, generic, double point that lies over $\{a+2\}\times \{k\}$ (and, in the fibre, it is close to $P\in M$)  and a
clean intersection, compact component along $\Sigma'_{h}=[a,a+1]\times \{k\}\times\{P\}$.
We will say that the double point over $\{a+2\}\times \{k\}$ is a bottleneck at $a+2$.
This construction can be repeated for 
all non-compact ends. There is an obvious adjustment for the negative ends where the bottlenecks will be over
$-a-2$. Notice, that in this case too, the crossing pattern of the two curves $\gamma_{-}$ and 
$\gamma_{+}$ over the bottleneck is assumed to be the same as in Figure \ref{fig:bottle}.
Once this construction completed, we are left 
with double points that are either generic and isolated or along clean intersections along compact $1$-manifolds.  The construction of $V_{h}$ ends by a generic perturbation, 
supported in a neighbourhood of the preimage through $j_{V}$ of these compact components,
such that these components are also replaced by a union of isolated double points.
\begin{figure}[htbp]
   \begin{center}
    \includegraphics[width=0.55\linewidth]{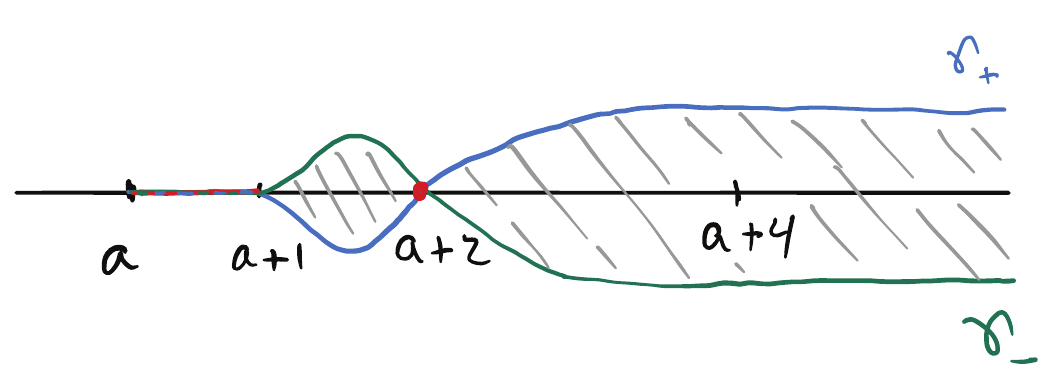}
   \end{center}
   \caption{\label{fig:bottle} The projection onto $\C$ of the $L$-end of $V_{h}$, 
   after replacing the semi-axis of double points
   $\Sigma'=[a,\infty)\times \{k\}\times \{P\}$ with a compact clean intersection along $[a,a+1]$ and a double point over $\{a+2\}\times\{k\}$.}
\end{figure}

\begin{rem} For each double point $P$ of an end $L$ of the cobordism $V$, the perturbation above
 requires a choice of a lift of $P$ to $I_{L}$. We will refer to a perturbation $h$ as before to be {\em positive} relative
to a point $(P_{-},P_{+})\in I_{L}$ if $h$ has the profile in Figure \ref{fig:bottle}. In other words, if
 $V_{h}$ contains $\gamma_{+}\times P_{+}\cap \gamma_{-}\times P_{-}$. It is negative relative 
 to $(P_{-},P_{+})$ in the opposite case. We denote all the intersection points where $h$ 
 is positive by $(I_{L})^{+}_{h}$.
\end{rem}

\subsubsection{Unobstructed cobordisms} \label{subsubsec:marked-cob}
The definition of unobstructed marked cobordism below is quite complicated. 
This being said, it is a minimal list of conditions to attain the following  aims: first, such a cobordisms $V$ should allow the definition of Floer homology $HF(V',V)$ for $V'=\gamma\times L$  with $\gamma\subset \C$ an appropriate planar curve and $L\in Lag^{\ast}_{e}(M)$; secondly, the data associated to $V$ should restrict appropriately to the ends; finally, cabling in the sense of \S\ref{sec:cob} should be possible inside this class.
In practice, these conditions will become much simpler in the exact setting that we will mainly focus on,
as indicated in \S\ref{subsec:exact-cob}.

\

Assume that $\{(L'_{j},c'_{j})\}$ and $\{(L_{i},c_{i})\}$ are respectively marked Lagrangians in $M$.
\begin{dfn}\label{def:marked-cob}
A {\em marked cobordism} $(V,h,c)$ between these two families of marked Lagrangians is an immersed
Lagrangian cobordism
$V: (L'_{j})\cobto (L_{i})$  as in \S\ref{subsubsec:preturb}, with double points either transverse and isolated or along clean $1$-dimensional intersections, together with a perturbation $V_{h}$ as above and a choice of $c\in I_{V_{h}}$ such that:
\begin{itemize}
\item[i.] The restriction of $c$ over the positive bottlenecks coincides with $c'_{j}$ for the end $L'_{j}$. Similarly, the restriction of $c$ over the negative bottlenecks coincides with $c_{i}$ for the end $L_{i}$.
\item[ii.] The perturbation $h$ is positive relative to each point of $c$ that  corresponds to an end, as at point \emph{i} above.
\end{itemize} 
\end{dfn}

Defining unobstructedness for cobordisms requires consideration of some additional moduli spaces of (marked) $J$-holomorphic curves. The reason is that cabling produces naturally tear-drops as one can see from Figure \ref{Fig:cabling}.  In turn, this leads to difficulties with regularity, as mentioned before in Remark \ref{rem:coherent-perturb} b.  

\

Consider a marked cobordism $(V,h,c)$ as above. To define the new moduli spaces we first fix some points in the plane $P_{1},\ldots, P_{k}$ such that
$(\{P_{i}\}\times M)\cap V_{h}=\emptyset$. The new moduli spaces consist of marked, perturbed $J$-holmorphic polygons $u$ (just as in the definitions in \S\ref{subsubsec:curves-marked}) with boundary along   $V_{h}$  but with additional distinct (moving) interior marked points  $b_{1},b_{2},\ldots, b_{r}\in int(D^{2})$ with the property that $u(b_{i})\in \{P_{j}\}\times M$ for some $j$. Around each point $P_{j}$ we fix  closed disks $D_{j}$, $D'_{j}$ such that 
$D_{j}\subset D'_{j}\subset \C\backslash  \pi(V_{h})$ and $P_{j}\in int(D_{j})\subset D_{j}\subset int (D'_{j})$. We will assume that each bounded, connected component of $\C\backslash \pi(V_{h})$ contains {\em at most} one point $P_{j}$.  Moreover, we assume that, for each $P_{j}$ there is {\em at most one} of the marked points $b_{j}$ that is sent to $P_{j}\times M$.

\begin{rem} \label{rem:interior-mark} The condition that each marked point $b_{i}$ is sent to a different
$P_{j}\times M$ is quite strong. The reason it is needed is that we are working here over $\Z/2$ and
 counting meaningfully configurations with more marked points being sent to the same hypersurface $P_{j}\times M$ is delicate.
\end{rem} 

We will consider moduli spaces of curves $u$ with marked points along the boundary and with 
interior marked points $b_{i}$ that satisfy a Cauchy-Riemann type equation
such that the allowable perturbations appearing in the  (perturbed) Cauchy-Riemann type equation associated to these $u$ contain a perturbation supported 
 in small neighbourhoods of the points $b_{i}$, in the domain of $u$, and defined 
in terms of almost complex structures and  Hamiltonian perturbations supported inside $D_{i}\times M$.
Over the region $D'_{i}\backslash D_{i}$ the Hamiltonian perturbation term is trivial
and the almost complex structure is of the form $i\times J_{0}$ (outside of the region $D'_{i}\times M$
we allow for the possibility of other perturbation terms; in short, the support of our perturbations is 
in $(\C\backslash D'_{i})\cup D_{i}$).  The resulting moduli spaces will be denoted
by  $\mathcal{M}_{(\mathcal{D}, P_{1},\ldots, P_{r});\mathbf{c};V}(x_{1},\ldots , x_{m}; y)$. Notice that the choices available for a marked point $b_{i}$ as before add two parameters  to the relevant moduli space and  the condition $u(b_{i})\in \{P_{i}\}\times M$ substracts two parameters.

\begin{rem}
In principle we would like to avoid domain dependent perturbations. With the degree constraints given here this is probably possible but we will not pursue this appoach.\end{rem}

\begin{figure}[htbp]
   \begin{center}
      \includegraphics[width=0.87
      \linewidth]{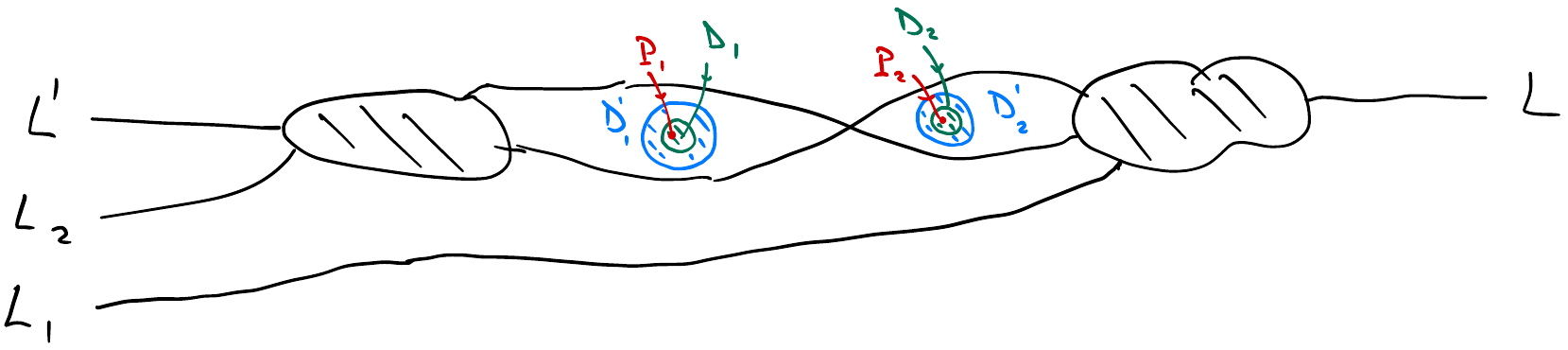}
   \end{center}
   \caption{\label{fig:markedpts} The projections of a perturbed cobordism $V_{h}$ onto $\C$
   together with the poinst $P_{1},P_{2}\in\C$ used to stabilize tear-drops.}
\end{figure}

Our choices mean that, for any point $P_{i}$ and any curve $u$ in one of these moduli spaces, there 
is a well defined degree of the map $\pi\circ u$ at any of the points $P_{i}$ and, because the composition
$\pi\circ u$ is holomorphic over $D'_{i}\backslash D_{i}$ this degree is positive  or null.
Moreover, the degree is null only if the curve $u$ does not go through $P_{i}$. 

We will say that a curve $u$ is of {\em positive degree} if one of these degrees is non vanishing and is {\em plane-simple} if each of these degrees is $0$ or $1$.
All the curves that we will need to use here will be plane-simple. 

We denote by $||u||$ the total valence of $u$ - the number given as $|u|+$ twice the number
of internal punctures of $u$ (recall that $|u|$ is the number of boundary punctures of $u$).   
By convention, we will assume that an element  $u\in\mathcal{M}_{(\mathcal{D}, P_{1},\ldots, P_{s});\mathbf{c};V}(x_{1},\ldots , x_{m}; y)$ has a positive degree at each of the $P_{i}$'s that appear
as subscripts.  As mentioned before, in our examples, the degree of the curves $u$ at each 
point $P_{j}$ is at most $1$ and there is precisely a single marked point $b_{j}$ associated 
to each $P_{j}$ for which this degree is non-vanishing.
 
 We will view below the choice of points $P_{1},\ldots, P_{k}$ and the relevant perturbations to be 
included in the perturbation data $\mathcal{D}_{V_{h}}$. We call the points $P_{j}$ the {\em pivots} associated to $V$ and we will denote the associated hypersurface
$\cup_{j} P_{j}\times M$ by $H_{V}$. In terms of terminology, we will refer
to  $\mathbf{c}$-marked polygons, tear-drops etc without generally distinguishing between the corresponding objects with interior marked points and those without them: thus tear-drops can be curves with $|u|=||u||=1$ or  curves with $|u|=1$, $||u||\geq 3$.  

In this setting, we define unobstructed cobordisms as follows:

\begin{dfn}\label{def:unobstrcob}
A cobordism $(V,h,c)$ as in Definition \ref{def:marked-cob}, is {\em unobstructed} if:
\begin{itemize}
\item[i.]   there is a system of coherent perturbation data
$\mathcal{D}_{V_{h}}$  containing a base almost complex structure, denoted $\bar{J}_{0}$, 
defined on $\C\times M$ and such that:

\begin{itemize} \item[a.] the moduli spaces of $\bar{J}_{0}$-marked disks are void and the moduli spaces of non-marked $\bar{J}_{0}$-holomorphic tear-drops are also void except for those moduli spaces of
tear-drops that are of positive degree and plane-simple.
\item[b.] the regularity and coherence conditions in Definition \ref{defi:perturbD} apply but only to those moduli spaces of curves with $||u||\geq 2$ consisting of $\mathbf{c}$-marked polygons 
and the moduli spaces appearing in their compactification (as in Remark \ref{rem:coherent-perturb} c).
\item[c.] the $0$-dimensional moduli space of teardrops with $||u||\geq 3$ associated to  $(V_{h}, c, \mathcal{D}_{V_{h}})$ at any point $y\in I(V_{h})$ is compact and the number of its elements {\em mod $2$} vanishes (as in Definition \ref{def:unobstr}). 
\end{itemize}
\item[ii.] the data $\mathcal{D}_{V_{h}}$  extends  the corresponding data for the ends in the following sense: 
\begin{itemize}
\item[a.] There are coherent systems of perturbations $\mathcal{D}_{L_{i}}$ and $\mathcal{D}_{L'_{j}}$
for all the ends of $V$ such that the base almost complex structure $J_{0}$ (as  in Definition \ref{defi:perturbD} \emph{i}) is the same for all $L_{i},L'_{j}$.
\item[b.] The triples $(L_{i},c_{i},\mathcal{D}_{L_{i}})$ and $(L'_{j}, c'_{j},\mathcal{D}_{L'_{j}})$ are
unobstructed. 
\item[c.] The  base almost complex structure $\bar{J}_{0}=\bar{J}_{V_{h}}$  has the property that $\bar{J}_{0}=i\times J_{0}$ over a region of the form $(-\infty, -a-\frac{3}{2}]\times \R\ \bigcup\  [a+\frac{3}{2},\infty)\times \R$.
\item[d.] The coherent system of perturbations $\mathcal{D}_{V_{h}}$ has the property that it restricts to the
$\mathcal{D}_{L_{i}}$, respectively, $\mathcal{D}_{L'_{j}}$  over all negative, respectively, positive
bottlenecks.
\item[e.] The perturbation $h$ and the data $\mathcal{D}_{L_{i}}$, $\forall i$, 
have the property that the moduli spaces of polygons in $M$ with boundary
on $L_{i}$ and with  entries all belonging to $(I_{L_{i}})^{+}_{h}$  are non-void only if the exit  of the polygons also belongs to $(I_{L_{i}})^{+}_{h}$.
Similarly for $\mathcal{D}_{L'_{j}}$ and $L'_{j}$, $\forall j$.  
\end{itemize}
\end{itemize}
\end{dfn}

We will denote an unobstructed cobordism in this sense by $(V,h,c,\mathcal{D}_{V_{h}})$.

\begin{rem}\label{rem:gluing-pert}
a. Assumptions {\em i, ii} in Definition \ref{def:marked-cob} and {\em ii.}\emph{a},\emph{b},\emph{c},\emph{d}
in Definition \ref{def:unobstrcob}
are basically what is expected from compatibility with the ends. We have explained before the 
reason to allow the existence of tear-drops as well as the use of interior marked points in achieving 
regularity.

The remaining parts of Definition \ref{def:unobstrcob}
that warrant explanation are assumptions {\em i}.{\em b}  and \emph{ii.}\emph{e}.
These two requirements are a reflection of the following remark. Consider a moduli space $\mathcal{M}$ of polygons in $M$ with boundary on an end $L_{i}$ and assume that it is regular in $M$.
The same moduli space can be viewed as a moduli space $\widehat{\mathcal{M}}$ of polygons in $\C\times M$, with boundary along $V_{h}$, but included in the fibre over a  bottleneck. In general, $\widehat{\mathcal{M}}$ is  regular only if $h$ is positive along all the corners of the polygons. There might exist non-void moduli spaces of polygons in $M$ with corners that are not all positive for $h$. When these moduli spaces are viewed as moduli spaces in $\C\times M$, they are no longer regular and further perturbations to remedy this situation interferes with condition \emph{ii.c}.
We deal with this difficulty by using condition \emph{ii.e} to 
ensure that moduli spaces of $\mathbf{c}$-marked polygons, included in the fiber over a bottleneck, remain regular in $\C\times M$ and that in the compactification of such moduli spaces only appear
moduli spaces with corners where $h$ is positive, and thus their regularity is also achieved.  
This is sufficient for our purposes as all algebraic structures of interest to us
are defined in terms of $\mathbf{c}$-marked polygons.

Obviously, 
given fixed ends for a cobordism together with associated coherent systems of data, showing that a perturbation $h$ as before exists is non-trivial.  However, this turns out to be rather easy to do under 
the exactness assumptions that we will use in our actual constructions.

b. Inserting the data $h$ in the definition of a marked cobordism has the disadvantage that one needs to be 
precise on how gluing of cobordisms behaves with respect to these perturbations. In particular, if $(V,h)$ and 
$(V',h')$ need to be glued along a common end $L$, then the perturbations $h$ and $h'$ have to be positive 
exactly for the same double points of $L$. On the other hand, there is no obvious way to avoid including this $h$
in the definition because associating the marking $c$ directly to connected components of $I_{V}$ is not sufficient to canonically induce a marking to a deformation $V_{h}$ and, at the same time, for Floer homology type considerations the actual object in use is precisely the marked Lagrangian $V_{h}$. 

c. In our arguments, whenever non-marked tear-drops will contribute to the definition of algebraic structures they will carry interior punctures $b_{j}$ and the relevant moduli spaces will be regular (condition i.b). At the same time, if such tear-drops exist there are also non-perturbed tear-drops, with the same degree relative to each pivot  $P_{i}$, for which regularity is not necessarily true (they can be seen to exist by making perturbations $\to 0$). This explains the formulation at i.a: these non-marked $\bar{J}_{0}$ (non-perturbed) tear-drops  are allowed to exist even if in all arguments
only perturbed curves $u$ with $||u||\geq 3$ will be used.

\end{rem}
Unobstructed, marked, Lagrangian cobordisms in the sense above can be assembled
in a class $Lag^{\ast}(\C\times M)$ as in Definition \ref{def:finite-energy}. We will denote 
by $\bar{\mathcal{D}}$ the perturbation data for this family.  More details on how to pick this perturbation
data will be given further below, in \S\ref{subsubsec:morph}.

\subsection{Exact marked Lagrangians and cobordisms.}\label{subsec:exact} In this subsection we
specialize the discussion in \S \ref{subsec:unobstr},\ref{subsubec:marked-cob} to the case of 
exact Lagrangians and cobordisms. Our purpose is to prepare the ground to set up the category $\mathsf{C}ob^{\ast}(M)$
that appears in Theorem \ref{thm:BIG}.

\

We fix the context for the various considerations to follow. We  assume that the manifold $(M,\omega)$ is exact, $\omega=d\lambda$, tame at infinity.  We will consider Lagrangian immersions  $j_{L}:L\to M$ that are exact and {\em endowed with a primitive} $f_{L}$,  $j_{L}^{\ast}\lambda=d f_{L}$. We will also assume two
additional assumptions that are generic: 
\begin{itemize}
\item[i.]$j_{L}$ only has double points that are isolated, self-transverse 
intersections.
\item[ii.] if $(P_{-},P_{+})\in I_{L}$, then $f_{L}(P_{-})\not= f_{L}(P_{+})$.
\end{itemize} 
For such an exact Lagrangian we denote by $I_{L}^{<0}$ the set of all {\em action negative} double points
$(P_{-},P_{+})\in I_{L}$. By definition, these are those double points $(P_{-},P_{+})$  such 
that $f_{L}(P_{+}) <  f_{L}(P_{-})$. Similarly, we denote by $I_{L}^{> 0}$ the complement of $I_{L}^{< 0}$ in $I_{L}$.
We will only consider marked Lagrangians of type $(L,c)$ where $L$ is exact with a fixed primitive as above 
and:
\begin{itemize}
\item[iii.]
 $c \subset I_{L}^{ < 0}$.
\end{itemize}
Notice that, in general, the Lagrangian $L$ is not assumed to be connected. 
Condition {\em iii.} will play an important role in a number of places in our construction. In particular, 
in relation to unobstructedness and when composing cobordisms together with their perturbations. Condition {\em ii.}
is of use, among other places, to properly define the composition of cobordisms. Notice that
if $j_{L}$ is an embedding, then $c=\emptyset$ and $L$ trivially satisfies the unobstructedness condition 
in Definition \ref{def:unobstr}. Moreover, the class of all such embedded Lagrangians  is easily seen to satisfy condition {\em ii} in Definition \ref{def:finite-energy}. 

\

Finally, we consider an unobstructed class $Lag^{\ast}(M)$ of {\em exact}, marked Lagrangians $(L,c,\mathcal{D}_{L})$ (as above) in the sense of Definition \ref{def:finite-energy}, in particular they
satisfy the condition \emph{i.b} in Definition \ref{defi:perturbD} (in other words, all moduli spaces of non-marked tear-drops are void). 

\

We denote by $\mathcal{D}$ the perturbation data for this class and by $\mathcal{D}_{L}$ the relevant perturbation data for each individual Lagrangian $L$, as in Definition \ref{def:finite-energy}. 
 For embedded Lagrangians $L$, the data $\mathcal{D}_{L}$ reduces to fixing one almost complex structure $J$ on $M$. We will assume:
 \begin{itemize}
 \item[iv.] for a certain almost complex structure $J$ - called the {\em ground} a.c.s. - on $M$, 
 all exact embedded Lagrangians $L$ (with all choices of primitives)  and with $\mathcal{D}_{L}=J$
 belong to $Lag^{\ast}(M)$,
 \begin{equation}\label{eq:embed} (L,\emptyset, J)\in Lag^{\ast}(M)~.~ \end{equation} 
 \end{itemize}

\begin{rem} \label{rem:geo-rep}
The same geometric immersion $j_{L}:L\to M$ may appear numerous times
inside the class $Lag^{\ast}(M)$: first, it can appear with more than a single primitive $f_{L}$; with various markings $c$; even if both the primitive and the marking are fixed, there are possibly more  choices of data $\mathcal{D}_{L}$ leading to different triples $(L,c,\mathcal{D}_{L})$. At the same time,
the class $Lag^{\ast}(M)$ does not necessarily contain all the unobstructed triples as before, but it does contain all embedded Lagrangians in the sense described above.
\end{rem}

\subsubsection{Fukaya category of unobstructed marked Lagrangians.}\label{subsubsec:modules}
In view of Definition \ref{def:finite-energy}, we apply Seidel's method to construct an $A_{\infty}$-category associated with objects in the class $Lag^{\ast}(M)$ defined as above with perturbation data $\mathcal{D}$. We use homological notation here but, in matters of substance, there is a single difference  
in our setting compared to the construction in \cite{Se:book-fukaya-categ}. In our case, as we deal with {\em marked} Lagrangians $(L,c)$, the operations $\mu_{k}$, $k\geq 1$,
$$\mu_{k}: CF((L_{1},c_{1}),(L_{2},c_{2}))\otimes \ldots \otimes CF((L_{k},c_{k}),(L_{k+1},c_{k+1}))\longrightarrow CF((L_{1},c_{1}),(L_{k+1},c_{k+1}))$$
are defined by counting elements $u$ in $0$-dimensional moduli spaces of $\mathbf{c}$-{\em marked} polygons
$$u\in\overline{\mathcal{M}}_{\mathcal{D};L_{1},\ldots, L_{k+1}}(h_{1},\ldots , h_{k}; y)$$ as defined
in \S\ref{subsubsec:marked-poly}. The conditions in the unobstructedness Definitions \ref{def:unobstr} and \ref{def:finite-energy} imply that the sums involved in the definitions of the $\mu^{k}$'s are well defined
and finite and that the usual $A_{\infty}$ relations are true in this setting. 

\begin{rem}
A slightly delicate point is to show that this category is (homologically) unital. One way to do this is to
use a Morse-Bott type description of the complex $CF(L,L)$, where $j_{L}:L\to M$ is an unobstructed, marked immersion in our class (with $L$ connected), 
through a model that consists of a complex having as generators the critical points of a Morse function $f$ on $L$ together with two copies of each of the self intersection points of $L$. The differential has a part that is given by the usual Morse trajectories of $f$, another part that
consists of marked $J$-holomorphic strips joining self intersection points of $L$ as well as an additional
 part  that consists of (marked) tear-drops starting (or arriving) at a self-intersection point joined by a Morse flow line to a critical point of $f$ (this model is in
the spirit of \cite{Cor-La:Cluster-2} and appears in the non-marked case in \cite{Alston-Bao:imm2}). 
Of course, one also needs to construct the operations $\mu_{k}$ in this setting. We omit the details here but mention that, with this model, it is easily seen that the unit is represented by the class of the maximum, as in the embedded case. 
\end{rem}

This leads to an $A_{\infty}$- category, $\fuk_{i}^{\ast}(M;\mathcal{D})$, where $i$ indicates that the objects considered here are in general immersed, marked Lagrangians. This category contains the subcategory  $\fuk^{\ast}(M;\mathcal{D})$ whose objects are the embedded Lagrangians 
$\mathcal{L}ag^{\ast}_{e}(M)\subset Lag^{\ast}(M)$.  There are also associated derived
versions, $D\fuk^{\ast}_{i}(M;\mathcal{D})$ and $D\fuk^{\ast}(M;\mathcal{D})$.
The Yoneda functor $$\mathcal{Y}:\fuk^{\ast}_{i}(M;\mathcal{D})\to mod (\fuk^{\ast}_{i}(M;\mathcal{D}))$$ continues to be well defined and the Yoneda lemma from \cite{Se:book-fukaya-categ} still applies. In particular, there is a quasi-isomorphism:
\begin{equation}\label{eq:yoneda-imm}
\mor_{mod} (\mathcal{Y}(L),\mathcal{Y}(L'))\simeq \mor_{\fuk} (L,L')~.~
\end{equation}

\subsubsection{The class $Lag^{\ast}(\C\times M)$} \label{subsec:exact-cob}
In this subsection we make precise the type of cobordisms that will appear
in the class $Lag^{\ast}(\C\times M)$. This class can be viewed as a close approximation to the actual
class $\mathcal{L}ag^{\ast}(\C\times M)$ from Theorem \ref{thm:BIG}. Our conventions are that $(\C, \omega_{0})$ is exact with primitive $\lambda_{0}=xdy$, $(x,y)\in \R\oplus \R\cong \C$ and, similarly, the primitive of $\omega_{0}\oplus \omega$ is $\lambda_{0}\oplus\lambda\in \Omega^{1}(\C\times M)$. 

Consider a cobordism $$V: (L_{i})\cobto (L'_{j})~.~$$ 
We fix some additional notation. Given some data defined
for the Lagrangian $V_{h}$, obtained by perturbing $V$ as described in \S\ref{subsubsec:preturb}, we 
denote by $( - ) |_{L_{i}}$ the restriction of this data to the end $L_{i}$ in the fibre (of $\pi:\C\times M\to \C$)
that lies over the bottleneck corresponding to $L_{i}$, and similarly for the negative ends $L'_{j}$. 

\

We consider an unobstructed family $Lag^{\ast}(\C\times M)$ of cobordisms $(V,c,h, \mathcal{D}_{V_{h}})$,
$V: (L_{i})\cobto (L'_{j})$,  as in \S\ref{subsubsec:marked-cob} 
subject to the additional conditions that follow:
\begin{itemize}
\item[i.]  $L_{i}, L'_{j}\in \mathcal{L}ag^{\ast}(M)$, $\forall i,j$.
\item[ii.] $V$ is exact and
the primitive $f_{V}$ on $V_{h}$ has the property that $f_{V}|_{L_{i}}= f_{L_{i}}$,
 $f_{V}|_{L'_{j}}= f_{L'_{j}}$, $\forall i,j$. 
\item[iii.] The double points of $V_{h}$ have the property that $f_{V}(P_{-})\not= f_{V}(P_{+})$
whenever $(P_{-},P_{+})\in I_{V}$ and, moreover, $c\subset I_{V_{h}}^{ < 0}$.
\item[iv.]The perturbation $h$ is positive exactly for those points $(P_{-},P_{+})\in I_{V_{h}}|_{L_{i},L'_{j}}$ such that $(P_{-},P_{+})\in I_{V_{h}}^{ < 0}$.
 \end{itemize}
 \begin{rem}\label{rem:var1}
 a. Notice that if $V$ is exact, then its deformation $V_{h}$ is also exact. In case $V$ is embedded,
 the perturbation $h$ is taken to be $0$ and all restrictions over bottlenecks simply correspond to restrictions to the fiber over the corresponding point (where $V$ is horizontal).
 
 b. Conditions {\em i, ii, iii} are the immediate analogues of the conditions already imposed on $Lag^{\ast}(M)$. Condition {\em iv} ties the perturbation $h$ to the ``action negativity'' of the self intersection points of the ends $L_{i}, L_{j}$ of $V$ in a natural way. The impact of this condition is further discussed 
below. 
 
 c. In the setting above, we can revisit the regularity constraint {\em i} in Definition \ref{def:unobstrcob}  and condition {\em ii.e} in the same definition. The key point is that,
 at least when the Hamiltonian terms in the perturbation data $\bar{\mathcal{D}}$ are sufficiently small (which we will implicitely assume from now on),
 a non-constant polygon $u\in \mathcal{M}_{\mathcal{D}_{V_{h}};V_{h}}(x_{1},\ldots, x_{m};y)$ has 
 energy $$0 < E(u) \approx \sum_{i=1}^{m} (f_{V}((x_{i})_{+})-f_{V}((x_{i})_{-}) - (f_{V}(y_{+})-f_{V}(y_{-}))$$ where $x_{i}\in I_{V_{h}}$ is written as $x_{i}=((x_{i})_{-}, (x_{i})_{+})\in V_{h}\times V_{h}$ (the sign $\approx$ is due to the curvature terms that we do not make explicit here, but assume to be very small). Thus, if $x_{i}\in I_{V_{h}}^{<0}$  (which, we recall means, 
 $f_{V}((x_{i})_{-}) > f_{V}((x_{i})_{+})$, then we also have $y\in I_{V_{h}}^{ < 0}$. In particular, this means that condition {\em ii.e}  in Definition  \ref{def:unobstrcob} is automatically satisfied in our context because condition \emph{iv} above requires that the perturbation $h$ is positive precisely at those self-interesection points (along the ends) that are action negative. 
Moreover, recall that we assume (at point {\em iii} above) 
that the marking $c\subset I_{V_{h}}$ is also action negative
 in  the sense  that $c\subset I_{V_{h}}^{< 0}$. This implies that a $c$-marked polygon with boundary on $V_{h}$ has an exit $y\in I_{V_{h}}^{ < 0}$ and thus there are no $\mathbf{c}$-marked $J$-disks in this setting, thus the disk part  in condition \emph{i.a} in  Definition \ref{def:unobstrcob}  is automatically satisfied.  Further, it is easy to see that all moduli spaces appearing in the compactification of moduli spaces of $\mathbf{c}$-marked polygons only have action negative inputs and an action negative exit.
In short, the regularity constraint  in Definition \ref{def:unobstrcob} can be ensured by requiring that  all moduli spaces $\mathcal{M}_{\mathcal{D}_{V_{h}};V_{h}}(x_{1},\ldots, x_{m};y)$ with 
$x_{i}, y\in I_{V_{h}}^{<0},  \forall i $ are regular - see also Remark \ref{rem:gluing-pert}.
 \end{rem}
 
In view of the remark above, the regularity requirements become simpler in our exact setting and we adjust them slightly 
here to  a form that is easier to use in practice.  We first notice that the discussion in Remark \ref{rem:var1} \emph{c}
above also applies in an obvious way to the Lagrangians $L\in Lag^{\ast}(M)$
and to their systems of coherent perturbations $\mathcal{D}_{L}$. Thus, in our context, 
regularity of moduli spaces of polygons with boundary on $L\subset M$, respectively, $V\subset \C\times M$  is only necessary for those moduli spaces  such that all inputs are action negative
(see also Remark \ref{rem:coherent-perturb} \emph{c}), as long as the 
Hamiltonian perturbation terms in the respective systems of perturbations are sufficiently small.
We will work from now with this understanding of regularity. We formulate it more precisely below. 

\

Given all the various definitions and conditions contained in the previous subsections, it is useful to
list here all the types of assumptions that operate in our setting. 
First, we recall that we work
with Lagrangians $(L,c, \mathcal{D}_{L})$ and cobordisms $(V, c, h,\mathcal{D}_{V_{h}})$ that are exact,
marked, endowed with fixed primitives such that each self-intersection point of $L$ and $V_{h}$ are
action positive or negative (but non-zero) and also with fixed coherent perturbation data $\mathcal{D}_{L}$
and, respectively, $\mathcal{D}_{V_{h}}$. The perturbations $\mathcal{D}_{V_{h}}$ associated to cobordisms also include choices of points in the plane and associated domain dependent perturbations that are used to deal with tear-drops of positive degree (see \S\ref{subsubsec:marked-cob}).

\

In view of the discussion above, the classes $Lag^{\ast}(M)$, $Lag^{\ast}(\C\times M)$ satisfy:
\begin{assumption}I. (regularity)
\begin{itemize}
\item[i.]The Hamiltonian perturbation terms for $\mathcal{D}_{L}$, $\mathcal{D}_{V_{h}}$ are 
sufficiently small so that moduli spaces with action negative inputs and an action positive output 
(or no output) are void.
\item[ii.] condition {\em ii} in Definition \ref{defi:perturbD}  applies to only those moduli spaces with
action negative inputs.
\item[iii.] condition {\em i.b} in Definition \ref{def:unobstrcob} applies to all moduli spaces with action negative inputs.
\end{itemize}
\end{assumption}
There are six other assumptions (each detailed in the various points in Definitions \ref{defi:perturbD}, \ref{def:unobstr}, \ref{def:finite-energy}, \ref{def:marked-cob}, \ref{def:unobstrcob}):
\begin{itemize}
\item[II.] (negativity of markings) All markings for Lagrangians and cobordisms are action negative.
\item[III.] (restriction to ends) Cobordism data restricts to end data over the bottlenecks. In particular,
the ends of $V\in Lag^{\ast}(\C\times M)$ are in $Lag^{\ast}(M)$.
\item[IV.] (positivity of perturbations) For cobordisms $(V,h,c)$ the perturbation $h$ is positive
precisely at the action negative self-intersection points of the ends.
\item[V.] (unobstructedness) The number of $\mathbf{c}$-marked tear-drops through any self-intersection point for Lagrangians as well as cobordisms is $0$ \emph{mod $2$}. The moduli spaces of tear-drops (non-marked) are void for the elements of $Lag^{\ast}(M)$.  The moduli spaces of (non-marked) tear-drops for the elements of $Lag^{\ast}(\C\times M)$ are void except, possibly, for those of positive degree and plane-simple.
\item[VI.] (regularity of data for families) The data $\mathcal{D}$ 
for $Lag^{\ast}(M)$, respectively, $\bar{\mathcal{D}}$ for $Lag^{\ast}(\C\times M)$ extends the data $\mathcal{D}_{L}$, respectively $\mathcal{D}_{V_{h}}$, and is regular.
\item[VII.] (nontriviality) all embedded exact Lagrangians are contained in $Lag^{\ast}(M)$ and
all embedded exact cobordisms are contained in $Lag^{\ast}(\C\times M)$.
\end{itemize}
In our context, as mentioned above, condition {\em ii.e} in Definition \ref{def:unobstrcob} is redundant
and the energy bounds condition in Definition \ref{def:finite-energy} {\em ii} is automatically
satisfied. Moreover, because all the markings are action negative, there are no $\mathbf{c}$-marked $J$-holomorphic disks. The choice of perturbation data $\bar{\mathcal{D}}$ will be made explicit below.
The part of condition VII that applies to cobordisms means that we assume that the class $Lag^{\ast}(\C\times M)$ contains all exact {\em embedded} Lagrangian cobordisms  $V$ as quadruples $(V,\emptyset, 0, i\times J)$ where $J$ is the ground  almost complex structure from (\ref{eq:embed}). An analogue of Remark \ref{rem:geo-rep} remains valid in the case of immersed cobordisms $j_{V}:V\to \C\times M$: such a cobordism can appear multiple times in the class
$Lag^{\ast}(\C\times M)$ due to different primitives $f_{V}$, different perturbations $h$, different 
markings $c$ and different perturbation data $\mathcal{D}_{V_{h}}$.

In reference to point V. above  if  some moduli space of non-marked tear-drops associated to an element $V\in Lag^{\ast}(\C\times M)$ is non-void, we say  that  $V$ {\em carries non-marked tear-drops}. 
If all these moduli spaces are void we say that $V$ is {\em without non-marked tear-drops}.
The purpose of the distinction is that the elements of $Lag^{\ast}(\C\times M)$ without non-marked
tear-drops form a Fukaya category while the other unobstructed cobordisms can be used in Floer type
arguments but with more care due to the presence of the pivots in the plane (see also \S\ref{subsubsec:marked-cob}) . We denote the 
subset of  $Lag^{\ast}(\C\times M)$ consisting of cobordisms without non-marked tear-drops by $Lag^{\ast}_{0}(\C \times M)$ (of course, this class depends on the choice of almost complex structure $J$).

\

 \subsubsection{The Fukaya category of cobordisms.}\label{subsubsec:morph}

 We  denote the class of embedded cobordisms by $Lag^{\ast}_{e}(\C\times M)$.
Embedded  cobordisms are the objects of a Fukaya category, $\fuk^{\ast}(\C\times M)$,
 as constructed in \cite{Bi-Co:lcob-fuk}.  We extend this construction here, namely, we define a Fukaya category of immersed, marked, unobstructed cobordisms $\fuk^{\ast}_{i}(\C\times M; \bar{\mathcal{D}})$ that contains $\fuk^{\ast}(\C\times M)$ as a full subcategory. Along the way, we make more explicit the choice of the perturbation data $\bar{\mathcal{D}}$. The objects of  $\fuk^{\ast}_{i}(\C\times M; \bar{\mathcal{D}})$ are the elements of $Lag^{\ast}_{0}(\C\times M)$, in other words those unobstructed cobordisms
 without non-marked tear-drops.
 
 \
 
The construction is  an almost identical replica of the construction of the embedded cobordism category 
$\fuk^{\ast}(\C\times M)$ in \cite{Bi-Co:lcob-fuk} with a few modifications that we indicate now. 

\

\noindent \emph{a. Profile functions and cobordism perturbations.}\\
Recall from \S\ref{subsubsec:preturb} that the immersed marked cobordisms $(V,c,h)$ have bottlenecks at points of real coordinates $a+2$ for
the positive ends and $-a-2$ for the negative ends (here $a\in (0,\infty)$ is some large constant).
We will make the additional assumption that all the perturbation $h$ associated to marked immersed cobordisms
 - see Figure \ref{fig:bottle} - have the property that the projection of the perturbed Lagrangians 
 along one end is included inside the strip of width $2\epsilon$ (for a fixed small $\epsilon$) 
 along the respective horizontal axis and, more precisely, inside the region drawn in Figure \ref{fig:strict-bottle} below.
\begin{figure}[htbp]
   \begin{center}
      \includegraphics[width=0.68\linewidth]{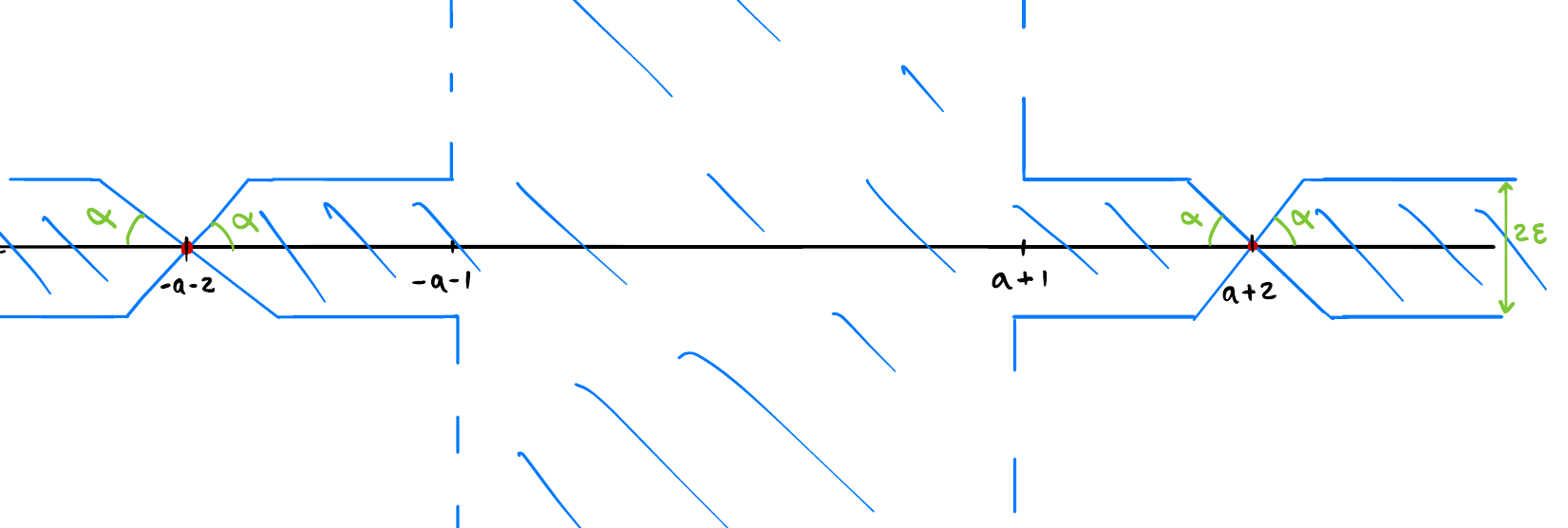}
   \end{center}
   \caption{\label{fig:strict-bottle} The region containing the projection of all perturbations $h$ associated
   to marked, immersed cabordisms $(V,c,h)$.}
\end{figure}

We assume that the angle $\alpha$ in Figure \ref{fig:strict-bottle} is equal to $\frac{\pi}{8}$.
We will assume that all embedded cobordisms in $\mathcal{L}ag^{\ast}_{e}(\C\times M)$ are cylindrical outside the region $[-a-1,a+1]\times \R$. The construction of $\fuk^{\ast}(\C\times M)$ makes use
of a profile function that we will denote here by $\hbar$ (this appears on page 1760-1761 of \cite{Bi-Co:lcob-fuk} with the notation $h$, we use $\hbar$ here to distinguish this profile function from the perturbations associated to immersed marked cobordisms). We recall that, in essence,
 the role of the profile function $\hbar:\R^{2}\to \R$ is to disjoin the ends of cobordisms at $\infty$ by using the associated Hamiltonian flow $\phi_{t}^{\hbar}$.  This profile function also has some bottlenecks, as they appear in  Figures 8 and 9 in  \cite{Bi-Co:lcob-fuk}. The real coordinate of the bottlenecks is fixed in \cite{Bi-Co:lcob-fuk} at $\frac{5}{2}$ for the positive ones and $-\frac{3}{2}$ for the negative (of course, it is later shown that this is just a matter of convention). Our first assumption here 
 is that the positive bottlenecks of $\hbar$ have real coordinate $a+2$ and that the negative one has real coordinate $-a-2$. Moreover, we will assume that the (inverse) hamiltonian isotopy $(\phi_{1}^{\hbar})^{-1}$ associated to $\hbar$ transforms
 the region in Figure \ref{fig:strict-bottle} as in Figure \ref{fig:moved-str-bottle} below.

\begin{figure}[htbp]
   \begin{center}
      \includegraphics[width=0.8\linewidth]{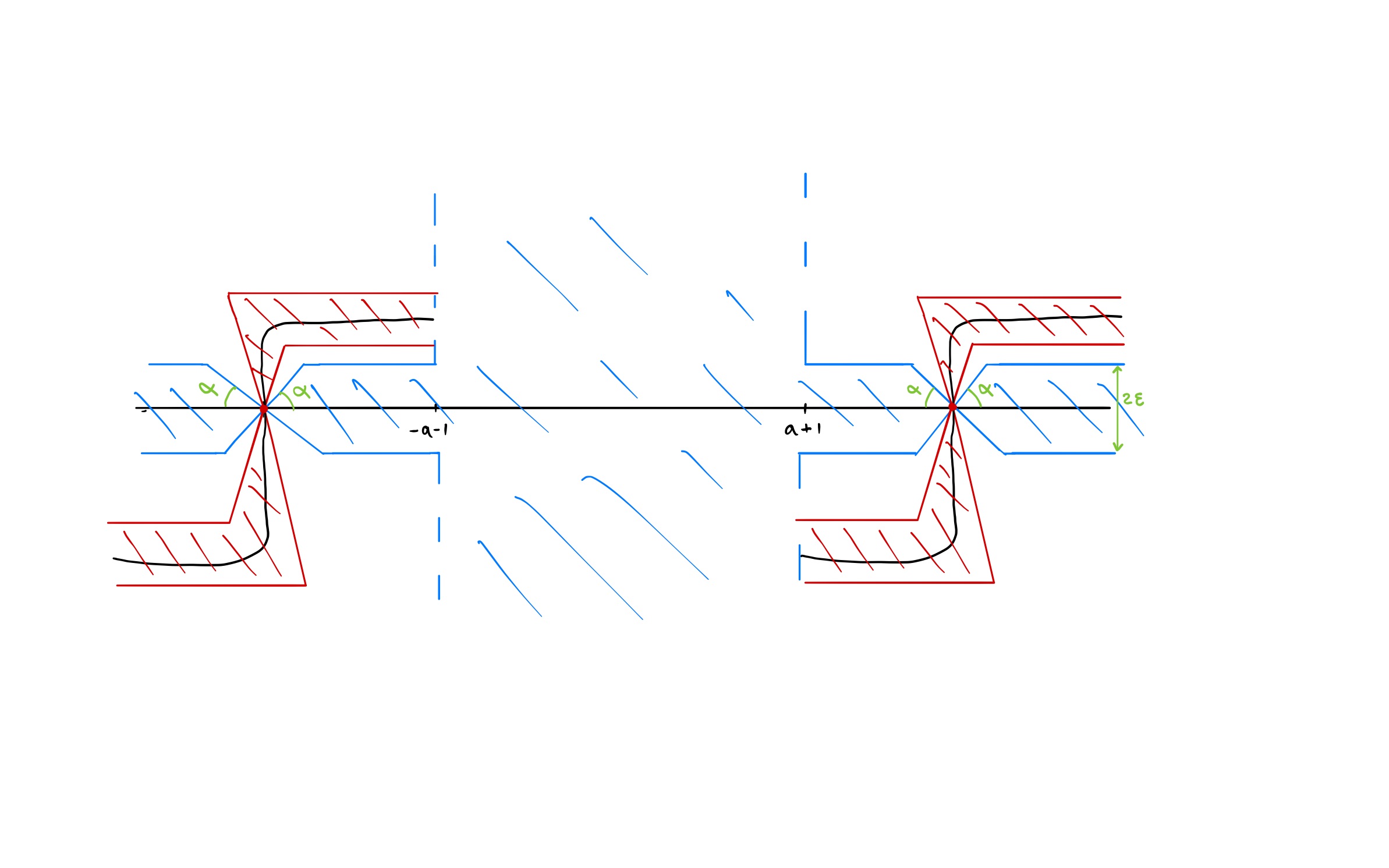}
   \end{center}
   \caption{\label{fig:moved-str-bottle} The red region containing the transformation through 
   $(\phi^{\hbar}_{1})^{-1}$ of the blue region.}
\end{figure}

\noindent \emph{b. Choices of almost complex structures.}\\
Once the profile functions chosen as above, 
the construction proceeds as in \S 3 in \cite{Bi-Co:lcob-fuk} with an additional requirement having to
do with the choice of almost complex structures (see the points ii.p.1763 and iii*. on p.1764 of
\cite{Bi-Co:lcob-fuk}) associated to a family of marked, unobstructed cobordisms $\{(V_{i},c_{i},h_{i},\mathcal{D}_{V_{h_{i}}})\}_{1\leq i\leq k+1}$ (we denote $V_{h_{i}}=(V_{i})_{h_{i}}$).  
The requirement is that the choice of (domain dependent) almost complex structure $\mathbf{J}=\{J_{z}\}_{z}$ on $\C\times M$, where $z\in  S_{r}$ with $r$ varying in the relevant family of pointed disks, has the property that it agrees with the $\bar{J}_{V_{i}}$ - this is the base almost structure of $\mathcal{D}_{V_{h_{i}}}$, see {\em ii.c} in Definition \ref{def:unobstrcob} - along each boundary of $S_{r}$ that lies on the cobordism $V_{h_{i}}$. This condition is added to the point iii*. on p.1764 of \cite{Bi-Co:lcob-fuk}.   All the relevant perturbation data is collected in the set $\bar{\mathcal{D}}$.

\noindent \emph{c. Definition of the $\mu_{k}$ operations.}\\
With the data fixed as above, the operations $\mu_{k}$ are defined as in \S\ref{subsubsec:modules} (see also 
\S\ref{subsubsec:marked-poly} for the definition of the relevant moduli spaces).
Namely, they are given through counts of $\mathbf{c}$-marked polygons:
$$u\in\overline{\mathcal{M}}_{\bar{\mathcal{D}};V_{h_{1}},\ldots, V_{h_{k+1}}}(h_{1},\ldots , h_{k}; y)$$

The moduli space $\overline{\mathcal{M}}_{\bar{\mathcal{D}};V_{h_{1}},\ldots, V_{h_{k+1}}}(h_{1},\ldots , h_{k}; y)$ contains in this case all the $\mathbf{c}$-marked polygons $u$ with $||u||\geq 2$, without distinguishing the interior marked points (see also \S\ref{subsubsec:marked-cob})
Recall form Definition \ref{def:unobstrcob} that the almost complex structures $\bar{J}_{V_{h}}$ are split outside
a finite width vertical band in $\C$ that does not contain the bottlenecks. As a consequence, all the arguments 
related to naturality, and compactness of moduli spaces in \S 3.3 \cite{Bi-Co:lcob-fuk} remain valid. It follows that the operations $\mu_{k}$ are well defined and that they satisfy the $A_{\infty}$ relations. The key point
here is that bubbling off of tear-drops can only produce tear-drops of positive degree (because, by Assumption V,
there are no tear-drops of vanishing degree) and, due to the choice of perturbation data, these are regular and 
in even numbers at each self intersection point of any (perturbed) cobordism $V_{h}$.

\

From the discussion above we conclude that 
 the $A_{\infty}$ category $\fuk^{\ast}_{i}(\C\times M;\bar{\mathcal{D}})$ is now well-defined. Moreover,
 this category contains as a full subcategory $\fuk^{\ast}(\C\times M)$. In particular, to each
 unobstructed, marked cobordism without non-marked tear-drops $(V,c,h,\mathcal{D}_{V_{h}})$ we may associate an $A_{\infty}$
 Yoneda module $\mathcal{Y}(V)$ over $\fuk^{\ast}_{i}(\C\times M;\bar{\mathcal{D}})$ and, by restriction, also an $A_{\infty}$ module over $\fuk^{\ast}(\C\times M)$. 
 
 \subsubsection{Algebraic structures associated to cobordisms that do carry non-marked tear-drops.}\label{subsubec:tear}
Given a cobordism $V\in Lag^{\ast}(\C\times M)$ that carries non-marked teardrops, it is still possible
to define certain Floer type invariants associated to it.

Consider for instance a
full subcategory $\fuk^{\ast}_{i,V}(\C\times M)$ of $\fuk^{\ast}_{i}(\C\times M)$ that 
has as objects those elements of $\fuk^{\ast}_{i}(\C\times M)$ that do not intersect the complex hypersurface
$H_{V}=\cup_{i}P_{i}\times M$ where $P_{i}$ are pivots associated to $V$
and such that any object $W\in  \fuk^{\ast}_{i,V}(\C\times M)$ only intersects $V$ transversely, 
only at {\em two} consecutive  ends of $V$.

With appropriate choices of perturbations (which we fix from now on), the 
cobordism $V=(V,h,c, \mathcal{D}_{V_{h}})$ induces a module, denoted $\mathcal{Y}(V)$
over  $\fuk^{\ast}_{i,V}(\C\times M)$. In the definition of the different operations we only 
count curves that are plane-simple (in other words such that the degree of these curves at each pivot $P_{j}\in \C$  is at most one) and, moreover, such that for each point $P_{j}$ where the degree is one,
there is a marked point $b_{j}$ in the domain of the curve.  The choice of relevant regular perturbations
is possible by Seidel's scheme, by induction on $||u||$.

Compared to the construction of $\fuk^{\ast}_{i}(\C\times M)$ there is one additional delicate point
to note: whenever two plane-simple curves can be glued the outcome needs again to be plane-simple. 
Indeed, in case this condition is not satisfied, we would need to allow counts of curves where more than a single interior marked point is sent to the same $P_{j}\times M$ and this creates problems with our counting over $\Z/2$. We want to avoid this additional complication here and this is the purpose of our assumption on the 
intersection between the objects of  $\fuk^{\ast}_{i,V}(\C\times M)$ and $V$ as it ensures that this 
type of plane-simple coherence is satisfied. 

More generally, the same construction works for any other full subcategory of $\fuk^{\ast}_{i}(\C\times M)$, as long as each object continues to be disjoint from $H_{V}$ and if this plane coherence condition continues to be satisfied. The resulting module will again be denoted by $\mathcal{Y}(V)$.


\subsection{The category $\mathsf{C}ob^{\ast}(M)$ revisited.}\label{subsubsec:cob-cat-def}
We now consider unobstructed classes of Lagrangians $Lag^{\ast}(M)$ and cobordisms $Lag^{\ast}(\C\times M)$ subject to Assumptions I- VII
in \S\ref{subsec:exact-cob}. To define $\mathsf{C}ob^{\ast}(M)$ additional modifications of these classes
are needed. These modifications have to do with the composition of morphisms in  
$\mathsf{C}ob^{\ast}(M)$ as well as with the identity morphisms and, as will be seen later on, 
with the definition of cabling.

\

Concerning composition, recall that for two immersed cobordisms $V: L\cobto (L_{1},\ldots, L_{k},L')$ and $V':L'\cobto (L'_{1},\ldots, L'_{s}, L'')$, the composition $V'\circ V$ is defined as 
in Figure \ref{Fig:Comp}. However, the actual objects in $Lag^{\ast}(\C\times M)$
are immersed cobordisms together with perturbations (as well as additional decorations). This means that,
to properly define the composition in this setting, we need to actually describe a specific perturbation of $V'\circ V$ (together with its other decorations) and we also need to show that the resulting object is
unobstructed. 

\

A similar issue has to do with the identity morphism: assuming that $L$ is immersed we can consider
the cobordism $V=\R\times L$ that should play the role of the identity. However, in our formalism
we need to indicate a specific perturbation of this $V$.

\subsubsection{An additional decoration for objects.}\label{subsubsec:dec-bis}
Given an immmersed marked Lagrangian $$(L,c,\mathcal{D}_{L})\in Lag^{\ast}(M)$$ the additional decoration we require is an {\em $\epsilon$-deformation germ} for $L$. To make this notion precise, assume that $L$ is the $k$-th positive end  of a marked, immersed cobordism $(V, c, h,\mathcal{D}_{V_{h}})$. The restriction to the band $[a +2-\epsilon, a +2+\epsilon]\times \R$
of the part of $V_{h}$ corresponding to the end $L$ can be viewed as a specific perturbation $h_{L}$
of the (trivial) cobordism $[-\epsilon,\epsilon]\times \{1\}\times L$ 
with a bottleneck at $0$, as defined in \S\ref{subsubsec:preturb},  with the profile of the picture in  Figure \ref{fig:bottle} (around the point $a+2$), but translated
so that the bottleneck is moved at the point $\{a+2\}\times \{k\}$.
The same notion also applies for negative ends and, because of our constraints involving the positivity
of perturbations (Assumption IV), the same perturbation $h_{L}$ can appear as a restriction to a negative end
(in the band $[-a -2-\epsilon, -a -2+\epsilon]\times \R$).

\

From now on, we will view the objects in $Lag^{\ast}(M)$ as quadruples
$(L,c,h_{L},\mathcal{D}_{L})$ where $h_{L}$ is a perturbation of $[-\epsilon,\epsilon]\times \{1\}\times L$ with a bottleneck at $0$, in the sense above.  Two such 
objects $(L,c, \mathcal{D}_{L}, h_{L}), (L,c, \mathcal{D}_{L}, h'_{L})$ will be identified
if for a sufficiently small $\epsilon$ the perturbations $h_{L}$ and $h'_{L}$ agree on 
$[-\epsilon,\epsilon]$. We call $h_{L}$ {\em a perturbation germ} associated 
to $L$.  If $L$ is embedded, the perturbation germ $h_{L}$ is null.

\

Given this notion, Assumption III in \S\ref{subsec:exact-cob} is strenghtened to require that if an immersed marked cobordism $(V,c,h, \mathcal{D}_{V_{h}})\in Lag^{\ast}(\C\times M)$ has an
end $(L,c,h_{L},\mathcal{D}_{L})$, then $h_{L}$ is the restriction of $h$.

\begin{rem}
The same triple $(L,c,\mathcal{D}_{L})$ can appear multiple times in the class $Lag^{\ast}(M)$
with different perturbation  germs $h_{L}$. However, we will see below that we will restrict the 
class of allowable perturbation germs to only sufficiently small perturbations  (in a sense to be 
explained) and with this restriction any two perturbation germs will become equivalent from our 
perspective. 
\end{rem}

\subsubsection{Definition of composition of immersed marked cobordisms.}\label{subsubsec:comp}
The composition of two cobordisms $V,V'\in Lag^{\ast}(\C\times M)$ along an 
end $L'\in Lag^{\ast}(M)$ (with the decorations listed in \S\ref{subsubsec:dec-bis}) follows
the scheme in Figure \ref{Fig:Comp} but by taking into account the perturbations $h$ and respectively
$h'$ associated to $V$, respectively, $V'$ as in Figure \ref{fig:comp2}.

\begin{figure}[htbp]
   \begin{center}
      \includegraphics[width=0.71\linewidth]{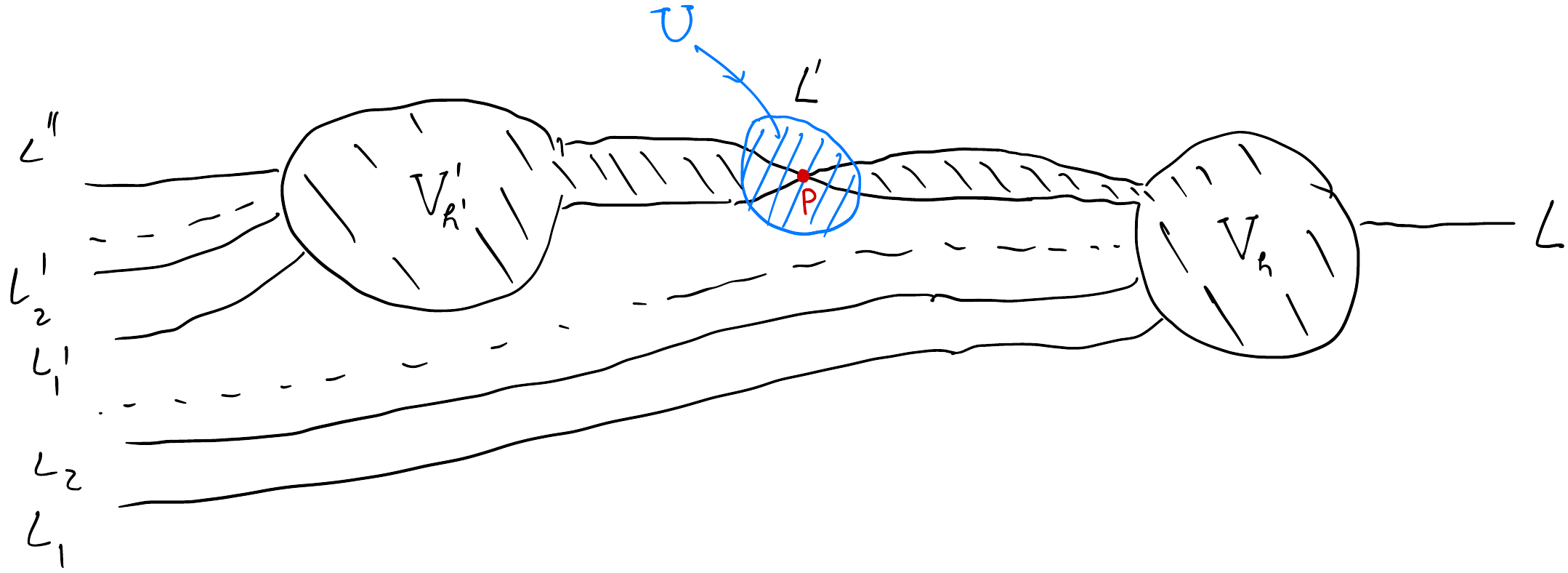}
   \end{center}
   \caption{\label{fig:comp2} The composition of $V,V'\in Lag^{\ast}(\C\times M)$.
 This composition restricts to the perturbation $h_{L'}$ over the set $U$. The bottleneck is the point $P$. Are not drawn the perturbations of $V$ and $V'$ along the ends different from $L'$.}
\end{figure}
In other words, if the two objects to be composed are $(V,c,h,\mathcal{D}_{V_{h}})$, $(V',c',h',\mathcal{D}_{V'_{h'}})$ we define the resulting object $(V'\circ V, c'',h'',\mathcal{D}'')$ by letting $h''$ be the perturbation
of the cobordism $V'\circ V$ (as defined in Figure \ref{Fig:Comp}) obtained by splicing together $h'$ and $h$
in a neighbourhood of type $[b-\epsilon,b+\epsilon]\times [k-\delta, k+\delta]\times L'$ (with convenient $b\in \R$, $k\in \N$, $\delta>0$). We can also splice together the perturbation data $\mathcal{D}_{V_{h}}$ and $\mathcal{D}_{V'_{h'}}$ and extend the result over $\C$ to get the perturbation data $\mathcal{D}''$.
In particular, we assume that the almost complex structure in a band  $[b-\epsilon,b+\epsilon]\times \R$ is a product and we also assume that the same is true in a region under the ``bulk'' of $V'_{h'}$, that contains
the extended ends of $V_{h}$ ($L_{1}, L_{2},\ldots$ in the picture). More details on the definition of $\mathcal{D}''$ are given below, in \S\ref{subsubsec:unobstr-comp}.
Finally, the marking $c''$ is simply the union of $c$ and $c'$ where the marking of $L'$ - in the fiber over the bottleneck $P$ - is common to both $c$ and $c'$. 

\subsubsection{Unobstructedness of the composition.}\label{subsubsec:unobstr-comp}
To discuss this point we start by addressing in more detail the following aspect related to the composition of two cobordisms. This concerns the fact that when operating the splicing described above
it is convenient to extend the end $L'$ as well as the  secondary ends $L_{1}, L_{2},\ldots$ in Figure \ref{fig:comp2}. This type of operation needs to be implemented with some care as the specific aspect of 
the bottleneck needs to be preserved, even if the bottleneck itself is translated to the left or right. 
The argument is simple and makes use of a horizontal Hamiltonian isotopy as in Figure \ref{fig:transl}.

\begin{figure}[htbp]
   \begin{center}
      \includegraphics[scale=0.45]{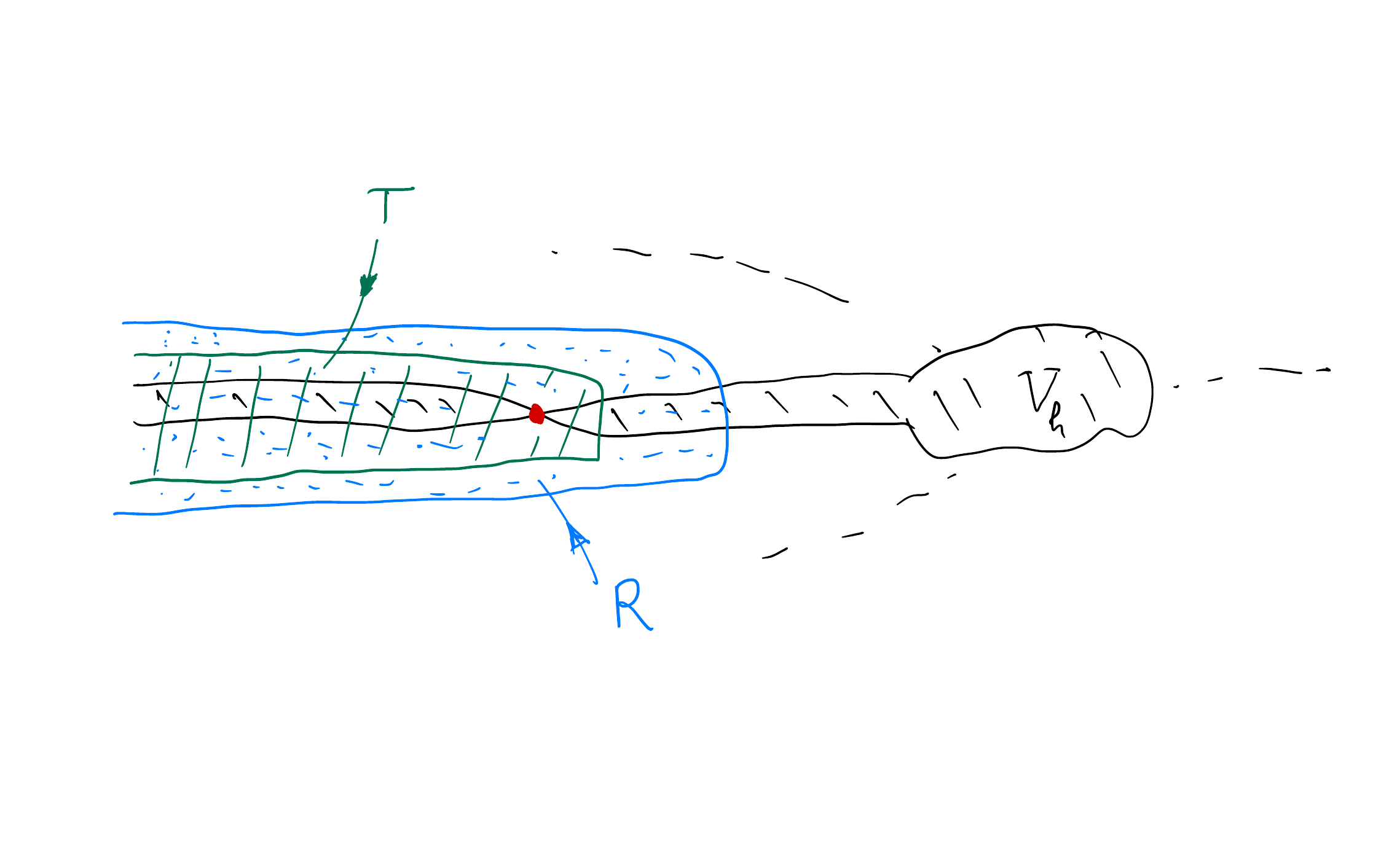}
   \end{center}
   \caption{\label{fig:transl} The horizontal Hamiltonian isotopy that acts as a translation (to the left) inside the region $T$ and is supported inside $R$.}
\end{figure}
Such a hamiltonian isotopy also acts on the relevant additional data, $c,h,\mathcal{D}_{V_{h}}$ and it preserves all the properties of the initial cobordism $V_{h}$. 

With this understanding of the extension to the left of the ends $L_{1}, L_{2},\ldots$ in Figure \ref{fig:comp2} the argument to show that the composition $V'\circ V$ has the required properties reduces to
a couple of observations. First, due to a simple open mapping argument, no $J$-holomorphic polygons can pass from one side of $P$ to the other (the two sides are to the right and left of the vertical axis through $P$): 
this is clear  (as in \cite{Bi-Co:lcob-fuk}) if the polygon does not jump branches at $P$
but, even if it does,  such a curve would still be forced to pass through a non-bounded quadrant
at the point $P$ which is not possible.  As a result of this remark, the relevant moduli spaces are regular.

It remains to check that the number of $\mathbf{c}$-marked tear-drops through any self-intersection point is $0$ \emph{mod} $2$. The key remark here is that due to our assumption on the action negativity of the marking
$\mathbf{c}$, a $\mathbf{c}$-marked tear-drop can only have an exit puncture that is action negative. At the same time, in view of the positivity of the perturbation $h_{L'}$ at  the action negative points (Assumption IV) it follows that all the tear-drops with an exit point belonging to the fiber over the the bottleneck $P$
are completely included in this fiber. Thus they are tear-drops with boundary along $L'$ and as $L'$ is unobstructed their count \emph{mod} $2$ vanishes. All the other tear-drops are only associated either
with $V_{h}$ or with $V'_{h'}$ and their counts also vanish because $V$, $V'$ are unobstructed.

\subsubsection{Small perturbation germs and identity morphisms.}\label{subsubsec:smallpert}
To define the category $\mathsf{C}ob^{\ast}(M)$ some further modifications of the classes $Lag^{\ast}(M)$
and $Lag^{\ast}(\C\times M)$ are needed. In essence, they 
come down to a notion of ``sufficiently small'' perturbation germs $h_{L}$.  Such a notion will be used
to define identity morphisms in our category. Additionally,  it will serve to view any two objects 
$(L, c, h_{L},\mathcal{D}_{L})$ and $(L,c, h'_{L},\mathcal{D}_{L})$ as equivalent as soon as $h_{L}$ 
and $h'_{L}$ are sufficiently small.

\

First notice that the perturbation germs - as defined through the perturbations in \S\ref{subsubsec:preturb} - 
can be compared to the $0$ perturbation in $C^{k}$-norm ($k=2$ is sufficient for us). 
Given two perturbation germs $h_{L}$ and 
$h'_{L}$ for the same marked, exact, Lagrangian $(L,c,\mathcal{D}_{L})$ we may consider the 
immersed Lagrangian $V_{h}$ drawn in Figure \ref{fig:smallperturb} below which is a perturbation of $\gamma\times L$ with $\gamma=\R$
such that $h$ restricts to $h_{L}$ close to the positive bottleneck and to $h'_{L}$ close to
the negative bottleneck. The definition of the actual perturbations is as in \S\ref{subsubsec:preturb}, Figure \ref{fig:bottle} but one uses instead of $\gamma_{+}$ there the upper blue curve 
exiting $P$ in the positive direction, on top in Figure  \ref{fig:smallperturb} and instead of 
$\gamma_{-}$ the blue curve exiting $P$ in the positive direction, at the bottom. 
In particular, there is no perturbation
of $\gamma \times L$ away from some disks around the self-intersection points of $L$ and, along
$\R\times (D_{-}\cup D_{+})$ where $D_{-}$, $D_{+}$ are respectively small disks around 
the components of a self-intersection point $(P_{-},P_{+})\in I^{<0}_{L}\subset L\times L$, $V_{h}$ equals $\gamma_{-}\times D_{-}\bigcup\gamma_{+}\times D_{+}$.

There is an {\em inverted} bottleneck at $0$ (this is inverted in the sense
that this time the points in $\{0\}\times I^{< 0}_{L}$ are points where the perturbation $h$ is negative; we will refer to the usual type of bottleneck as being {\em regular}).
The marked points of $V_{h}$ are over the  positive and negative bottlenecks and they
coincide there with the markings on $L$; the self intersection points over $\{0\}\in \C$ are 
not marked. We consider an additional curve $\gamma'$ as in the picture and a similarly defined
perturbation $V'_{h}$ of $\gamma'\times L$ so that again the germs over the positive/negative bottleneck are $h_{L}$ and $h'_{L}$, respectively. The cobordism $V'_{h}$ is obtained through 
a horizontal Hamiltonian  isotopy (that is independent of $L$, $h_{L}$, $h'_{L}$) from the cobordism $V_{h}-i$ (this is simply $V_{h}$ translated by $-i$). The bottlenecks of $V'_{h}$ and $V_{h}$ are positioned as in  the figure. 

\begin{figure}[htbp]
   \begin{center}
      \includegraphics[scale=0.18]{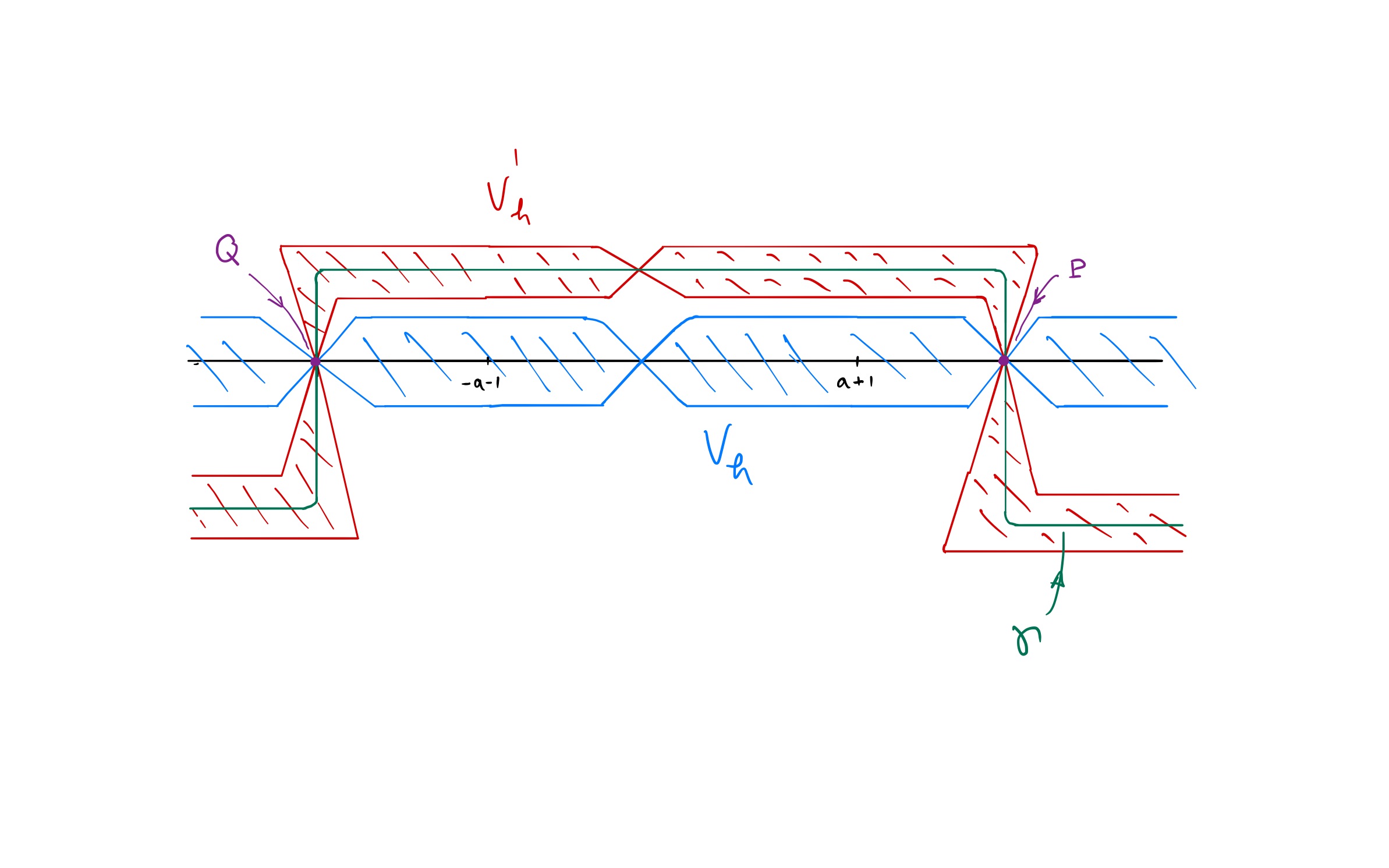}
   \end{center}
   \caption{\label{fig:smallperturb} Two small perturbations $V_{h}$ and $V'_{h}$ of 
   $\R\times L$ and $\gamma'\times L$, respectively.}
\end{figure}

We fix the perturbation data $\mathcal{D}_{L}$ on $L$ (as well as the marking $c$). We consider
perturbation data $\mathcal{D}_{V_{h}}$ on $V_{h}$ satisfying the Assumptions I-VII and that is a small 
perturbation of the perturbation $\mathcal{D}_{L}$. ``Smalleness'' here is understood in the sense that 
if $h_{L}$, $h'_{L}$ tend to $0$, then $V_{h}$ tends to $\R\times L$ and  this perturbation data tends to the product data $i \times D_{L}$. Moreover, the data $\mathcal{D}_{V_{h}}$ is defined in such a way that on 
the boundary of polygons the perturbation vanishes (in other, words the almost complex structure is the product one $i\times J_{0}$, and the Hamiltonian term is $0$ - as in Definition \ref{defi:perturbD} ii. a). 
We define the data $\mathcal{D}_{V'_{h}}$ by transporting appropriately the data of $V_{h}$ through a Hamiltonian isotopy induced from an isotopy in $\C$.

In view of the fact that there are no tear-drops with boundary on $L$, we will see below that, if 
$V_{h}$ is sufficiently close to the product $\R\times L$ and the data $\mathcal{D}_{V_{h}}$ is sufficiently close to $i\times \mathcal{D}_{L}$, then $V_{h}$ is unobstructed (and similarly for $V'_{h}$). 

\begin{lem}\label{lem:ident}
 If $V_{h}$ is sufficiently close to $\R\times L$ (and thus $h_{L}$ is small too) 
 and if $\mathcal{D}_{V_{h}}$
is sufficiently close to the product $i\times \mathcal{D}_{L}$, then $V_{h}$ is unobstructed.
\end{lem}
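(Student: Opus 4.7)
The plan is to argue by contradiction using Gromov compactness together with the product structure of the limit $\R\times L$ carrying the product almost complex structure $i\times J_0$. Suppose the lemma fails: then there is a sequence $h_n\to 0$ of perturbation germ pairs together with data $\mathcal{D}_{V_{h_n}}\to i\times\mathcal{D}_L$ for which $V_{h_n}$ is obstructed in the sense of Definition \ref{def:unobstrcob}. Regularity of the moduli spaces of $\mathbf{c}$-marked polygons (condition \emph{i.b}) is a generic property, achievable by a further arbitrarily small perturbation of $\mathcal{D}_{V_{h_n}}$ within the allowed class, so I may assume that the failure is either the existence of a non-marked tear-drop on $V_{h_n}$ (ruled out by \emph{i.a}) or a nontrivial mod $2$ count of $\mathbf{c}$-marked tear-drops at some self-intersection $y_n\in I_{V_{h_n}}$ (ruled out by \emph{i.c}). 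In either case I extract a sequence of tear-drops $u_n$ on $V_{h_n}$ with exit at $y_n$.

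Exactness of the $V_{h_n}$ with primitives $f_{V_{h_n}}$ converging to $f_L$ provides a uniform energy bound on the $u_n$ in terms of the actions of the $y_n$, and Gromov compactness yields a subsequential stable limit $u_\infty$ with components mapping to $\R\times L$ with boundary on $\R\times L$. Because both target and almost complex structure split as products, every component of $u_\infty$ factors as a product of a holomorphic map $v:S\to \C$ with boundary on the exact Lagrangian $\R\subset \C$, and a $J_0$-holomorphic map $w:S\to M$ with boundary on $L$. Exactness of $\R$ in $\C$ forces each $v$ to have vanishing symplectic area, hence to be constant, so every nontrivial component of $u_\infty$ is a tear-drop or polygon on $L$. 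By Assumption V, non-marked tear-drops on $L$ are void and the mod $2$ counts of $\mathbf{c}$-marked tear-drops on $L$ vanish; since these counts are stable under the small deformation from $L$ to $V_{h_n}$ near a regular bottleneck, this contradicts the assumed failure for $V_{h_n}$.

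The delicate case is when $y_n$ lies over the inverted bottleneck at $0$, because those self-intersections have no counterpart on $\R\times L$ and collapse as $h_n\to 0$. For such $y_n$ I would apply the open mapping principle to the planar projection $\pi\circ u_n:S\to\C$, whose image is bounded by two branches of $V_{h_n}$ crossing transversely at $0$ with a definite nonzero angle. This transverse crossing yields a uniform isoperimetric lower bound on the symplectic area of any nonconstant planar disk attached at such a corner, whereas the action of $y_n$ tends to $0$ as $h_n\to 0$; these two facts are inconsistent for $n$ large, so no such tear-drops can exist for sufficiently small perturbations. Combining the two cases completes the proof.

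The main obstacle is the bookkeeping of bubbling in the Gromov limit and matching mod $2$ counts through the compactness argument. One must rule out the possibility that the limit $u_\infty$ consists entirely of sphere or disk bubbles that together carry the count, but since bubbles must also respect the product splitting and $\R$ admits no nontrivial holomorphic disks, every bubble component is either constant in the plane direction or lies entirely in $M$; both situations are controlled by the unobstructedness of $L$ together with its regularity data $\mathcal{D}_L$.
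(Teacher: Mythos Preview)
Your treatment of the inverted-bottleneck case contains the main gap. You argue that no tear-drop with exit $y_n$ over $X=0$ can exist because a ``definite nonzero angle'' at the planar crossing gives an isoperimetric lower bound contradicting the vanishing action. But as $h_n\to 0$ the planar curves $\gamma_\pm$ converge to the line $\R$, so the angle at $X$ tends to $0$ at the same rate as the action; there is no uniform lower bound. In fact such tear-drops \emph{do} exist for every $h_n$: for each marked point $x\in c$ there is a thin triangle with single input $\{P\}\times\{x\}$ and output $\{X\}\times\{x\}$ (constant in the $M$-factor, equal to the small planar region between $\gamma_\pm$ in $\C$), and a second one with input over $Q$. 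These are regular for the product structure $i\times J_0$ and their energies go to $0$ with $h_n$, perfectly consistent with your action remark rather than contradicting it.

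The paper's argument is therefore a two-step count, not a non-existence statement. First, a Gromov compactness plus \emph{dimension-drop} argument: any limit of $0$-dimensional marked tear-drops on $V_{h_n}$ projects in $M$ to a marked tear-drop $v$ on $L$, but because $X$ is an \emph{inverted} bottleneck the moduli space containing $v$ has dimension at least one less than that of the $u_n$; regularity of $\mathcal{D}_L$ then forces $v$ to be constant, which in turn forces $|u_n|=2$ with input and output at the same $x$. Second, an explicit count: at each $\{X\}\times\{x\}$ there are exactly two such thin triangles (one from $P$, one from $Q$), giving $0\bmod 2$. Your compactness paragraph is in the right spirit for the first step but draws the wrong conclusion (it does not show the moduli spaces are empty, only that their elements are of this restricted thin-triangle form), and the counting step is entirely missing. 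Incidentally, you need not treat outputs over the regular bottlenecks $P,Q$ separately: the geometry of a regular bottleneck, together with the open-mapping argument for the planar projection, forces every tear-drop output to lie over $X$.
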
 
\begin{proof} Let $X$ denote the inverted bottleneck (at $0$).  Given that $L$ is unobstructed, and in view of the shape of the curves $\gamma_{+},\gamma_{-}$, the only (marked) tear-drops with boundary on $V_{h}$ can have inputs in points either all in the fiber $\{Q\}\times M$ or  all in $\{P\}\times M$ and  the output has to be in a point in $\{X\}\times M$. A tear-drop can not pass from one side to the other of $X$ (due to the fact that the perturbation of the data chosen here vanishes on the boundary
of the perturbed $J$-holomorphic polygons). The key remark is that for $V_{h}$ sufficiently close to $\R\times L$ and  $\mathcal{D}_{V_{h}}$ sufficiently
close to the product $i\times \mathcal{D}_{L}$, the only marked tear-drops in $0$-dimensional moduli spaces are of a very special type: they
only have one input and one output ($|u|=2$) with the input some  point of the form
$\{Q\}\times \{x\}$ or $\{P\}\times \{x\}$) and the output $\{X\}\times \{x\}$ with $x\in c$. If this would not be 
the case, there would exist marked tear-drops $u_{n}$ (with the same asymptotic conditions) for $V_{h}$ approaching $\R\times L$  and data  approaching $ i\times \mathcal{D}_{L}$, and by taking the limit we would get a curve $u$ whose projection $v$ onto $M$ will also be a marked teardrop with boundary on $L$. This curves are regular by the assumption on $\mathcal{D}_{L}$. But it is easy to see, 
due to the fact that $X$ is an inverted bottleneck, that the dimension of the moduli space containing $v$  drops by at least one compared to the dimension of the moduli spaces containing the curves $u_{n}$. Thus, such curves $v$ do not exist, except if they are constant, in which case there is a single output and a single input and they coincide which implies the claim. 

We now fix $V_{h}$ and $\mathcal{D}_{V_{h}}$ so that the only tear-drops in $0$-dimensional moduli 
spaces are as explained above.  We want to notice that for each point $x\in c$ both possibilities occur, in the sense that both types of
 marked tear-drops, the ones starting over $P$ and the ones starting over $Q$ appear, and in each case their number (mod $2$) is $1$. This follows by a cobordism argument,  based on the fact that for the product structure $i\times \mathcal{D}_{L}$ these two types of tear-drops exist (they project to the  point $x$ on $M$), are regular and there is precisely one starting over $P$ and one starting over $Q$. 


The conclusion is that (mod $2$) the number of tear-drops at each such point $\{X\}\times \{x\}$ vanishes and this implies that $V_{h}$ is unobstructed.
\end{proof}

We now select Floer data for the couple $(V'_{h}, V_{h})$ so that it also
is close to $i \times \mathcal{D}_{L}$ in the sense above. 

We consider the resulting chain map $\phi:CF(L,L)\to CF(L,L)$ given by counting elements in $0$-dimensional moduli spaces of  (marked) Floer strips with boundary on $V'_{h}$ and $V_{h}$ that go from hamiltonian orbits  lying over the point $P$ to orbits in the fiber over  $Q$
(the definition of this map is standard in embedded cobordism type arguments as 
in \cite{Bi-Co:cob1} and the fact that it is a chain map is a simple application of the unobstructedness conditions).
Keeping the curves $\gamma$ and $\gamma'$ fixed as well as the Lagragian $L$, 
another application of a Gromov compactness argument shows that there exist $V_{h}$, $V'_{h}$ possibly even closer to products, as well as data also possibly closer 
 to $i\times \mathcal{D}_{L}$ such that the map $\phi$ is the identity.

We will call a cobordism $V_{h}$ endowed with the data $\mathcal{D}_{V_{h}}$ as before 
a {\em sufficiently small} deformation of the cobordism $\gamma\times L$ if there exists $V_{h}'$
 as above  (thus, in particular, the associated map $\phi$ is the identity).  Similarly,
the respective perturbation germs $h_{L}$ (that appear at the ends of $V_{h}$) 
will be called {\em sufficiently small} perturbation germs.

In summary, the deformation $V_{h}$ of the trivial cobordism $\gamma\times L$ is sufficiently
small if it is unobstructed and it admits itself a deformation $V'_{h}$, positioned as 
in Figure \ref{fig:smallperturb}, also unobstructed and such that the morphism $\phi$ relating 
the Floer homologies over $P$ and $Q$ is the identity.

\subsubsection{Final definition of the objects and morphisms in $\mathsf{C}ob^{\ast}(M)$.}
\label{subsubsec:objmorph} 

We first adjust the classes $Lag^{\ast}(M)$ and $Lag^{\ast}(\C\times M)$ (that satisfy the Assumptions
 I- VII from \S\ref{subsec:exact-cob})
in accordance with the discussion in \S\ref{subsubsec:smallpert}. Thus, we will only consider the
unobstructed, marked Lagrangians $(L,c, \mathcal{D}_{L}, h_{L})\in Lag^{\ast}(M)$  with $h_{L}$ a sufficiently small perturbation germ, in the sense of \S\ref{subsubsec:smallpert}.  
An additional important constraint is that we will assume that the base almost complex structure
included in the data $\mathcal{D}_{L}$ is {\em  the same} $J_{0}$ for all Lagrangians $L$. This constraint is particularly important to be able to define cabling properly, in this context. 

We denote the resulting class by $\mathcal{L}ag^{\ast}(M)$. In particular, all Lagrangians in
$\mathcal{L}ag^{\ast}(M)$ are unobstructed with respect to the same base almost complex 
structure $J_{0}$ on $M$, that we fix from now on  (see Definition \ref{defi:perturbD}). In our setting
this means that there are no $J_{0}$ (non-marked) tear-drops with boundary on any 
$L\in \mathcal{L}ag^{\ast}(M)$ and that, for the  perturbation data $\mathcal{D}_{L}$, the number
of $c$-marked tear-drops at any self-intersection point of $L$ are $0$.


The class of cobordisms $\mathcal{L}ag^{\ast}(\C\times M)$ will refer from now on to those 
cobordisms $$(V,c,h,\mathcal{D}_{V_{h}})\in Lag^{\ast}(\C\times M)$$ with ends among the 
elements  of $\mathcal{L}ag^{\ast}(M)$.  To recall notation, the immersions associated to these 
Lagrangians is denoted by $j_{L}, j_{V}$ and the primitives are $f_{L}$, $f_{V}$.


Finally, we define the cobordism category $\mathsf{C}ob^{\ast}(M)$ with
 objects the elements in $\mathcal{L}ag^{\ast}(M)$.  The morphisms are the elements
 in $\mathcal{L}ag^{\ast}(C\times M)$
modulo an equivalence relation that identifies  cobordisms up to:
\begin{itemize}
\item[i.] horizontal Hamiltonian isotopy  (of course, these isotopies act not only on $V,c$ and $h$ but, in an obvious way, on $\mathcal{D}_{V_{h}}$ as well).
\item[ii.] any two small enough deformations $V_{L}$, $V_{L}'$, 
of the cobordism $\R\times L$, $\forall L$,  are equivalent and, for any other cobordism $V$, 
the compositions $V\circ V_{L}$ and $V'_{L}\circ V$ (if defined) are both equivalent to $V$. 
\end{itemize}

Property ii allows for the definition of an identity morphism for each object $L$ of our category: this is the equivalence class of any sufficiently small deformation of the identity cobordism $\R\times L$.

\

We complete the set of elements of $\mathcal{L}ag^{\ast}(\C\times M)$ with respect to composition, as defined in \S\ref{subsubsec:comp}.  We denote by $\mathcal{L}ag^{\ast}_{0}(\C\times M)$ the class
of objects in $\mathcal{L}ag^{\ast}(\C\times M)$ that do not carry tear-drops (see also \S\ref{subsubsec:morph}).

\begin{rem} In the definition of the composition it is possible to make choices such that the 
resulting composition operation defined on the morphisms of $\mathsf{C}ob^{\ast}(M)$ is associative
but we will not further detail this point here.
\end{rem}

To avoid further complicating the notation, we will continue to denote by $\fuk^{\ast}_{i}(M)$ and, respectively, $\fuk^{\ast}_{i}(\C\times M)$, the Fukaya categories associated to the classes $\mathcal{L}ag^{\ast}(M)$ and $\mathcal{L}ag^{\ast}_{0}(\C\times M)$ even if, properly speaking, these are full subcategories
of the Fukaya categories introduced in  \S\ref{subsubsec:modules} and \S\ref{subsubsec:morph}.


\section{From geometry to algebra and back.} 

\subsection{The main result.} 

With the preparation in \S\ref{sec:unobstr-tech} and, in particular, with the category $\mathsf{C}ob^{\ast}(M)$ defined in \S\ref{subsubsec:cob-cat-def} and with the machinery of
surgery models introduced in \S\ref{subsec:surg-models}, the purpose of this section is to prove the main result, Theorem \ref{thm:BIG}. The constructions in this proof can be viewed as establishing a number of ``lines'' in 
the dictionary  \emph{geometry} $\leftrightarrow$ \emph{algebra} mentioned at the beginning of the paper.
The passage from geometry to algebra is in essence a direct extension of the results concerning 
Lagrangian cobordism from \cite{Bi-Co:cob1}, \cite{Bi-Co:lcob-fuk}. 

We reformulate Theorem \ref{thm:BIG} in a more precise fashion just below. 

\begin{thm}\label{thm:surg-models}
The category $\mathsf{C}ob^{\ast}(M)$ as defined in \S\ref{subsubsec:cob-cat-def} has the following 
properties:
\begin{itemize}
\item[i.] $\mathsf{C}ob^{\ast}(M)$ has rigid surgery models.
\item[ii.] There exists a triangulated functor 
$$\widehat{\Theta}:\widehat{\mathsf{C}}ob^{\ast}(M)\to  H(mod(\fuk^{\ast}(M)))$$
that restricts to a triangulated isomorphism 
$\widehat{\Theta}_{e}:\widehat{\mathsf{C}}ob^{\ast}_{e}(M)\to  D\fuk^{\ast}(M)$
where $\widehat{\mathsf{C}}ob^{\ast}_{e}(M)$ is the triangulated subcategory of 
$\widehat{\mathsf{C}}ob^{\ast}(M)$ generated by $\mathcal{L}ag^{\ast}_{e}(M)$.
\item[iii] The category $\widehat{\mathsf{C}}ob^{\ast}(M)$ is isomorphic to the Donaldson 
category associated to the Lagrangians in $\mathcal{L}ag^{\ast}(M)$.
\item[iv.] For any family $\mathcal{F}\subset \mathcal{L}ag^{\ast}_{e}(M)$ the 
morphism $\widehat{\Theta}_{e}$ is non-contracting with respect to the shadow pseudo-metric $d^{\mathcal{F}}$ on $\widehat{\mathsf{C}}ob^{\ast}_{e}(M)$ and the pseudo-metric $s^{\mathcal{F}}_{a}$ on $D\fuk^{\ast}(M)$.
\end{itemize} 
\end{thm}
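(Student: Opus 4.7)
The proof separates into the four parts of the statement, with part (i) serving as the technical backbone from which (ii)--(iv) follow. The overarching strategy is to interpret cabling equivalence as a form of ``unobstructed cabling'': two morphisms are equivalent precisely when the planar cable interpolating between them is itself an element of $\mathcal{L}ag^{\ast}(\C\times M)$, i.e.\ the counts of tear-drops at its self-intersection points vanish. This converts each axiom into an unobstructedness statement that can be checked geometrically using the interior-marked-point regularization and the action-negativity of markings from Section 3.

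For part (i) I would verify the six axioms one at a time. Axiom 1 (homotopy) is the most delicate: reflexivity and symmetry are immediate from the cabling picture, but transitivity requires constructing an ``iterated'' cabling and arguing its unobstructedness reduces to that of the building-block cables via Gromov compactness and the pivot-perturbation scheme of \S\ref{subsubsec:marked-cob}. Axiom 4 (surgery) is the main geometric input: given $V:L\cobto (L_1,\ldots,L_m,L')$, I would iteratively apply the braiding procedure of Remark \ref{rem:var-braiding} to fuse consecutive secondary ends into iterated surgeries, showing at each step that the result remains in $\mathcal{L}ag^{\ast}(\C\times M)$ thanks to the action-negativity constraint on markings and the positivity of perturbations $h$. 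Axioms 2, 3 and 5 are then verified by direct inspection of the planar pictures: erasing arcs in a cabling preserves the ``closed cycle'' structure needed for unobstructedness, and the naturality identities are geometric in nature, as suggested by Figures \ref{Fig:ax5fig1}--\ref{Fig:ax5fig4}.

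The central obstacle is Axiom 6 (rigidity). My plan is to establish, for distinct $L,L'\in\mathcal{L}ag^{\ast}(M)$, a strictly positive lower bound $\delta(L,L')$ on the shadow of any simple cobordism between them via an action/energy argument. A narrow simple cobordism $V:L\cobto L'$ induces, through the Fukaya category $\fuk^{\ast}_i(\C\times M;\bar{\mathcal{D}})$ of \S\ref{subsubsec:morph}, a quasi-isomorphism of the Yoneda modules $\mathcal{Y}(L)$ and $\mathcal{Y}(L')$ whose underlying chain maps are filtered by the shadow $\mathcal{S}(V)$. A persistence/barcode-style argument for action-filtered $A_\infty$-modules then forces the barcodes of $CF(L,L_0)$ and $CF(L',L_0)$ to coincide on a scale smaller than $\mathcal{S}(V)$ for a sufficient supply of test objects $L_0$; taking $\mathcal{S}(V)\to 0$ and invoking the Yoneda lemma (\ref{eq:yoneda-imm}) in the immersed setting forces $\mathcal{Y}(L)\simeq \mathcal{Y}(L')$, hence $L=L'$ in the category. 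This is the step where the full machinery of Section 3 is essential.

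For parts (ii)--(iv), I would define $\widehat{\Theta}$ by $L\mapsto\mathcal{Y}(L)$ on objects, and on morphisms by sending a cobordism $V$ to the $A_\infty$-module morphism between Yoneda modules of its ends that it induces as in \S\ref{subsubsec:morph}--\S\ref{subsubec:tear}. Well-definedness on equivalence classes follows because a cabling $\mathcal{C}(V,V';c)\in\mathcal{L}ag^{\ast}(\C\times M)$ plays the role of a chain homotopy between the module maps induced by $V$ and $V'$; this is the direct immersed-marked analogue of the cobordism-module correspondence in \cite{Bi-Co:lcob-fuk}. Triangulation is automatic since surgery cobordisms correspond to cones in $\fuk^{\ast}_i(M)$ by the construction of the surgery morphism. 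The restricted isomorphism $\widehat{\Theta}_e$ on the embedded subcategory uses Remark \ref{rem:dec} (every object of $\widehat{\mathsf{C}}ob^{\ast}(M)$ is an iterated cone of embedded ones via braiding) together with the Yoneda quasi-isomorphism (\ref{eq:yoneda-imm}). Part (iii) is then immediate, since the Donaldson category of $\mathcal{L}ag^{\ast}(M)$ has by definition the same objects and morphisms $HF$, which match $H\mor_{\widehat{\mathsf{C}}ob^{\ast}(M)}$ through $\widehat{\Theta}$. Part (iv) follows from (i) and the elementary observation that the filtration on $\mu_k$-operations in $\fuk^{\ast}_i(\C\times M;\bar{\mathcal{D}})$ is bounded by the shadow of the relevant polygon, so that the algebraic fragmentation pseudo-metric $s^{\mathcal{F}}_a$ is dominated by the geometric one $d^{\mathcal{F}}$.
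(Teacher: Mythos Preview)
Your proposal reverses the paper's order of operations, and this leads to a real gap in the rigidity step.

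\textbf{Strategy.} The paper does \emph{not} verify the six axioms geometrically and then define $\widehat{\Theta}$. It first constructs the functor $\Theta$ (\S4.2.1) and proves the key characterization (Proposition~\ref{prop:theta-cabl}): $V\sim V'$ if and only if $\Theta(V)=\Theta(V')$. The proof of this proposition is a direct tear-drop count at the bottleneck $P$ of the cabling, using the pivots $U,T$ to separate the two ``sides''. Once this is in hand, Axioms 1 and 2 are immediate (cabling equivalence is pulled back from equality in $H(\mathrm{mod}\,\fuk^{\ast}(M))$), Axiom 5 follows because naturality holds in the target triangulated category and $\widehat{\Theta}$ is injective, and Axiom 3 is deduced from a separate computation of $\Theta(\bar{V}\circ V)$ (Corollary~\ref{cor:inv-cob}, Lemma~\ref{lem:inv-cob2}). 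Your plan to prove transitivity of $\sim$ by building an ``iterated cabling'' and checking its unobstructedness directly is not what the paper does and would be substantially harder; the paper bypasses this entirely via $\Theta$.

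\textbf{Axiom 4.} The paper does not obtain the surgery representative by braiding down the ends of $V$. It constructs, for each \emph{cycle} $a\in CF(L,L')$, an explicit $0$-size surgery cobordism $S_{L,L';a}$ (Proposition~\ref{prop:surgery-mor}) and computes $\Theta(S_{L,L';a})=[a]$. Then $V\sim S_{L,L';a}$ for $[a]=\Theta(V)$ by Proposition~\ref{prop:theta-cabl}. This construction is also what makes $\widehat{\Theta}_{e}$ surjective in part (ii).

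\textbf{Rigidity (Axiom 6).} Your barcode argument concludes only that $\mathcal{Y}(L)\simeq\mathcal{Y}(L')$, hence $L\cong L'$ in the category. That is not what the axiom asks: $\delta(L,L')$ must be positive whenever $L\neq L'$ as decorated immersions, not merely when their Yoneda modules differ. The paper's argument (Lemma~\ref{cor:rig}) is geometric: for $x\in L\setminus L'$ one embeds a standard ball avoiding $L'$, and a $J$-holomorphic strip produced by the simple cobordism $V:L\cobto L'$ passes through its center with energy $\leq \mathcal{S}(V)$, giving $\mathcal{S}(V)\geq c_{L,L'}\cdot w(L;L')>0$. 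Strong rigidity is then obtained by combining this width bound with braiding to reduce a general cobordism to a simple one without increasing the shadow by more than $\epsilon$. Your filtered-module argument does not access this pointwise geometric information and, as stated, does not close the gap between ``isomorphic modules'' and ``equal Lagrangians''.
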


\begin{rem}
a. Allowing Lagrangians to be immersed is necessary, in general, for surgery models to exist.
However, our proof also makes essential use of the fact that we allow for {\em marked} immersed Lagrangians. It is possible that this extension to marked Lagrangians is not necessary to construct categories with surgery models but, as we will explain later in the paper, restricting to non-marked Lagrangians adds significant complications. It is an interesting open question at this time whether these can be overcome. 
 
 b. As mentioned before, in Remark \ref{rem:bding-chain}, the specific type of marked immersed Lagrangians that appears in our construction is a variant of the notion of an immersed Lagrangian endowed with a bounding chain in the sense of \cite{FO3:book-vol1}\cite{Akaho-Joyce}.
 \end{rem}

The proof of the theorem is contained in the next subsections. Here is the outline of the argument.
We first move from geometry to algebra. There are a couple main
steps in this direction.  First,  in \S\ref{subsubsec:funct-th} we show the existence of a functor
\begin{equation}\label{eq:funct-Theta}
\Theta: \mathsf{C}ob^{\ast}(M)\to H(mod(\fuk^{\ast}(M)))~.~
\end{equation} 
We then revisit in our setting the cabling construction from \S\ref{subsubsec:cables} and show in
\S\ref{subsubsec:theta-td} that 
the cabling equivalence of two cobordisms $V$, $V'$ in $\mathcal{L}ag^{\ast}(\C\times M)$
comes down to the equality $\Theta (V)=\Theta (V')$.
We then proceed in the reverse direction, from algebra to geometry.  We start by associating in \S\ref{subsubsec:surg-mor}
to each module morphism $\phi$ an appropriate $0$-surgery cobordism 
$V$ so that $\Theta(V)\simeq\phi$. We pursue in \S\ref{subsubsec:surg-model} by remarking that, as a consequence of the properties of the functor $\Theta$,  $\mathsf{C}ob^{\ast}(M)$ has surgery models
and that $\Theta$ induces $\widehat{\Theta}$ with the properties in the statement.
The various parts of the proof are put together in \S\ref{subsec:proof-surg-mod} where
we also address the rigidity part of the statement.

\

\subsection{From geometry to algebra.}\label{subsec:geo-to-alg}

\subsubsection{The functor $\Theta$.}\label{subsubsec:funct-th}
The definition and properties  of this functor are similar to those of the functor
$\Theta$ constructed for embedded Lagrangians and cobordisms in 
\cite{Bi-Co:lcob-fuk}.  In this subsection we first define $\Theta$ and pursue to describe 
some of its main properties. Namely, we give a homological expression for $\Theta$ in Lemmas \ref{lem:hlgy-cl} and \ref{lem:inv-elem}. We then show  in Lemma \ref{lem:comp-theta}  that $\Theta$ behaves functorially.  We then provide in Lemma \ref{lem:ham-isot} an alternative description of $\Theta$ involving Hamiltonian isotopies, at least for simple cobordisms and deduce the behaviour of $\Theta$ with respect to inverting simple cobordisms in Corollary \ref{cor:inv-cob}
Finally, we discuss the impact of changes of perturbation data for the same geometric Lagrangian $L$ in
Lemma \ref{lem:data-change}.

Recall from \S\ref{subsubsec:morph} that there is a category
$\fuk^{\ast}_{i}(\C\times M)$ (we neglect in this writing the data $\bar{\mathcal{D}}$) with objects the cobordisms in $\mathcal{L}ag^{\ast}_{0}(\C\times M)$.

The first step is  to consider the so-called inclusion functor  associated to a curve $\gamma :\R\to \C$ that
is horizontal at infinity (we view such a curve as a cobordism between two points, embedded in $\C$ itself):
$$i_{\gamma}:\fuk^{\ast}(M)\to \fuk^{\ast}(\C\times M)\to \fuk^{\ast}_{i}(\C\times M)~.~$$
This is defined as a composition, with the second map being the embedding of the category of embedded cobordisms in the category of immersed, marked cobordisms:
$$\fuk^{\ast}(\C\times M)\hookrightarrow \fuk^{\ast}_{i}(\C\times M)~.~$$
The first map $\fuk^{\ast}(M)\to \fuk^{\ast}(\C\times M)$ is the inclusion
functor constructed in \cite{Bi-Co:lcob-fuk} (and denoted there also by $i_{\gamma}$).
To fix ideas we recall that, on objects, $i_{\gamma}(L)=\gamma\times L$. 

Thus, to a cobordism $V\in \mathcal{O}b(\fuk^{\ast}_{i}(\C\times M))$ we can associate a module over $\fuk^{\ast}(M)$ defined as the pullback $i_{\gamma}^{\ast}(\mathcal{Y}(V))$ where $\mathcal{Y}(V)$
is the Yoneda module of $V$. This module is invariant, up to quasi-isomorphism, with respect to the usual horizontal homotopies of the curve $\gamma$ (horizontal homotopies of $\gamma$ are assumed here to leave the curve $\gamma$ horizontal outside of the region $[-a-1,a+1]\times \R$ - see Figure \ref{fig:strict-bottle}).

The second step is to consider a particular curve $\gamma$ that intersects $V_{h}$ (the deformation of the cobordism $V$) only over the bottlenecks (in, particular, it is away from the point $P_{i}$ in case $V$ carries
non-marked tear-drops), at points of real coordinates $-a-2$, for the negative ends and $a+2$ for the positive ones, and give a description of the module $i_{\gamma}^{\ast}(\mathcal{Y}(V))$ as an iterated cone of the Yoneda modules of the ends.  Assuming the curve $\gamma$ intersects the bottlenecks as in Figure \ref{fig:moved-str-bottle} (in other words, the curve $\gamma$ intersects the blue horizontal region only at the bottlenecks themselves) the arguments from \cite{Bi-Co:lcob-fuk} adjust to this 
case without difficulty. For instance, consider $V:L\cobto (L_{1},\ldots, L_{k},L')$
and $\gamma$ as in Figure \ref{fig:it-cone} below, such that $\gamma$ starts below the positive end then crosses it to pass over the non-horizontal part of $V$ and descends  by crossing the negative ends $L', L_{k},\ldots, L_{s+1}$, becoming again horizontal between the ends $L_{s}$ and $L_{s-1}$. In this case, 
the iterated cone $i^{\ast}_{\gamma}(\mathcal{Y}_{e}(V))$ is quasi-isomorphic to:
 \begin{equation}\label{eq:cones}
 Cone( \mathcal{Y}_{e}(L)\to Cone (\mathcal{Y}_{e}(L')\to Cone (\mathcal{Y}_{e}(L_{k})\to \ldots \to Cone(\mathcal{Y}_{e}(L_{s+1})\to \mathcal{Y}_{e}(L_{s})))..)
 \end{equation}
with the module morphisms indicated by the arrows above associated in a geometric fashion 
to the cobordism $V$ and where $\mathcal{Y}_{e}(N)$ is the pull-back to $\fuk^{\ast}(M)$ (the category of embedded objects) of the Yoneda module  $\mathcal{Y}(N)$ which is defined 
over the category of immersed Lagrangians, $\fuk^{\ast}_{i}(M)$.
\begin{figure}[htbp]
   \begin{center}
      \includegraphics[scale=0.18]{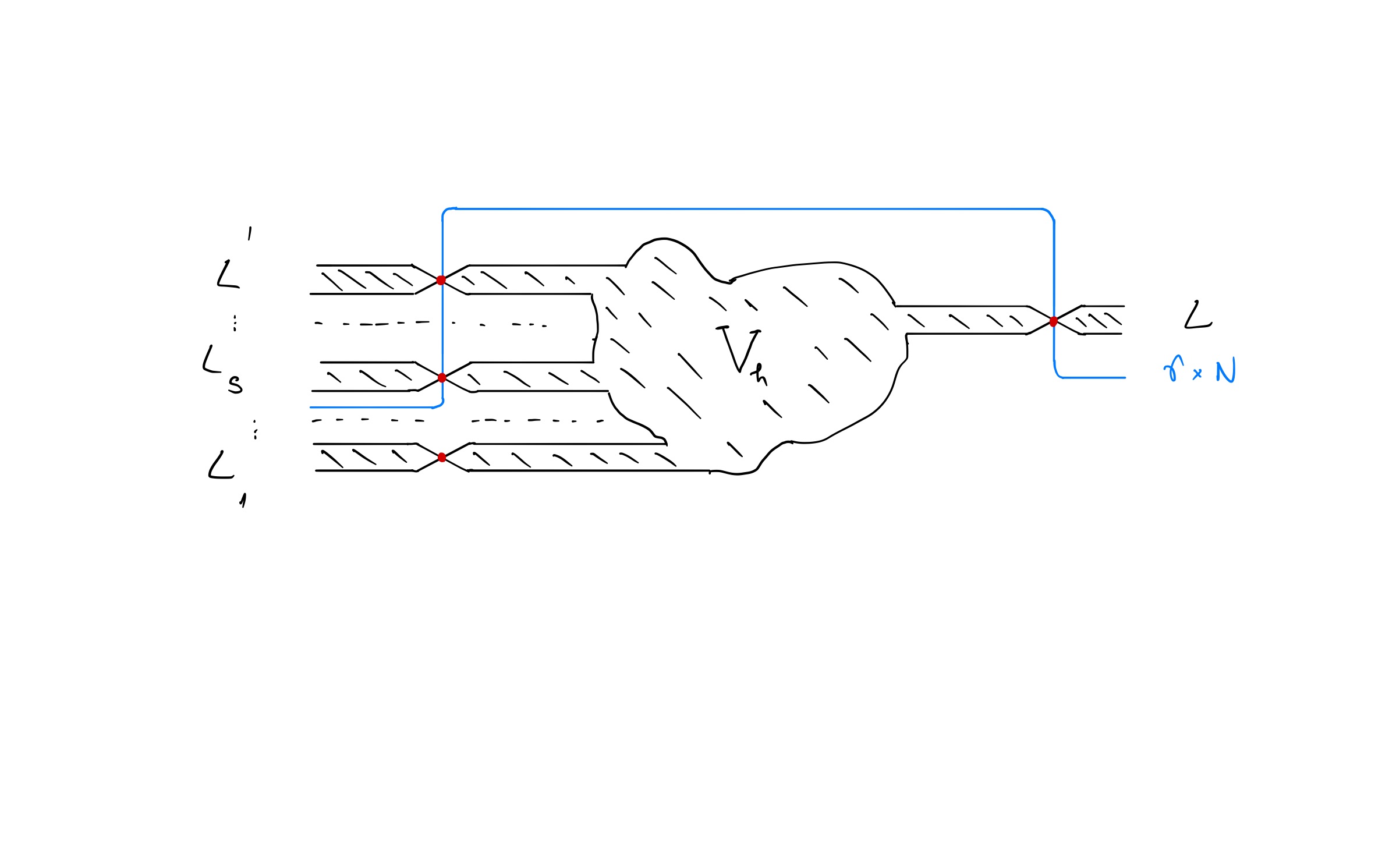}
   \end{center}
   \caption{\label{fig:it-cone} Position of the curve $\gamma$ relative to the (deformed) 
   cobordism $V_{h}$.}
\end{figure}

Of course, the $J$-holomorphic curves being counted in the different structures
considered here are marked. The use of positive action markings and 
the fact that the perturbation $h$ is itself positive at these intersection points of the ends 
implies that precisely the correct Yoneda module is associated to the intersection of $\gamma$ with each 
end of $V_{h}$.  

To conclude,  the construction of the inclusion functors
$i_{\gamma}$ and the description of the Yoneda modules $i^{\ast}_{\gamma}(\mathcal{Y}(V))$ 
carry over to the more general setting here without significant modifications when restricting to modules over the category of embedded objects. The definition of $\Theta$
is a particular application of this construction, as follows.  
On objects $\Theta$ associates to a Lagrangian $L\in \mathcal{L}ag^{\ast}(M)$ the Yoneda module 
$\mathcal{Y}_{e}(L)$. To define $\Theta$ on morphisms, consider first a cobordism $V\in \mathcal{L}ag^{\ast}_{0}(\C\times M)$ (thus $V$ does not carry (non-marked) tear-drops) and a curve $\gamma$ as in Figure \ref{fig:it-cone} such that $\gamma$ only crosses the ends $L$ and $L'$. In this case
\begin{equation}\label{eq:theta}
i^{\ast}_{\gamma}(\mathcal{Y}(V))\simeq Cone
\ (\mathcal{Y}_{e}(L) \stackrel{\phi_{V}}{\longrightarrow}\mathcal{Y}_{e}(L'))
\end{equation}
and we put $\Theta(V)=[\phi_{V}]\in \mor (H(mod(\fuk^{\ast}(M))))$.

For the particular curve $\gamma$ used to define $\Theta$ all the discussion also applies (with obvious
small modifications)
to $V$'s that carry non-marked tear-drops because $\gamma$ avoids
the pivots $P_{i}\in \C$ associated to $V$ and plane-simple coherence (see \S\ref{subsubec:tear}) 
is easy to check in this case. The resulting module is invariant under horizontal homotopies
of the curve $\gamma$ that again avoid the points $P_{i}$ and continue to intersect $V$ only along its 
two ends $L$ and $L'$.

To show that  $\Theta$ so defined is indeed a functor we need to check that $\Theta (V)$ does not 
depend on the choice of curve $\gamma$ (with the fixed behaviour at the ends); that it
is left invariant through horizontal Hamiltonian isotopies of $V$; that it associates the identity
to each sufficiently small deformation of the identity (recall these notions from \S\ref{subsubsec:smallpert}); and that it behaves functorially with respect to composition. The key points are the last two ones. They follow from an alternative description of $\Theta(V)$ that will play an important
role for us later on and thus we will first give this description. 

\begin{lem}\label{lem:hlgy-cl} Let $W$ be a sufficiently small deformation of $\gamma\times L$ where $\gamma\subset \C$ is as in Figure \ref{fig:cob-id} below. There is an associated morphism $\phi_{V}^{W}:CF(L,L)\to CF(L,L')$
and let $a^{W}_{V}=\phi_{V}^{W}([L])$ where $[L]$ is the unit in $HF(L,L)$.
We have the identity: 
\begin{equation}\label{eq:module-map}[\phi_{V}]=\mu_{2}(-,a^{W}_{V})
\end{equation}
where we use the identification $H_{\ast}(\hom_{mod}(\mathcal{Y}_{e}(L),\mathcal{Y}_{e}(L')))\cong HF(L,L')$.
\end{lem}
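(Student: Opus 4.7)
The plan is to reduce the lemma to an application of the Yoneda quasi-isomorphism \eqref{eq:yoneda-imm}, combined with a matching of two geometric Floer counts. Under the identification
$$H_{\ast}(\hom_{mod}(\mathcal{Y}_{e}(L),\mathcal{Y}_{e}(L')))\cong HF(L,L'),$$
a module morphism $\phi$ is sent to $\phi_{L}([L])\in HF(L,L')$, and conversely a cycle $a\in CF(L,L')$ is assigned to the module morphism whose component on any test object $N$ is $\mu_{2}(-,a)\colon CF(N,L)\to CF(N,L')$. Consequently, to establish \eqref{eq:module-map} it is enough to verify the equality
$$\phi_{V}|_{L}([L]) \;=\; a_{V}^{W} \quad\text{in}\quad HF(L,L').$$

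First I would make $\phi_{V}^{W}$ explicit as the chain map obtained by counting $0$-dimensional moduli spaces of $\mathbf{c}$-marked Floer strips with boundary on $W$ and $V_{h}$, connecting chord generators in the fibre over the positive bottleneck of $V$ (computing $CF(L,L)$) to generators in the fibre over the negative bottleneck (computing $CF(L,L')$). That this is a chain map follows from the unobstructedness of $W$ and $V$ together with Assumption~IV, which ensures that the bottleneck fibres present the correct Floer complexes of the ends. Next I would recall that $\phi_{V}$ comes from the cone decomposition \eqref{eq:theta} of the pulled-back Yoneda module $i_{\gamma}^{\ast}(\mathcal{Y}(V))$, with $\gamma$ as in Figure~\ref{fig:it-cone}; its restriction to $L$ is itself realized by counts of marked polygons in $\C\times M$ whose boundary alternates between $\gamma\times L$ and $V_{h}$.

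The central step is to identify $\phi_{V}|_{L}([L])$ with $\phi_{V}^{W}([L])$. I would do this by interpolating, through a one-parameter family of admissible planar curves and associated Floer data, between the configuration $(\gamma\times L,V)$ underlying $\phi_{V}|_{L}$ and the configuration $(W,V)$ defining $\phi_{V}^{W}$, using that by hypothesis $W$ is a \emph{sufficiently small} deformation of $\gamma\times L$ in the sense of \S\ref{subsubsec:smallpert}. Smallness of $W$ guarantees, via Lemma~\ref{lem:ident} and the identity-continuation property established there, that the continuation map $CF(L,L)\to CF(L,L)$ induced by the interpolation preserves the unit class $[L]$. The parametrized moduli space then provides a chain homotopy between $\phi_{V}^{W}$ and $\phi_{V}|_{L}$ as chain maps $CF(L,L)\to CF(L,L')$, and evaluating both sides on $[L]$ yields the desired equality in $HF(L,L')$. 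Applying Yoneda upgrades this chain-level identity to the module-level identity \eqref{eq:module-map}.

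The main obstacle is the control of bubbling in these parametrized moduli spaces: since we work with marked immersed objects, one must rule out, or account for, breakings off of $\mathbf{c}$-marked tear-drops and disks during the interpolation, as well as sphere or disk bubbles sliding across bottlenecks. This is exactly where Assumption~II (action-negativity of markings), Assumption~V (vanishing of $\mathbf{c}$-marked tear-drop counts), and the positivity of $h$ at action-negative self-intersection points intervene: they confine potential bubbles to configurations whose contributions either vanish individually or cancel in pairs modulo~$2$, in the same spirit as the embedded case treated in \cite{Bi-Co:lcob-fuk}. Once this compactness/transversality package is in place, the argument above delivers the lemma.
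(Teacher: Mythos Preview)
Your reduction via Yoneda is correct in spirit, but there is a genuine gap: the morphism $\phi_V$ from \eqref{eq:theta} is a morphism of modules over $\fuk^{\ast}(M)$, the category of \emph{embedded} Lagrangians, while $L$ itself may be immersed. Hence the component ``$\phi_V|_L$'' that you repeatedly invoke is not defined a priori, and the Yoneda identification \eqref{eq:yoneda-imm}---which is stated for the immersed category $\fuk^{\ast}_i(M)$---does not directly give you $H_{\ast}(\hom_{mod}(\mathcal{Y}_e(L),\mathcal{Y}_e(L')))\cong HF(L,L')$ via evaluation at $L$ when $L$ is not an object of $\fuk^{\ast}(M)$.

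The paper resolves this by first \emph{extending} $\phi_V$ to a morphism $\bar{\phi}_V$ of $\fuk^{\ast}_i(M)$-modules: for each immersed test object $N$ one chooses a sufficiently small deformation $W_N$ of $\gamma\times N$, and the $N$-component of $\bar{\phi}_V$ is defined by the same strip count you describe, with boundary on $W_N$ and $V_h$. In particular $(\bar{\phi}_V)_L=\phi^W_V$ \emph{by construction}, with $W=W_L$. Yoneda over $\fuk^{\ast}_i(M)$ then gives $[\bar{\phi}_V]=\mu_2(-,a^W_V)$, and restriction to $\fuk^{\ast}(M)$ yields \eqref{eq:module-map}. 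From this viewpoint your proposed interpolation between ``$\gamma\times L$'' and $W$ is moot: the undeformed $\gamma\times L$ does not produce a well-defined chain map when $L$ is immersed (it has non-isolated double points along the ends), and once it is deformed to $W$ the resulting map \emph{is} $\phi^W_V$, so there is nothing left to compare.
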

The morphism $\phi_{V}$ in (\ref{eq:module-map}) is defined through equation (\ref{eq:theta}) and the identity
 (\ref{eq:module-map}) is written, as will be explained below, in the homological category of modules
 over the immersed Fukaya category $\fuk_{i}^{\ast}(M)$.

\begin{figure}[htbp]
   \begin{center}
      \includegraphics[scale=0.18]{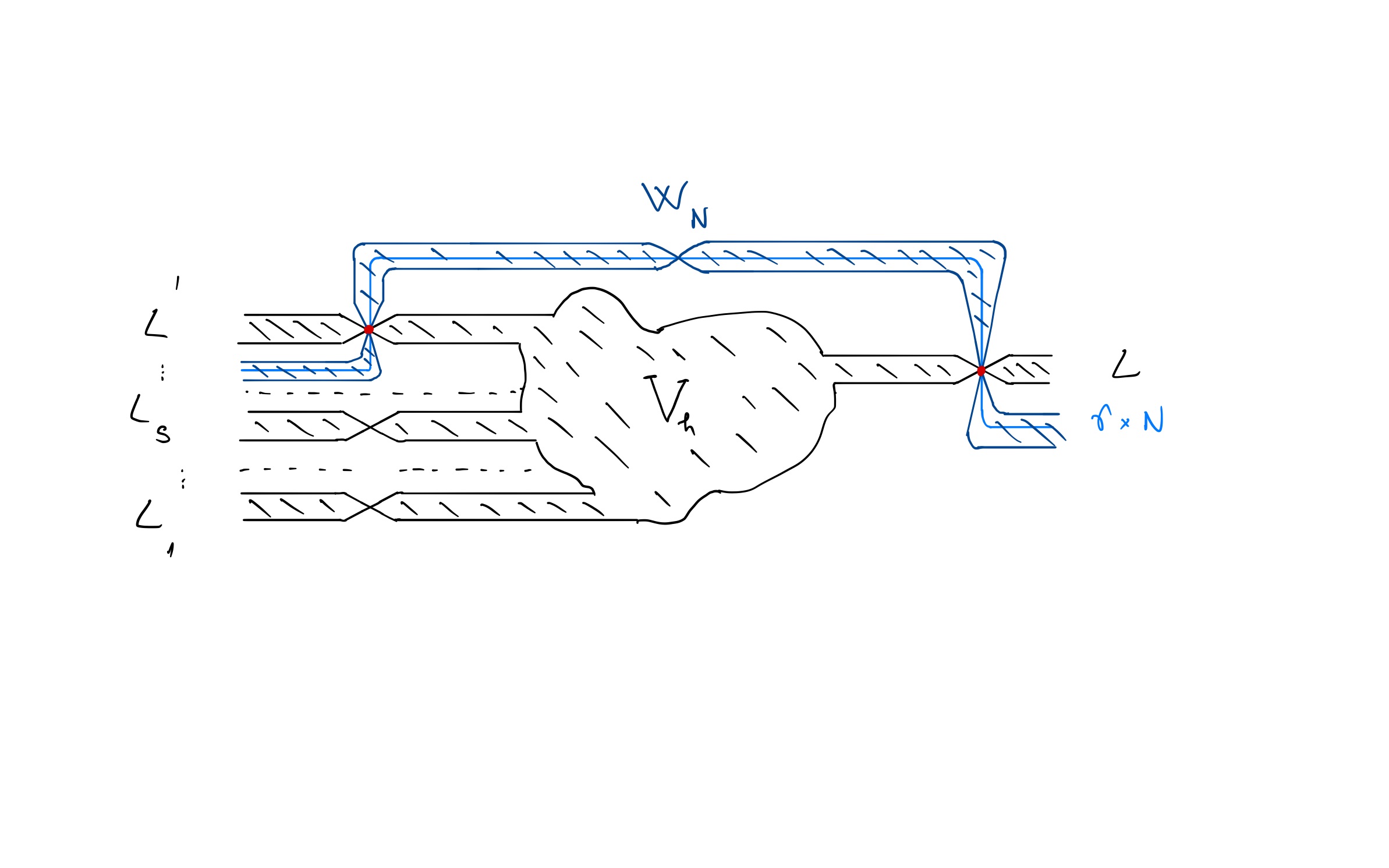}
   \end{center}
   \caption{\label{fig:cob-id}The perturbed cobordism $V_{h}$ and the small enough deformation $W_{N}$ of $\gamma\times N$.}
\end{figure}

\begin{proof}  Recall the existence of the immersed Fukaya  categories $\fuk_{i}^{\ast}(M)$, 
$\fuk_{i}^{\ast}(\C\times M)$.  We start by noting that there is an extension of the morphism
$\phi_{V}$ as a module morphism $\mathcal{Y}(L)\to \mathcal{Y}(L')$ over the category  $\fuk_{i}^{\ast}(M)$. To define this more general
 $\bar{\phi}_{V}$ we consider for each Lagrangian $N\in \mathcal{L}ag^{\ast}(M)$ a certain sufficiently small deformation, $W_{N}$, of  $\gamma \times N$  where $\gamma$ is the fixed curve in Figure \ref{fig:cob-id} ($W$ is one such example for $N=L$). Once these choices fixed, the same method as in \cite{Bi-Co:lcob-fuk} shows that this data leads to the extension of the  morphism $\phi_{V}$
 to $mod_{\fuk^{\ast}_{i}(M)}$. We denote this extension by $\bar{\phi}_{V}$.  Because the category $\fuk^{\ast}_{i}(M)$ is homologically unital it results that there exists 
an element $a^{W}_{V}$ as in the statement and this element is associated to $W=W_{L}$.
In fact, it follows that formula (\ref{eq:module-map}) is true over $\fuk^{\ast}_{i}(M)$.
\end{proof}

\begin{rem}\label{rem:ext-phi}
 It is useful to notice
 that the extension $\bar{\phi}_{V}$ of $\phi_{V}$ 
 depends on our choices of perturbations $W_{N}$ and, implicitely, also 
 depends on the choice of the curve $\gamma$. However, the properties of $\phi_{V}$
 can be deduced from properties of any extension  $\bar{\phi}_{V}$ because, up to a boundary,
 the morphisms $\phi_{V}$ are robust: they do not depend on the curve $\gamma$ (assuming the correct
 behaviour at the ends) or on perturbations.
\end{rem}
 
 We next consider a different curve  $\gamma'$ with the same ends at infinity as $\gamma$  in Figure \ref{fig:cob-id} and that intersects the  bottlenecks of $V_{h}$ in the same pattern
 as $\gamma$. 
  
  \begin{lem}\label{lem:inv-elem}
  In the setting above, there is a sufficiently small deformation $W'$
  of $\gamma'\times L$ such that the associated element $a^{W'}_{V}$ is defined and 
  $[a^{W}_{V}]=[a^{W'}_{V}]$.
  \end{lem}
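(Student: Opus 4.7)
The plan is a standard Floer-theoretic continuation argument applied to a one-parameter family of test cobordisms interpolating between $W$ and $W'$.

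First, I would connect $\gamma$ to $\gamma'$ by a smooth family $\{\gamma_s\}_{s\in[0,1]}$ of curves in $\C$, all horizontal at infinity and all intersecting the positive and negative bottlenecks of $V_h$ in the same pattern as $\gamma$ and $\gamma'$. Such a family exists because $\gamma$ and $\gamma'$ share both their asymptotic behavior and their crossing data over the bottlenecks; concretely, one can realize the homotopy as a horizontal Hamiltonian isotopy $\psi_s$ of $\C$, supported away from the bottlenecks and the ends at infinity, with $\psi_0=\id$ and $\psi_1(\gamma)=\gamma'$. Away from the bottlenecks the curves $\gamma_s$ may become non-transverse to $\pi(V_h)$ at isolated values of $s$; this is dealt with by the usual parametric framework.

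Second, I would build the family of sufficiently small deformations $W_s$ of $\gamma_s\times L$, with associated data $\mathcal{D}_{W_s}$, depending smoothly on $s$, with $W_0=W$ and $W_1=W'$, by transporting $W$ via the lifted isotopy $\Psi_s=\psi_s\times\id_M$. Since $\Psi_s$ is product-like in the $M$-direction and leaves the profile of $W$ near each of the two bottlenecks unchanged, the argument of Lemma \ref{lem:ident} applies uniformly in $s$: every $W_s$ is unobstructed and remains a sufficiently small deformation in the sense of \S\ref{subsubsec:smallpert}, so the chain maps $\phi_V^{W_s}:CF(L,L)\to CF(L,L')$ and the elements $a^{W_s}_V=\phi_V^{W_s}(e_L)$ are well defined for every $s$, where $e_L\in CF(L,L)$ is a fixed cycle representative of the unit.

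Third, I would compare $\phi_V^{W_0}$ and $\phi_V^{W_1}$ through the parametric moduli space of marked Floer strips with boundary on $V_h\cup W_s$, $s\in[0,1]$. A generic choice of the family $\mathcal{D}_{W_s}$ makes this a regular $1$-manifold whose boundary consists of the fibres over $s=0$ and $s=1$ together with the standard breakings obtained by splitting off a Floer strip at either end. Counting the $0$-dimensional boundary strata yields a chain homotopy $K:CF(L,L)\to CF(L,L')$ satisfying
\[
\phi_V^{W_1}+\phi_V^{W_0}=\partial K+K\partial
\]
(mod $2$). Evaluated on the cycle $e_L$, this gives $a^{W'}_V+a^W_V=\partial K(e_L)$, whence $[a^{W'}_V]=[a^W_V]$ in $HF(L,L')$.

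The main technical point is arranging uniform smallness of the $W_s$ simultaneously with regularity of the parametric moduli space; but both are standard consequences of Gromov compactness and genericity, completely parallel to those used in Lemma \ref{lem:ident} and in the construction of $\fuk^{\ast}_{i}(\C\times M;\bar{\mathcal{D}})$, now carried out with the additional parameter $s$. All compactness issues are controlled, as elsewhere in the paper, by the action-negativity of the markings imposed by Assumption II and by the conventions of \S\ref{subsec:exact-cob}.
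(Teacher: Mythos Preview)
Your approach is essentially the same as the paper's: a horizontal Hamiltonian isotopy from $\gamma$ to $\gamma'$ followed by a standard continuation argument. The paper gives only a one-sentence sketch, noting that the perturbation associated to $\gamma'$ (and along the deformation) may need to be reduced relative to that of $W$ to maintain unobstructedness throughout---a point you address in your final paragraph on uniform smallness.
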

  
  The argument to prove this is to use a horizontal Hamiltonian deformation from 
  $\gamma$ to $\gamma'$ and slide one of the bottlenecks along $\gamma$. To ensure unobstructedness
  all along this process the perturbation associated to $\gamma'$ (and along the transformation) might need
  to be reduced compared to that associated to $\gamma$.

 \begin{rem}
 A somewhat delicate point is that the size of the perturbation germs of $L$, as it appears at the two ends of $\gamma'\times L$,  depends potentially on $\gamma'$.  At the same time, the category $\fuk^{\ast}_{i}(M)$
 does not ``see'' perturbation germs associated to its objects and, as mentioned in Remark \ref{rem:ext-phi}, it follows that whenever we use various auxiliary
 curves $\gamma$, $\gamma'$, sufficiently small deformations of associated trivial cobordisms are sufficient
 for our arguments even if they depend on the curves $\gamma$, $\gamma'$.  
 
 \end{rem}

 Returning to the properties of $\Theta$, it follows from the definition of the sufficiently small 
 deformations of trivial cobordisms that the class $a_{V_{h}}^{V'_{h}}$ is the unit and thus $\Theta$
 applied to a trivial cobordism of the form $\gamma\times N$ is the identity.  
Showing that $\Theta$ respects the composition of cobordisms requires a slightly more involved
argument.

\begin{lem} \label{lem:comp-theta}Consider $V,V'\in \mathcal{L}ag^{\ast}(M)$ scuh that $V''=V'\circ V$ is defined
with the gluing over the end $L'$.
Then $\Theta (V'')=\Theta(V')\circ \Theta (V)$.
\end{lem}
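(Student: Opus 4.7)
The plan is to reduce the functoriality of $\Theta$ to a single Floer-homological identity by using the element description from Lemma \ref{lem:hlgy-cl}. By that lemma, there exist sufficiently small deformation witnesses $W$, $W'$, $W''$ of trivial cobordisms on $L$, $L'$, $L$ respectively, producing classes $a^{W}_{V}\in HF(L,L')$, $a^{W'}_{V'}\in HF(L',L'')$ and $a^{W''}_{V''}\in HF(L,L'')$ such that
\begin{equation*}
\Theta(V)=[\mu_{2}(-,a^{W}_{V})],\quad \Theta(V')=[\mu_{2}(-,a^{W'}_{V'})],\quad \Theta(V'')=[\mu_{2}(-,a^{W''}_{V''})].
\end{equation*}
The quadratic $A_{\infty}$ relation for $\mu_{2}$ and $\mu_{3}$ (over $\Z/2$) shows that the composition $\mu_{2}(-,a^{W'}_{V'})\circ\mu_{2}(-,a^{W}_{V})$ is chain homotopic, as a module morphism, to $\mu_{2}(-,\mu_{2}(a^{W}_{V},a^{W'}_{V'}))$. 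Hence
$\Theta(V')\circ\Theta(V)=[\mu_{2}(-,\mu_{2}(a^{W}_{V},a^{W'}_{V'}))]$,
and the lemma reduces to proving the identity
\begin{equation}\label{eq:comp-theta-target}
[a^{W''}_{V''}]=[\mu_{2}(a^{W}_{V},a^{W'}_{V'})]\in HF(L,L'').
\end{equation}

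To establish \eqref{eq:comp-theta-target}, I would set up an auxiliary planar configuration for $V''_{h}=(V'\circ V)_{h}$ with three bottlenecks: one in the fibre over $L$, one in the common gluing fibre over $L'$, and one over $L''$. Choose a curve $\gamma''$ crossing $V''_{h}$ transversally exactly once over each of these three fibres, and let $W''$ be a small deformation of $\gamma''\times L$ of the type used in Lemma \ref{lem:hlgy-cl}, with an additional small fibrewise deformation over the $L'$-bottleneck providing the third corner. By definition, $\mu_{2}(a^{W}_{V},a^{W'}_{V'})$ is represented by counts of $\mathbf{c}$-marked Floer triangles with corners at chords over these three bottlenecks, boundary on $V$, $V'$ and the spliced auxiliary piece, and exit over the $L''$-bottleneck. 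A gluing/degeneration argument along the shared $L'$-fibre, entirely analogous to the identification of iterated cones at the beginning of \S\ref{subsubsec:funct-th} and to the corresponding arguments of Section~3 of \cite{Bi-Co:lcob-fuk}, converts these triangle counts into precisely the strip counts defining $a^{W''}_{V''}$.

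The main technical obstacle is ensuring that no spurious contributions arise from strip breaking or marked teardrop bubbling at corners lying over the $L'$-bottleneck, and that the count along that fibre really is the Floer product. Both issues are controlled by Assumptions II, IV and V of \S\ref{subsec:exact-cob}: action negativity of the markings, together with positivity of $h$ at the $L'$-end of both $V$ and $V'$, force any broken configuration exiting above the $L'$-fibre to be entirely fibrewise and to land at action-negative corners. Such fibrewise pieces are absorbed either into the unit property of the small-deformation witnesses introduced in \S\ref{subsubsec:smallpert}, or into the unobstructedness vanishing of the individual cobordisms $V$ and $V'$. The remaining transversality and gluing arguments are standard within the Seidel-style coherent perturbation scheme already set up in \S\ref{subsubsec:morph}.
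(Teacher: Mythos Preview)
Your reduction in the first paragraph to the element identity \eqref{eq:comp-theta-target} is correct and is exactly the right preliminary step.

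The genuine gap is in the second paragraph. First, $\mu_{2}(a^{W}_{V},a^{W'}_{V'})$ is the product in $\fuk^{\ast}_{i}(M)$: a count of triangles in $M$ with boundary on $L,L',L''$. It is \emph{not} ``by definition'' a triangle count in $\C\times M$ with boundary on $V$, $V'$ and an auxiliary piece; establishing that relation is precisely the content that needs to be proved. Second, your configuration is incoherent: once $V$ and $V'$ are glued into $V''$ they form a single immersed Lagrangian in $\C\times M$, not two separate boundary conditions, so a ``triangle with boundary on $V$, $V'$ and the spliced auxiliary piece'' does not parse. With your $W''$ (a deformation of $\gamma''\times L$) and $V''$ you have \emph{two} boundary conditions, so you are counting strips, not triangles; and a curve $\gamma''$ crossing $V''$ at three bottlenecks does not by itself factor the resulting strip count into a $\mu_{2}$-expression.

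The paper's device is different and is the missing idea: instead of a curve crossing at three points, take a curve $\gamma$ that makes a small loop around the gluing bottleneck $H$, so that it meets $V''$ at \emph{four} regular bottlenecks $P,Q,R,T$, with $Q$ and $R$ both lying near $H$. The Floer complex $CF(W,V'')$ (two boundary conditions throughout) is then a four-term complex whose differential contains a piece $C_{P}\to C_{Q}$ identified with $\phi^{W}_{V}$, a piece $C_{Q}\to C_{T}$ identified with (the module map) $\phi^{W}_{V'}$, and a piece $C_{R}\to C_{Q}$ which, by a Gromov compactness argument shrinking the loop around $H$, is homotopic to the identity of $CF(L,L')$. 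Comparing with a homotopic curve $\gamma'$ that does not loop (so $W'$ meets $V''$ only over $P$ and $T$) gives $\phi^{W'}_{V''}\simeq \phi^{W}_{V'}\circ\phi^{W}_{V}$, which together with Lemma~\ref{lem:hlgy-cl} yields the statement. No $\mu_{2}$ in $\C\times M$ is ever invoked; the $\mu_{2}$ in $M$ enters only through Lemma~\ref{lem:hlgy-cl} at the end.
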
 
\begin{proof} We sketch
 $V$, $V'$ and their composition $V''$,  (after deformation) in Figure \ref{fig:comp-theta}. 
 
 \begin{figure}[htbp]
   \begin{center}
      \includegraphics[scale=0.18]{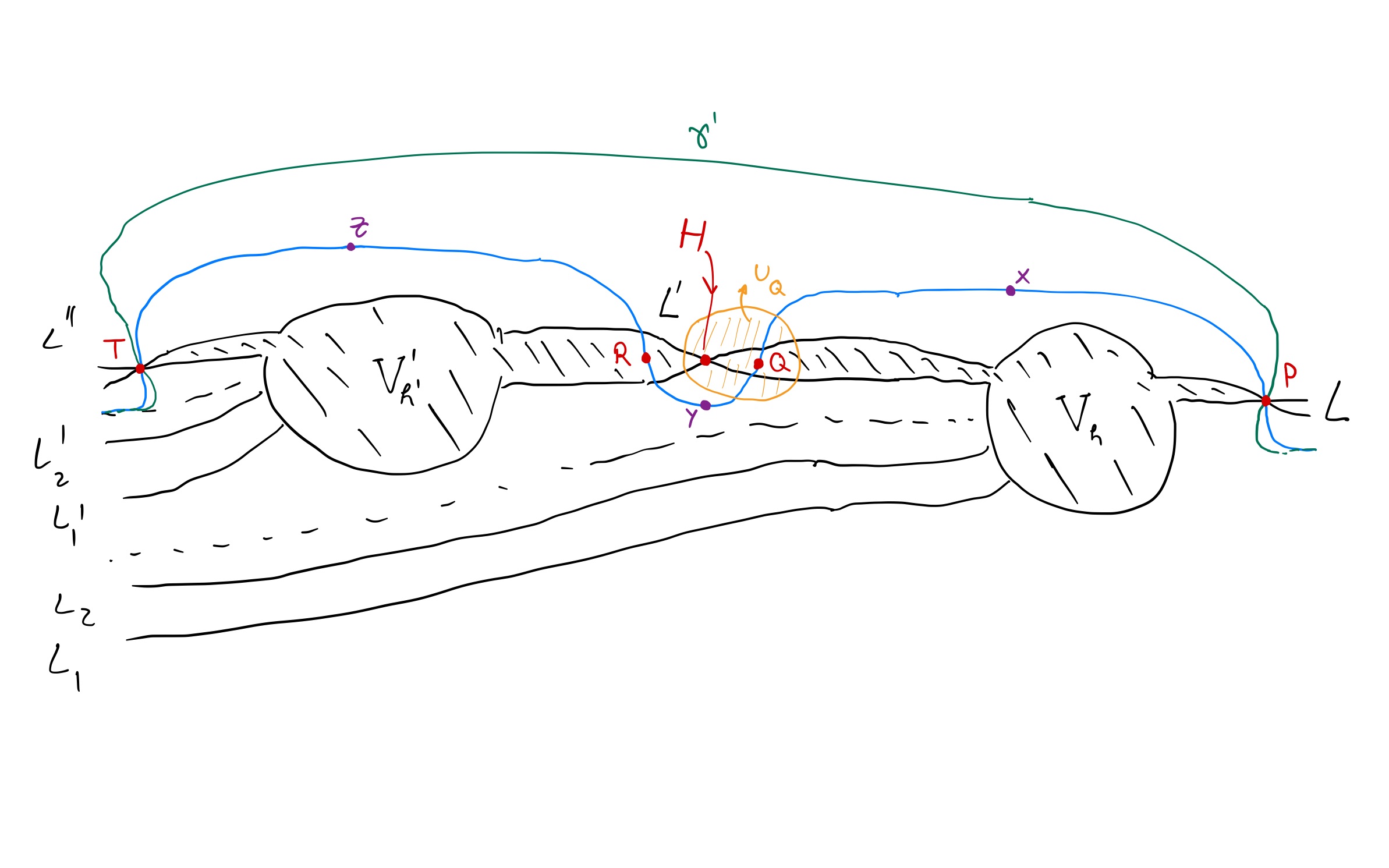}
   \end{center}
   \caption{\label{fig:comp-theta}The composition $V''=V'\circ V$ and the curves
   $\gamma$ and $\gamma'$. A sufficiently small deformation $W$ of $\gamma\times L$ has regular bottlenecks at $P,Q,R,T$ and inverted ones at $X,Y,Z$.}
\end{figure}

To fix ideas we start under the assumption that both $V$ and $V'$ do not carry (non-marked) tear-drops.
 We also consider two curves $\gamma$ and $\gamma'$ ($\gamma$ and $\gamma'$ are homotopic in the complement of the marked points $P_{i},P'_{j}\in\C$ associated to $V,V'$ in case these carry non-marked tear-drops) as in the
 picture. We consider a sufficiently small deformation $W$ of $\gamma \times L$ with regular
 bottlencks at the points $P,Q,R,T$ and with inverted bottlenecks at $X,Y, Z$. We assume that 
 $\gamma'$ is Hamiltonian isotopic to $\gamma$ with the ends $P$ and $T$ fixed and we also consider a
 sufficiently small deformation $W'$ of $\gamma'\times L$ which is obtained by the associated Hamiltonian deformation of $W$. We claim that there exists such a $\gamma$ and 
 $W$ (in particular $W$ is unobstructed)
 with the following additional porperties:  there is a complex $C_{Q}$ with generators the intersections
  $W\cap V''\cap \pi^{-1}(U_{Q})$, with $U_{Q}$ as in the picture, and whose differential
  counts curves with an image that does not get out of $U_{Q}$; under the translation bringing $Q$ over
   the bottleneck $H$ (that corresponds to the gluing of the end of $V$ to the corresponding end of $V'$) of$V''$ (by an isotopy coming from the plane).
   This complex is identified with $CF(L, L')$; there is a similar complex $C_{R}$
   associated to $R$ with similar properties; the strips going from the elements
    in $C_{R}$ to those in $C_{Q}$ give a morphism homotopic to the identity. 
    The existence of such a $W$ results from a Gromov compactness argument 
   applied by diminishing simultaneously the ``loop'' that $\gamma$ does around $H$ and the perturbation
giving $W$. Using the identifications of the complexes $C_{R}$ and $C_{Q}$ with the 
   complex $CF(L,L')$ over the bottleneck $H$, the complex $CF(W, V'')$ can be rewritten as a sum of four        
   complexes corresponding to the four regular bottlenecks of $W$ and with a differential that involves both 
   $\phi^{W}_{V}$, defined by strips going from $P$ to $C_{Q}$, $\phi^{W}_{V'}$ defined by strips
   going from $C_{Q}$ to $T$, and a morphism homotopic to the identity from $C_{R}$ to $C_{Q}$.
   As $W'$ is Hamiltonian isotopic to $W$, we finally deduce that $\phi^{W'}_{V''}\simeq
   \phi ^{W}_{V''}\simeq \phi^{W}_{V'}\circ \phi^{W}_{V}$  and, in view of Lemma \ref{lem:hlgy-cl},
  this ends the proof in case $V$ and $V'$ do not carry tear-drops. 
  
  If $V$ and $V'$ carry tear-drops, we notice that the curve
  $\gamma$ is away from the pivots of both $V$ and $V'$, that $\gamma$ and $\gamma'$ are homotopic in the complement of the union of  the pivots. Moreover, for the Floer complexes and maps that appear in this proof,   plane-simple coherence is easy to check. Thus, it follows that the argument remains true in this case too and this concludes the proof.
   \end{proof} 
   
\begin{rem} By using methods similar to the above it is also possible to show, following \cite{Bi-Co:lcob-fuk}, that the iterated cone-decomposition in (\ref{eq:cones}) remains valid also in terms of modules over the 
immersed Fukaya category $\fuk^{\ast}_{i}(M)$ at least for $V\in \mathcal{L}ag^{\ast}_{0}(\C\times M)$. 
\end{rem}

To summarize what we proved till now in this sub-section, we have shown that  $\Theta (V)$, as defined in (\ref{eq:theta}),   is characterized by the class $[a_{V}^{W}]\in HF(L,L')$ through equation  (\ref{eq:module-map}). Moreover, it also follows that the definition of $\Theta$ is independent of the choice of the curves $\gamma$ and that $\Theta$ is an actual functor, as it associates the identity to any small enough perturbation of a curve $\gamma\times L$. Finally, it follows that $\Theta$ does not ``see'' perturbation germs (as long as they are small enough).

\

We end the section with three other important features of the functor $\Theta$. 

\

Before going on, we simplify our drawing conventions to avoid drawing explicitly the projections onto the plane of the perturbations of the cylindrical ends of cobordisms. 
\begin{figure}[htbp]
   \begin{center}
      \includegraphics[scale=0.53]{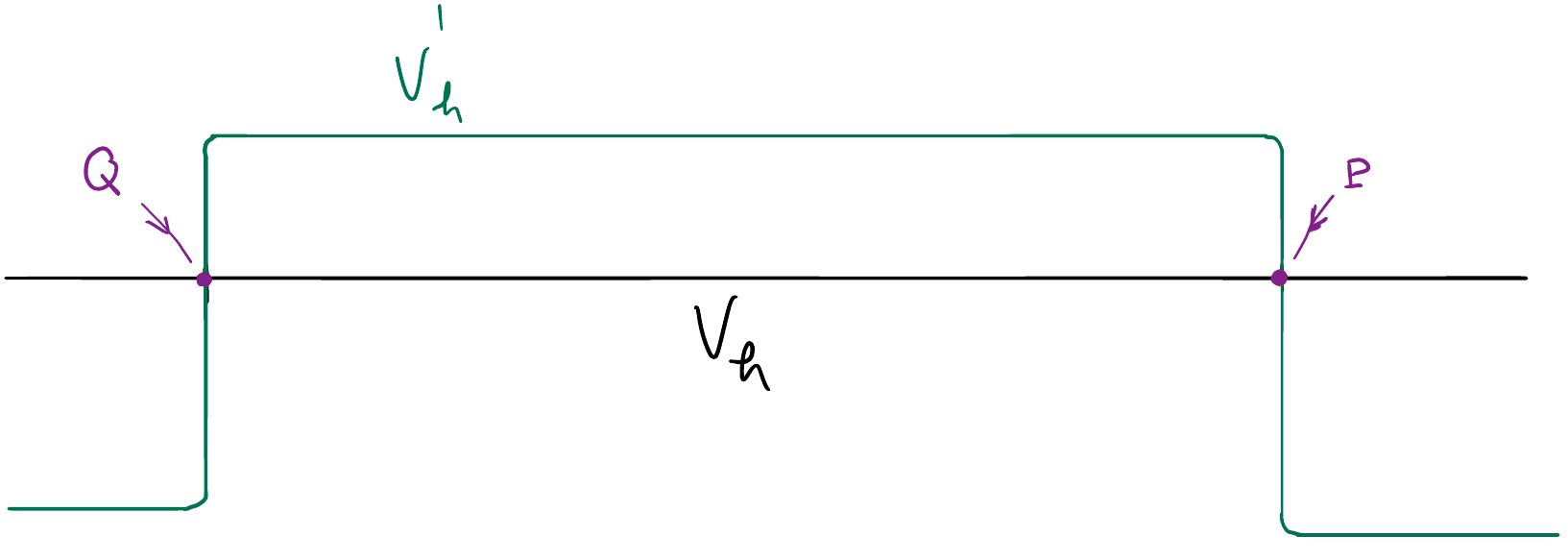}
   \end{center}
   \caption{\label{fig:smallperturb2} The schematic version of Figure \ref{fig:smallperturb}. The bottlenecks $P$ and $Q$ are regular for both $V'_{h}$ and $V_{h}$.}
\end{figure}

Instead, from now on we will represent these configurations schematically by using the following conventions that apply to the cylindrical ends (or parts) of immersed cobordisms:
we indicate the regular bottlenecks by a  dot (such as $P$, $Q$, $R$ and $T$ in Figure \ref{fig:comp-theta}); we will no longer indicate the inverted  bottlenecks (such as $X$, $Y$, $Z$ in Figure \ref{fig:comp-theta}) 
 but we assume that a unique inverted bottleneck always exists in between two regular ones; when two cylindrical components intersect at bottlenecks of both components, then the two bottlenecks are regular and positioned as in Figure \ref{fig:smallperturb}; there could also be pivots in $\C$, away from the 
 projection of the cobordisms considered, such as $P_{1}$, $P_{2}$ in Figure \ref{fig:markedpts}, that will be used to ``stabilize'' tear-drops. 
Of course, more details will be given if needed. To give an example of our conventions, Figure \ref{fig:smallperturb2} is the schematic version of Figure \ref{fig:smallperturb}.

The next result provides an alternative description for $\Theta(V)$ when $V$ is simple.

\begin{lem} \label{lem:ham-isot} Assume that $V\in \mathcal{L}ag^{\ast}_{0}(\C\times M)$ (which means
that $V$ does not carry any $J_{0}$-tear-drops) and is simple (which means that it has just two ends, one  positive and one  negative). Then $\Theta(V)$ is induced by the Hamiltonian 
translation moving a small enough perturbation of $\gamma \times L$ to $\gamma'\times L$ in Figure \ref{fig:ham-isotop}.
\end{lem}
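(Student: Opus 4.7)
\medskip

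\noindent\textit{Proof proposal.} The plan is to reduce the statement to the formula of Lemma~\ref{lem:hlgy-cl}. Since $V$ is simple with ends $L$ (negative) and $L'$ (positive), by Lemma~\ref{lem:hlgy-cl} we have $\Theta(V) = \mu_2(-, a_V^W)$ where $a_V^W = \phi_V^W([L])$ for any sufficiently small deformation $W$ of a curve $\gamma \times L$ of the type in Figure~\ref{fig:cob-id}. The goal is to exhibit a specific $\gamma$ (as in Figure~\ref{fig:ham-isotop}) for which the element $a_V^W \in HF(L,L')$ coincides with the image of the unit $[L] \in HF(L,L)$ under the continuation map induced by the Hamiltonian isotopy between $\gamma \times L$ and $\gamma' \times L$.

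The first step is to choose $\gamma$ so that it has a regular bottleneck at the negative end (intersecting $V$ only at the fibre over the $L$-bottleneck, giving an identification $CF(W, V)|_{\text{neg}} \cong CF(L,L)$ in the usual way via positivity of the perturbation germs), and so that $\gamma'$ has a regular bottleneck crossing $V$ only at the fibre over the $L'$-bottleneck (yielding $CF(W', V)|_{\text{pos}} \cong CF(L,L')$). Because $V$ is simple, such $\gamma$ and $\gamma'$ exist and are related by a planar horizontal Hamiltonian isotopy $\psi_t$ (supported away from the bottlenecks and from the pivots $P_i \in \C$, if any), which lifts to a Hamiltonian isotopy $\Psi_t = \psi_t \times \id_M$ of $\C \times M$ sending $\gamma \times L$ to $\gamma' \times L$. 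One then selects sufficiently small unobstructed deformations $W_t$ of $\gamma_t \times L$ which agree with $W$ and $W'$ at the endpoints; shrinking the perturbation germs as needed along the isotopy (as in the proof of Lemma~\ref{lem:inv-elem}) ensures that $W_t$ remains in $\mathcal{L}ag^{\ast}(\C \times M)$ throughout.

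The second step is to compare $\phi_V^W$ with the Floer continuation map associated to $\Psi_t$. One considers the standard interpolating moduli space of Floer strips with boundary on $V$ and on the moving Lagrangian $W_t$, going from a generator of $CF(W,V)$ to one of $CF(W',V)$. A Gromov compactness argument, relying crucially on: (i) $V$ carries no non-marked $J_0$-tear-drops (so no disc/tear-drop bubbling off $V$ occurs, except for possible perturbed plane-simple tear-drops controlled by the pivots), (ii) $W_t$ is unobstructed throughout by construction, and (iii) action-negativity of all markings, shows that the continuation map $\mathcal{K}: CF(W,V) \to CF(W',V)$ is well defined at the chain level and agrees up to homotopy with $\phi_V^W$ after the bottleneck identifications above. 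Evaluating both sides on the unit $[L]$ yields the claim.

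The main obstacle is the third step, namely the rigorous identification of the strips counted by $\phi_V^W$ with continuation strips for $\Psi_t$. Technically one has to verify coherence of the perturbation data along the isotopy, rule out breaking at intermediate intersection points of $W_t$ with $V$ that are not of bottleneck type, and (when $V$ carries non-marked tear-drops) check plane-simple coherence for the continuation moduli spaces in the sense of \S\ref{subsubec:tear} — here the absence of tear-drops in $\mathcal{L}ag^{\ast}_{0}(\C \times M)$ and the possibility to route $\psi_t$ through the complement of the pivots will be decisive.
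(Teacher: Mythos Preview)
Your overall strategy --- reduce to Lemma~\ref{lem:hlgy-cl} and compare $\phi_V^W$ with a Hamiltonian continuation map --- is the right one, and your set-up of $\gamma,\gamma'$ matches the statement. The gap is in Step~2. You assert that ``a Gromov compactness argument\ldots shows that the continuation map $\mathcal{K}:CF(W,V)\to CF(W',V)$ \ldots agrees up to homotopy with $\phi_V^W$'', but this is precisely the content of the lemma and you give no mechanism for it. With your choice of $\gamma$ (a single intersection with $V$, over the $L$-bottleneck) there is no cone structure on $CF(W,V)$, so the map $\phi_V^W$ of Lemma~\ref{lem:hlgy-cl} is not even defined for \emph{this} $W$; it is defined for a different test curve crossing \emph{both} ends. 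You therefore have two a~priori unrelated maps $CF(L,L)\to CF(L,L')$ --- the moving-boundary continuation map and the $\phi$-map for a two-crossing curve --- and your compactness argument does not supply the cobordism of moduli spaces that would relate them.

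The paper closes this gap by a different, algebraic device. It introduces auxiliary curves $\eta,\eta'$ each meeting $V$ at \emph{two} bottlenecks: $\eta$ hits $V$ at $P,Q$ both on the $L$-end, so that $C=CF(\eta\times L,V)$ is the cone on the identity $C_P\to C_Q$ (both $\cong CF(L,L)$); $\eta'$ hits $V$ at $P$ (on $L$) and at $R$ (on $L'$), so that $C'=CF(\eta'\times L,V)$ is the cone on $\phi_V^{W'}:C_P\to C_R\cong CF(L,L')$. The planar isotopy fixes $P$ and carries $Q$ to $R$, so the induced chain quasi-isomorphism $\psi:C\to C'$ restricts on the $P$-summand to the identity and on the $Q$-summand to the translation map $\psi^W_V:C_Q\to C_R$. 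Now the fact that $\psi$ is a chain map between mapping cones, with identity on the base, forces $\psi^W_V\simeq\phi_V^{W'}$ by a short algebraic computation. This cone-map argument is the missing ingredient in your Step~2: it replaces the unspecified compactness comparison by a purely algebraic identity, at the cost of introducing the two-intersection test curves.
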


 \begin{figure}[htbp]
   \begin{center}
      \includegraphics[scale=0.78]{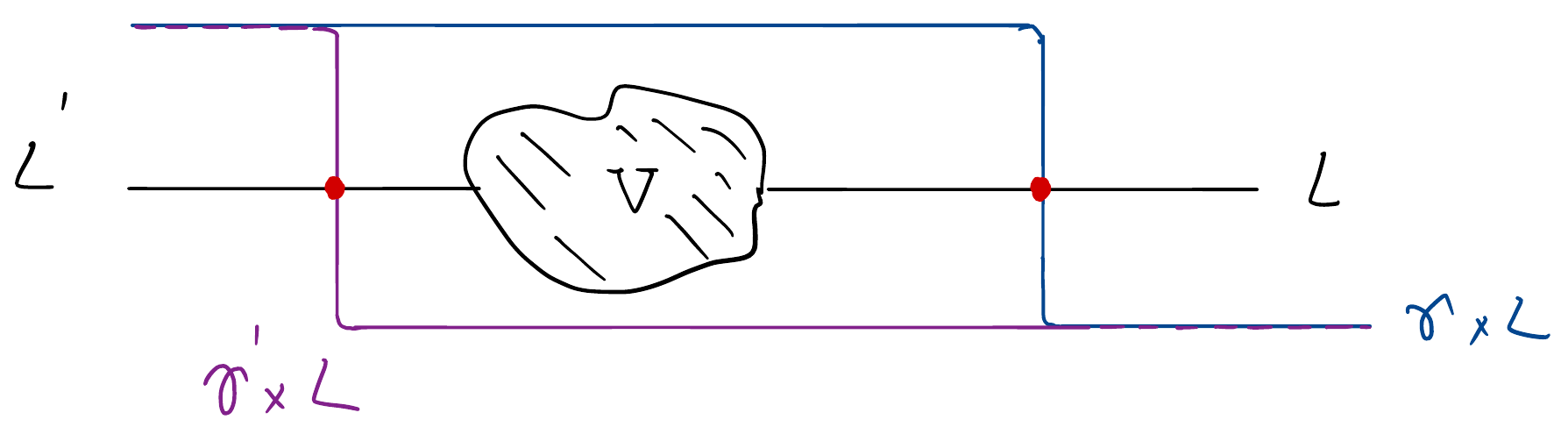}
   \end{center}
   \caption{\label{fig:ham-isotop}The Lagrangians $V$, $\gamma\times L$ and $\gamma'\times L$ drawn without perturbations. The intersections at the bottlenecks drawn in red is as in Figure \ref{fig:smallperturb} (at the point $P$). }
\end{figure}

\begin{rem}
There is a more general version of this statement when $V$ is not simple but we will not need it here.
\end{rem}

\begin{proof} The proof makes use of Figure \ref{fig:disp} below. 
 \begin{figure}[htbp]
   \begin{center}
      \includegraphics[scale=0.18]{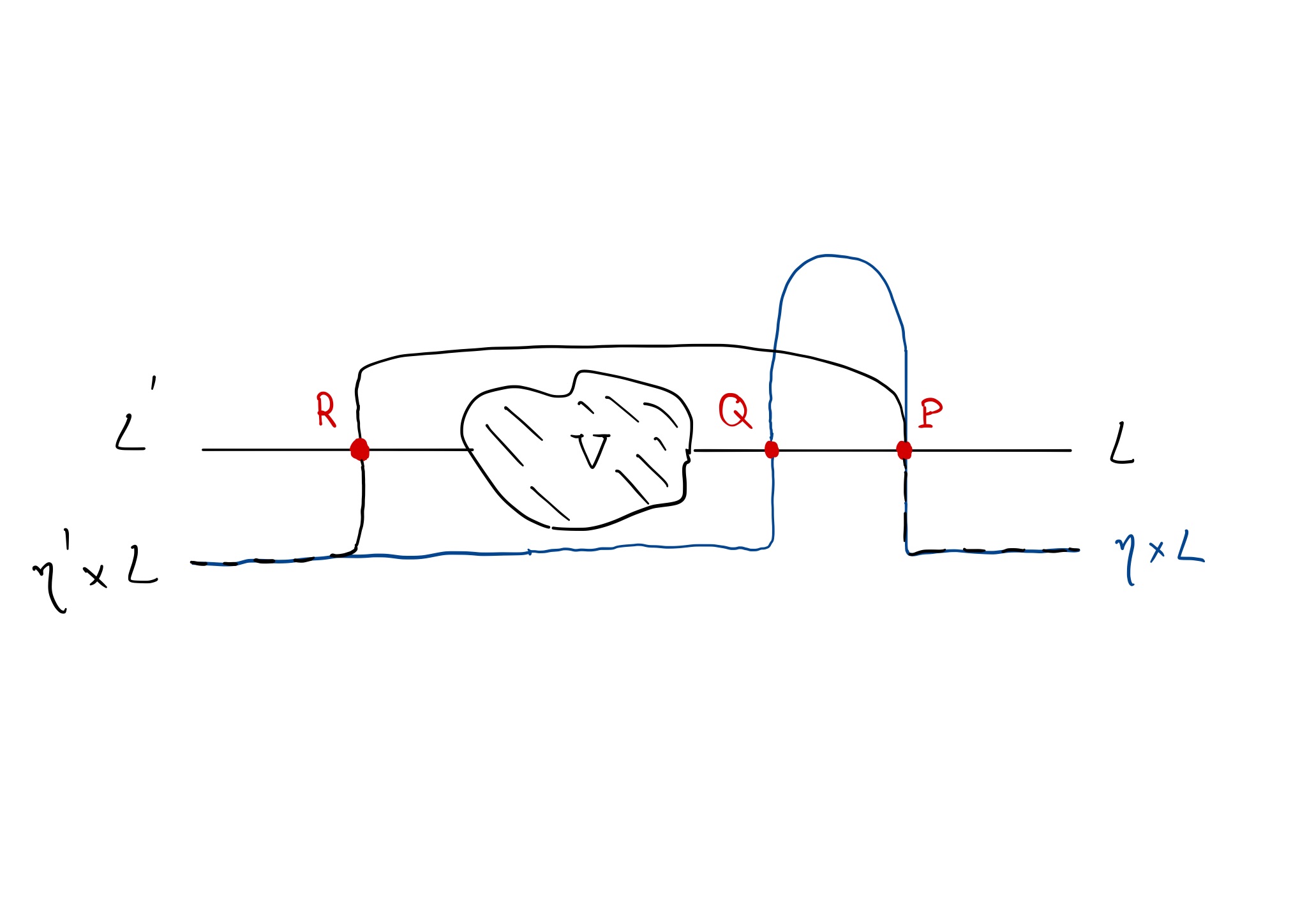}
   \end{center}
   \caption{\label{fig:disp} Schematic representation of $V$, $W=\eta\times L$ and $W'=\eta'\times L$
   obtained from $W$ by the obvious Hamiltonian isotopy carrying the bottleneck $P$ to $Q$ (and maintaining $P$ fixed.}
\end{figure}
We consider the two cobordisms 
$W=\eta\times L$ and $W'=\eta'\times L$ where $\eta, \eta'$ are the curves as in Figure \ref{fig:disp}.
There is an obvious horizontal Hamiltonian diffeomorphism $\varphi$ (induced from the plane $\C$)
 that carries $W$ to $W'$. Put $C=CF(W,V)$ and $C'= CF(W',V)$ and deduce that there is a chain
 quasi-isomorphism  $\psi : C\to C'$  induced by this Hamiltonian diffeomorphism. Of course, in the definition of these Floer complexes we need to use appropriate perturbations for $W$ and $W'$ (and we also consider a sufficiently small perturbation of $V$ in between $Q$ and $P$). In view of this, $C$ can be written
 as a cone over the identity morphism identifying the complex $C_{P}$ over $P$
 to the complex over $Q$, $C_{Q}$, both being identified to $CF(L,L)$. The complex $C'$ is the cone over the map $\phi_{V}^{W'}$ relating the complex $C_{P}$ and $C_{R}=CF(L,L')$.  The chain morphism
 $\psi$, induced by the Hamiltonian isotopy $\varphi$ moving $\eta$ to $\eta'$ (and that  moves 
 $Q$ to $R$ and keeps $P$ fixed), restricts
 to the chain map  $\psi^{W}_{V}: C_{Q}\to C_{R}$ that is easily seen to be homotopic
 to the chain map induced by the translation in the statement (to see this it is enough to notice that $\varphi$
 can be taken to be an appropriate translation moving $Q$ to $R$).  As $\psi$ is a chain map it 
 follows from a simple algebraic calculation that $\psi^{W'}_{V}$ is chain homotopic 
 to $\phi^{W}_{V}$ and this concludes  the proof.
 \end{proof}

Given a simple cobordism $V$ recall that $\bar{V}$ is the inverse cobordism, as defined in \S\ref{subsubsec:basic-def} by a $180^{o}$ roatation in the plane. 
We can also transform the data associated to 
$V$ in such a way as to get corresponding data for $\bar{V}$ and this is how we interpret
$\bar{V}$ as an element of $\mathcal{L}ag^{\ast}(\C\times M)$. It is also possible to adjust the deformation
of the ends, say $h_{L'}$, so that $\bar{V}\circ V$ is defined. Using Lemma \ref{lem:ham-isot}
and the functoriality of $\Theta$, it is a simple exercise to deduce the next statement.

\begin{cor} \label{cor:inv-cob} If $V$ is a simple cobordism $\in \mathcal{L}ag^{\ast}_{0}(\C\times M)$
and $\bar{V}$ is its inverse, then $\Theta(V)\circ \Theta (\bar{V})=id$.
\end{cor}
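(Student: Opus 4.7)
The plan is to combine the geometric description of $\Theta$ on simple cobordisms given by Lemma \ref{lem:ham-isot} with the functoriality of $\Theta$ proved in Lemma \ref{lem:comp-theta}. By Lemma \ref{lem:ham-isot}, for the simple cobordism $V:L\to L'$ the morphism $\Theta(V):\mathcal{Y}_e(L)\to\mathcal{Y}_e(L')$ is realised by the chain map induced by a horizontal Hamiltonian isotopy $\psi_V$ of $\C\times M$ (coming from an isotopy of $\C$) which translates a sufficiently small perturbation of $\gamma\times L$ to $\gamma'\times L$, where $\gamma$ and $\gamma'$ are the curves of Figure \ref{fig:ham-isotop} lying on opposite sides of the projection of $V$. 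Applying the same lemma to $\bar V$, which by definition is the $180^\circ$ rotation of $V$ in $\C$, gives $\Theta(\bar V):\mathcal{Y}_e(L')\to\mathcal{Y}_e(L)$ as the morphism induced by a Hamiltonian translation $\psi_{\bar V}$ whose plane trajectory is, up to this rotation, the reverse of that of $\psi_V$.

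Next I would invoke functoriality: after adjusting the perturbation germ $h_{L'}$ as in the paragraph preceding the corollary, so that the composition $V\circ \bar V$ (or a horizontally Hamiltonian isotopic model of it) is defined in $\mathcal{L}ag^{\ast}(\C\times M)$, Lemma \ref{lem:comp-theta} yields
\[
\Theta(V)\circ \Theta(\bar V) \;=\; \Theta(V\circ \bar V).
\]
Under the geometric description above, the right-hand side is the map induced on Floer cohomology by the concatenated Hamiltonian isotopy $\psi_V * \psi_{\bar V}$. This concatenation is a horizontal Hamiltonian isotopy of $\C\times M$ whose trajectory in $\C$ is a round trip (a path followed by its reverse), and it is therefore isotopic, as a compactly supported Hamiltonian isotopy with fixed endpoint, to the constant isotopy. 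A standard continuation-map argument, combined with the homological interpretation furnished by Lemma \ref{lem:hlgy-cl}, then identifies the induced map with the identity endomorphism of $\mathcal{Y}_e(L')$, which is the desired equality.

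The only nontrivial point to monitor is that throughout the concatenated isotopy $\psi_V * \psi_{\bar V}$ the intermediate Lagrangians remain in $\mathcal{L}ag^{\ast}(\C\times M)$, with all relevant moduli spaces of $\mathbf{c}$-marked polygons regular and unobstructed, so that the continuation argument is actually available. This is the main obstacle. However, since by hypothesis $V\in\mathcal{L}ag^{\ast}_0(\C\times M)$ carries no non-marked tear-drops and the whole setup is exact with action-negative markings, the same Gromov-compactness argument used in the proofs of Lemma \ref{lem:ident} and Lemma \ref{lem:ham-isot}, applied after shrinking the perturbation germs and the Floer data sufficiently, ensures unobstructedness along the whole isotopy. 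With this verification in place the corollary reduces to the algebraic identity recorded above.
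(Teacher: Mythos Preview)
Your proposal is correct and follows essentially the same approach as the paper, which states only that the result ``is a simple exercise'' using Lemma~\ref{lem:ham-isot} and the functoriality of $\Theta$. One minor remark: the passage through $\Theta(V\circ\bar V)$ is a slight detour --- once Lemma~\ref{lem:ham-isot} identifies $\Theta(V)$ and $\Theta(\bar V)$ with continuation maps induced by mutually inverse horizontal Hamiltonian isotopies, their composition is the identity directly by the standard invariance property of continuation maps, without needing to realise the composite as $\Theta$ of a single cobordism.
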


\begin{rem}
For simple cobordisms $V\in \mathcal{L}ag^{\ast}(\C\times M)$ that might carry (non-marked) tear-drops
the same result is true but the proof is more involved and is postponed to Lemma \ref{lem:inv-cob2}.
\end{rem}

We now fix an object $\in \mathcal{L}ag^{\ast}(M)$. Thus we are fixing a quadruple
$(L,c, \mathcal{D}_{L}, h_{L})$.  Recall that $L$ does not carry any $J_{0}$ (non-marked) tear-drops.
By using Seidel's iterative process, it is easy to construct coherent perturbations with $J_{0}$
as a base almost complex structure, that make all moduli spaces with action negative inputs (and output)
regular (as described in Definition \ref{defi:perturbD} and in Assumption I  in \S\ref{subsec:exact-cob}).
Let $\mathcal{D}'_{L}$ be such a choice of coherent perturbation data (the initial $\mathcal{D}_{L}$ is
another such choice). 

\begin{lem} \label{lem:data-change} There exist a marking $c'$, a perturbation $h'_{L}$ and a
simple unobstructed cobordism $V\in \mathcal{L}ag^{\ast}_{0}(\C\times M)$ with ends
$(L,c,\mathcal{D}_{L}, h_{L})$ and $(L,c',\mathcal{D}'_{L}, h'_{L})$. Geometrically, $V$ is 
a small deformation of the trivial cobordism $\R\times L$ with a projection such as $V_{h}$ in Figure \ref{fig:smallperturb}. 
\end{lem}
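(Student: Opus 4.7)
The plan is to adapt the construction of Lemma \ref{lem:ident} to the case where the two ends carry distinct perturbation data, and then to extract $c'$ and $h'_L$ from the unobstructedness requirement. A key observation that makes this feasible is that $\mathcal{D}_L$ and $\mathcal{D}'_L$ share the same base almost complex structure $J_0$, so the interpolation over the cobordism is a genuine interpolation of Floer data rather than a change of base a.c.s., and the absence of $J_0$-tear-drops on $L$ persists on $V$ for small enough perturbations, placing $V$ in $\mathcal{L}ag^{\ast}_{0}(\C\times M)$.

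First I would take $V_h$ to be the small deformation of $\R\times L$ pictured in Figure \ref{fig:smallperturb}, with a regular bottleneck at real coordinate $a+2$ carrying the prescribed germ $h_L$, a regular bottleneck at $-a-2$ carrying a germ $h'_L$ to be read off a posteriori, and a single inverted bottleneck in between. I would then build a coherent perturbation system $\mathcal{D}_{V_h}$ whose base almost complex structure is $i\times J_0$ outside a vertical band and which restricts to $\mathcal{D}_L$ over the fibre of the positive bottleneck and to $\mathcal{D}'_L$ over the fibre of the negative one. Seidel's inductive scheme applies to produce such a $\mathcal{D}_{V_h}$ with the regularity required by Assumption I in \S\ref{subsec:exact-cob}, and the germ $h'_L$ is defined as the restriction of the perturbed Lagrangian to the negative end.

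The marking $c'$ will be determined by the unobstructedness of $(V_h, c\cup c', h, \mathcal{D}_{V_h})$. The self-intersection points of $V_h$ split into three groups: those in the fibre over the positive bottleneck (identified with $I_L^{<0}$), those in the fibre over the negative bottleneck (also identified with $I_L^{<0}$), and those over the inverted bottleneck in between. A Gromov compactness argument modeled on the proof of Lemma \ref{lem:ident} shows that, for a sufficiently small deformation, the mod-$2$ count of marked tear-drops at any action-negative self-intersection point of $V_h$ is expressible in terms of moduli spaces on $L$ for $\mathcal{D}_L$ and $\mathcal{D}'_L$. The vanishing condition over the positive bottleneck is automatic, being precisely the unobstructedness of $(L,c,\mathcal{D}_L)$; the vanishing condition over the negative bottleneck amounts to the Maurer--Cartan equation for $c'$ in the $A_\infty$-algebra $(CF(L,L),\mu^{\mathcal{D}'_L})$; and the vanishing condition over the inverted bottleneck reduces, by the same matching argument as in Lemma \ref{lem:ident}, to the requirement that $c$ and $c'$ correspond under the continuation quasi-isomorphism induced by $V$.

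The existence of such a $c'$ is then an algebraic transfer: $V$ realizes an $A_\infty$ quasi-isomorphism between the two $A_\infty$ structures on $CF(L,L)$, so the Maurer--Cartan element $c$ transfers to a Maurer--Cartan element $c'$ for the target, which over $\Z/2$ is automatically a subset of $I_L^{<0}$ and hence a valid marking in the sense of Definition \ref{def:marked}. The main obstacle I foresee is the simultaneous control of regularity, smallness of the deformation, and the Gromov compactness identifications at a common scale so that all three vanishing conditions above can be verified at once; this is handled as in Lemma \ref{lem:ident} by shrinking the deformation (and, if needed, the perturbation germs) until every argument applies uniformly.
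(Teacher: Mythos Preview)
Your outline is in the right spirit, but it misses the key simplification the paper uses and, as written, has a gap at the inverted bottleneck.

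The paper does \emph{not} spread the interpolation of data across the whole cobordism. Instead it arranges the data $\overline{\mathcal{D}}$ so that it equals $\mathcal{D}'_{L}$ at \emph{both} the inverted bottleneck $X$ and the negative regular bottleneck $Q$, and so that over the segment from $X$ to $Q$ the pair $(W,\overline{\mathcal{D}})$ is a small deformation of $(\R\times L,\, i\times\mathcal{D}'_{L})$. All of the interpolation from $\mathcal{D}_{L}$ to $\mathcal{D}'_{L}$ is pushed into the segment from $P$ to $X$. With this arrangement the marking $c'$ is defined \emph{explicitly}: one sets $z=\sum_{y\in J_X}\#_2\,\overline{\mathcal{M}}_{c}(\emptyset;y)\,y$, where the moduli spaces count $c$-marked tear-drops on $W$ with inputs over $P$ and output at $y$ over $X$, and $c'$ is the support of $z$ (transported to $Q$). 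The action inequality from Remark~\ref{rem:var1}\,c guarantees $c'\subset I_L^{<0}$. Unobstructedness at $X$ then follows directly: the tear-drops coming from the $P$-side are exactly those defining $c'$, while on the $Q$-side the segment is product-like with the \emph{same} data $\mathcal{D}'_L$, so Lemma~\ref{lem:ident} applies verbatim and produces exactly one tear-drop per point of $c'$. The two contributions cancel mod $2$.

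Your proposal instead invokes the matching argument of Lemma~\ref{lem:ident} at the inverted bottleneck in a situation where the two sides carry \emph{different} data. That lemma's Gromov compactness argument relies on the limiting product structure $i\times\mathcal{D}_L$ being the same on both sides; with $\mathcal{D}_L$ on one side and $\mathcal{D}'_L$ on the other there is no single limiting moduli problem to compare to, so the reduction you state (``$c$ and $c'$ correspond under the continuation quasi-isomorphism induced by $V$'') is not justified by that argument. To make your route work you would need to actually construct the $A_\infty$ morphism between the two $A_\infty$ structures on $CF(L,L)$, prove it is a quasi-isomorphism, carry out the Maurer--Cartan transfer, and then separately verify that the resulting $c'$ makes $V$ unobstructed --- none of which is available off the shelf in this paper. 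The paper's placement of $\mathcal{D}'_L$ at $X$ sidesteps all of this and gives $c'$ by a direct count.
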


\begin{proof}
We will make use of Figure \ref{fig:data-change} below. In short, we consider a deformation
$W$ of the Lagrangian immersion $\R\times L$ with three bottlenecks, $P$ and $Q$ regular and
one inverted, $X$, in between $P$ and $Q$.  The element $V$ in the proof will be of the form 
$V=(\R\times L, \mathbf{c}, \overline{\mathcal{D}},h_{V})$ with $W=V_{h_{V}}$.

 \begin{figure}[htbp]
   \begin{center}
      \includegraphics[scale=0.18]{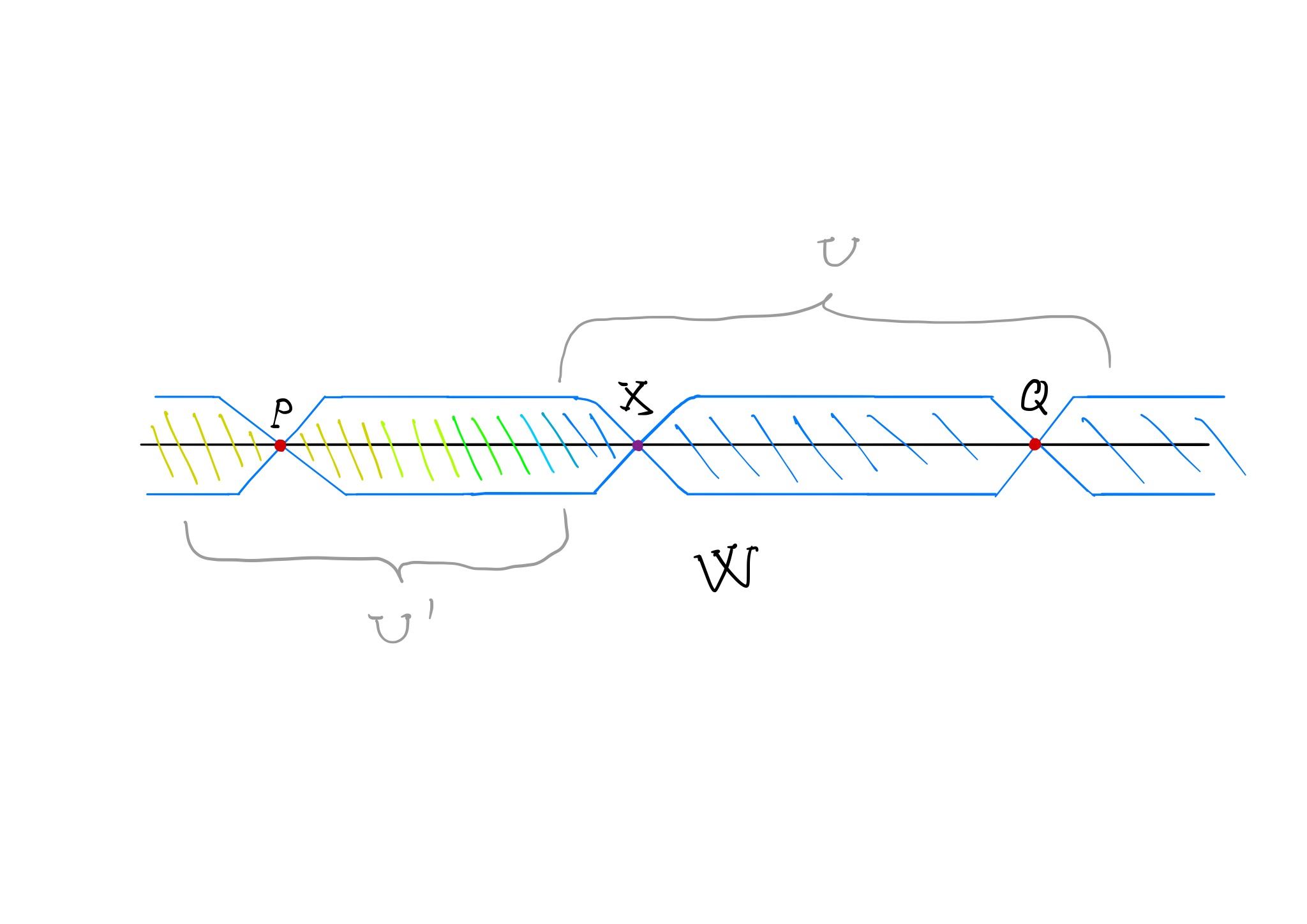}
   \end{center}
   \caption{\label{fig:data-change} The Lagrangian $W$. The data $\overline{\mathcal{D}}$ is a small deformation of the identity in the region $U$ and is an interpolation between the data $\mathcal{D}_{L}$
   and $\mathcal{D}'_{L}$ in the region $U'$.}
\end{figure}

We first discuss the data $\overline{\mathcal{D}}$ associated to $W$. It is a coherent choice of perturbations
such that $\overline{\mathcal{D}}$ restricts to $\mathcal{D}_{L}$
at $P$ and to $\mathcal{D}'_{L}$ at both $X$ and $Q$. Moreover, over the cobordism going from $X$ to $Q$, $(W, \overline{\mathcal{D}})$ is a small deformation of $(\R\times L, i\times \mathcal{D}'_{L})$. Such  $\overline{\mathcal{D}}$ exists by the usual construction of coherent perturbations because the base almost 
complex structure is the same $J_{0}$ for both $\mathcal{D}_{L}$ and $\mathcal{D}'_{L}$ and there are no
$J_{0}$ (non-marked) tear drops. The marking of $W$ coincides with that of $(L,c,\mathcal{D}_{L},h_{L})$
over $P$ and there are no markings over the bottleneck $X$. The purpose of the proof is to show that 
there is a marking $c'$ over $Q$ that consists only of self-intersection points that are action-negative
and such that the marking $\mathbf{c}=\{P\}\times c \cup \{Q\}\times c'$ makes $V$ unobstructed.

We denote by $J_{X}=\{X\}\times I_{L}$ the self intersection points of $L$ viewed in the 
fibre over the bottleneck $X$, $J_{X}\subset \{X\}\times L\subset \{X\}\times M$. We let $\Z/2<J_{X}>$ be the $\Z/2$ vector space of basis the elements of $J_{X}$ and consider 
the element $z\in \Z/2 < J_{X}>$ given by $z=\sum_{y\in I_{X}} \#_{2} \overline{\mathcal{M}}_{c}(\emptyset; y)y$. Here $\overline{\mathcal{M}}_{c}(\emptyset; y)$ is the moduli space of $c$-marked tear-drops
with boundary on $W$, defined with respect to the data $\overline{\mathcal{D}}$ and with output at $y$. 
As $c$ is the marking over $P$, all these tear-drops have marked inputs over $P$.
Of course, are only counted (with coefficients in $\Z_{2}$) the elements in those moduli spaces that are $0$ dimensional. By taking the deformation $W$ sufficiently close to $\R\times L$ and considering the 
action inequalities as in Remark \ref{rem:var1} c, we see that whenever 
$\overline{\mathcal{M}}_{c}(\emptyset; y)$  is not void the point $y$ is action negative.
We now define $c'$ to be the union of all the self-intersection points $y\in I_{L}$ that appear in $z$ and $\mathbf{c}=\{P\}\times c \cup \{Q\}\times c'$. Because $W$ is a sufficiently small deformation of the identity over the segment from $X$ to $Q$ , we deduce that at each point $y\in J_{X}$
the number of $\mathbf{c}$ marked tear drops is now $0$, as the number of tear-drops coming from $P$
agrees with the number (mod $2$) of tear-drops coming from $Q$ (see also  the proof of
 Lemma \ref{lem:ident}). It follows that $V$ is unobstructed and this concludes the proof.
\end{proof}


\subsubsection{Cabling and counting tear-drops.}\label{subsubsec:theta-td}

In this section we revisit the cabling construction from \S\ref{subsubsec:cables} with the aim to show:

\begin{prop}\label{prop:theta-cabl}
Let $V,V'\in\mor_{\mathcal{C}ob^{\ast}(M)}(L,L')$. The two morphisms $V$ and $V'$ are cabling equivalent
(see \S\ref{subsubsec:cables} and Definition \ref{dfn:cabling-rel}) if and only if $\Theta(V)=\Theta(V')$.
\end{prop}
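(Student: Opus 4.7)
The plan is to compare the two cobordisms $V$ and $V'$ via the mod $2$ counts of $\mathbf{c}$-marked tear-drops on the cabling $\mathcal{C}=\mathcal{C}(V,V';c,\epsilon)$ and to identify the resulting obstruction with an explicit algebraic obstruction in $CF(L,L')$ that measures the difference between $\Theta(V)$ and $\Theta(V')$.

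First I would reduce the unobstructedness condition on the cabling to its "new" marked points. The self-intersections of the perturbed cabling split into three families: those inherited from $V$, those inherited from $V'$, and the new ones in the fiber above the crossing point $P$, which are exactly the points of $c\subset L\cap L'$ (or their $\epsilon$-deformations in the $\epsilon>0$ case). For any exit point $x$ in the first two families, the action-negativity of all markings (Assumption II), the positivity of the perturbation germs at the ends (Assumption IV), and the energy identity of Remark \ref{rem:var1}(c) confine every $\mathbf{c}$-marked tear-drop exiting at $x$ to the corresponding sub-cobordism, so those counts vanish by the unobstructedness of $V$ and $V'$. The problem reduces to controlling, for each $x\in c$, the count $\#_2\overline{\mathcal{M}}_{\mathbf{c}}(\emptyset;x)$ of $\mathbf{c}$-marked tear-drops on $\mathcal{C}$ with exit at $x$.

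The key step is then to identify this count with the coefficient of $x$ in an explicit cochain in $CF(L,L')$. From the cabling picture I would extract a natural planar curve $\gamma$ (obtained by tracing the bent arc of $V$ around $P$ and back along the bent arc of $V'$) together with a sufficiently small deformation $W$ of $\gamma\times L$; by Lemmas \ref{lem:hlgy-cl} and \ref{lem:inv-elem} the classes $[a_V^W],[a_{V'}^W]\in HF(L,L')$ determine $\Theta(V)$ and $\Theta(V')$. A neck-stretching / Gromov compactness argument at the "waist" of the cabling, combined with exactness (to exclude sphere and disk bubbles), the absence of non-marked tear-drops on $L$ (Assumption V), and the plane-simple coherence of Section \ref{subsubec:tear} (to handle the case when $V$ or $V'$ carries non-marked tear-drops), decomposes any curve in $\overline{\mathcal{M}}_{\mathbf{c}}(\emptyset;x)$ into pieces glued along the fiber over $P$ of exactly three types: (i) a $c$-marked Floer strip in that fiber, whose counts assemble into $\mu_1(c)\in CF(L,L')$; (ii) a strip-like piece on $V_h$ whose counts assemble into $a_V^W$; and (iii) a matching piece on $V'_{h'}$ whose counts assemble into $a_{V'}^W$. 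A standard gluing theorem in the reverse direction makes the matching bijective mod $2$, so the total count at $x$ equals the coefficient of $x$ in $\mu_1(c)+a_V^W+a_{V'}^W$.

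Combining these steps, the cabling belongs to $\mathcal{L}ag^{\ast}(\C\times M)$ iff the identity $\mu_1(c)=a_V^W+a_{V'}^W$ holds in $CF(L,L')$. Such a $c$ exists iff $a_V^W+a_{V'}^W$ is a $\mu_1$-boundary, iff $[a_V^W]=[a_{V'}^W]$ in $HF(L,L')$, and by Lemmas \ref{lem:hlgy-cl} and \ref{lem:inv-elem} this is equivalent to $\Theta(V)=\Theta(V')$. For the ``only if'' direction the identity is read directly from the vanishing of the tear-drop counts; for the ``if'' direction one chooses $c$ to be any chain with $\mu_1(c)=a_V^W+a_{V'}^W$, noting that any such $x$ lies in $L\cap L'$ and, by the action inequalities of Remark \ref{rem:var1}(c), automatically sits in $I^{<0}$, hence defines an admissible surgery marking. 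The main obstacle will be the tear-drop/strip decomposition of the third paragraph: setting up Floer data that is simultaneously compatible with the bottlenecks of $V$ and $V'$ and with the surgery handles (including the marked $\epsilon=0$ case), executing the neck-stretching so that the limit configurations are precisely the three listed types, excluding extraneous bubbling, and gluing back to obtain a bijection mod $2$. A secondary subtlety is the independence of cabling equivalence on the parameter $\epsilon\geq 0$, which should follow from a continuity argument in the handle size once the dictionary with $CF(L,L')$ is established.
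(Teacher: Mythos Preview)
Your overall architecture matches the paper's: reduce unobstructedness of the cabling to the tear-drop counts at the new self-intersection points over the crossing $P$, and show these counts equal (differential of the marking at $P$) $+$ (a contribution from $V$) $+$ (a contribution from $V'$). The final equation $\mu_1(c)=a_V^W+a_{V'}^W$ is exactly the paper's $w-w'=d_{L,L'}(c''_P)$ over $\mathbb{Z}/2$.

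However, your mechanism for the key identification has a genuine gap and differs from the paper in two linked ways.

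\emph{First}, the paper does not use neck-stretching. It places two pivots $U,T$ in the bounded planar regions on either side of $P$ (Figure~\ref{fig:cabling2}) and argues via the open-mapping theorem on the holomorphic projection $\pi\circ u$: a tear-drop with exit over $P$ either stays in the fiber $\{P\}\times M$, or its projection lies entirely on one side of $P$ and hence passes through $\{T\}\times M$ or $\{U\}\times M$ with degree one. No degeneration or decomposition of individual curves occurs; each tear-drop is already \emph{entirely} of one of the three types. The pivots are what make these tear-drop moduli spaces regularizable via interior marked points (as in \S\ref{subsubsec:marked-cob}), and this is the device already built into the paper's framework.

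\emph{Second}, and more serious: your claim that ``a strip-like piece on $V_h$ whose counts assemble into $a_V^W$'' is not justified as stated. The element $a_V^W=\phi_V^W([L])$ is defined using an auxiliary Lagrangian $W$ (a small deformation of $\gamma\times L$), but the tear-drops on the cabling have boundary only on the cabling itself --- there is no $W$ in sight. The paper closes this gap in a separate Step~2: the tear-drops through the pivot $T$ assemble into a cycle $w\in CF(L,L';\mathcal{D}')$, and then one \emph{introduces} an auxiliary $W$ as in Figure~\ref{fig:tear-drop-ident} and compares, via a one-dimensional moduli space of marked strips with an interior point sent to $T$, the map $\phi_V^W$ with $\mu^2(-,w)$. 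This yields $\eta_{V''}([a_V^W])=[w]$ for an explicit change-of-data isomorphism $\eta_{V''}$ coming from Lemma~\ref{lem:data-change}. Your neck-stretching sketch does not supply this bridge, and without it the identification of the $V$-side contribution with $[a_V^W]$ is unfounded.

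So: keep the reduction and the final algebraic equivalence, but replace the neck-stretching by the pivot/open-mapping classification, and insert the auxiliary-$W$ comparison to actually relate the tear-drop cycle on each side to the class $[a_V^W]$ (resp.\ $[a_{V'}^W]$).
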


Along the way towards proving this result we will also need to adjust the cabling construction
to our setting, in a way that takes into account the deformations of the immersed cobordisms $V$ and $V'$.

\begin{proof}
Using  Lemma \ref{lem:hlgy-cl} we see that it is sufficient to show that $V$ and $V'$ are cabling equivalent iff
 $[a_{V}^{W}]=[a_{V'}^{W}]$ where $W$ is a small enough deformation of $\gamma\times L$ for an appropriate curve $\gamma$.  There are three steps to show this.
 
 \ \\
 \underline{Step 1. Cabling taking into account markings and perturbations.}

\noindent We now consider two cobordisms $V,V'\in \mor_{\mathcal{C}ob^{\ast}(M)}(L,L')$. We define the cabling
of $V$ and $V'$, $V''=\mathcal{C}(V,V';c)$, by revisiting the construction in \S\ref{subsubsec:cables} and, in particular, Figure \ref{Fig:cabling}.  We consider the Lagrangian represented in Figure \ref{fig:cabling2}
below (the components, $V$ and $V'$, appear in Figure \ref{Fig:twocob}). To fix ideas, we will first 
assume that $V, V'\in \mathcal{L}ag^{\ast}_{0}(M)$ and that $L$ and $L'$ intersect transversely.

\begin{figure}[htbp]
   \begin{center}
      \includegraphics[scale=0.2]{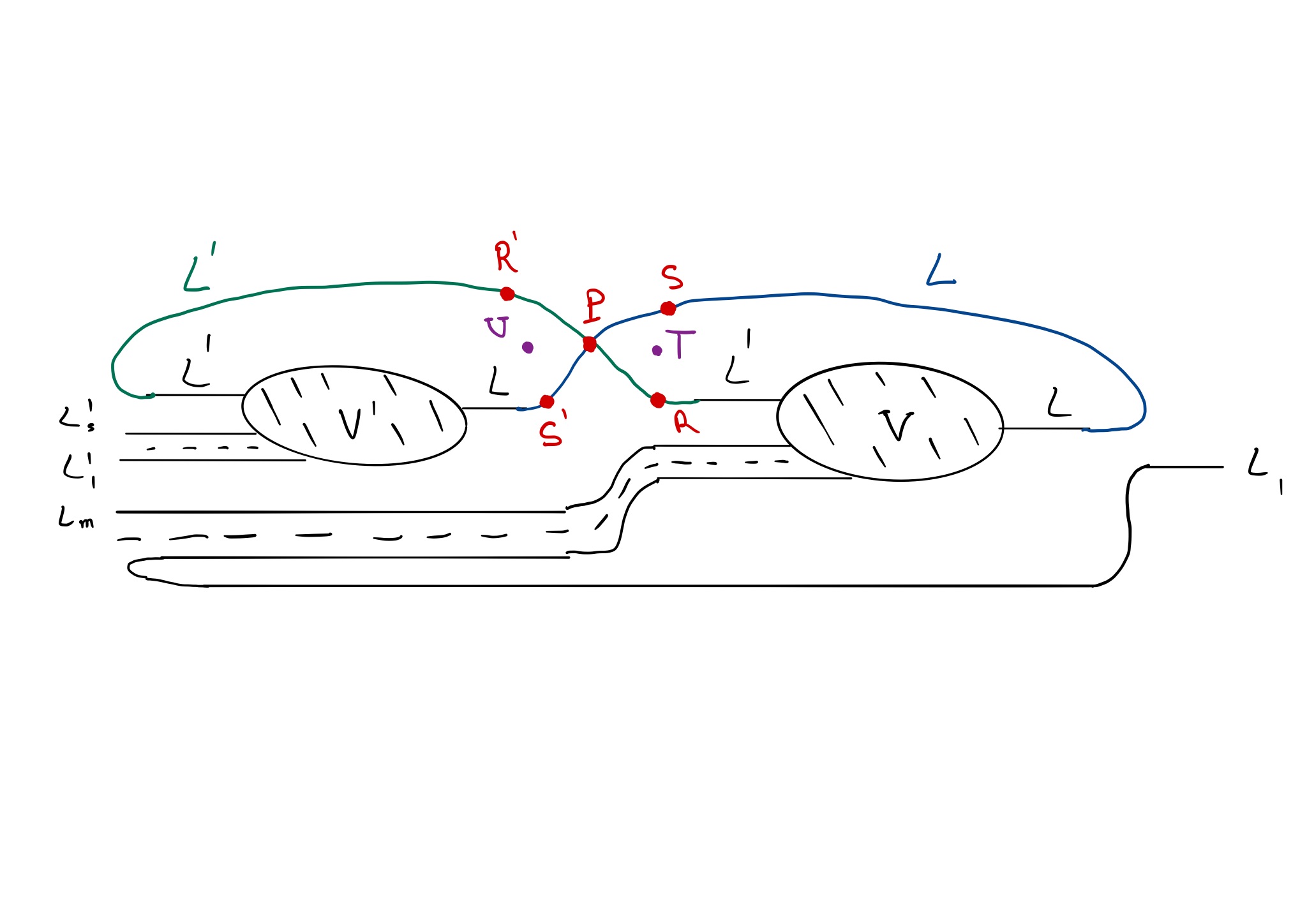}
   \end{center}
   \caption{\label{fig:cabling2} Schematic representation of cabling, in the immersed perturbed, marked, exact setting. The points $R,P,S,S',R'$ are regular bottlenecks, $U$ and $T$ are points in $\C$ used
   to stabilize tear-drops.}
\end{figure}
We now list details of the construction. The data for $V$ is $(V,h, c, \mathcal{D}_{V_{h}})$
and for $V'$ it is $(V',h',c',\mathcal{D}_{V'_{h'}})$ and we have the two primitives
$f_{V}$ and $f_{V'}$ (that are viewed as defined on $V_{h}$ and $V'_{h'}$).
\begin{itemize}
\item[i.] The bottlenecks $R,S$ and $R',S'$ are associated to the respective ends of $V$ and,
respectively, $V'$ with the structure induced from the deformed cobordisms $V_{h}$ and $V'_{h'}$.
\item[ii.] Postponing for the moment discussion of the relevant perturbation data, 
the  segments  $SP$, $RP$, and respectively $S'P$, $R'P$, represent 
sufficiently small deformations of products of the form $A \times L$ and, respectively, 
$A\times L'$ with $A$ the respective segment along the curves in green and blue in Figure \ref{fig:cabling2}. 
\item[iii.]
The profile of the gluing at $P$ for each of the two branches, one associated to $L'$ - with a restriction at $P$ written as $\{P_{+}\}\times L'$ and the other
associated to $L$, with restriction at $P$ written as $\{P_{-}\}\times L$, is as in Figure \ref{fig:comp2}. 
\item[iv.] The points $U$ and $V$ are pivots associated to $V''$.
\item[v.] The primitives $f_{V}$ and $f_{V'}$ are extended to a primitive $f_{V''}$ (this is defined
on the domain of the immersion and the ``sizes'' of the blue and green curves in Figure \ref{fig:cabling2} might need to be adjusted to ensure the existence of the primitive $f_{V''}$). The restriction of $f_{V''}$ to the submanifold $\{P_{+}\}\times L'$ is denoted $f_{V''}|_{L',P}$ and similarly for the restriction to $\{P_{-}\}\times L$ which is denoted by $f_{V''}|_{L,P}$. The curves in blue and green in Figure \ref{fig:cabling2} are picked in such a way
that $\min [f_{V''}|_{L,P}] \geq \max [f_{V''}|_{L',P}]$.
\end{itemize}
We further discuss the markings $c''$ and data $\mathcal{D}_{V''}$ associated to the cabling $V''$. 
To proceed, we first fix
the perturbation data associated to the ends we are interested in. Thus the data for $L$ is $(L, h_{L}, c_{1},
\mathcal{D}_{L})$ and that for $L'$ is $(L',h_{L'},c'_{1},\mathcal{D}_{L'})$. A key point here
is that for  the perturbation data for the cabling to make sense it has to be chosen in a coherent way for both
$L$ and for $L'$ when viewed as Lagrangians $\subset \{P\}\times M$.  To achieve this we will make use of Lemma \ref{lem:data-change}.  We first pick perturbation data $\mathcal{D}'$ that is coherent (and unobstructed)
for both $L$ and $L'$. By Lemma \ref{lem:data-change}, there are unobstructed cobordisms $V_{1}$ from 
$(L, h_{L}, c_{1}, \mathcal{D}_{L})$ to $(L, h_{L}, c_{2},
\mathcal{D}')$ and similarly $V'_{1}$ from  $ (L',h_{L'},c'_{1},\mathcal{D}_{L'})$ to $(L',h_{L'},c'_{2},\mathcal{D}')$. Here $c_{2}$, $c'_{2}$ are markings determined as in Lemma \ref{lem:data-change}.
We pick the perturbations in question here so that the cobordisms $\overline{V_{1}}\circ V_{1}$
 and $\overline{V'_{1}}\circ V'_{1}$ are defined (see Corollary \ref{cor:inv-cob}).

With these cobordisms $V_{1}$, $V'_{1}$ given with the associated perturbation data, the additional
conditions on the cabling $V''$ are as follows:

\begin{itemize}
\item[vi.] The data $\mathcal{D_{V''}}$ of $V''$ restricted to $V'$, viewed from a neighbourhood of $S'$ and to a neighbourhood
of $R'$ coincides with  the data of $V'$, including the markings. Similarly, the data of $V''$
restricted to $V$ viewed from a neighbourhood of $S$ to a neighbourhood of $R$ coincides with the data of $V$.
\item[vii.] The cobordism $V''$ together with its perturbation data restricts to $V'_{1}$ along the segment
$RP$ and to the cobordism $\overline{V'_{1}}$ along $PR'$. Similarly, $V''$ restricts to $
V_{1}$ along $SP$ and to $\overline{V_{1}}$ along $RS'$.

\item[viii.] The marking $c''$ of $V''$ contains the marking $c$ and $c'$ for $V$ and $V'$ as described
at point vi, the markings $\{P\}\times c_{2}$ and $\{P\}\times c'_{2}$ over $P$
together with a set (possibly void) of new points of the form $((\{P_{-}\}\times \{x\}), (\{P_{+}\}\times \{y\}))$
 where $(x,y)\in L\times L'$ such that $j_{L}(x)=j_{L'}(y)$. 

\item[ix.] The pivots of $V''$ are the union of the pivots of $V$ and $V'$ and of $U$ and $T$. Finally, the data for $V''$ is coherent and regular also with respect to the pivots $U,T$.
\end{itemize}

Two cobordisms $V,V'\in \mathcal{L}ag^{\ast}(\C\times M)$ are, by definition, cabling equivalent
if there exists a cobordism $V''$, with a choice of coherent perturbation data as described above, such 
that $V''$ is unobstructed.

\

 \ \\
 \underline{Step 2. Tear-drops and the classes $a_{V}^{W}$.} 
 
 \noindent The purpose of this paragraph is to show that 
 the homology class $[a_{V}^{W}]$ (from Lemma \ref{lem:hlgy-cl}) can be determined by counting
 the tear-drops in Figure \ref{fig:cabling2} with output at $P$ and with one interior marked point that is sent to $T$.  More precisely, let $\mathcal{M}^{T}_{V'', c''}(\emptyset; y)$ be the moduli space of marked tear-drops with one interior marked point sent to $T$ and output at the point $y\in \{P\}\times (L\cap L')$.  Notice that the condition on the interior marked point implies that the marking of the potential
 entries of such a tear-drop are only at points in $c\subset c''$.
 Consider the element $w=\sum_{y}\#_{2} \mathcal{M}^{T}_{V'',c''}(\emptyset; y)y\in CF(L,L'; \mathcal{D}')$ (again the count is only for those moduli spaces of dimension $0$). 
 We want to notice that $w$ is a cycle and that  $[w]$ is identified (in a precise sense to be made  explicit) 
 with $[a_{V}^{W}]$. We will make use of Figure \ref{fig:tear-drop-ident} below.
 
 \begin{figure}[htbp]
   \begin{center}
      \includegraphics[scale=0.2]{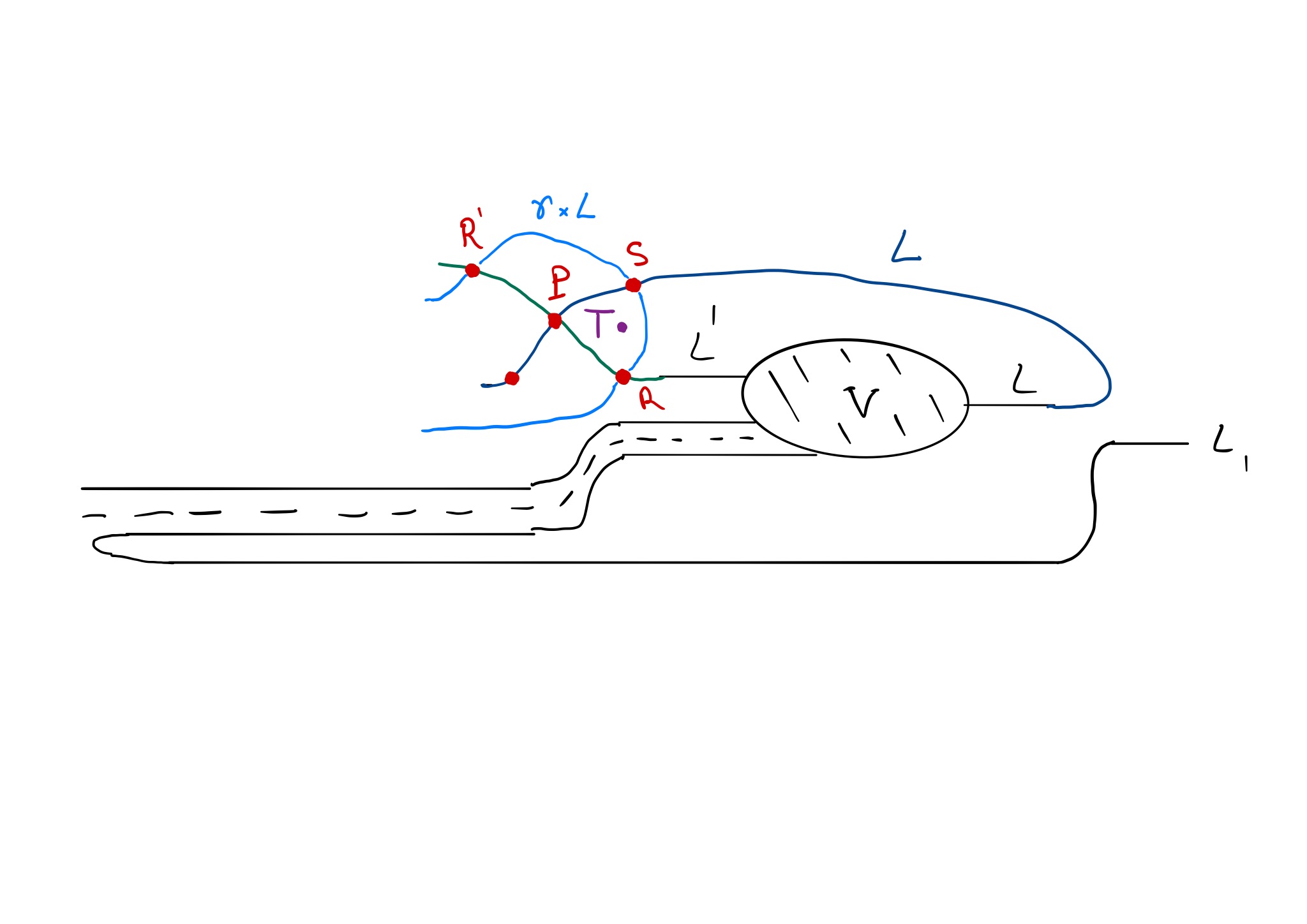}
   \end{center}
   \caption{\label{fig:tear-drop-ident} Comparing tear-drops and $\Theta(V)$ by using a small 
   perturbation $W$ of $\gamma\times L$.}
\end{figure}
 In this figure appears a sufficiently small perturbation $W$ of $\gamma\times L$. This has again 
 three regular bottlenecks at the points $R'$, $S$ and $R$ (as well as two inverted bottlenecks that are no
 longer pictured). By considering the $1$-dimensional moduli spaces of the form $\mathcal{M}^{T}_{V'',c''}(\emptyset; y)$ and their boundary, it is easy to see that $w$ is indeed a cycle. We then consider the $1$-dimensional moduli space of marked Floer strips with one interior marked point (again sent to $T$) that have an input at the point $S$ and with output at $R'$ and with the  $\{0\}\times \R$ part of the boundary along $W$ 
and the $\{1\}\times \R$ part of their boundary along $V''$. From the structure of the boundary of this moduli space we see that there are two maps  $\Psi_{1}= \phi_{1}\circ \phi^{W}_{V}$  and  $\Psi_{2}=\mu^{2}(-, w)$ that are chain homotopic.  Here $\phi_{1}$ is the chain map given by the marked Floer strips
(with one interior marked point that is sent to $T$) that exit the point $R$ and enter in $R'$. The operation $\mu^{2}$ is given by (marked) triangles with entries at $S$ and at $P$ and output at $R'$.  As $W$ is a small deformation
of $\gamma\times L$ and $V''$ is the composition of two inverse cobordisms over the interval $RR'$ it follows
that $\phi_{1}$ is chain homotopic to the identity. We now apply $\Psi_{1}$ and $\Psi_{2}$ to the unit
$[L]\in CF(L,L)$ viewed in the (marked) Floer complex in the fiber over $S$. We obtain $[\mu^{2}([L],w)]=[a_{V}^{W}]$. This means that $\eta_{V''}([a_{V}^{W}])=[w]$ where $\eta$ is the isomorphism between the 
complexes $CF(L,L')_{R}$ and $CF(L,L')_{P}$ induced by the cobordism $V''$ restricted to the segment $RP$, changing the data $\mathcal{D}_{L}$ to $\mathcal{D}'$ (as described in Lemma \ref{lem:data-change}).
 
 Denote now by $w'=\sum_{y}\#_{2} \mathcal{M}^{U}_{V'',c''}(\emptyset; y)y\in CF(L,L'; \mathcal{D}')$
 the cycle defined by counting marked tear-drops with an interior marked point being sent to $U$ (see Figure \ref{fig:cabling2}). 
 It follows, from an argument similar to the one above, that $\eta'_{V''}([a_{V'}^{W}])=[w']$ where $\eta'_{V''}$ is the identification using (the restriction of $V''$ to) the interval $S'P$.

 \ \\

 \underline{Step 3. Conclusion of the proof.} 
 
\noindent  We start by remarking that the construction of the cabling $V''$ at Step 1, 
 together with the coherent, regular perturbation data   $\mathcal{D}_{V''}$ and the marking $c''$
 is possible by the usual scheme. It is useful to note here that, as we are only using  
tear-drops with interior marked points we may pick coherent perturbations inductively, following 
the total valence $||u||$ of the curves $u$ involved. In particular, the cycles $w$, $w'$ are well defined
as in Step 2. Notice that these cycles do not depend on the part of the marking $c''$ given by
self-intersection points of $V''$ that belong to $\{P\}\times (L\cap L')$. We denote this part of the 
marking $c''$ by $c''_{P}$.

 We now want to remark that there is a choice of $c''_{P}$ making $V''$ unobstructed if and only if $[w]=[w']$. This shows the statement
 as it implies, by Step 2,  that $\Theta(V)=\Theta(V')$ if an only if $V$ and $V'$ are cabling equivalent. 
 
 In turn, the cobordism $V''$ (together with the data defined before) is unobstructed if and only if for any self intersection point $y$ the number (mod $2$) of marked tear-drops with output at $y$ is $0$. Given that both $V$ and $V'$ are unobstructed and in view of the presence of the bottleneck at the point $P$, the only points  $y$ where the number of tear-drops might not vanish are the points $y\in \{P\}\times (L\cap L')$. We now consider such a point $y$ and the moduli spaces of tear-drops that have $y$ as an output.  A marked tear-drop
 $u$ with output at $y$ has a projection onto the plane that is holomorphic in a neighbourhood of $P$. As a result this tear-drop could be either completely contained in $\{P\}\times M$, or if its projection $v=\pi\circ u$ onto the plane is not constant, then the image of $v$ is either to the left or to the right of  $P$ since otherwise the image of the curve $v$ would enter one of the unbounded quadrants at the point $P$. 
 Moreover, $v$ is simple in a neighbourhood of $P$. This is seen as follows.  First, because $u$ is a marked teardrop and in view of the action condition on the points belonging to the marking, 
 none of the entry puncture points of $u$
 can belong to $\{P\}\times M$ as otherwise the image of $v$ would again intersect one of the unbounded quadrants at $P$ (if an output lies over $P$, then the curve $v$ will ``get out'' of $P$ through one
 of the unbounded quadrants). Secondly, due to asymptotic convergence at the exit boundary point, the restriction of $v$ to some neighbourhood of the exit puncture is simple and, again because the image of $v$ can not intersect one of the unbounded quadrants at $P$, it follows that $v$ is simple over a possibly smaller neighbourhood of the exit.  Thus, we deduce that $u$ is simple in a neighbourhood of $y$.

 We conclude that either $\pi\circ u$ is constant equal to $P$ or $u$ belongs to one of $\mathcal{M}^{U}_{V'',c''}(\emptyset; y)$ or 
 $\mathcal{M}^{T}_{V'',c''}(\emptyset; y)$.  Denote by $\mathcal{M}_{L\cup L', c'''}(\emptyset;y)$ the 
 set of tear-drops $u$ with a constant projection. Here $c'''$ is the marking 
 $c'''=c_{L}\cup c_{L'}\cup c''_{P}$. Notice that we can write $\#_{2}\mathcal{M}_{L\cup L', c'''}(\emptyset;y)= <d_{L,L'}(c''_{P}),y>$
 where $d_{L,L'}$ is the differential of the Floer complex $CF(L,L')$. Of course, we work here - as before - with the Floer complex with a differential counting marked strips, with the marking $c_{L}$ for $L$ and $c_{L'}$ for $L'$. We conclude therefore that the total number (mod $2$) of marked tear-drops at the point $y$ is
 $n_{y}=<w,y>-<w',y>- <d_{L,L'}(c''_{P}),y>$. Unobstructedness of $V''$ is equivalent to the vanishing 
 of all these numbers and thus to the equality $w-w'= d_{L,L'}c''_{P}$. This means that if $V''$ is unobstructed, then $[w]=[w']$. Conversely, if $[w]=[w']$ we can pick $\eta$ to be such that $d_{L,L'}\eta=w-w'$ and then  the cabling $V''$ defined as above and with $c''_{P}=\eta$ is unobstructed. 
 \end{proof}

\begin{rem} a. The fact that we work over $\Z/2$ is important in these calculations. It is clear that to work 
over $\Z$ one needs not only to fix spin structures (so that the various moduli spaces are oriented) but also 
to allow $0$-size surgery with weights (essentially allowing integral multiples of the same intersection 
point). 
\end{rem}

Proposition \ref{prop:theta-cabl} together with the other properties of the functor $\Theta$ from \S\ref{subsubsec:funct-th} show that cabling is indeed an equivalence relation and that Axioms 1, 2, 3 
from \S\ref{subsec:surg-models} are satisfied.  

\subsection{From algebra to geometry.}\label{subsec:alg-to-geo}
\subsubsection{Surgery over cycles and geometrization of module morphisms.}\label{subsubsec:surg-mor}

The main purpose of this subsection is to show that Axiom 4 from \S\ref{subsec:surg-models} is satisfied in our 
context. In other words, we need to show that for any morphism $V\in \mor_{\mathsf{C}ob^{\ast}}(L,L')$
there exists a surgery morphism $S_{V}\in \mathcal{L}ag^{\ast}(M)$ that is cabling equivalent to $V$.
By Proposition \ref{prop:theta-cabl} this is equivalent to finding $S_{V}$ such that $\Theta(S_{V})=\Theta(V)$. 
This property is a consequence of the next result.

\begin{prop}\label{prop:surgery-mor}
Let $L,L'\in \mathcal{L}ag^{\ast}(M)$ and let $a\in CF(L,L')$ be a cycle. Then there exists a marked
cobordism $S_{L,L';a}:L\cobto L'$, $S_{L,L';a}\in \mathcal{L}ag^{\ast}_{0}(\C\times M)$
such that $$\Theta(S_{L,L'; a})=[a]\in HF(L,L')~.~$$
\end{prop}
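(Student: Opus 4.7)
The plan is to take $S_{L,L';a}$ to be (a suitable marked, perturbed version of) the trace of a $0$-size surgery of $L'$ on $L$ along a set $c$ of intersection points representing the cycle $a$. First, I would perturb so that $L$ and $L'$ intersect transversely (invoking the sufficiently-small-deformation machinery of \S\ref{subsubsec:smallpert} to avoid changing the homology class), and write $a = \sum_{x\in c} x$ as a formal sum of intersection points in $L\cap L'$, where $c\subset L\cap L'$. The underlying cobordism is the $S_{L',L;c}$ morphism pictured in Figure \ref{Fig:surg3} (rotated to go from $L$ to $L'$), whose projection onto $\C$ carries a single bottleneck on each end and whose body has double-point locus corresponding to $L\cap L'$. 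I would place pivots $T,U\in\C$ on either side of the crossing region, exactly as in the cabling construction of \S\ref{subsubsec:theta-td}.

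Next I would equip this cobordism with data $(c'',h,\mathcal{D})$. The perturbation $h$ is chosen positive exactly at the action-negative self-intersections over the bottlenecks (Assumption IV), the perturbation data $\mathcal{D}$ is chosen coherent and regular, restricting to $\mathcal{D}_L$ and $\mathcal{D}_{L'}$ over the bottlenecks, with base almost complex structure $J_0$, and close to a product away from the crossing region. The marking $c''$ consists of $c_L$, $c_{L'}$, the designated set $c\subset L\cap L'$ placed (as ordered pairs, using the correct sign convention for negative surgery) at the crossing point, together with a correction term $\eta\subset (L\cap L')\setminus c$ to be determined.

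The main obstacle is verifying unobstructedness. By exactly the open mapping / bottleneck argument used in \S\ref{subsubsec:unobstr-comp} and in the proof of Proposition \ref{prop:theta-cabl}, any $\mathbf{c}''$-marked tear-drop on $S_{L,L';c}$ with output at a self-intersection point $y$ either sits entirely in one of the fibers $\{P\}\times M$ over a bottleneck (and then contributes $0$ by unobstructedness of $L$ and $L'$), or has non-constant planar projection carrying exactly one interior marked point sent to $T$ or to $U$. For outputs $y$ at the ends, the action inequalities force everything to occur in the fiber, so nothing new happens there. The only potentially obstructing outputs are the points of $(L\cap L')\setminus c$. For such $y$, stretching the neck near the crossing reduces the count of tear-drops with interior puncture at $T$ (resp.\ $U$) to the Floer-differential coefficient $\langle \mu_1(a),y\rangle$ (resp.\ a term coming from the constant cobordism $L'$, which vanishes since $c'_{L'}$ is already unobstructed), plus the Floer-differential coefficient $\langle d_{L,L'}\eta, y\rangle$ coming from fiberwise tear-drops over the crossing. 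The total count at $y$ is therefore $\langle \mu_1(a) + d_{L,L'}\eta, y\rangle$. Since $a$ is a cycle, $\mu_1(a)=0$, and we may simply take $\eta=\emptyset$; all obstructions vanish, and $S_{L,L';c}$ lies in $\mathcal{L}ag^{\ast}_0(\C\times M)$.

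Finally, I would compute $\Theta(S_{L,L';c})$ via the tear-drop characterization of \S\ref{subsubsec:theta-td}: counting $\mathbf{c}''$-marked tear-drops with output over a bottleneck fiber at a point $y\in L\cap L'$ and with interior marked point sent to $T$ yields, by the same neck-stretching analysis, precisely the coefficient of $y$ in $a$. Hence the corresponding cycle $w$ equals $a$ on the nose, and by the argument of Step~2 in the proof of Proposition \ref{prop:theta-cabl} (via the identification $\eta_{V''}$), one obtains $[a^W_{S_{L,L';c}}]=[a]$, so Lemma \ref{lem:hlgy-cl} gives $\Theta(S_{L,L';c})=[a]$ as required.
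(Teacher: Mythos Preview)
Your overall idea---take the trace of the $0$-size surgery along $a$ and verify unobstructedness using that $a$ is a cycle---is exactly right, but several structural elements of the paper's actual argument are missing or replaced by things that do not quite work.

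\textbf{Pivots vs.\ membership in $\mathcal{L}ag^{\ast}_{0}$.} You introduce pivots $T,U$ ``on either side of the crossing region'' and then analyse $\mathbf c''$-marked tear-drops via interior marked points. But the statement demands $S_{L,L';a}\in \mathcal{L}ag^{\ast}_{0}(\C\times M)$, i.e.\ the cobordism must carry \emph{no} non-marked tear-drops. If your planar projection has bounded components (which the very presence of pivots suggests), then non-marked $\bar J_{0}$-tear-drops of positive degree can exist and the cobordism lies outside $\mathcal{L}ag^{\ast}_{0}$. The paper avoids this entirely: the two curves $\gamma,\gamma'$ in Figure~\ref{fig:surgery-curves} are chosen \emph{tangent} at $P$ and coinciding along the bottom leg, so their union creates no bounded planar region and no pivots are needed at all.

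\textbf{The action inequality and the third end.} The paper's unobstructedness argument does not run through pivots; it runs through action. One first arranges (by adjusting $\gamma$) that $\min f_{V}|_{L,P}>\max f_{V'}|_{L',P}$. This forces any $\mathbf c$-marked tear-drop with boundary on the third end $L''=(L\cup L',\,c\cup c'\cup a)$ to have \emph{at most one} input in $a$, and then the count at an output $y\in L\cap L'$ is exactly $\langle d a,y\rangle=0$. You never set up this inequality, and you never separately verify that the secondary end $L''$ is itself an unobstructed object of $\mathcal{L}ag^{\ast}(M)$---which is required for $S_{a}$ to be a morphism in $\mathsf{C}ob^{\ast}(M)$. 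Your ``neck-stretching'' reduction of the tear-drop count to $\langle\mu_{1}(a),y\rangle$ is not justified and is not how the paper proceeds.

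\textbf{Computing $\Theta$.} You invoke the tear-drop characterisation from Step~2 of Proposition~\ref{prop:theta-cabl}, but that identity $\eta_{V''}([a^{W}_{V}])=[w]$ is proved for the specific cabling geometry $V''$; it is not a general formula you can apply to $S_{L,L';a}$. The paper instead computes $\Theta(S_{a})$ directly: place a small deformation $W$ of $\gamma''\times L$ as in Figure~\ref{fig:surgery-curves2}, observe that any marked strip from $Q$ to $R$ must switch from $V$ to $V'$ at a single marked point in $\{P\}\times a$, and conclude $\phi^{W}_{S_{a}}([L])=\mu_{2}([L],a)$, hence $\Theta(S_{a})=[a]$.
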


\begin{proof} It is not hard to construct a cobordism with double points containing $1$-dimensional
clean intersections, that represents the trace of a $0$-size surgery: we can simply put 
 $S_{L,L'; a}=\gamma\times L\bigcup \gamma'\times L'$  where $\gamma$, $\gamma'$ are the curves
 from Figure \ref{fig:surgery-curves} (see also \S\ref{subsec:surg}) this cobordism has three ends 
 (indicated here with their markings): $(L,c)$, $(L',c')$ and $(L\cup L', c\cup c' \cup a)$.
 The difficulty is to construct a deformation $S_{a}$ of this $S_{L,L'; a}$ together with the associated
 marking of $S_{a}$ so that this marking restricts to the markings of the ends (as in Definition \ref{def:marked-cob}) and such that $S_{a}$ is unobstructed,
 so that, together with the relevant perturbation data, we get an element in  $\mathcal{L}ag^{\ast}_{0}(\C\times M)$.

We start with the two Lagrangians $L$ and $L'$ viewed as elements in $\mathcal{L}ag^{\ast}(M)$ and thus endowed with the additional data $L=(L, c, \mathcal{D}_{L}, h_{L})$, $L'=(L', c',\mathcal{D}_{L'}, h_{L'})$. Additionally, the primitives $f_{L}$ and $f_{L'}$ are also fixed.  In view of Corollary \ref{cor:inv-cob} and Lemma \ref{lem:data-change} we may assume that $L$ and $L'$ are intersecting
transversely at points different from the self-intersection points of both $L$ and $L'$ and that 
there is a choice of coherent data $\mathcal{D}''$ associated to the immersed Lagrangian $L\cup L'$ that extends both $\mathcal{D}_{L}$ and $\mathcal{D}_{L'}$. To fix ideas, we indicate from the start
that the cobordism that we will construct will consist of the union of two sufficiently small deformations $V$
of $\gamma\times L$ and $V'$ of $\gamma'\times L'$, with $\gamma$ and $\gamma'$ two curves to be described below in Figure \ref{fig:surgery-curves}, together with additional marking points associated to $a$. 
In other words $S_{a}$ is a Lagrangian immersion with domain the disjoint union $V\coprod V'$ and 
$j_{S_{a}}=j_{V}\cup j_{V'}$. The primitive on $S_{a}$ restricts to $f_{V}$ on the $V$ factor
and to $f_{V'}$ on the $V'$ factor.

Notice that the curves $\gamma$, $\gamma'$ themselves are tangent at the point $P$
and they coincide along the bottom leg in the picture, to the left of $P$. The points $Q$, $P$, $R$ and $T$
are regular bottlenecks and there are inverted bottlenecks $X,Y,Z$. 
The primitives $f_{V}$ and $f_{V'}$ restrict to the primitives $f_{L}$ at $Q$
and $f_{L'}$ at $R$ and, by possibly having $\gamma$ make an ``upper bump'' in between $Q$ and $P$ we may
assume that the primitive $f_{V}$ restricts at $P$ along $L\subset \{P\}\times M$ to a primitive on $L$,
$f_{V}|_{L,P}$, such that $\min(f_{V}|_{L,P}) >\max (f_{V'}|_{L',P})$ where $f_{V'}|_{L',P}$ is 
the corresponding restriction of $f_{V'}$.

There are four ``regimes'' governing the perturbations providing $S_{a}$ and we describe them now.
Denote by $D_{i}\subset L$ small disks around the self-intersection points of $L$, let $D'_{i}\subset L'$ be small 
disks around the self-intersections points of $L'$ (in each case we view an intersection point as a pair $(P_{-},P_{+})\in I_{L}\subset L\times L$ and similarly for $L'$). Let also $K_{i}$, $K'_{i}$ be small disks
around the intersection points of $x_{i}\in j_{L}(L)\cap j_{L'} (L')$ with $K_{i}\subset L$, $K'_{i}\subset L'$. 
The first regime is non-perturbed:  one portion of $S_{a}$, $(S_{a})_{1}$
is given as: $$(S_{a})_{1}=(\gamma\times (L\setminus\{\cup_{i} K_{i}\bigcup\cup_{j}D_{j}\}))\bigcup 
(\gamma'\times (L'\setminus\{\cup_{i} K'_{i}\bigcup\cup_{k} D'_{k}\}))~.~$$
The second portion $(S_{a})_{2}$ contains a perturbation around the self intersection points of $L$
 in the sense of the small perturbation of the cobordism 
 $\gamma\times L$ as in \S\ref{subsubsec:smallpert}. In other words, if $D_{i}, D_{j}$ are two
 disks with $P_{-}\in D_{i}$, $P_{+}\in D_{j}$, $(P_{-},P_{+}) \in I^{<0}_{L}$, then
 $$(S_{a})_{2}=\gamma_{-}\times D_{i}\bigcup \gamma_{+}\times D_{j}~.~$$
 Similarly, there is another portion $(S_{a})'_{2}$
 that corresponds to a similar small perturbation of $\gamma'\times L'$.
 
 Finally, there is yet one other portion, $(S_{a})_{3}$, which is given in terms of the disks $K_{i}$, $K'_{i}$
 by:
 $$(S_{a})_{3}= \gamma_{-}\times K_{i}\bigcup \gamma'_{+}\times K'_{i} ~.~$$
\begin{figure}[htbp]
   \begin{center}
      \includegraphics[scale=0.2]{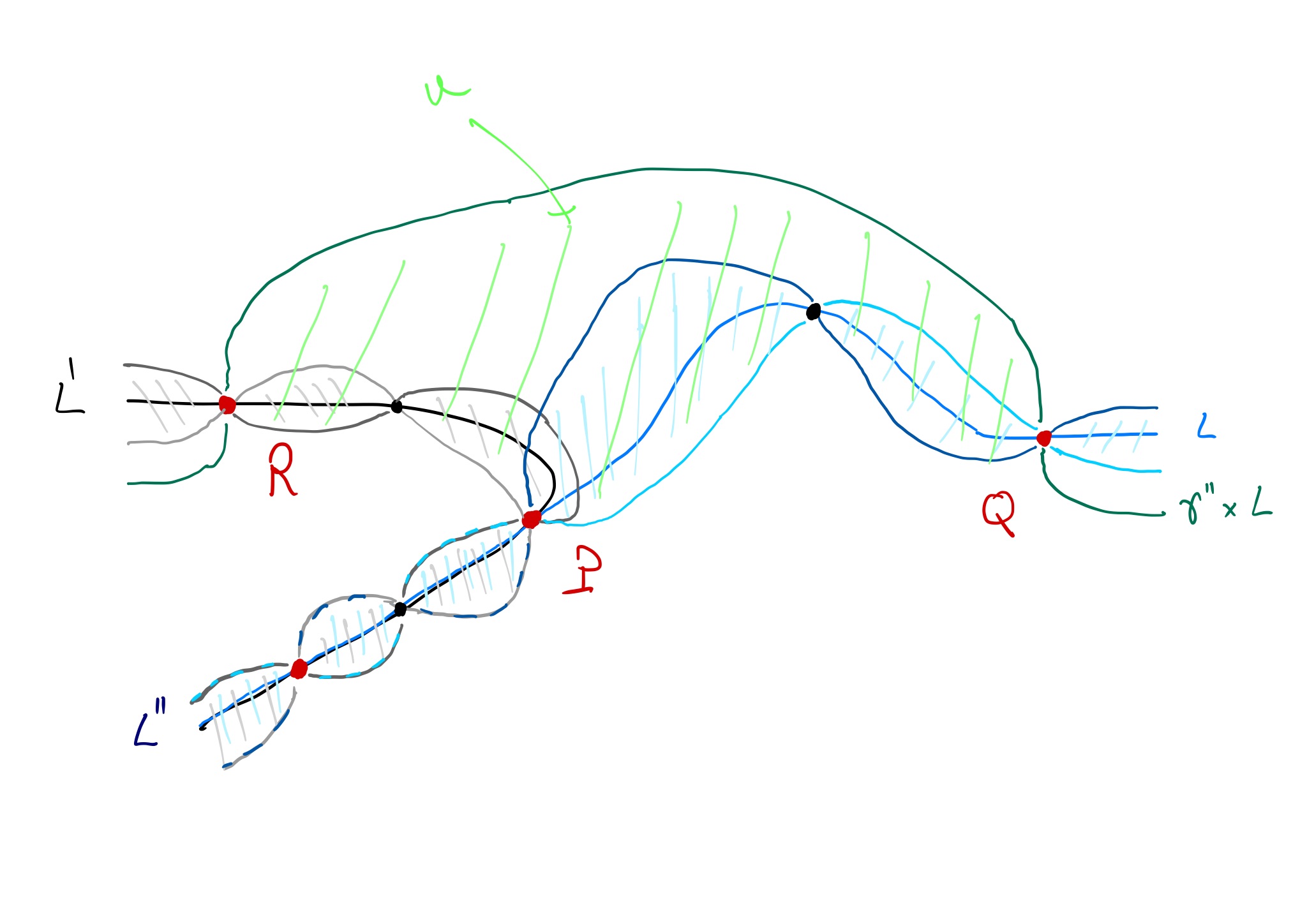}
   \end{center}
   \caption{\label{fig:surgery-curves} The two cobordisms $V$ - in blue - and $V'$ - in grey - together with the
   (regular) bottlenecks $P,Q,T,R$.}
\end{figure}
These four parts of $S_{a}$ are glued together by using perturbations on slighter larger disks $\tilde{D}_{i}$
such that $D_{i}\subset \tilde{D}_{i}$ (and similarly for $D'_{j}$, $K_{r}$ and $K'_{r}$). This is possible because the curves
$\gamma_{-}$, $\gamma_{+}$ are Hamiltonian isotopic in the plane to $\gamma$ (and similarly for the $(-)'$
curves). This Hamiltonian isotopy can be lifted to $\C\times M$ and the gluing uses an interpolation between
this Hamiltonian defined on  $\gamma \times D_{i}$  and the trivial one defined on exterior of 
$\gamma\times \tilde{D}_{i}$. 

The next point is to discuss the marking of $S_{a}$ and the relevant data $\mathcal{D}_{S_{a}}$.
The markings appear at the points $R$ where it is given by $c'$, at $Q$ where it is given by $c$, 
at $P$ where it is given by $c\cup c'\cup a$ and at $T$ where it is again $c\cup c'\cup a$. 
Notice that this makes sense as $a$ consists 
of pairs $(P_{-},P_{+})\in L\times L'$ such that $j_{L}(P_{-})=j_{L'}(P_{+})$. It follows that for the pairs as above we have $f_{S_{a}}(P_{-})>f_{S_{a}}(P_{+})$ (when all the relevant perturbations are sufficiently small). The data $\mathcal{D}_{S_{a}}$ restricts to the data $\mathcal{D}''$ in neighbourhoods of the four
regular bottlenecks $Q,R, P, T$. Away from the bottlenecks the data is a small deformation of $i\times \mathcal{D}''$. The deformations giving $V$, $V'$ as well as $\mathcal{D}_{S_{a}}$ are taken sufficiently
small such that with this data both $V$ and $V'$ are unobstructed. Notice that $S_{a}$ is a deformation of the immersed cobordism $S_{L,L';a}$, as desired. 

We now want to show that, after possibly diminishing these perturbations even more, 
 $L''=(L\cup L', c\cup c'\cup a, \mathcal{D}'')$ (which is viewed as the end over the bottleneck $T$) is unobstructed and that the cobordism $S_{a}$ is also unobstructed. We start with $L''$. Because both $L$ and $L'$ are unobstructed, it is immediate to see
 that the only marked tear-drops with boundary on $L''$ and that are not purely on $L$ or on $L'$,
 have to have some marked inputs $\in a$. Moreover, because the markings are action negative and because
 $\min(f_{V}|_{P,L})> \max(f_{V'}|_{P,L'})$  we also have the same type of inequality at $T$.
 This implies that a marked tear-drop with boundary on $L''$ can have at most one marked point  $\in a$ 
 as an input (to have more such points, the boundary of the tear-drop should cross back from $L'$ to $L$
 at some other input but that input can not belong to $a$ due to the action inequality). 
 In this case, obviously, the output is some intersection point $y\in L\cap L'$. We now consider the moduli
 space of such teardrops at $y$, $\mathcal{M}_{c\cup c'\cup a; L\cup L',}(\emptyset, y)$. It is immediate
 to see that this moduli space can be written as a union $\cup_{x\in a}\mathcal{M}_{\mathbf{c};L,L'}(x,y)$ where $\mathbf{c}$ are precisely the markings $c$ for $L$ and $c'$ for $L'$.
 In other words, $\mathcal{M}_{\mathbf{c};L,L'}(x,y)$ is the moduli space of marked Floer strips with boundaries on $L$ and $L'$, originating at $x$ and ending at $y$. Considering now only the $0$-dimensional
 such moduli spaces and recalling that the data $\mathcal{D}''$ ensures regularity, we 
 have $$\#_{2}\mathcal{M}_{c\cup c'\cup a; L\cup L'}(\emptyset, y)=\sum_{x\in a}\#_{2}\mathcal{M}_{\mathbf{c};L,L'}(x,y)=<da,y>=0$$ because, by hypothesis, $a$ is a cycle (here $d:CF(L,L')\to CF(L,L')$  is the (marked) Floer differential). Therefore, we conclude that  $L''$ is unobstructed.
 
 We pursue by analyzing $S_{a}$. We again start by using the fact that $V$ and $V'$ are each unobstructed.
 Therefore, the only type of tear-drops with boundary on $S_{a}$ that
  we need to worry about are those having a marked input $\in \{P\}\times a$ or  $\in \{T\}\times a$
  and output in $\{Y\}\times L''$.
 But we may assume that the cobordism $S_{a}$ restricted to the region in between $P$ and $T$ is in
 fact a sufficiently small deformation of $\gamma_{P,T}\times L''$ where $\gamma_{P,T}$ is the part
 of $\gamma $ in between $P$ and $T$ (it agrees there with $\gamma'$). This implies that that this region
 of $S_{a}$ is also unobstructed (by Lemma \ref{lem:ident}) and shows that $S_{a}$ itself is unobstructed.
 
 To conclude the proof of the proposition we still need to show that $\Theta(S_{a})= [a]$. 
 For this argument we will use Figure \ref{fig:surgery-curves2} and notice that, by considering a sufficiently small deformation of $\gamma''\times L$ (as in the figure) and the corresponding data, the morphism 
 $\phi_{S_{a}}^{W}$ is given by counting marked strips from $Q$ to $R$ in Figure \ref{fig:surgery-curves2}. 
\begin{figure}[htbp]
   \begin{center}
      \includegraphics[scale=0.17]{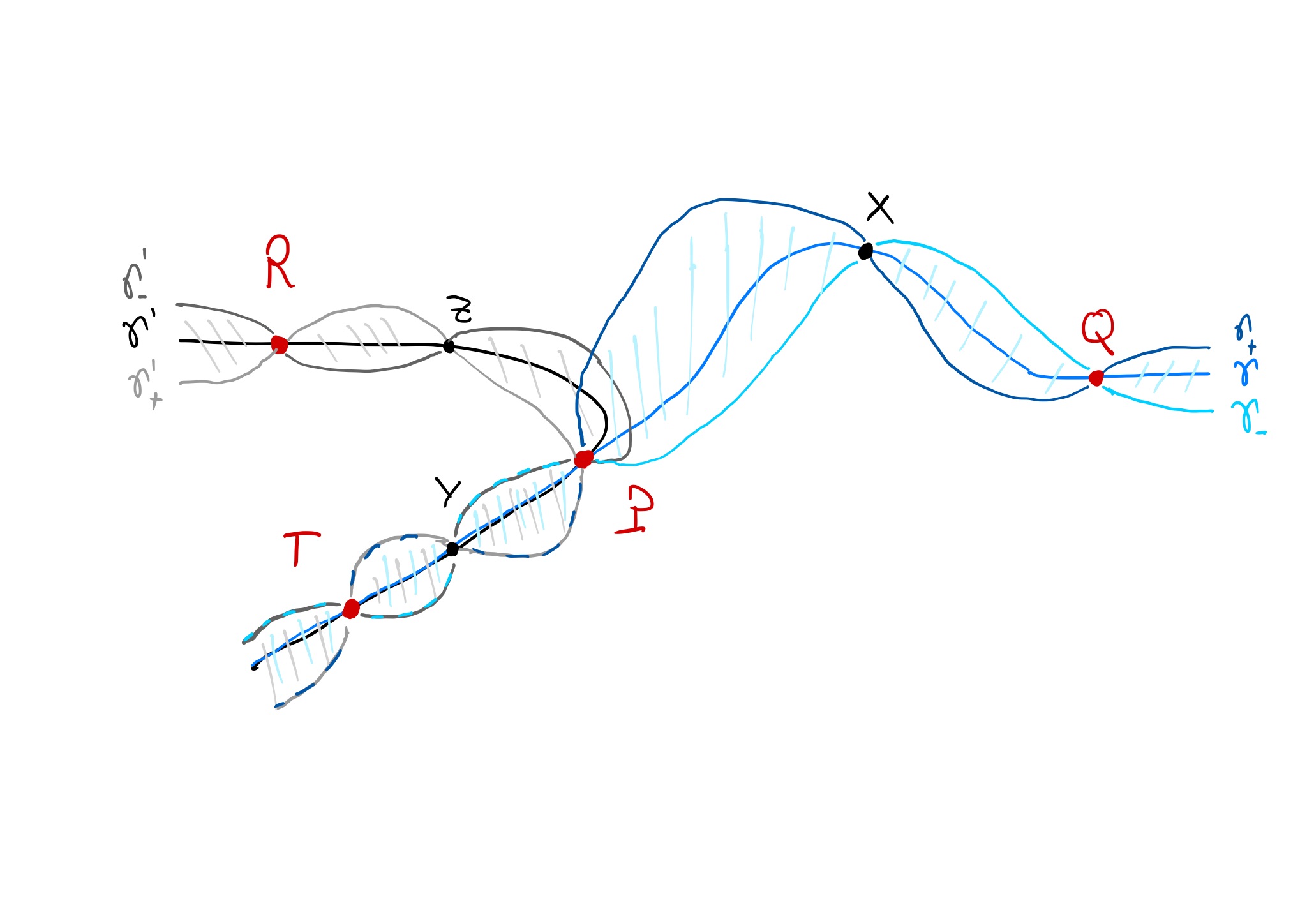}
   \end{center}
   \caption{\label{fig:surgery-curves2} The cobordisms $S_{a}$ together with the schematic representation of
   as sufficiently small deformation $W$ of $\gamma''\times L$ and a triangle $u$.}
\end{figure}
There has to be only one such marking passing from $V$ to $V'$ and the only possibility for such a
 marking is at the point $P$. Thus this marking has to belong to $\{P\}\times \{a\}$. In other words,
 we have $\phi_{S_{a}}^{W}([L])=\mu_{2}([L],a)$ where $\mu_{2}$ is the multiplication for the following
 three Lagrangian cobordisms $(W, V, V')$ given by counting triangles such a $u$ in the Figure. 
It is not difficult to see that $[L]$ is the unit (in homology) for this multiplication and thus $\Theta (S_{a})=[a]$ which concludes the proof.

\end{proof}

\subsubsection{$\mathsf{C}ob^{\ast}(M)$ has surgery models.}\label{subsubsec:surg-model}
In this subsection we intend to finish showing that the category $\mathsf{C}ob^{\ast}(M)$
satisfies the Axioms from \S\ref{subsec:surg-models}. Given that we already
constructed the functor $\Theta$ from Theorem \ref{thm:surg-models} and that we showed in Proposition
\ref{prop:theta-cabl} that $V$ and $V'$ are cabbling equivalent iff $\Theta(V)=\Theta(V')$, 
we already know that the quotient functor $\widehat{\Theta}$ is well defined and injective.
It follows that Axioms 1 and 2 are satisfied.  We showed in \S\ref{subsubsec:surg-mor} that Axiom
4 is also satisfied.

It remains show that Axiom 3, which was
verified for elements in $\mathcal{L}ag^{\ast}_{0}(\C\times M)$ in Corollary \ref{cor:inv-cob},
 is also true for  general $V$'s $\in \mathcal{L}ag^{\ast}(\C\times M)$.
 Moreover, we need to show that the functor $\widehat{\Theta}$ is triangulated. 
Indeed, if this is so, Axiom 5  - in the form made
explicit in Remark \ref{rem:axiom5} b - is also satisfied because the relevant naturality properties 
are already satisfied in the triangulated category  $H(mod(\fuk^{\ast}(M)))$ which is the image of the injective functor $\widehat{\Theta}$.  

Therefore, taken together, these two claims imply that $\mathsf{C}ob^{\ast}(M)$ has surgery models.

\begin{lem}\label{lem:inv-cob2} If $V$ is a simple cobordism $V \in \mathcal{L}ag^{\ast}(\C\times M)$,
possibly carrying (non marked) tear-drops, then $\Theta(V)\circ \Theta (\bar{V})=id$.
\end{lem}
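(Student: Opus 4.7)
The plan is to mirror the proof of Corollary \ref{cor:inv-cob}, extending it to cobordisms that carry non-marked tear-drops by replacing the ordinary Yoneda module of $V$ with the module structure over the subcategory $\fuk^{\ast}_{i,V}(\C\times M)$ from \S\ref{subsubec:tear}. The two ingredients of that corollary were functoriality of $\Theta$ (Lemma \ref{lem:comp-theta}, already shown to hold for cobordisms with tear-drops) and the Hamiltonian-translation description of $\Theta$ from Lemma \ref{lem:ham-isot}. So the task reduces to generalizing Lemma \ref{lem:ham-isot} to simple $V\in\mathcal{L}ag^{\ast}(\C\times M)$ that possibly carry tear-drops.

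To carry out this generalization, choose the auxiliary planar curves $\eta,\eta'$ of Figure \ref{fig:disp} and the horizontal Hamiltonian isotopy $\varphi$ carrying $\eta$ to $\eta'$ so that $\eta$, $\eta'$, and all intermediate curves avoid the pivots $\{P_j\}$ associated to $V$. Then the auxiliary cobordisms $W=\eta\times L$ and $W'=\eta'\times L$ lie in $\fuk^{\ast}_{i,V}(\C\times M)$ as defined in \S\ref{subsubec:tear}, and the $A_\infty$-module $\mathcal{Y}(V)$ over this subcategory produces Floer complexes $C=CF(W,V)$ and $C'=CF(W',V)$ whose differentials count $\mathbf{c}$-marked, plane-simple $J$-holomorphic polygons. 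With perturbations chosen sufficiently small, the cone-description of $C$ and $C'$ from the proof of Lemma \ref{lem:ham-isot} still applies: $C$ identifies with a cone over the identity and $C'$ with a cone over $\phi_V^{W'}$. The chain quasi-isomorphism $\psi:C\to C'$ induced by $\varphi$ then yields, as in the original argument, that $\phi_V^{W'}$ is chain homotopic to the translation-induced map. From this generalized Lemma \ref{lem:ham-isot}, together with functoriality of $\Theta$, the formal deduction used for Corollary \ref{cor:inv-cob} carries over verbatim and gives $\Theta(V)\circ\Theta(\bar V)=id$.

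The main obstacle is enforcing plane-simple coherence throughout: every curve appearing in the moduli spaces defining the differentials, the chain map $\psi$, the module multiplications of $\mathcal{Y}(V)$, and their compactifications must have degree at most one at each pivot $P_j$, so that the $\Z/2$-counts used to construct $\mathcal{Y}(V)$ over $\fuk^{\ast}_{i,V}(\C\times M)$ are meaningful. This is the same technical point that was flagged in the tear-drop case of Lemma \ref{lem:comp-theta}, and it is ensured here by the facts that $W$ and $W'$ are simple, their planar projections do not wrap around any pivot, and the Hamiltonian isotopy $\varphi$ can be taken supported in a region of $\C$ disjoint from the pivots.
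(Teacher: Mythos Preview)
Your strategy is to extend Lemma~\ref{lem:ham-isot} to cobordisms carrying tear-drops and then invoke Corollary~\ref{cor:inv-cob} verbatim. The paper takes a different route and in fact opens its proof by explaining why your approach is problematic: ``the argument used in Corollary~\ref{cor:inv-cob}, which is based on a Hamiltonian isotopy induced by a translation in the plane, is no longer directly applicable as the image of the moving cobordism intersects the pivots.''

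The concrete gap is your justification of plane-simple coherence. You claim it follows from the facts that the planar curves $\eta,\eta'$ and the support of $\varphi$ avoid the pivots $P_j$. But plane-simplicity is a condition on the degree of the $J$-holomorphic curves at each $P_j$, not on the position of the boundary Lagrangians. The continuation strips defining $\psi$ have one boundary on the moving $\eta_t\times L$ and the other on $V_h$; their planar projections are two-dimensional regions bounded in part by pieces of $\pi(V_h)$, and nothing in your hypotheses prevents such a region from covering a pivot with multiplicity $\geq 2$. Worse, the isotopy must carry the crossing at $Q$ (on the $L$-end) to $R$ (on the $L'$-end), so for intermediate $t$ the object $\eta_t\times L$ meets $V$ inside its body rather than only along the cylindrical ends; such objects fall outside the subcategory $\fuk^{\ast}_{i,V}(\C\times M)$ of \S\ref{subsubec:tear}, and the plane-simple coherence provided there is no longer available. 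Your last paragraph asserts exactly the point that needs proof.

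The paper's argument avoids any isotopy. It fixes a single test object $W=\gamma\times N$ with $\gamma$ a closed circle meeting $V$ only at the two bottlenecks $P$ (on the $L$-end) and $Q$ (on the $L'$-end), so that $W$ genuinely lies in $\fuk^{\ast}_{i,V}(\C\times M)$. Then $\phi^{W}_{V}$ and $\phi^{W}_{\bar V}$ are the counts of marked strips $P\to Q$ and $Q\to P$ respectively, and $\phi^{W}_{V}\circ\phi^{W}_{\bar V}\simeq\id$ is read off from the boundary of the one-dimensional moduli space of marked strips from $Q$ to $Q$: one part of the boundary gives the composition (breaking at $P$), while the other consists of Maslov-$2$ disks on $W$, exactly one through each generator of $C_Q$. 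This replaces the continuation-map mechanism with an analysis of a single, fixed module $CF(W,V)$, where the pivots enter only through the interior marked points already built into $\mathcal{Y}(V)$.
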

\begin{proof} Given that $V$ may carry tear-drops, we need to deal with the possible presence of pivots.
The difficulty that these create is that the argument used in Corollary \ref{cor:inv-cob}, which is based on 
a Hamiltonian isotopy induced by a translation in the plane, is no longer directly applicable as the image of 
the moving cobordism intersects the pivots.
The argument makes use of Figure \ref{fig:simple-cob} below. The ends of the cobordism $V$ are $L$ and $L'$. We consider a Lagrangian $W\subset \C\times M$, $W=\gamma\times N$, where $\gamma$ is a circle intersecting the cylindrical region of $V$  at the point $P$ and $Q$ and  $N$ is in $\mathcal{L}ag^{\ast}(M)$ in generic position relative to $L$ and $L'$. For the moment, we assume $N$ is embedded.  The points $P$ and $Q$ are regular bottlenecks for $V$.
\begin{figure}[htbp]
   \begin{center}
      \includegraphics[scale=0.65]{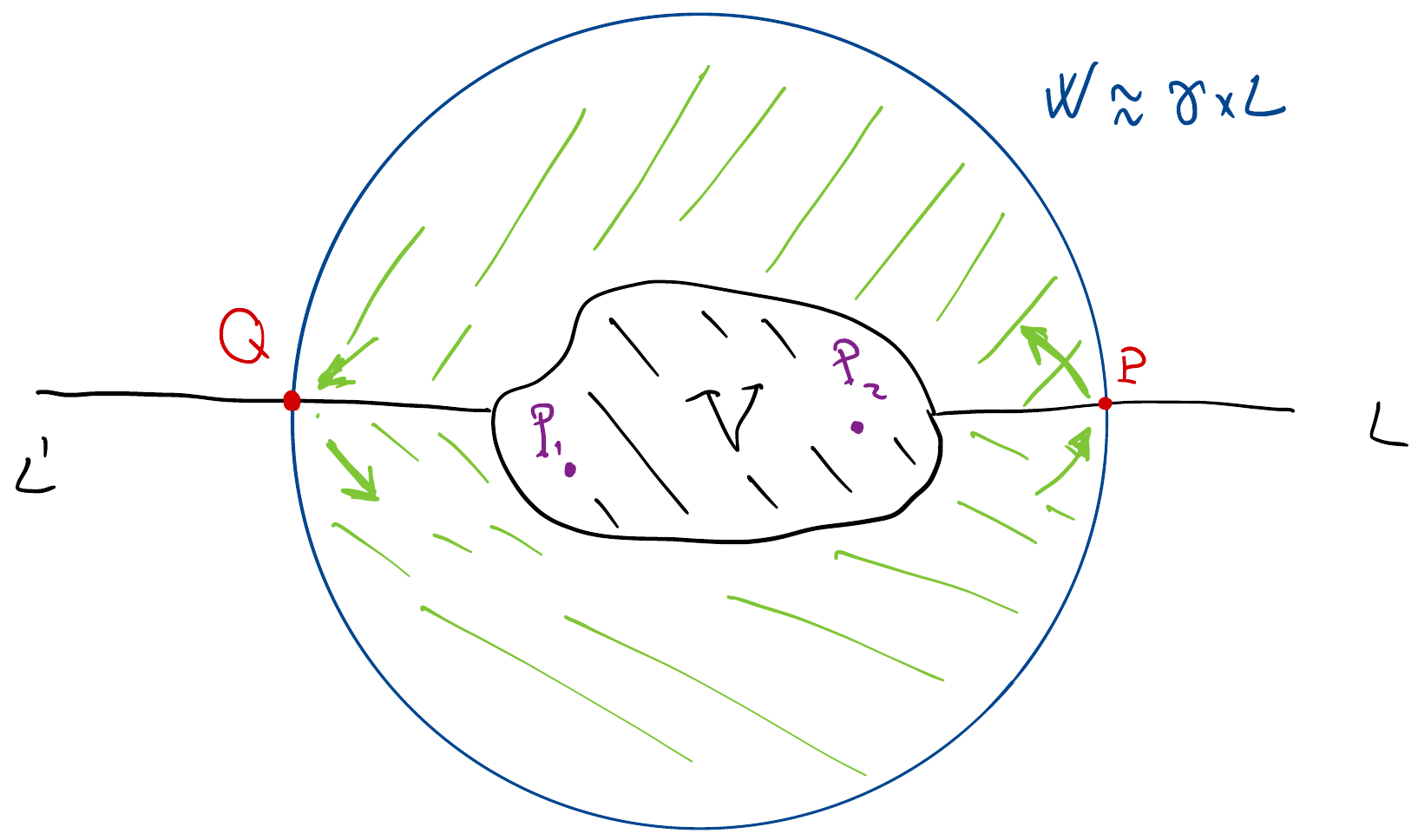}
   \end{center}
   \caption{\label{fig:simple-cob} Schematic representation of the cobordism $V$ of the Lagrangian $W=\gamma\times N$; $P_{1}$ and $P_{2}$ are pivots for $V$.}
\end{figure}
Following the definition of $\phi^{W}_{V}$ (see equation (\ref{eq:theta})) we see that this morphism is given by counting (marked strips) going from points $\{P\}\times \{x\}$ to points $\{Q\}\times \{y\}$ where $x\in N\cap L$, $y\in N\cap L'$. Similarly, $\phi^{W}_{\bar{V}}$ is given by counting the strips going from $Q$ to $P$. We now consider
the composition $\phi_{V}\circ \phi_{\bar{V}}$ and view it as providing a count of the points in a part of the boundary of the moduli space of marked strips going from $Q$ to $Q$. Another such part 
consists of disks of Maslov class $2$ with boundary on $W$, going through the points in $\{Q\}\times (N\cap L')$. If the deformations giving $W$ out of $\gamma\times N$ are
sufficiently small, we see that there is precisely one such disk for each point  in $\{Q\}\times (N\cap L')$ and we conclude that $\phi_{V}\circ \phi_{\bar{V}}$ is chain homotopic to the identity. 
The same argument actually shows that the composition $\phi_{V}\circ \phi_{\bar{V}}$ is chain homotopic to the identity as an $A_{\infty}$-module map and concludes the proof (alternatively,
we apply the same geometric argument as before to the case when $N$ is no longer embedded
and $W$ is a small enough deformation of $\gamma\times N$).
 \end{proof}

\begin{lem}\label{lem:theta-triang} The functor $\widehat{\Theta}$ is triangulated.
\end{lem}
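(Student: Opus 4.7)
The plan is to reduce to the case of surgery-type distinguished triangles and then identify the associated cone in modules with $\Theta$ applied to the third vertex. Recall that a distinguished triangle in $\mathsf{C}ob^{\ast}(M)$ is, by \S\ref{subsubsec:tri}, a triple $(u,Ru,R^{-1}u)$ coming from a three-ended cobordism $V:L\cobto (L'',L')$. Thus it is enough to show that for every such $V$, applying $\widehat{\Theta}$ produces an exact triangle in $H(mod(\fuk^{\ast}(M)))$.

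By Axiom 4 (proved via Proposition~\ref{prop:surgery-mor}), any three-ended cobordism is cabling equivalent to a surgery morphism $S_{L,L';a}$ for some cycle $a\in CF(L,L')$, whose third end is the $0$-size surgery $L\#_{a}L'=(L\cup L', a)$ (enriched with the appropriate markings and data, in the sense of \S\ref{subsec:surg}). Since $\widehat{\Theta}$ is already known to descend to the quotient and is injective (Proposition~\ref{prop:theta-cabl}), it suffices to establish exactness for the triangle associated with such a surgery cobordism. By Proposition~\ref{prop:surgery-mor}, $\Theta(S_{L,L';a})=[a]$ viewed as a morphism $\mathcal{Y}_{e}(L)\to \mathcal{Y}_{e}(L')$ via equation~\eqref{eq:module-map}. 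The three edges of the distinguished triangle are then
\[
\mathcal{Y}_{e}(L)\stackrel{[a]}{\longrightarrow}\mathcal{Y}_{e}(L'),\qquad \mathcal{Y}_{e}(L')\longrightarrow \widehat{\Theta}(L\#_{a}L'),\qquad \widehat{\Theta}(L\#_{a}L')\longrightarrow \mathcal{Y}_{e}(L),
\]
where the last two arrows are the images under $\Theta$ of the rotations $R(S_{L,L';a})$ and $R^{-1}(S_{L,L';a})$.

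The core step is to identify $\widehat{\Theta}(L\#_{a}L')$ with the mapping cone $\mathrm{Cone}([a])$ and to match the two structural maps with the cone inclusion and projection. For this I would use the iterated cone decomposition formula of equation~\eqref{eq:cones} applied to the three-ended cobordism $S_{L,L';a}$ and a curve $\gamma$ that crosses first the $L$-end and then the $L'$-end, leaving the $(L\#_{a}L')$-end below $\gamma$. This immediately yields
\[
i^{\ast}_{\gamma}\bigl(\mathcal{Y}(S_{L,L';a})\bigr)\simeq \mathrm{Cone}\bigl(\mathcal{Y}_{e}(L)\to \mathcal{Y}_{e}(L')\bigr),
\]
while choosing instead a curve $\gamma'$ that crosses only the $(L\#_{a}L')$-end (so that $S_{L,L';a}$ is seen as a simple cobordism with those ends collapsed into one) gives $i^{\ast}_{\gamma'}(\mathcal{Y}(S_{L,L';a}))\simeq \mathcal{Y}_{e}(L\#_{a}L')$ up to shift. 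Invariance of $i^{\ast}_{\gamma}$ under horizontal Hamiltonian isotopies of $\gamma$ (staying horizontal at infinity and avoiding the pivots), together with the Floer-theoretic description of $CF(X,L\#_{a}L')$ for a test object $X$ as the mapping cone of $\mu_{2}(\cdot,a):CF(X,L)\to CF(X,L')$, then gives the desired identification $\widehat{\Theta}(L\#_{a}L')\simeq \mathrm{Cone}([a])$.

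The main technical obstacle lies precisely in this last identification: one must show that the differential on $CF(X,L\#_{a}L')$ splits, in the marked sense of \S\ref{subsubsec:curves-marked}, into the internal differentials of $CF(X,L)$ and $CF(X,L')$ plus the count $\mu_{2}(\cdot,a)$ of triangles switching branches at a marked point of $a$. This is where the action-negativity of markings (Assumption II) and the positivity of perturbations at action-negative self-intersections (Assumption IV) are essential: they force any $\mathbf{c}$-marked strip with boundary on $L\cup L'$ either to stay entirely on one of $L,L'$, or to cross from $L$ to $L'$ through exactly one marked self-intersection belonging to $a$. Once this splitting is established, the two rotation morphisms $R(S_{L,L';a})$ and $R^{-1}(S_{L,L';a})$ are recognized via Lemma~\ref{lem:hlgy-cl} as the structure maps of the cone (the counts being by definition those of the inclusion $\mathcal{Y}_{e}(L')\hookrightarrow \mathrm{Cone}([a])$ and the projection $\mathrm{Cone}([a])\to \mathcal{Y}_{e}(L)[1]=\mathcal{Y}_{e}(L)$), completing the proof that $\widehat{\Theta}$ is triangulated.
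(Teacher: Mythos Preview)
Your reduction to surgery cobordisms is circular. You write that ``it suffices to establish exactness for the triangle associated with [a surgery] cobordism,'' but this step presumes that if $V:L\cobto(L'',L')$ and the surgery cobordism $S_{L,L';a}$ represent cabling-equivalent morphisms $L\to L'$, then their associated distinguished triangles are isomorphic in $\widehat{\mathsf{C}}ob^{\ast}(M)$, so that checking one suffices for the other. That isomorphism of triangles is a form of uniqueness of the cone, i.e.\ TR3. In the paper's logical order (see \S\ref{subsubsec:surg-model}), Axiom~5---from which TR3 for $\widehat{\mathsf{C}}ob^{\ast}(M)$ is derived in the proof of Theorem~\ref{thm:tri}---is \emph{deduced from} the present lemma. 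So at this point you do not know that $\widehat{\mathsf{C}}ob^{\ast}(M)$ is triangulated, and the reduction is unavailable. Neither the fact that $\widehat{\Theta}$ descends to the quotient nor its injectivity on morphisms bridges this gap: the cabling equivalence $V\sim S_{L,L';a}$ equates only the first edges of the two triangles, while the third vertices $L''$ and $L\#_{a}L'$ and the remaining two edges live in different hom-sets and cannot be compared without the very structure you are trying to establish.

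The paper's proof avoids this by treating an arbitrary three-ended $V\in\mathcal{L}ag^{\ast}(\C\times M)$ directly, including those carrying non-marked tear-drops. It takes a \emph{closed} curve $\gamma\subset\C$ meeting $V$ at the three bottlenecks $P,Q,R$, sets $W=\gamma\times N$ for a generic embedded $N$, and reads off the Floer complexes $C_{P},C_{Q},C_{R}$ together with the strip-counting maps $\psi^{P}_{Q},\psi^{Q}_{R},\psi^{R}_{P}$ among them; these are identified with $\Theta(u),\Theta(Ru),\Theta(R^{-1}u)$ evaluated on $N$. One then shows that $C_{R}$ is chain-homotopy equivalent to $\mathrm{Cone}(\psi^{P}_{Q})$ by exhibiting mutually inverse maps $\psi^{P,Q}_{R}$ and $\psi^{R}_{P,Q}$ and checking the compositions via the disk-through-a-point argument of Lemma~\ref{lem:inv-cob2}. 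Your treatment of the surgery case itself---identifying $CF(X,L\#_{a}L')$ with the mapping cone of $\mu_{2}(-,a)$ using the action constraints on the marking---is essentially what the paper calls ``perfectly similar to the embedded case,'' but it does not, on its own, cover the general $V$ needed here.
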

\begin{proof}
Recall from \S\ref{subsubsec:tri} and Figure \ref{Fig:Extr} the definition of the distinguished triangles in 
$\mathsf{C}ob^{\ast}(M)$. It is easy to see that the distinguished triangle associated to a
 surgery cobordism $S_{a}$, such as constructed in \S\ref{subsubsec:surg-mor}, is mapped by $\Theta$ to 
 an exact triangle in the homological category of $A_{\infty}$-modules over $\fuk^{\ast}(M)$ (a simple 
proof in this case is perfectly similar to the embedded case in \cite{Bi-Co:lcob-fuk}).  However, we need to deal with the case of $V:L\cobto (L'',L')$, $V\in \mathcal{L}ag^{\ast}(\C\times M)$, when $V$ possibly carries (non-marked) tear-drops.  To do this 
we provide a different proof, along an approach similar to the one in Lemma \ref{lem:inv-cob2}. 
We consider Figure \ref{fig:triangle-disk} below (compare with Figure \ref{fig:simple-cob}). There 
are now three regular bottlenecks $P,Q,R$, each corresponding to one of the ends of $V$. The curve
$\gamma$ is a circle, as in Figure \ref{fig:simple-cob}. We consider a Lagrangian $W=\gamma\times N$
where $N$ is embedded, in generic position relative to all three of $L,L',L''$. 

\begin{figure}[htbp]
   \begin{center}
      \includegraphics[scale=0.65]{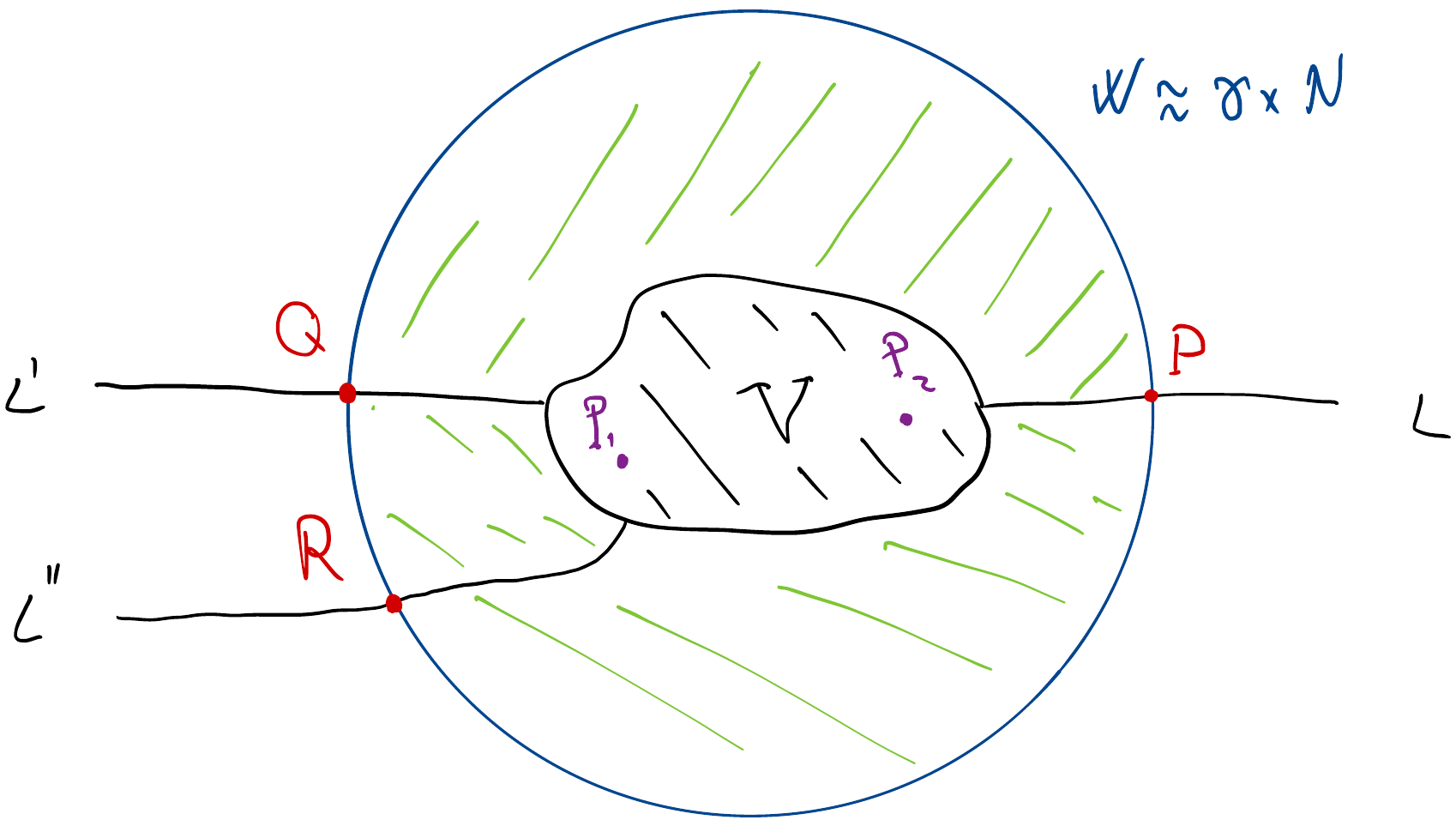}
   \end{center}
   \caption{\label{fig:triangle-disk} Schematic representation of the cobordism $V$ with three ends $L$, $L'$, $L''$, and $W=\gamma\times N$; $P_{1}$ and $P_{2}$ are pivots for $V$.}
\end{figure}

We again consider the (marked) Floer complex $C_{P}$ generated by the points $\in \{P\}\times (N\cap L)$
and similarly $C_{Q}$, $C_{R}$ for the other ends of $V$. There is a chain map that we denote
by $\psi^{P}_{Q}:C_{P}\to C_{Q}$ (and, similarly, $\psi^{Q}_{R}$, $\psi^{R}_{P}$) defined 
by counting Floer (marked) strips going from intersection points over $P$ to intersection points over $Q$. These three maps are respectively identified with the three  maps associated through the construction of $\Theta$ to the three maps in the distinguished triangle corresponding to $V$, as given in Figure \ref{Fig:Extr}. Thus we need to show that these three maps fit into an exact triangle of chain complexes (and, by extension, of 
$A_{\infty}$-modules). For this purpose, denote by $C_{P,Q}$ the cone of the map $\psi^{P}_{Q}$:
it is given as vector space by $C_{P}\oplus C_{Q}$ and has
 the differential $D_{P,Q}=(d_{P}+\psi^ {P}_{Q}, d_{Q})$.
There is a chain map $\psi^{P,Q}_{R}:C_{P,Q}\to C_{R}$ defined by counting strips from intersection points
over either $P$ or $Q$ to intersection points over $R$. Moreover, there is a chain map $\psi^{R}_{P,Q}:C_{R}\to C_{P,Q}$ given by counting strips from intersection points over $R$ to intersection points that are either 
over $P$ or $Q$. As in the proof of Lemma \ref{lem:inv-cob2} these two maps are homotopy inverses.
We notice that the projection $C_{P,Q}\to C_{P}$ composed with $\psi^{R}_{P,Q}$ agrees with $\psi^{R}_{P}$ and that $\psi^{P,Q}_{R}$ composed with the inclusion $C_{Q}\to C_{P,Q}$ 
 agrees with $\psi^{Q}_{R}$ which shows that the three maps $\psi^{P}_{Q}$,
  $\psi^{Q}_{R}$, $\psi^{R}_{P}$ fit into an exact triangle in the homotopy category.

 \end{proof}

\subsection{Proof of Theorem \ref{thm:surg-models}}\label{subsec:proof-surg-mod}
Most of Theorem \ref{thm:surg-models} has been already established. We need to only 
discuss here the metric statements in this theorem. 

We start by defining the shadow of an element $(V,c,\mathcal{D}_{V},h)\in \mathcal{L}ag^{\ast}(\C\times M)$. Recall that $V_{h}$ has regular bottlenecks along each of its ends and let $\hat{V}_{h}$ be
the part of $V_{h}$ that is left after removing the regions to the left of the positive
bottlenecks and to the right of the negative bottlenecks, as in Figure \ref{fig:reduce-bottle} (compare with
the cobordism $V_{h}$ from Figure \ref{fig:cob-id}). 
The shadow of $(V,c,\mathcal{D}_{V},h)$ is by definition, the shadow of
$\hat{V}_{h}$, in the sense of \S\ref{subsec:shadow}. 

\begin{figure}[htbp]
   \begin{center}
      \includegraphics[scale=0.78]{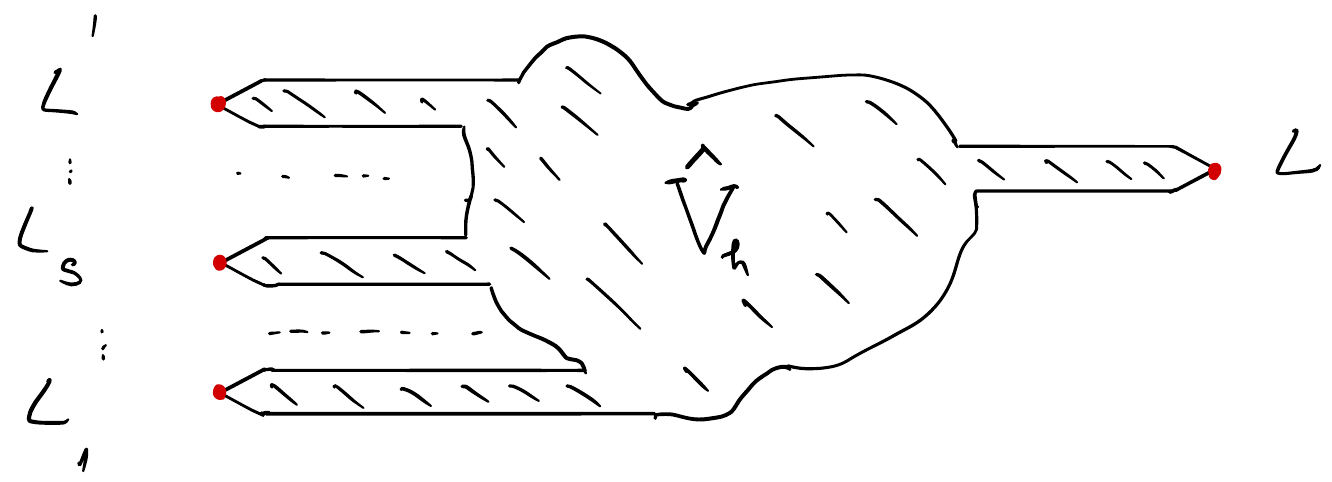}
   \end{center}
   \caption{\label{fig:reduce-bottle}The ``truncated'' cobordism $\hat{V}_{h}$ used to define the sahdow of $(V,c,\mathcal{D}_{V},h)$.}
\end{figure}

Notice that the band width parameter, $\epsilon$, appearing in the definition of the perturbation germs 
$h_{L}$ associated to an element  $L\in \mathcal{L}ag^{\ast}(M)$, as discussed in \S\ref{subsubsec:dec-bis},
can be taken arbitrarily small. Making use of this
and noticing also that in the construction of the surgery morphisms  in \S\ref{subsec:alg-to-geo} we may take 
the various relevant deformations with an arbitrary small area covered by their shadows - this applies to $h_{L},h_{L'}$ as well as the deformations giving the cobordisms $V$ and $V'$ - we deduce that 
the category $\mathsf{C}ob^{\ast}(M)$ has small surgery models in the sense of \S\ref{subsec:shadow}. 

The next step is to see that $\mathsf{C}ob^{\ast}(M)$ satisfies the rigidity Axiom 6 from \S\ref{subsec:shadow}. 
We will prove the stronger fact that $\mathsf{C}ob^{\ast}(M)$ is strongly rigid.

\begin{lem}\label{cor:rig}
The category $\mathsf{C}ob^{\ast}(M)$ is strongly rigid (see Definition \ref{def:rig}).
\end{lem}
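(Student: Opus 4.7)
The plan is to reduce strong rigidity to an algebraic non-degeneracy statement by transporting the problem through the functor $\widehat{\Theta}$ constructed earlier. Concretely, I would invoke part~(iv) of Theorem~\ref{thm:surg-models}, which provides for each family $\mathcal{F}\subset\mathcal{L}ag^{\ast}_e(M)$ the inequality
\[
d^{\mathcal{F}}(L,L') \;\geq\; s^{\mathcal{F}}_a\bigl(\widehat{\Theta}(L),\widehat{\Theta}(L')\bigr),
\]
where $s^{\mathcal{F}}_a$ is the action-filtered fragmentation pseudo-metric on $D\fuk^{\ast}(M)$ obtained, in the sense of Remark~\ref{rem:weights1}\emph{b}, by taking as weight on morphisms in $T^S D\fuk^{\ast}(M)$ the action-shift needed to represent a cone decomposition. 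Averaging the inequality for $\mathcal{F}$ with the one for $\mathcal{F}'$ yields $d^{\mathcal{F},\mathcal{F}'} \geq s^{\mathcal{F},\mathcal{F}'}_a\circ(\widehat{\Theta}\times\widehat{\Theta})$, so the problem reduces to showing that $s^{\mathcal{F},\mathcal{F}'}_a$ is non-degenerate on $D\fuk^{\ast}(M)$ as soon as $\overline{\bigcup\mathcal{F}}\cap\overline{\bigcup\mathcal{F}'}$ is totally discrete.

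To establish this algebraic non-degeneracy I would argue by contradiction. Assume $s^{\mathcal{F},\mathcal{F}'}_a(\mathcal{Y}_e(L),\mathcal{Y}_e(L'))=0$ while $L\neq L'$ in $\mathcal{L}ag^{\ast}(M)$; then for every $\varepsilon>0$ there exist iterated cone decompositions of $\mathcal{Y}_e(L)$ using $\mathcal{Y}_e(L')$ together with pieces from $\mathcal{F}$ (respectively from $\mathcal{F}'$) of total weight less than $\varepsilon$. Using the action-energy inequality recalled in Remark~\ref{rem:var1}\emph{c} and iterating the mapping-cone construction, one extracts from the $\mathcal{F}$-decomposition a cycle in $CF(L,L')$ of action $<\varepsilon$, and dually from the $\mathcal{F}'$-decomposition a cycle in $CF(L',L)$ of action $<\varepsilon$. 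Splicing these through the Donaldson triangle product gives a cocycle in $CF(L,L)$ of action $<2\varepsilon$ homologous to the unit, whose support is forced by the two cone decompositions to concentrate near $\overline{\bigcup\mathcal{F}}\cap\overline{\bigcup\mathcal{F}'}$. Letting $\varepsilon\to 0$ and invoking the totally discrete hypothesis, the limit cycle must be supported on finitely many points disjoint from $L\neq L'$ in action filtration zero, contradicting the fact that the unit in $HF(L,L)$ is represented by a class of strictly positive action spread (generically by the sum of the maxima of the primitives $f_L$ on each component).

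The main obstacle will be making the filtered extraction rigorous in the marked immersed setting. The two delicate points are: (a) verifying that the $\mu^k$ operations on $\mathbf{c}$-marked polygons obey the filtered $A_\infty$ axioms in the form needed to pass weights from $T^S D\fuk^{\ast}(M)$ to genuine action bounds on representing chains — which, in the exact framework of \S\ref{subsec:exact}, follows from the primitive $f_V$ controlling the energy of marked polygons as in Remark~\ref{rem:var1}\emph{c}; and (b) carrying out the telescope/splicing argument so that the pieces contributed by $\mathcal{F}$ and $\mathcal{F}'$ genuinely cancel in the limit, which is where the disjointness of the closures is decisively used. Once these filtered formalities are installed, the non-contracting inequality of Theorem~\ref{thm:surg-models}(iv), already established, propagates non-degeneracy from the algebraic side back to $d^{\mathcal{F},\mathcal{F}'}$, completing the proof of Lemma~\ref{cor:rig}.
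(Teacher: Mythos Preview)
Your proposal has a genuine gap. The central step --- ``whose support is forced by the two cone decompositions to concentrate near $\overline{\bigcup\mathcal{F}}\cap\overline{\bigcup\mathcal{F}'}$'' --- is not justified and does not follow from the data you have. Action filtrations on $CF(L,L)$ and weights on iterated cone decompositions record values of primitives and energy of holomorphic curves; they carry no direct information about \emph{where} in $M$ the generators of a chain live. A cone decomposition of $\mathcal{Y}_e(L)$ with small weight using $\mathcal{F}$ does not by itself localise anything near $\overline{\bigcup\mathcal{F}}$. The contradiction you reach (involving the ``action spread'' of the unit) is correspondingly ill-defined. There is also a scope mismatch: Theorem~\ref{thm:surg-models}(iv), which you want to invoke, is stated only for families $\mathcal{F}\subset\mathcal{L}ag^{\ast}_e(M)$ of embedded Lagrangians and for the restricted functor $\widehat{\Theta}_e$, whereas strong rigidity in Definition~\ref{def:rig} concerns arbitrary families in $\mathcal{L}ag^{\ast}(M)$; and in the paper's logical order part~(iv) is established \emph{after} the present lemma, not before.

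The paper's argument is entirely geometric and does not pass through $\widehat{\Theta}$ at all. It first proves the basic rigidity inequality $\mathcal{S}(V)\geq c_{L,L'}\,w(L;L')$ for simple cobordisms, using that through the centre of a standard ball disjoint from $L'$ and meeting $L$ in a real slice there passes a (marked) $J$-holomorphic strip of energy at most $\mathcal{S}(V)$. For strong rigidity one takes a general cobordism $V:L\cobto(F_1,\ldots,L',\ldots,F_m)$ with $F_i\in\mathcal{F}$, and uses braiding (Remark~\ref{rem:var-braiding}) with small surgery models to replace it by a simple cobordism $V':L\cobto L''$ where $L''$ is an iterated $0$-size surgery of $L'$ with the $F_i$, at the cost of adding at most $\epsilon$ to the shadow. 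Now the width inequality applied to $V'$ says: any point $x\in L\setminus L'$ admitting a small standard ball missing $L'$ forces $x\in\overline{\bigcup_{F\in\mathcal{F}}F}$, and in fact a whole open neighbourhood of $x$ in $L\setminus L'$ lies in that closure. Running the same argument with $\mathcal{F}'$ gives an open set contained in $\overline{\bigcup\mathcal{F}}\cap\overline{\bigcup\mathcal{F}'}$, contradicting total discreteness. This is the geometric mechanism you are missing: the passage from small shadow to geometric containment goes via Gromov width and holomorphic strips, not via action bounds on algebraic representatives.
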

\begin{proof}
The first part of the proof is to show that $\mathsf{C}ob^{\ast}(M)$ is rigid.
This means that we need to show that for any two different Lagrangians $L,L'\in \mathcal{L}ag^{\ast}(M)$ there exists a constant $\delta(L,L')>0$ such that for any simple cobordism 
$V:L\cobto L'$, $V\in\mathcal{L}ag^{\ast}(\C\times M)$  we have $\mathcal{S}(V)\geq \delta(L,L')$.

Recall that for two Lagrangians $L$ and $L'$, the Gromov width of $L$ relative to $L'$ is defined by 
$$w(L;L')=\sup\{ \frac{\pi r^{2}}{2} \ |\ \exists \ e: (B_{r},\omega_{o})\hookrightarrow (M,\omega)\ , \ e(B_{r})\cap L'=\emptyset \ , \ e^{-1}(L)=\R B_{r}\}$$ where $e$ is a symplectic embedding, 
$(B_{r},\omega_{0})$ is the standard ball of radius $r$ and $\R B_{r}=B_{r}\cap \R^{n}\times \{0\}$. 

Assume now that $V:L\cobto L'$, $V\in \mathcal{L}ag^{\ast}_{0}(\C\times M)$
 is a simple cobordism. Proceeding as in
\cite{Co-She:metric} we deduce that through the center of a ball $e(B_{r})$  as in the definition 
of $w(L;L')$, passes a perturbed marked $J_{0}$-holomorphic strip $u$ with boundary along $L$ and on $L'$  and of energy not bigger than $\mathcal{S}(V)$. 

The energy of $u$ is bounded from below by the energy of $Image(u)\cap e(B_{r})$.
By adjusting the data associated to the pair $L,L'$, there exists a constant $c_{L,L'}>0$ 
depending of the perturbations $\mathcal{D}_{L}$ and $\mathcal{D}_{L'}$ (and thus also of 
the base almost complex structure $J_{0}$) such that  the energy of $Image(u)\cap e(B_{r})$ is bounded from below by   $c_{L,L'}\frac{\pi r^{2}}{2}$ (if the almost complex structure used inside $B_{r}$  could be assumed to be the standard one and the perturbations vanish there, the energy in question would agree with
the symplectic area and we could take $c_{L,L'}=1$).

In summary, we have  
\begin{equation}\label{eq:ineq-width}\mathcal{S}(V)\geq c_{L,L'}w(L;L')
\end{equation} 
which implies that Axiom 6 is satisfied at least for simple cobordisms that do not carry non-marked tear-drops.
In case $V\in \mathcal{L}ag^{\ast}(\C\times M)$, the argument in \cite{Co-She:metric} no longer applies
because the argument there is based on a Hamiltonian isotopy such as the one appearing in Lemma \ref{lem:ham-isot} and this isotopy potentially intersects the poles associated to $V$. 
However, a quantitative adjustment of the argument in Lemma \ref{lem:inv-cob2} does imply 
the statement even in this case.

Consider now a cobordism
$V:L\cobto (F_{1},\ldots,F_{i}, L', F_{i+1}, F_{m})$, $V \in\mathcal{L}ag^{\ast}(\C\times M)$. Using the braiding in Remark \ref{rem:var-braiding}, we may replace it by a cobordism $V':L\cobto L''$ where 
$L''$ is obtained by iterated $0$-size surgeries involving the $F_{i}$'s and $L'$. Because $\mathsf{C}ob^{\ast}(M)$ has 
small surgergy models, the cabling used in the braiding can be assumed to add to the shadow of the initial
cobordism $V$ as little as wanted. In short, for any $\epsilon >0$ we can find such a $V'$ so  that 
$\mathcal{S}(V')\leq \mathcal{S}(V)+\epsilon$. Assume now that there are families $\mathcal{F}$ and $\mathcal{F}'$ with $\overline{\bigcup_{F\in\mathcal{F}}  F}\cap \overline{\bigcup_{F'\in\mathcal{F}'}  F'}$ totally discrete and $L\not=L'$ such that $d^{\mathcal{F},\mathcal{F}'}(L,L')=0$.
Focusing for now just on the family $\mathcal{F}$, it follows from the arguments 
 above that, for any $\epsilon$, there is a  cobordism $V'_{\epsilon}:L\cobto L''_{\epsilon}$
of shadow less than $\epsilon$ with $L''_{\epsilon}=F_{m}\#_{c_{m}}\ldots \#L'\#\ldots \# F_{1}$.
Consider now a point $x\in L\backslash L'$. As there exists a sufficiently small standard symplectic ball around $x$ 
(as in the definition of width) that also does not intersect $L'$, we deduce from (\ref{eq:ineq-width}) 
that $x\in \overline \bigcup_{F\in \mathcal{F}}F$. There is a small open set $U\subset (L\backslash L')$ containing $x$ and thus we deduce that all of $U\subset  \overline \bigcup_{F\in \mathcal{F}}F$. We apply the same argument to $\mathcal{F}'$ and we arrive at a contradiction because of our assumption on $\mathcal{F}$ and $\mathcal{F}'$.
\end{proof}

To conclude the proof of Theorem \ref{thm:surg-models} there are two more statements to justify.
The most important part is that the restriction $\widehat{\Theta}_{e}$ of the 
morphism $\widehat{\Theta}$ to the triangulated 
subcategory $\widehat{\mathsf{C}}ob^{\ast}_{e}(M)$ generated by the embedded Lagrangians 
gives a triangulated isomorphism to $D\fuk^{\ast}(M)$.   The functor
$\widehat{\Theta}$ is triangulated and all modules in $D\fuk^{\ast}(M)$ are iterated cones of Yoneda modules of embedded objects.  Therefore, by using Proposition \ref{prop:surgery-mor}, 
recurrence on the number of cones and equation (\ref{eq:yoneda-imm}) the result follows. 
Notice also that it also follows from Proposition \ref{prop:surgery-mor} and equation (\ref{eq:yoneda-imm})
that the category $\widehat{\mathsf{C}}ob^{\ast}(M)$ itself is the Donaldson category
$\mathcal{D}on(\mathcal{L}ag^{\ast}(M))$ associated to the Lagrangians in $\mathcal{L}ag^{\ast}(M)$.

\begin{rem}\label{rem:embed}
Notice that in this construction the base almost complex structure that one starts with can be
any almost complex structure $J_{0}$ on $M$ because exact, embedded Lagrangians are unobstructed with respect to any such structure. Moreover,  because we use in Proposition \ref{prop:surgery-mor} $0$-size
surgeries, all modules in $D\fuk^{\ast}(M)$ are represented by  marked immersed Lagrangians  $j_{L}:L\to M$ so that the immersion $j_{L}$ restricts to embeddings on the connected components of $L$. In other words,
in order to represent $D\fuk^{\ast}(M)$ the only immersions required are unions of embedded Lagrangians
together with appropriate choices of markings that are used to deform the usual Floer differential as well as 
the higher $A_{\infty}$ multiplications.
\end{rem}

The last part of  Theorem \ref{thm:surg-models} claims that $\widehat{\Theta}_{e}$ is non-decreasing 
with respect to the pseudo-metrics mentioned there. This follows through the same arguments as in 
\cite{Bi-Co-Sh:lshadows-long} but we will not give more details here.

\section{Some further remarks.}\label{sec:rem}


\subsection{Related results.}

As mentioned before, the literature concerning Lagrangian cobordism starts with the work
of Arnold \cite{Ar:cob-1}, \cite{Ar:cob-2} who introduced the notion. Rigid behaviour first appears in this 
context in a remarkable paper due to Chekanov \cite{Chek:cob}.
All the consequences and particular statements that result from the existence of 
the functor $\Theta$ in \S\ref{subsubsec:funct-th}, 
when applied to only {\em embedded Lagrangians and cobordisms}, were
 established, mainly in \cite{Bi-Co:cob1} and \cite{Bi-Co:lcob-fuk}. 
A variety of additional facts are known and,  as they also contribute to various other aspects of the 
geometry - algebra dictionary discussed here, we will rapidly review the relevant literature here.  
\subsubsection{Simple cobordisms.} As discussed before, unobstructed simple cobordisms are inducing isomorphisms between the Floer theoretic invariants of the two ends. More is likely to be  true: for instance
simple exact cobordisms are conjectured to coincide with Lagrangian suspensions. Steps towards proving
this conjecture are due to Suarez \cite{Sua:s-cob} and Barraud-Suarez \cite{Bar-Su:rigid-cob}. They  prove, under 
constraints on the behaviour of the fundamental group inclusions of the ends,
that an exact simple cobordism is a pseudo-isotopy (in other words, the cobordism is smoothly trivial).

\subsubsection{Additional algebraic structures.}  There are a variety of results of this sort. Among them:
relations with Calabi-Yau structures due to Campling \cite{Campling}; to stability conditions,
discovered by Hensel \cite{Hensel}; $\Theta$ can be used \cite{Ch-Co:cob-Seidel} to provide a categorification of Seidel's representation that views this representation as an algebraic translation of Lagrangian suspension; cobordisms can be studied also in the total space of Lefschetz fibrations over the complex plane \cite{Bi-Co:lefcob-pub}; they also can be used to approach from a different angle a Seidel type exact sequence for generalized Dehn twists \cite{Mak-Wu:Dehn-twist}. Some connections to mirror 
symmetry appear in Smith-Sheridan \cite{Sm-Sh}.

\subsubsection{Cobordism groups.} Following earlier results due to Audin \cite{Aud:calc-cob} and Eliashberg \cite{El:cob} that compute in the flexible case cobordism  groups in euclidean spaces by means
of algebraic topology, there are essentially two recent calculations in the rigid case. Both apply to surfaces: Haug \cite{Haug:K-cob}
for the torus and Perrier \cite{Per2} (this last paper contains a number of gaps and errors, a corrected version
is in preparation by Rathel-Fourrier \cite{Ra-Fou})  for surfaces of genus at least two. 

\subsubsection{Variants of the ``surgery models'' formalism.} It is likely that the 
concept of a cobordism category with surgery models, or a variant, can be  
applied to some other contexts. There is some work in progress of Fontaine \cite{Fon} who
studies a related formalism for Morse functions. In a different direction, it is not clear for now whether
the five axioms governing categories with surgery models in the paper are in any way optimal.

\subsubsection{Measurements and shadows.} As mentioned earlier in the paper,
shadows of cobordisms have first appeared in \cite{Co-She:metric} for the case of simple
cobordisms and in the wider setting
 of general cobordisms in \cite{Bi-Co-Sh:lshadows-long} where are also introduced fragmentation metrics. 
A variety of interesting related results are due to Bisgaard \cite{Bis1}, \cite{Bis2}. In a different direction,
an interesting new approach is due to Chass\'e \cite{Cha} who is exploring the behaviour of  
shadow type metrics for classes of Lagrangians under uniform  curvature bounds.

\subsubsection{Other approaches/points of view.}
Lagrangian cobordism has also been considered from a different perspective by Nadler-Tanaka \cite{Na-Ta},
\cite{Tanaka}. There are  connections, only superficially explored at the moment, to micro-local sheaf theoretic techniques as evidenced in work of Viterbo \cite{Vi:sheaf} (page 55).

Finally, there is considerable work today on cobordisms between Lagrangians through Legendrians. This is certainly
a different point of view relative to the one that is central to this paper but there are also a variety 
of common aspects. For remarkable applications of this point of view see 
Chantraine-Rizell-Ghiggini-Golovko \cite{Chan-Ri-Ghi-Go}.


\subsection{Some further questions} 

\subsubsection{Non-marked Lagrangians.} \label{subsubsec:nonmarked-Lag}
As mentioned before in the paper, in general, rigid cobordism categories with surgery 
models have to contain immersed Lagrangians among their objects. A natural question is 
whether one can have objects that are non-marked (or, in other words, with the empty markings). 

In some framework the answer to this question is expected to be in the affirmative (thus providing
a non-tautological solution to Kontsevich's conjecture). However, there are some difficulties
in achieving this, some conceptual and some technical.

Here are two examples of the problems on the conceptual side. 
\begin{itemize}
\item[a.] The $\epsilon$-surgery $L\#_{\mathbf{x},\epsilon} L'$ at some intersection points 
$\mathbf{x}=\{x_{i}\}_{i\in I}\subset L\cap L'$ with $\epsilon>0$ and
$L$, $L'$ exact is only exact (with suitable choices of handles) if $f_{L}(x_{i})-f_{L'}(x_{i})$ is the
same for all $i\in I$. As a result, the morphisms that can be obtained through an analogue of Proposition \ref{prop:surgery-mor} are associated to cycles of intersection points that have the same action. On the other
hand there is no such control of the action of the points representing $\Theta(V)$ with $\Theta$
the functor in \S\ref{subsubsec:funct-th} and thus it is not clear whether all the image of $\Theta(V)$, at least
as defined in \S\ref{subsubsec:funct-th}, can be represented geometrically.
\item[b.] we have not discussed here changing the base almost complex structure $J_{0}$ but it is certain that such changes in any case lead to a set-up involving bounding chains.
\end{itemize}

The technical difficulties involved to remove from the picture marked Lagrangians and replace them with immersed but non-marked ones, already appear at the following point. 
We need to compare $L\#_{c,\epsilon} L'$ for varying $\epsilon$ and fixed $c$ and 
for $L$ and $L'$ embedded. Assuming the marked Lagrangian $L\#_{c,0} L'$ is unobstructed 
(in the marked sense), the basic questions are whether it is true that:
\begin{itemize}
\item[i.] The non-marked Lagrangian $L\#_{c,\epsilon} L'$ is also unobstructed.
\item[ii.] The surgery cobordism from $L''=L\#_{c,0} L'$ to $L'''=L\#_{c,\epsilon}L'$ for small 
enough $\epsilon$ is also unobstructed and the relevant Yoneda modules of $L''$ and $L'''$ are
quasi-isomorphic.
\end{itemize}
A key step is to compare  moduli spaces $\mathcal{N}_{L'',c}$ of marked tear-drops with boundary on $L''$ to the moduli spaces $\mathcal{N}_{L'''}$ of non-marked
tear-drops with boundary on $L'''$, after rounding the corners at the marking $c$, as pictured in Figure \ref{fig:tear-strips} (assuming the dimension of these moduli spaces is $0$). 
The two moduli spaces need to be identified for $\epsilon$ small. While intuitively clear,
this identification is not obvious.  It depends on a gluing result that appears in \cite{FO3:book-chap-10}
but, beyond that, there are other subtleties. For instance, to ensure regularity of $\mathcal{N}_{L'',c}$
it is natural to use non-autonomous almost complex structures. On the other hand $\mathcal{N}_{L'''}$
consists, by default, of autonomous complex structures, thus these moduli spaces are not  
so easy to render regular. Additionally, the gluing result in \cite{FO3:book-chap-10} applies to autonomous 
curves. Regularity of curves defined with respect to autonomous structures and without perturbations
can potentially be approached  as in \cite{Laz:decomp}, as attempted in  \cite{Per1} (but this paper is not
complete at this time).
   
 \begin{figure}[htbp]
   \begin{center}
    \includegraphics[width=0.45\linewidth]{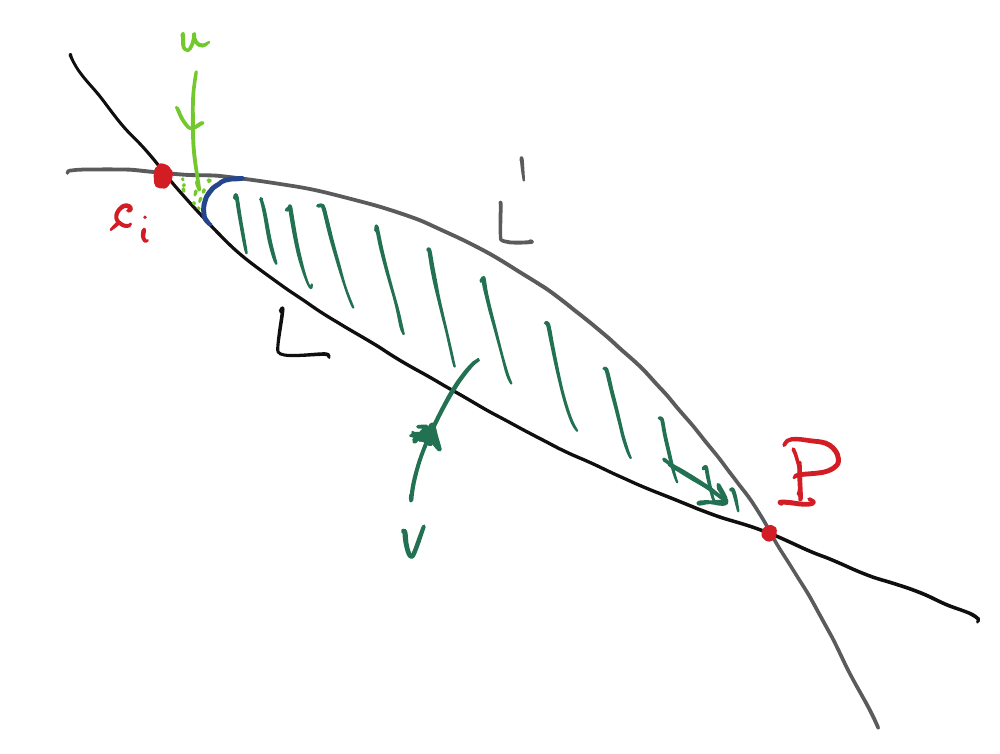}
    \end{center}
   \caption{\label{fig:tear-strips} Comparing teardrops to marked tear-drops (or Floer strips) after rounding a type corner.}
\end{figure}

Further, the same methods need also to be applied to deal with moduli spaces of 
$J$-holomorphic disks and with more complicated curves. This requires a more general gluing statement 
of the type of the one in \cite{FO3:book-chap-10}, that applies to polygons with more than a single corner having boundary on a generically immersed Lagrangian submanifold.  Moreover, one also needs a version of the Gromov compactness theorem that applies to curves with boundary on a sequence
of Lagrangians that converges to an immersed Lagrangian through a process that corresponds to reducing to zero the size of a handle attached through surgery. 

An additional issue, related to the application of the Gromov compactness argument in this setting, 
is that to achieve the comparison between the moduli spaces $\mathcal{N}_{L'',c}$ 
and $\mathcal{N}_{L'''}$ one needs uniform energy bounds
that apply for all $\epsilon$-surgeries for $\epsilon$ small enough. Otherwise,
the moduli spaces $\mathcal{N}_{L'',c}$ might contain curves with energy going to $\infty$ when $\epsilon\to 0$ and these curves obviously are not ``seen'' by the spaces $\mathcal{N}_{L'''}$. 

The gluing and the compactness ingredients mentioned are basically ``in reach'' today, the gluing statement is probably very close to \cite{FO3:book-chap-10} and the compactness one is likely to follow from some version of SFT compactness \cite{Bo-El-Ho-Wy-Ze}. For both, see also the recent work \cite{Pal-Wood-imm}.

\

\subsubsection{Bounding chains and invariance properties.} \label{subsubec:bound-inv}As mentioned before in \S\ref{subsubsec:bounding}, bounding chains and the markings of Lagrangian immersions used in this paper are closely related. Recalling from \cite{Akaho-Joyce} the Akaho-Joyce 
$A_{\infty}$ algebra associated to an immersed Lagrangian $L$, it is easy to see that a marking associated to $L$ provides a bounding chain for this algebra. Moreover, the marked Flower homology of two such marked Lagrangians $L$ and $L'$ is  the corresponding Floer homology associated to the respective bounding chains. Conversely, possibly under some additional conditions, it is expected that a bounding chain can be reduced to a marking.

Exploring in more detail the relation between markings and bounding chains is also of interest
because the notion of unobstructedness used in the paper is somewhat unsatisfying
as it depends on a specific choice of a base almost complex structure $J_{0}$. Certainly, it remains true that the resulting
category $\widehat{\mathsf{C}}ob^{\ast}_{e}(M)$ is a model for the derived Fukaya category of the embedded objects and thus it does not depend of $J_{0}$ up to triangulated isomorphism. Nonetheless, it is expected that  a more invariant construction would make explicit use of bounding chains to adjust the markings, data etc when transitioning from one base almost complex structure to another.  

In this context, an interesting related question is to understand the behaviour of 
bounding chains with respect to surgery. A first step in this direction appears in \cite{Pal-Wood-imm}.

\subsubsection{Novikov coefficients.} Going beyond $\Z/2$-coefficients seems 
possible and, in particular, using coefficients in the universal Novikov field. This extension is however
not completely immediate as it requires using surgery with coefficients that take into account the size
of the handles. In other words, where in the body of the paper we viewed surgery as an operation associated to a cycle $c= \sum c_{i}$ where $c_{i}\in  L\cap L'$ now we need to consider 
cycles of the form $c=\sum \alpha_{i}c_{i}$ with $\alpha_{i}\in\Lambda$ and the surgery needs to take
into account the size of the $\alpha_{i}$'s. 

\subsubsection{Idempotents.} With motivation in questions coming from mirror symmetry 
it is natural to try to understand how to take into account idempotents in the global picture of 
the cobordism categories considered here. This turns out not to be difficult (even if we leave the details
for a subsequent publication): one can view idempotents as an additional structure given by a class $q_{L}$ in the quantum homology $QH(L)$  defined for each one of the ends $L$ of a cobordism and
such that $q_{L}^{2}=q_{L}$. A similar structure also makes sense for a cobordism $V$, in this case the relevant class belongs to $QH(V,\partial V)$. There is a connectant
$QH(V,\partial V)\to QH(\partial V)$ and the requirement for a cobordism respecting the idempotents is for the classes on the boundary to descend from that on $V$. The case of standard cobordisms is recovered 
when all these classes are the units. For embedded Lagrangians, the additional structure provided by idempotents is not hard to control (compared to what was discussed before one also needs to show that the various morphisms induced by cobordisms also relate in the appropriate way the idempotents of the ends).
However,  we expect that the whole construction will be of increased interest when cobordisms 
and their ends are allowed to be immersed. The second author thanks Mohammed Abouzaid for 
a useful discussion on this topic.

\subsubsection{Small scale metric structure.}
 For an embedded Lagrangian $L\in \mathcal{L}ag^{\ast}_{e}(M)$,
 a basic question is whether there is some $\epsilon>0$ such that, if
 $d^{\mathcal{F},\mathcal{F}'}(L,L')\leq \epsilon$ for some $L'\in \mathcal{L}ag^{\ast}(M)$,
 then $L'$ is also embedded and, further, $L'$ is Hamiltonian isotopic to $L$. Could it be that even more 
 is true for $\epsilon$ possibly even smaller: there is a constant $k$ only depending on 
 $\mathcal{F},\mathcal{F}'$ such that $d^{\mathcal{F},\mathcal{F}'}(L,L')\geq k d_{H}(L,L')$
(where $d_{H}$ is the Hofer distance) ?


\subsubsection{Embedded vs. immersed. }
There are two natural questions related to the formalism discussed here.
Are there criteria that identify objects of $D\fuk^{\ast}(M)$ that are not representable
by embedded Lagrangians? Similarly, are there criteria that identify un obstructed immersed, marked, Lagrangians that are not isomorphic to objects of $D\fuk^{\ast}(M)$ ?

\bibliography{bibliography}


%
%
%

\end{document}